\documentclass[11pt,reqno]{amsart}
\textwidth=15.5cm \oddsidemargin=0cm \topmargin=-1.5cm
\textheight=24cm \evensidemargin=0cm

\usepackage[brazilian,english]{babel} 
\usepackage[T1]{fontenc}
\usepackage{amsfonts,amsthm,amssymb,amsmath}  
\usepackage{graphicx,color}       
\usepackage{latexsym}       
\usepackage{overpic}        
\usepackage[colorlinks=false, linktocpage=true]{hyperref}
\usepackage{graphicx}
\usepackage{multicol}
\usepackage{color}
\usepackage{multirow} 

\newcommand{\leqnomode}{\tagsleft@true}
\newcommand{\reqnomode}{\tagsleft@false}
\makeatother

\newtheorem{teore}{Theorem}[section]
\setcounter{secnumdepth}{3}
\newtheorem{obs}[teore]{Remark}
\newtheorem{defi}[teore]{Definition}

\newtheorem{coro}[teore]{Corollary}

\newtheorem{pro}[teore]{Proposition}

\newtheorem{lem}[teore]{Lemma}
\newtheorem{theorem}{}

\newcommand{\R}{\mathbb{R}}
\newcommand{\N}{\mathbb{N}}
\newcommand{\C}{\mathbb{C}}

\newcommand{\psib}{\mbox{\boldmath$\psi$}}
\newcommand{\phib}{\mbox{\boldmath$\phi$}}
\newcommand{\varphib}{\mbox{\boldmath$\varphi$}}

\numberwithin{equation}{section}

\title[NLS systems with quadratic-type nonlinearities]{On a system of Schr\"odinger equations with general quadratic-type nonlinearities}

\author[N. Noguera]{Norman Noguera}
\address{IMECC-UNICAMP, Rua S\'ergio Buarque de Holanda, 651, 13083-859, Cam\-pi\-nas-SP, Bra\-zil}
\email{nnoguera57@gmail.com}

\author[A. Pastor]{Ademir Pastor}
\address{IMECC-UNICAMP, Rua S\'ergio Buarque de Holanda, 651, 13083-859, Cam\-pi\-nas-SP, Bra\-zil}
\email{apastor@ime.unicamp.br}
\date{\today}

\begin{document}

\maketitle

\begin{abstract}
In this work we study a system of Schr\"{o}dinger equations involving  nonlinearities with quadratic growth. We establish sharp criterion concerned with the dichotomy global existence versus blow-up in finite time. Such a criterion is given in terms on the ground state solutions associated with the corresponding elliptic system, which in turn are obtained by applying variational methods. By using the concentration-compactness method we also investigate the nonlinear stability/instability of the ground states.  
\end{abstract}

\tableofcontents

\section{Introduction}

In this paper we  consider the following initial-value problem
\begin{equation}\label{system1}
\begin{cases}
\displaystyle i\alpha_{k}\partial_{t}u_{k}+\gamma_{k}\Delta u_{k}-\beta_{k} u_{k}=-f_{k}(u_{1},\ldots,u_{l})\\
(u_{1}(x,0),\ldots,u_{l}(x,0))=(u_{10},\ldots,u_{l0}),\qquad k=1,\ldots l,
\end{cases}
\end{equation}
where $u_{k}:\R^{n}\times \R\to \C$, $(x,t)\in \R^{n}\times \R$, $\Delta$ is the Laplacian operator, $\alpha_{k}, \gamma_{k}>0$, $\beta_{k}\geq0$ are real constants and  the nonlinearities $f_{k}$ satisfy some suitable condition that will be displayed below. Our main interest here  is to study \eqref{system1} when the nonlinearities  have a quadratic-type growth.

Systems as in \eqref{system1}, with power-like quadratic nonlinearities, appear in several areas in physics such as nonlinear optics, plasma physics, propagation in
nonlinear fibers, among others. In nonlinear optics, for instance, such systems can be derived in view of  the so-called multistep cascading mechanism. In particular, multistep cascading can be achieved by second-order
nonlinear processes such as second harmonic generation (SHG) and sum-frequency
mixing (SFM) (see, for instance, \cite{Kiv}). To cite a few examples, when the propagation of optical beams in a nonlinear dispersive medium
with quadratic response is considered, the following  three-wave interaction models appear (see \cite{Kiv})
\begin{equation}\label{system1A}
\begin{cases}
\displaystyle 2i\partial_{t}w+\Delta w-\beta w=-\frac{1}{2}(u^{2}+v^{2}),\\
\displaystyle i\partial_{t}v+\Delta v-\beta_{1} v=-\chi \overline{v}w,\\
\displaystyle i\partial_{t}u+\Delta u-u=- \overline{u}w,
\end{cases}
\end{equation}
and
\begin{equation}\label{system1B2}
\begin{cases}
\displaystyle i\partial_{t}w+\Delta w- w=-(\overline{w}v+\overline{v}u),\\
\displaystyle 2i\partial_{t}v+\Delta v-\beta v=-\left(\frac{1}{2}w^{2}+ \overline{w}u\right),\\
\displaystyle 3i\partial_{t}u+\Delta u-\beta_{1}u=- \chi vw,
\end{cases}
\end{equation}
where $\beta, \beta_{1},\chi>0$ are real constants. In \cite{Pastor2}, the author studied  \eqref{system1A} and  \eqref{system1B2} in the one-dimensional case. Global well-posedness, existence of ground state solutions and linear stability were analyzed.

In \cite{Hayashi}, the authors considered the system
\begin{equation}\label{system1J}
\begin{cases}
\displaystyle i\partial_{t}u+\Delta u=-2\overline{u}v,\\
\displaystyle i\partial_{t}v+\kappa\Delta v=- u^{2},
\end{cases}
\end{equation}
which appears as a non-relativistic version  of some Klein-Gordon systems, when the speed of light constant tend to infinity. It also can be derived as a model in $\chi^{(2)}$ media (see \cite{Colin2}). 
In \cite{Hayashi}, the authors also established  local and global well-posedness theories in $H^{1}(\R^n)$, $L^{2}(\R^n)$, and in some $L^{2}$-weighted spaces. Among other things, they also proved existence of ground state solutions and a sharp sufficient condition for global solutions in the critical case  ($n=4$). In dimension $n=5$, the dichotomy global well-posedness versus blow up in finite time,  was studied in \cite{Hamano} and \cite{NoPa}, whereas the scattering properties was established in \cite{Hamano} (with mass-resonance condition) and in \cite{Hamano1} (without mass-resonance condition). Also, the scattering below the ground state, in dimension $n=4$, was dealt with in \cite{Inui1}.

Additional properties  of system \eqref{system1J} and additional models of two and three wave systems with quadratic nonlinearities can be found in \cite{Buryak}, \cite{Buryak2}, \cite{Colin}, \cite{Colin2}, \cite{Dinh}, \cite{Hayashi3}, \cite{Yew}, \cite{Yew2}, \cite{Zhao} and references therein. Particularly, in  \cite{Buryak} is presented an extensive overview about models in $\chi^{(2)}$ media; derivation of sets of equations with quadratic nonlinearities from Maxwell's equations is done. Others references in a similar spirit are \cite{Colin2} and \cite{Yew}. 

Inspired by these works we intent to provide sufficient conditions on the interactions terms, $f_{k}$, to study the dynamics of system \eqref{system1}.  General nonlinearities with quadratic interactions were considered for example in references \cite{Li}, \cite{LiHa} and \cite{Zhang}. These works were dedicated to the study the Cauchy problem in two dimensions. Here we  consider system \eqref{system1} in dimensions $1\leq n\leq 6$. Also, it is important to mention that our nonlinearities   include the ones considered in   \cite{Li} and \cite{Zhang}. However, in our work no explicit form  is assumed on the interaction terms. 

Our main purpose in this work is to establish local and global well-posedness theory in spaces $L^{2}(\R^n)$ and  $H^{1}(\R^n)$;  existence of blow-up solutions; and existence and stability of ground state solutions.

Next we will present our assumptions on the nonlinear terms. We will start our results with the local well-posedness ones. To do so, we will assume the following.
\renewcommand\thetheorem{(H1)}
\begin{theorem}\label{H1}
 \begin{align*}
f_{k}(0,\ldots,0)=0, \qquad  k=1,\ldots,l. 
\end{align*}
\end{theorem}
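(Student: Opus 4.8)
The plan is to deduce the vanishing of each $f_{k}$ at the origin directly from the quadratic-type structure that the nonlinearities are assumed to possess, rather than to treat the identity as a bare postulate. The key observation is that a map with quadratic homogeneity cannot take a nonzero value at the origin. Concretely, if $f_{k}(\lambda z_{1},\ldots,\lambda z_{l})=\lambda^{2}f_{k}(z_{1},\ldots,z_{l})$ for every real $\lambda>0$ and every $(z_{1},\ldots,z_{l})\in\C^{l}$, then evaluating at $(z_{1},\ldots,z_{l})=(0,\ldots,0)$ gives $f_{k}(0,\ldots,0)=\lambda^{2}f_{k}(0,\ldots,0)$ for all $\lambda>0$. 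Taking $\lambda=2$ yields $f_{k}(0,\ldots,0)=4f_{k}(0,\ldots,0)$, hence $3f_{k}(0,\ldots,0)=0$, and therefore $f_{k}(0,\ldots,0)=0$ for each $k=1,\ldots,l$.

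The first step is to record the homogeneity in the form appropriate to systems that mix components and their conjugates (terms such as $\overline{u}_{j}w$ or $u_{i}u_{j}$), namely homogeneity of degree two under the real positive dilation $z\mapsto\lambda z$; this is precisely the scaling that governs the quadratic-type nonlinearities in \eqref{system1A}, \eqref{system1B2} and \eqref{system1J}, since $\overline{\lambda u_{j}}\,\lambda w=\lambda^{2}\overline{u}_{j}w$ and $(\lambda u_{i})(\lambda u_{j})=\lambda^{2}u_{i}u_{j}$. The second step is the one-line algebraic argument above. As an alternative that does not invoke exact homogeneity, I would instead use only a quadratic growth bound of the form $|f_{k}(z_{1},\ldots,z_{l})|\le C(|z_{1}|+\cdots+|z_{l}|)^{2}$: evaluating this bound at the origin forces $|f_{k}(0,\ldots,0)|\le 0$, whence $f_{k}(0,\ldots,0)=0$. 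Either route delivers the identity immediately, and both make transparent that (H1) is the degree-zero shadow of the quadratic nature of the interactions.

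The significance of (H1), and the reason it is singled out, is that it guarantees that the trivial state $(u_{1},\ldots,u_{l})=(0,\ldots,0)$ solves \eqref{system1}; equivalently, the origin is an equilibrium of the associated elliptic system, which is the natural normalization underpinning the subsequent well-posedness and ground-state analysis. The only genuine obstacle here is one of logical placement rather than of mathematical depth: because (H1) is the first in the list of standing hypotheses, at the exact moment it is stated the quantitative growth or homogeneity condition has not yet been formally imposed, so one must take care not to argue circularly. The clean resolution is to view (H1) as the elementary consequence of the quadratic structure that the later hypotheses formalize; once that structure is granted, the proof reduces to the two short computations displayed above.
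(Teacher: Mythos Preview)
The item labelled (H1) in the paper is not a theorem at all: it is the first in the list of standing \emph{hypotheses} on the nonlinearities $f_k$. The paper uses the \texttt{theorem} environment only as a typographical device to display assumptions (H1)--(H8); none of them is accompanied by a proof, and none is meant to be. So there is no ``paper's own proof'' to compare against, and your proposal is attempting to prove an axiom.

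That said, your mathematical observations are not wrong, and they expose a mild redundancy in the paper's list of hypotheses. Your homogeneity argument is valid and non-circular: by Lemma~\ref{fkhomog2}, assumptions (H3) and (H5) alone imply that each $f_k$ is homogeneous of degree two, and then $f_k(0)=\lambda^2 f_k(0)$ for all $\lambda>0$ forces $f_k(0)=0$. So (H1) is indeed a formal consequence of (H3)+(H5). Your second route, via the quadratic growth bound $|f_k(\mathbf{z})|\le C\sum_j|z_j|^2$, is more delicate: in the paper that bound is Corollary~\ref{limfk}, whose proof already invokes (H1) (one takes $\mathbf{z}'=0$ in \eqref{estdiffkeq} and uses $f_k(\mathbf{0})=0$). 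So that branch of your argument is circular within the paper's logical structure, exactly as you suspected in your final paragraph.

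In short: there is nothing to prove here. The paper lists (H1) separately because several early results (Corollary~\ref{limfk}, Lemmas~\ref{estgraddiffk}--\ref{lemfkcontH}, and the local well-posedness theorems) use only (H1)+(H2) and do not yet require the potential structure (H3) or the homogeneity (H5). Keeping (H1) as an independent hypothesis lets those results stand on minimal assumptions.
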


\renewcommand\thetheorem{(H2)}
\begin{theorem}\label{H2}
 There exists a constant $C>0$ such that for $(z_{1},\ldots,z_{l}),(z_{1}',\ldots,z_{l}')\in \C^{l}$ we have 
\begin{equation*}
\begin{split}
\left|\frac{\partial }{\partial z_{m}}[f_{k}(z_{1},\ldots,z_{l})-f_{k}(z_{1}',\ldots,z_{l}')]\right|&\leq C\sum_{j=1}^{l}|z_{j}-z_{j}'|,\qquad k,m=1,\ldots,l;\\
\left|\frac{\partial }{\partial \overline{z}_{m}}[f_{k}(z_{1},\ldots,z_{l})-f_{k}(z_{1}',\ldots,z_{l}')]\right|&\leq C\sum_{j=1}^{l}|z_{j}-z_{j}'|,\qquad k,m=1,\ldots,l.
\end{split}
\end{equation*}
\end{theorem}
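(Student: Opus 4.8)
The statement (H2) is a structural hypothesis on the interaction terms rather than a consequence of the earlier material, so the natural task is to identify the class of nonlinearities for which it holds and to verify the two estimates directly. The plan is to prove that (H2) is satisfied whenever each $f_k$ is a polynomial of total degree at most two in the variables $z_1,\ldots,z_l$ and their complex conjugates $\overline{z}_1,\ldots,\overline{z}_l$ --- precisely the quadratic-type nonlinearities motivating the paper, including those appearing in the model systems \eqref{system1A}, \eqref{system1B2} and \eqref{system1J}. Throughout I treat $z_m$ and $\overline{z}_m$ as independent variables and work with the Wirtinger derivatives $\partial/\partial z_m$ and $\partial/\partial\overline{z}_m$.

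First I would record the elementary but decisive observation that, because each $f_k$ has total degree at most two, every first-order Wirtinger derivative $\partial f_k/\partial z_m$ and $\partial f_k/\partial\overline{z}_m$ is an affine function of $(z_1,\ldots,z_l,\overline{z}_1,\ldots,\overline{z}_l)$. Consequently the second-order Wirtinger derivatives of $f_k$ are constants, and one may take $C$ to be the maximum of the moduli of all of these (finitely many) constants over $k,m=1,\ldots,l$; this yields a single constant that is uniform in $k$ and $m$, as required.

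Next I would expand the difference of first derivatives along the segment joining $(z_1',\ldots,z_l')$ to $(z_1,\ldots,z_l)$. The fundamental theorem of calculus in the Wirtinger variables gives, for each fixed $k,m$,
\begin{equation*}
\frac{\partial}{\partial z_m}\bigl[f_k(z_1,\ldots,z_l)-f_k(z_1',\ldots,z_l')\bigr]=\sum_{j=1}^{l}\Bigl[\frac{\partial^2 f_k}{\partial z_j\,\partial z_m}(z_j-z_j')+\frac{\partial^2 f_k}{\partial\overline{z}_j\,\partial z_m}(\overline{z}_j-\overline{z}_j')\Bigr],
\end{equation*}
where the second derivatives are the constants identified above. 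Taking moduli, using the triangle inequality together with the identity $|\overline{z}_j-\overline{z}_j'|=|z_j-z_j'|$, and bounding every coefficient by $C$, I obtain the first inequality of (H2); the second inequality follows verbatim after replacing $\partial/\partial z_m$ by $\partial/\partial\overline{z}_m$.

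I do not expect a serious obstacle: for genuinely quadratic $f_k$ the identity above is exact and the constant is explicit. The only points requiring care are purely bookkeeping --- handling the conjugate variables consistently within the Wirtinger calculus, and checking that $C$ can be chosen independently of $k$ and $m$, which is immediate since there are finitely many indices. If one wished to enlarge the admissible class beyond polynomials, the same argument goes through for any $f_k\in C^2(\C^l)$ whose second-order Wirtinger derivatives are globally bounded, with $C$ taken to be that global bound; the quadratic case is simply the situation in which those second derivatives are constant.
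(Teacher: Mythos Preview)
Your reading is correct: in the paper (H2) is not a theorem but one of the structural \emph{assumptions} imposed on the nonlinearities $f_k$; the paper offers no proof because none is intended. Your decision to verify (H2) for the concrete class of quadratic polynomials in $z_j,\overline{z}_j$ is exactly the right way to give the statement content, and your argument is sound: for such $f_k$ the first Wirtinger derivatives are affine, so the difference in question is an exact linear combination of the increments $z_j-z_j'$ and $\overline{z}_j-\overline{z}_j'$ with constant coefficients, and the bound follows with $C$ equal to (twice) the maximum modulus of the finitely many second-order coefficients. The only cosmetic remark is that invoking the fundamental theorem of calculus is more than you need here---for affine functions the identity you display is immediate from linearity---but this does not affect correctness.
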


Next, to establish the global well-posedness by using the conservation laws, we assume

\renewcommand\thetheorem{(H3)}
\begin{theorem}\label{H3}
There exist a function $F:\C^{l}\to \C$,  such that    
\begin{equation*}
f_{k}(z_{1},\ldots,z_{l})=\frac{\partial F}{\partial \overline{z}_{k}}(z_{1},\ldots,z_{l})+\overline{\frac{\partial F }{\partial z_{k}}}(z_{1},\ldots,z_{l}),\qquad k=1\ldots,l. 
\end{equation*}
\end{theorem}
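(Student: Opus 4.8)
\medskip
\noindent\textbf{Proof proposal.} Strictly speaking, (H3) is a structural hypothesis on the nonlinearities rather than a consequence of (H1)--(H2); accordingly, the plan is to (i) reformulate it, (ii) construct the potential $F$ explicitly whenever it exists, and (iii) pin down the precise obstruction. For (i): since $\overline{\partial F/\partial z_k}=\partial\overline{F}/\partial\overline{z}_k$ by the Wirtinger conjugation rule, (H3) is equivalent to $f_k=\partial(F+\overline{F})/\partial\overline{z}_k=\partial\Phi/\partial\overline{z}_k$ for the real-valued function $\Phi:=2\,\mathrm{Re}\,F$; writing $z_k=x_k+iy_k$, this simply says that the real field $(2\,\mathrm{Re}\,f_1,\,2\,\mathrm{Im}\,f_1,\ldots,2\,\mathrm{Re}\,f_l,\,2\,\mathrm{Im}\,f_l)$ on $\R^{2l}$ is a gradient. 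In particular (H3) is exactly what equips \eqref{system1} with a Hamiltonian structure: differentiating in $t$ and using the equations, one verifies formally that the energy
\[
E(u_1,\ldots,u_l)=\sum_{k=1}^{l}\gamma_k\|\nabla u_k\|_{L^2}^2+\sum_{k=1}^{l}\beta_k\|u_k\|_{L^2}^2-2\,\mathrm{Re}\int_{\R^n}F(u_1,\ldots,u_l)\,dx
\]
is conserved along solutions of \eqref{system1}, which is what the global well-posedness results and the variational construction of the ground states rely on.

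For (ii): because $\R^{2l}$ is simply connected, such a real potential exists if and only if the $f_k$ satisfy the integrability (``curl-free'') relations
\[
\frac{\partial f_k}{\partial\overline{z}_m}=\frac{\partial f_m}{\partial\overline{z}_k},\qquad \frac{\partial f_k}{\partial z_m}=\overline{\frac{\partial f_m}{\partial z_k}},\qquad k,m=1,\ldots,l,
\]
which come from commuting the mixed second derivatives of $\Phi$ and using its reality. When they hold, a primitive is recovered by integrating the ``gradient'' along rays from the origin:
\[
F(z_1,\ldots,z_l):=\sum_{k=1}^{l}\overline{z}_k\int_0^1 f_k(tz_1,\ldots,tz_l)\,dt,
\]
which is well defined and of class $C^1$ in view of (H1)--(H2), and differentiation under the integral sign yields $\partial(2\,\mathrm{Re}\,F)/\partial\overline{z}_k=f_k$. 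When the $f_k$ are homogeneous of degree two --- the genuinely quadratic case --- the ray integral contributes only the factor $\int_0^1 t^2\,dt=\tfrac13$, so one obtains the closed form $F=\tfrac13\sum_{k=1}^{l}\overline{z}_k\,f_k$; for instance one may take $F=\overline{u}^2 v$ for \eqref{system1J}, and the same recipe delivers the potentials for \eqref{system1A} and \eqref{system1B2}.

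The main --- and essentially only --- point requiring care is that the relations above are a genuine restriction: not every quadratic interaction admits such an $F$, and when one does they may force algebraic constraints among the coefficients. For \eqref{system1A}, for instance, one of them reads $\partial f_1/\partial z_2=z_2$ against $\overline{\partial f_2/\partial z_1}=\chi z_2$ (in the labeling $u_1=w$, $u_2=v$, $u_3=u$), so (H3) holds there precisely when $\chi=1$, and an analogous computation selects the admissible coefficients in \eqref{system1B2}. In short, (H3) should be read as the hypothesis singling out the Hamiltonian/variational subclass of \eqref{system1}; what one verifies in practice is only that the models of interest --- and, more generally, the nonlinearities treated in \cite{Li} and \cite{Zhang} --- belong to that subclass, which is immediate once the primitive $F$ has been exhibited.
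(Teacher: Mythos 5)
Your reading is the right one: \textnormal{\ref{H3}} is a standing hypothesis on the class of nonlinearities, not a provable statement, and the paper offers no proof of it --- its only ``treatment'' is Remark \ref{fksegexp}, which records exactly your first reduction $f_k=2\,\partial(\mathrm{Re}\,F)/\partial\overline{z}_k$. Everything you add on top of that is correct and goes beyond what the paper does: the identification of \textnormal{\ref{H3}} with the statement that $(2\,\mathrm{Re}\,f_k,\,2\,\mathrm{Im}\,f_k)_{k=1}^{l}$ is a gradient field on $\R^{2l}$, the symmetry/conjugation integrability conditions, the ray-integral primitive, and the closed form $F=\tfrac13\sum_k\overline{z}_k f_k$ in the homogeneous case (which is the Euler identity of Lemma \ref{propertiesF}(ii) read backwards). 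Your observation about \eqref{system1A} is a genuine catch: with $F=\tfrac12\overline{z}_1(z_2^2+z_3^2)$ one obtains $f_2=\overline{z}_2z_1$ rather than $\chi\overline{z}_2z_1$, and your curl condition shows that no other choice of $F$ can repair this, so the paper's assertion that \eqref{system1A} satisfies \textnormal{\ref{H1}}--\textnormal{\ref{H8}} implicitly requires $\chi=1$ (and likewise for \eqref{system1B2}). The only caveat worth recording is that your construction of $F$ presupposes the integrability relations, so it is a verification procedure for concrete models rather than a consequence of \textnormal{\ref{H1}}--\textnormal{\ref{H2}} --- which is precisely how you frame it.
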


\renewcommand\thetheorem{(H4)}
\begin{theorem}\label{H4}
 For any $ \theta \in \R$ and $(z_{1},\ldots,z_{l})\in \mathbb{C}^{l}$,
\begin{equation*}
\mathrm{Re}\,F\left(e^{i\frac{\alpha_{1}}{\gamma_{1}}\theta  }z_{1},\ldots,e^{i\frac{\alpha_{l}}{\gamma_{l}}\theta  }z_{l}\right)=\mathrm{Re}\,F(z_{1},\ldots,z_{l}).
\end{equation*}	
\end{theorem}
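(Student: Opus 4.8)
The plan is to read the displayed identity as the statement that $\mathrm{Re}\,F$ is invariant under the one-parameter group of phase rotations adapted to the linear part of \eqref{system1}, and to establish it by reducing the global (all-$\theta$) identity to an infinitesimal, pointwise one and then integrating. Fix $(z_{1},\ldots,z_{l})\in\C^{l}$, abbreviate $\sigma_{k}=\alpha_{k}/\gamma_{k}$, put $w_{k}(\theta)=e^{i\sigma_{k}\theta}z_{k}$, and set
\begin{equation*}
\phi(\theta)=\mathrm{Re}\,F\bigl(w_{1}(\theta),\ldots,w_{l}(\theta)\bigr).
\end{equation*}
Since the primitive $F$ is of class $C^{1}$ (its first-order Wirtinger derivatives produce, through (H3), the $C^{1}$ nonlinearities controlled by (H2)), the map $\phi$ is differentiable, and it suffices to show $\phi'(\theta)\equiv0$: integrating from $0$ to $\theta$ and using $\phi(0)=\mathrm{Re}\,F(z_{1},\ldots,z_{l})$ returns exactly the asserted equality.

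The central computation is the chain rule for $\phi'$. Using $\dot w_{k}=i\sigma_{k}w_{k}$ and $\dot{\overline{w}}_{k}=-i\sigma_{k}\overline{w}_{k}$ gives
\begin{equation*}
\phi'(\theta)=\mathrm{Re}\Bigl[\,i\sum_{k=1}^{l}\sigma_{k}\Bigl(w_{k}\tfrac{\partial F}{\partial w_{k}}-\overline{w}_{k}\tfrac{\partial F}{\partial\overline{w}_{k}}\Bigr)\Bigr],
\end{equation*}
with the derivatives evaluated at $w(\theta)$. Passing the real part through $\mathrm{Re}(i\zeta)=-\mathrm{Im}\,\zeta$ and using $\mathrm{Im}(\overline{w}_{k}B)=-\mathrm{Im}(w_{k}\overline{B})$ collapses the two derivative terms into the single combination $\partial_{w_{k}}F+\overline{\partial_{\overline{w}_{k}}F}$, which by (H3) equals $\overline{f_{k}(w)}$. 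This yields the compact formula
\begin{equation*}
\phi'(\theta)=\sum_{k=1}^{l}\frac{\alpha_{k}}{\gamma_{k}}\,\mathrm{Im}\bigl(\overline{w_{k}(\theta)}\,f_{k}(w(\theta))\bigr).
\end{equation*}
Consequently the claim (H4) is equivalent to the pointwise algebraic identity
\begin{equation*}
\sum_{k=1}^{l}\frac{\alpha_{k}}{\gamma_{k}}\,\mathrm{Im}\bigl(\overline{z_{k}}\,f_{k}(z_{1},\ldots,z_{l})\bigr)=0,\qquad(z_{1},\ldots,z_{l})\in\C^{l},\tag{$\star$}
\end{equation*}
because applying $(\star)$ at the point $w(\theta)$ gives $\phi'(\theta)=0$, while conversely differentiating (H4) at $\theta=0$ returns $(\star)$.

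It therefore remains to establish $(\star)$, and this is where the real work lies. Identity $(\star)$ is precisely the statement that the interaction $f=(f_{1},\ldots,f_{l})$ is equivariant under the diagonal phase action $z_{k}\mapsto e^{i\sigma_{k}\theta}z_{k}$, namely $f_{k}(e^{i\sigma_{1}\theta}z_{1},\ldots,e^{i\sigma_{l}\theta}z_{l})=e^{i\sigma_{k}\theta}f_{k}(z)$; this is the algebraic form of the gauge symmetry underlying the charge conservation of \eqref{system1}. For the concrete $\chi^{(2)}$ nonlinearities of interest, such as those in \eqref{system1A}, \eqref{system1B2} and \eqref{system1J}, one checks $(\star)$ monomial by monomial: each interaction term is a product in the $z_{j}$ and $\overline{z_{j}}$ whose phase weight matches $\sigma_{k}$ exactly under the resonance relations among the $\alpha_{k},\gamma_{k}$, so the imaginary contributions cancel in conjugate pairs across the sum. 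I expect the principal obstacle to be twofold: first, the careful Wirtinger bookkeeping required to reach the compact formula for $\phi'$ without sign errors; and second, verifying $(\star)$ for the general, non-explicit nonlinearities admitted here, where a term-by-term argument is unavailable and one must instead extract the equivariance directly from (H3) together with the quadratic homogeneity of $f$. Once $(\star)$ is secured, $\phi'\equiv0$ and the integration step close the argument.
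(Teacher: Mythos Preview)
The item labelled (H4) in the paper is not a theorem at all: it is one of the standing \emph{hypotheses} (H1)--(H8) imposed on the nonlinearities. The paper writes ``to establish the global well-posedness by using the conservation laws, we assume'' immediately before (H3) and (H4), and the theorem environment is only being hijacked (via \verb|\renewcommand\thetheorem{(H4)}|) to format the display. There is no proof of (H4) in the paper because nothing is being claimed; (H4) is an axiom on $F$.

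Your computation correctly shows that (H4) is equivalent to the pointwise identity $(\star)$; in fact this equivalence is exactly the content of the paper's Lemma~\ref{ReFinvari}, which \emph{derives} $(\star)$ \emph{from} (H3)+(H4), not the other way around. The difficulty you flag at the end---``verifying $(\star)$ for the general, non-explicit nonlinearities admitted here''---is not an obstacle to be overcome but a genuine impossibility: $(\star)$ does not follow from (H1)--(H3) and homogeneity alone. For a counterexample, with $l=1$, $\alpha_1=\gamma_1=1$, take $F(z)=z^3$; this satisfies (H1)--(H3), (H5)--(H7), yet $\mathrm{Re}\,F(e^{i\theta}z)=\mathrm{Re}(e^{3i\theta}z^3)$ is not independent of $\theta$. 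So your argument stalls precisely where it must: $(\star)$ is an assumption, not a lemma.
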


\renewcommand\thetheorem{(H5)}
\begin{theorem}\label{H6}
Function $F$ is homogeneous of degree 3, that is, for any $\lambda >0$  and $(z_{1},\ldots,z_{l})\in \mathbb{C}^{l}$,
\begin{equation*}
F(\lambda z_{1},\ldots,\lambda z_{l})=\lambda^{3}F(z_{1},\ldots,z_{l}).
\end{equation*}
\end{theorem}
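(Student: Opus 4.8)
The plan is to read (H5) not as an isolated postulate but as the precise analytic meaning of the phrase \emph{quadratic-type nonlinearity}: I will show that, given the potential structure (H3) together with the normalization $F(0,\ldots,0)=0$ from (H1), degree-$3$ homogeneity of $F$ is equivalent to the interaction terms $f_k$ being homogeneous of degree $2$, and I will derive the former from the latter by an Euler (scaling-ODE) argument. First I would eliminate the apparent dependence of the $f_k$ on the full complex $F$. Using the Wirtinger conjugation identity $\overline{\partial_{z_k}F}=\partial_{\overline{z}_k}\overline{F}$ inside (H3) gives
\[
f_k=\partial_{\overline{z}_k}F+\partial_{\overline{z}_k}\overline{F}=\partial_{\overline{z}_k}(F+\overline{F})=2\,\partial_{\overline{z}_k}\Phi,\qquad \Phi:=\mathrm{Re}\,F,
\]
and, taking conjugates, $\overline{f_k}=2\,\partial_{z_k}\Phi$. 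Hence the nonlinearity is governed solely by the real potential $\Phi$, which is also why (H4) is phrased in terms of $\mathrm{Re}\,F$.

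Next, I would fix $(z_1,\ldots,z_l)\in\C^{l}$ and set $h(\lambda):=\Phi(\lambda z_1,\ldots,\lambda z_l)$ for $\lambda>0$. Computing $\frac{d}{d\lambda}\Phi(\lambda z)$ in the real coordinates $z_k=x_k+iy_k$ and rewriting it through Wirtinger derivatives yields
\[
h'(\lambda)=\sum_{k=1}^{l}\big[\partial_{z_k}\Phi(\lambda z)\,z_k+\partial_{\overline{z}_k}\Phi(\lambda z)\,\overline{z}_k\big]=\sum_{k=1}^{l}\mathrm{Re}\big(\overline{f_k}(\lambda z)\,z_k\big).
\]
At this point I would invoke the quadratic homogeneity of the interaction, $f_k(\lambda z)=\lambda^{2}f_k(z)$ --- which, under (H1) and the absence of a linear part, is exactly what ``quadratic growth'' encodes --- to obtain $h'(\lambda)=\lambda^{2}h'(1)$. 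Integrating from $0$ to $\lambda$ and using $h(0)=\Phi(0)=0$ gives $h(\lambda)=\tfrac{\lambda^{3}}{3}h'(1)$; evaluating at $\lambda=1$ forces $h'(1)=3\Phi(z)$, whence $h(\lambda)=\lambda^{3}\Phi(z)$. This is precisely Euler's homogeneity relation, so $\mathrm{Re}\,F$ is homogeneous of degree $3$.

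Finally I would pass from $\Phi=\mathrm{Re}\,F$ to $F$ itself. The hard part, and the genuine obstacle, is that (H3) determines $F$ only through $\mathrm{Re}\,F$: the imaginary part of $F$ is invisible to the $f_k$, so degree-$3$ homogeneity of the \emph{complex} $F$ is strictly more than the nonlinearities alone can fix. I would resolve this as the modeling dictates: since only $\mathrm{Re}\,F$ enters the equations and the associated conservation laws, one is free to --- and does --- choose the potential to be its homogeneous degree-$3$ representative, so that $F(\lambda z)=\lambda^{3}F(z)$ holds as stated. As a consistency check I would also verify the claim directly on the motivating systems \eqref{system1A}, \eqref{system1B2} and \eqref{system1J}, where $F$ is an explicit sum of cubic monomials in $(z,\overline{z})$ and the identity is immediate monomial by monomial; the scaling argument above then shows that this is not a coincidence but is forced by the quadratic character of the interaction.
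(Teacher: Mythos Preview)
There is a basic misreading here: in the paper, (H5) is not a theorem but one of the standing \emph{hypotheses} on the potential $F$. The paper uses a \texttt{theorem} environment with the counter relabeled as (H1)--(H8) to display its assumptions; there is no proof of (H5) to compare against because nothing is being asserted to follow from the other axioms.

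Your attempted derivation is, moreover, circular within the paper's own logic. You invoke the exact homogeneity $f_k(\lambda z)=\lambda^{2}f_k(z)$ as if it were a consequence of ``quadratic growth'', but the paper obtains that identity only in Lemma~\ref{fkhomog2}, and it does so \emph{by differentiating (H5)} together with (H3). What (H1) and (H2) alone give (Corollary~\ref{limfk}) is merely the growth bound $|f_k(\mathbf{z})|\leq C\sum_j|z_j|^{2}$, which is far weaker than degree-$2$ homogeneity; for instance $f(z)=|z|^{2}+|z|^{3/2}$ satisfies such a local Lipschitz/growth condition near $0$ but is not homogeneous. So the step ``$f_k(\lambda z)=\lambda^2 f_k(z)$'' is exactly the content you are trying to establish, and the Euler ODE argument collapses. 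You also note yourself that (H3) determines only $\mathrm{Re}\,F$, so even if the argument went through it would only constrain $\mathrm{Re}\,F$; the paper avoids this issue precisely by \emph{postulating} the homogeneity of $F$.
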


Finally, to deal with ground states and their stability, we assume the following.

\renewcommand\thetheorem{(H6)}
\begin{theorem}\label{H5}
There holds
\begin{equation*}
\left|\mathrm{Re}\int_{\R^{n}} F(u_{1},\ldots,u_{l})\;dx\right|\leq \int_{\R^{n}} F(|u_{1}|,\ldots,|u_{l}|)\;dx.  
\end{equation*}
\end{theorem}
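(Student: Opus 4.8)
The plan is to reduce (H6) to a pointwise inequality for $F$ and then to obtain that inequality from the structure imposed by the preceding hypotheses. Since $|\mathrm{Re}\,z|\le|z|$ for every $z\in\C$ and $\big|\int_{\R^n}g\big|\le\int_{\R^n}|g|$ for every integrable $g$, it suffices to establish the pointwise bound
\[
|F(z_1,\ldots,z_l)|\le F(|z_1|,\ldots,|z_l|)\qquad\text{for all }(z_1,\ldots,z_l)\in\C^l,
\]
where the right-hand side is $F$ evaluated on the non-negative real orthant of $\C^l$ (which, for the nonlinearities under consideration, is a non-negative real number). Taking $z_j=u_j(x)$ and integrating over $\R^n$ then yields (H6) at once.

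For the pointwise estimate I would exploit that, by (H5), $F$ is positively homogeneous of degree $3$, and that the global Lipschitz bounds on the $f_k$ and their derivatives in (H2)--(H3) confine $F$ to a cubic expression in the variables $z_j,\bar{z}_j$: writing $F=\sum_{\mathbf{p},\mathbf{q}}c_{\mathbf{p},\mathbf{q}}\,z^{\mathbf{p}}\bar{z}^{\mathbf{q}}$ with $z^{\mathbf{p}}\bar{z}^{\mathbf{q}}:=\prod_{j} z_j^{p_j}\bar{z}_j^{q_j}$ and $|\mathbf{p}|+|\mathbf{q}|=3$, the phase invariance (H4) (expanded as a finite Fourier series in $\theta$) forces every monomial which contributes to $\mathrm{Re}\,F$ to obey the resonance relation $\sum_{j}(p_j-q_j)\alpha_j/\gamma_j=0$. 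For any such monomial the triangle inequality gives $|\mathrm{Re}(c_{\mathbf{p},\mathbf{q}}\,z^{\mathbf{p}}\bar{z}^{\mathbf{q}})|\le|c_{\mathbf{p},\mathbf{q}}|\,|z_1|^{p_1+q_1}\cdots|z_l|^{p_l+q_l}$, while plugging the non-negative numbers $|z_j|$ into $F$ produces exactly $F(|z_1|,\ldots,|z_l|)=\sum_{\mathbf{p},\mathbf{q}}c_{\mathbf{p},\mathbf{q}}\,|z_1|^{p_1+q_1}\cdots|z_l|^{p_l+q_l}$. Summing over the finitely many monomials then gives the pointwise inequality, provided each coefficient $c_{\mathbf{p},\mathbf{q}}$ is non-negative.

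This last proviso is where the difficulty really lies: (H6) is, in effect, a positivity (sign) condition on the cubic form $F$, and it has to be checked directly rather than deduced from (H1)--(H5) --- already for $l=1$ and $F(z)=-|z|^3$ all of (H1)--(H5) hold while (H6) fails. For the models motivating this paper the verification is routine: for the three-wave interaction systems \eqref{system1A}--\eqref{system1B2} and for the explicit quadratic nonlinearities treated in \cite{Li} and \cite{Zhang}, the potential $F$ can be chosen, consistently with (H3), as a sum of monomials with non-negative coefficients, so the computation above applies verbatim. The one technical point demanding care is the reduction to polynomial form used above: homogeneity of degree $3$ by itself does not force $F$ to be a polynomial, so one should either invoke the explicit polynomial expression for $F$ available in the applications, or run the triangle-inequality estimate directly on the homogeneous function $\mathrm{Re}\,F$, using (H4) to control its dependence on the arguments' phases.
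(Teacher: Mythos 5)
The statement you were asked to prove is not a theorem of the paper but its standing hypothesis (H6): the paper offers no proof of it, only the assertion that the motivating systems \eqref{system1A} and \eqref{system1B2} satisfy (H1)--(H8) with the explicit potentials $F(z_1,z_2,z_3)=\tfrac12\overline{z}_1(z_2^2+z_3^2)$ and $F(z_1,z_2,z_3)=\tfrac12 z_1^2\overline{z}_2+z_1z_2\overline{z}_3$. Your analysis is therefore the correct one: you rightly reduce (H6) to the pointwise bound $|F(\mathbf{z})|\leq F(|z_1|,\ldots,|z_l|)$, rightly observe that this is an independent sign condition on $F$ that cannot be deduced from (H1)--(H5), and your triangle-inequality computation on cubic monomials with non-negative coefficients is precisely how one verifies it for the examples above. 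There is no gap to report --- there is simply no proof in the paper to compare against, and your diagnosis that (H6) must be checked directly for each concrete nonlinearity matches how the paper actually uses it.
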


\renewcommand\thetheorem{(H7)}
\begin{theorem}\label{H7}
Function $F$ is real valued on $\R^l$, that is, if $(y_{1},\ldots,y_{l})\in \R^{l}$ then
\begin{equation*}
F(y_{1},\ldots,y_{l})\in \R. 
\end{equation*}
Moreover, functions	$f_k$ are non-negative on the positive cone in $\mathbb{R}^l$, that is, for $y_i\geq0$, $i=1,\ldots,l$,
\begin{equation*}
f_{k}(y_{1},\ldots,y_{l})\geq0.
\end{equation*}

\end{theorem}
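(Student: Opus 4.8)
The statement records two structural facts about the potential $F$, and I would treat them as properties to be verified after $F$ has been put into a normal form. First I would fix the algebraic shape of $F$ and of the $f_k$. By (H1) and (H2) each $f_k$ has globally bounded, Lipschitz first derivatives, hence grows at most quadratically; combined with the homogeneity (H5) of $F$ (which makes $F$ cubic and each $f_k$ homogeneous of degree two) this forces each $f_k$ to be an \emph{exactly} homogeneous quadratic polynomial in the variables $z_j,\overline{z}_j$, and $F$ an exactly homogeneous cubic. Writing $F$ and the $f_k$ in monomial form reduces both assertions to statements about the coefficients of these polynomials.

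For the realness on $\R^l$ I would work monomial by monomial. The gauge condition (H4) demands that $\mathrm{Re}\,F$ be invariant under $z_k\mapsto e^{i(\alpha_k/\gamma_k)\theta}z_k$ for every $\theta$, which eliminates all monomials whose total frequency $\sum_k(\alpha_k/\gamma_k)(\mu_k-\nu_k)$ is nonzero and organizes the surviving resonant monomials into conjugate pairs. Restricting a conjugate-paired resonant cubic to $z=\overline{z}=y\in\R^l$ collapses it to a real value, giving $F(y)\in\R$, which is the first claim. I would also record the consequence, obtained by differentiating the associated Hermitian symmetry $\overline{F(z)}=F(\overline{z})$ and evaluating on $\R^l$, that the Wirtinger derivatives are real there; together with the conjugate-symmetric formula $f_k=\partial F/\partial\overline{z}_k+\overline{\partial F/\partial z_k}$ of (H3) this yields the identity $f_k|_{\R^l}=\partial F/\partial y_k$, the ordinary gradient of the real cubic $F|_{\R^l}$. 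This identity is the bridge to the second claim.

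The non-negativity of the $f_k$ on the positive cone is the main obstacle. By homogeneity, for a fixed nonnegative bump $\psi$ one has $F(a_1\psi,\ldots,a_l\psi)=\psi^3\,F(a_1,\ldots,a_l)$ pointwise, so (H6), whose right-hand side is manifestly nonnegative, gives $F(a)\int\psi^3\,dx\ge0$, hence $F\ge0$ throughout the positive cone. Euler's identity together with the gradient representation above then yields $\sum_k a_k f_k(a)=3F(a)\ge0$. However, this controls only the weighted sum: individual non-negativity is strictly stronger and does \emph{not} follow from homogeneity and (H6) alone, since for $l\ge2$ one can exhibit real homogeneous cubics that are nonnegative on the cone yet have a gradient component that changes sign there. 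The decisive step is therefore to sign each coefficient of $f_k|_{\R^l}$ separately, which I would do by combining the resonance restriction coming from (H4) — which strongly constrains the admissible cubic forms — with (H6) tested against a sufficiently rich family of profiles adapted to each coordinate and mixed direction, so as to isolate each monomial coefficient and read off its sign. I expect the technical heart of the argument to lie precisely here: translating the single scalar inequality (H6) into the individual coefficient inequalities that encode the positivity of the coupling, which ultimately reflects the positive sign of the physical interaction constants in the underlying models.
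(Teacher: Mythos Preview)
The fundamental issue is a category error: in the paper, \textnormal{(H7)} is not a theorem but a \emph{standing hypothesis} on the nonlinearity, on the same footing as \textnormal{(H1)}--\textnormal{(H6)} and \textnormal{(H8)}. The paper introduces it with the words ``to deal with ground states and their stability, we assume the following,'' and later states explicitly that ``throughout the paper we assume that \textnormal{(H1)}--\textnormal{(H8)} hold.'' There is accordingly no proof in the paper, and none is called for: the two conditions in \textnormal{(H7)} are part of the structural data one imposes on $F$ and the $f_k$, to be checked for a given model (as the paper does for the examples \eqref{system1A} and \eqref{system1B2}), not derived from the remaining hypotheses.

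Your attempt to \emph{deduce} \textnormal{(H7)} from \textnormal{(H1)}--\textnormal{(H6)} therefore targets the wrong goal, and it also cannot succeed. For the first claim, \textnormal{(H4)} constrains only $\mathrm{Re}\,F$; nothing in \textnormal{(H1)}--\textnormal{(H5)} prevents $F$ from carrying an imaginary part on $\R^l$ (the $f_k$ in \textnormal{(H3)} depend only on $\mathrm{Re}\,F$), and your appeal to a Hermitian symmetry $\overline{F(z)}=F(\overline{z})$ is not among the hypotheses. For the second claim you yourself observe that Euler's identity yields only $\sum_k y_k f_k(y)=3F(y)\geq 0$ on the cone, and that componentwise non-negativity is strictly stronger; your proposed remedy --- extracting each coefficient's sign by testing \textnormal{(H6)} against adapted profiles together with the resonance constraint from \textnormal{(H4)} --- is not a proof but a hope, and counterexamples with sign-changing gradient components show it cannot work without an additional assumption. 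That additional assumption is precisely \textnormal{(H7)}.
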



\renewcommand\thetheorem{(H8)}
\begin{theorem}\label{H8}
	Function $F=F_1+\cdots+F_m$, where $F_s$, $s=1,\ldots, m$ is super-modular on $\R^d_+$, $1\leq d\leq l$ and vanishes on hyperplanes, that is, for any $i,j\in\{1,\ldots,d\}$, $i\neq j$ and $k,h>0$, we have
	\begin{equation*}
	F_s(y+he_i+ke_j)+F_s(y)\geq F_s(y+he_i)+F_s(y+ke_j), \qquad y\in \R^d_+,
	\end{equation*}
	and $F_s(y_1,\ldots,y_l)=0$ if $y_j=0$ for some $j\in\{1,\ldots,d\}$.
\end{theorem}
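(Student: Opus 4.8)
The hypothesis (H8) is a structural condition on the potential $F$, and the natural way to establish it is to verify the asserted decomposition for the concrete cubic potential attached to the system at hand. The plan is to work on the real positive cone $\R^l_+$, where, by (H5) and (H7), the function $F$ restricts to a real homogeneous polynomial of degree three, say $\Phi:=F|_{\R^l_+}$, which I write in the monomial form $\Phi(y)=\sum_{|\mu|=3}c_\mu\,y^\mu$ with $y=(y_1,\dots,y_l)\in\R^l_+$ and $y^\mu=y_1^{\mu_1}\cdots y_l^{\mu_l}$. Relation (H3), together with the real-valuedness in (H7), identifies the restriction of each $f_k$ to $\R^l_+$ with the ordinary partial derivative $\partial\Phi/\partial y_k$ (up to a fixed positive constant), so that the nonnegativity statement in (H7) reads $\partial\Phi/\partial y_k\ge 0$ on $\R^l_+$. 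Note that, since $|\mu|=3$, the support $\operatorname{supp}(\mu)=\{\,i:\mu_i>0\,\}$ of every monomial has cardinality one, two, or three.

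I would obtain the decomposition by collecting monomials according to their support: for each admissible support $S$ set $F_S(y):=\sum_{\operatorname{supp}(\mu)=S}c_\mu\,y^\mu$, and relabel the finitely many nonzero $F_S$ as $F_1,\dots,F_m$. By construction each $F_s$ contains, with positive exponent, exactly the variables indexed by its support; hence $F_s$ vanishes as soon as any one of those coordinates is set equal to zero, which is precisely the required vanishing on the hyperplanes of $\R^d_+$ with $d=|S|\le 3$. This settles the second half of the statement at once and reduces everything to the supermodularity of each $F_s$.

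For the supermodularity I would use the $C^2$ characterization of increasing differences. Since each $F_s$ is a polynomial, the difference between the two sides of the stated inequality equals $\int_0^h\!\!\int_0^k \frac{\partial^2 F_s}{\partial y_i\,\partial y_j}(y+se_i+te_j)\,ds\,dt$, so it suffices to show that the mixed derivative $\partial^2 F_s/\partial y_i\,\partial y_j$ is nonnegative on $\R^d_+$ for every pair $i\neq j$ in the support. For a single monomial $c_\mu y^\mu$ with $c_\mu\ge 0$ one has $\partial^2(c_\mu y^\mu)/\partial y_i\,\partial y_j=c_\mu\,\mu_i\mu_j\,y^{\mu-e_i-e_j}\ge 0$ on the cone, and summing preserves the sign; when $d=1$ there is no admissible pair and supermodularity is vacuous. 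Thus the whole matter comes down to the sign of the coefficients $c_\mu$ with $|\operatorname{supp}(\mu)|\ge 2$. Those with support of size two are controlled by (H7) directly: evaluating $\partial\Phi/\partial y_j$ at the basis vector $e_i$ isolates exactly the coefficient of $y_i^2y_j$, whence $c_{iij}=\partial\Phi/\partial y_j(e_i)\ge 0$ for all $i\neq j$. Consequently every summand supported on one or two variables is supermodular.

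The main obstacle is the sign of the coefficients $c_{ijk}$ of the genuinely three-variable monomials $y_iy_jy_k$, on which the supermodularity of the corresponding summand rests, since $\partial^2(c_{ijk}\,y_iy_jy_k)/\partial y_i\,\partial y_j=c_{ijk}\,y_k$. Here (H7) is not enough: restricting $\partial\Phi/\partial y_k\ge 0$ to the plane spanned by $e_i$ and $e_j$ only yields that the binary form $c_{iik}\,y_i^2+c_{ijk}\,y_iy_j+c_{jjk}\,y_j^2$ is nonnegative on the quadrant, and this does not force $c_{ijk}\ge 0$, as the example $y_i^2-y_iy_j+y_j^2$ shows. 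Hence the nonnegativity of the three-variable coefficients is genuine additional content of (H8) beyond (H7), and it must be checked for the specific model under consideration. For the explicit systems \eqref{system1A}, \eqref{system1B2} and \eqref{system1J} this is immediate, because their cubic potentials are, on the real cone, sums of monomials with manifestly positive physical coefficients; carrying out the same inspection for any prescribed $F$ completes the verification, while the remaining steps above are routine.
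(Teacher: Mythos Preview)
There is a basic misreading here: in the paper, (H8) is not a theorem but a \emph{standing hypothesis} on the potential $F$, on the same footing as (H1)--(H7). The paper does not prove (H8); it assumes it, and later uses it (via the Burchard--Hajaiej rearrangement inequality) to obtain $P(\psib^*)\ge P(\psib)$ in Lemma~\ref{functrans}(ix). So there is nothing to prove, and the paper contains no proof of this ``statement'' for you to compare against.

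That said, your analysis is not wasted: what you have effectively shown is that (H8) does \emph{not} follow from (H1)--(H7) in general, which is exactly why the authors list it as a separate assumption. Your decomposition by monomial support is the natural one, and your observation that the two-variable coefficients $c_{iij}$ are forced nonnegative by (H7) while the three-variable coefficients $c_{ijk}$ are not (your counterexample $y_i^2 - y_iy_j + y_j^2$ is correct) pinpoints precisely the extra content of (H8). For the explicit models \eqref{system1A}, \eqref{system1B2}, \eqref{system1J} the verification is indeed immediate, as you note and as the paper remarks after stating the hypotheses.
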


We will discuss how assumptions \ref{H1}-\ref{H8} appear along the paper. By now, we only mention that if $F_s$ is $C^2$ then \ref{H8} is equivalent to
$$
\frac{\partial^2 F_s}{\partial{x_j}\partial{x_i}}\geq0.
$$

Even though we do not need all assumption in all results, throughout the paper we assume that \ref{H1}-\ref{H8} hold. However, in section \ref{sec.conseq}, we will specify which assumptions we are using in the results. It is easy to see that, systems \eqref{system1A} and \eqref{system1B2} satisfy \ref{H1}-\ref{H8} with
\begin{equation*}
F(z_1,z_2,z_3)=\frac{1}{2}\overline{z}_1(z_2^2+z_3^2), \qquad F(z_1,z_2,z_3)=\frac{1}{2}z_1^2\overline{z}_2+z_1z_2\overline{z}_3,
\end{equation*}
respectively.

This paper is organized as follows. In section \ref{sec.conseq} we establish some preliminaries results which are consequences of our conditions \ref{H1}-\ref{H8}.  In section \ref{sec.lgt}, we develop the local and global theories of system \eqref{system1} in the spaces $L^{2}$ and  $H^{1}$. For this purpose we use standard techniques for Schr\"odinger-type equations: the Strichartz estimates combined with the contraction mapping principle is sufficient to obtain the local well-posedness. On the other hand, the global results  are obtained in view of an a priori bound of the local  solution in the spaces of interest. In particular solutions are global for any initial data in the subcritical case. In the critical and supercritical cases the solutions are global under some assumptions on the charge and energy of the initial data.
In section \ref{sec.gs} we are interested in the existence of ground state solutions for the associated elliptic system. This is necessary taking into account we want to obtain a sharp Gagliardo-Nirenberg-type inequality, in which case the best constant depends on such solutions. We establish the existence of ground state by minimizing the so called Weinstein functional in an appropriate set. In section \ref{sec.gsbu} we are interested in the dichotomy global well-posedness versus blow up in finite time. In the critical case we establish a sharp result for the existence of global solutions (depending on the parameters of the system). In the supercritical case, we prove that under some suitable balance between the charge and the energy of the initial data (in terms of that of the ground states) the solutions are also global. This result is also sharp.
 Finally, in section \ref{sec.stinst} we study the nonlinear stability/instability of the ground states. To do so, in the subcritical dimensions, by using the concentration-compactness method developed by Lions, we see that the set of ground states can also be obtained by minimizing the energy under the constraint of constant charge. As a result, the set of ground states are stable in dimensions $n=1,2,3$. On the other hand, in dimensions $4$ and $5$, by using a blow up method, we prove that ground states are unstable.

\section{Preliminaries}\label{sec.conseq}

In this section we introduce some notations and give some consequences of our assumptions. We use $C$ to denote several constants that may vary line-by-line.
Given any set $A$, by  $\mathbf{A}$ we denote  the product  $\displaystyle A\times \cdots \times A $ ($l$ times). In particular, if $A$ is a Banach space with norm $\|\cdot\|$ then $\mathbf{A}$ is also a Banach space with the standard norm given by the sum. Given any complex number $z\in\mathbb{C}$, Re$z$ and Im$z$ represents its real and imaginary parts. Also, $\overline{z}$ denotes its complex conjugate. In $\mathbb{C}^l$ we frequently  write $\mathbf{z}$ and $\mathbf{z}'$ instead of  $(z_{1},\ldots,z_{l})$ and  $(z_{1}',\ldots,z_{l}')$. Given $\mathbf{z}=(z_{1},\ldots,z_{l})\in \mathbb{C}^l$, we write $z_m=x_m+iy_m$ where $x_m$ and $y_m$ are, respectively, the real and imaginary parts of $z_m$. As usual,  the operators $\partial/\partial z_m$ and $\partial/\partial \overline{z}_m$ are defined by
$$
\dfrac{\partial}{\partial z_m}=\frac{1}{2}\left(\frac{\partial}{\partial x_m} -i\frac{\partial}{\partial y_m}\right), \qquad\dfrac{\partial}{\partial \overline{z}_m}=\frac{1}{2}\left(\frac{\partial}{\partial x_m} +i\frac{\partial}{\partial y_m}\right).
$$ 
The spaces
 $L^{p}=L^{p}(\R^{n})$, $1\leq p\leq \infty$, and $W^s_p=W^s_p(\R^{n})$ denotes the usual Lebesgue and Sobolev spaces. In the case $p=2$, we use the standard notation $H^s=W^s_2$. The space  $H^{1}_{rd}=H^{1}_{rd}(\R^{n})$ is the subspace of radially symmetric non-increasing functions in $H^{1}$. 

To simplify notation, if no confusion is caused we use $\int f\, dx$ to denote  $\int_{\R^n} f\, dx$.
Given a time interval $I$, the mixed   spaces $L^p(I;L^q(\R^n))$ are endowed with the norm
$$
\|f\|_{L^p(I;L^q)}=\left(\int_I \left(\int_{\R^n}|f(x,t)|^qdx \right)^{\frac{p}{q}} dt \right)^{\frac{1}{p}},
$$
with the obvious modification if either $p=\infty$ or $q=\infty$. When the interval $I$ is implicit and no confusion will be caused we denote  $L^p(I;L^q(\R^n))$ simply by  $L^p(L^q)$ and its norm by $\|\cdot\|_{L^p(L^q)}$. More generally, if $X$ is a Banach space, $L^{p}(I;X)$ represents the $L^p$ space of $X$-valued functions defined on $I$.

Let us now give some useful consequences of our assumptions. 

 \begin{lem}\label{estinth}
 Let be $\theta \in \R$ and $p>0$.  Suppose $h: \C^{l}\to \C$ satisfies 
\begin{equation}\label{limiderivh}
 \begin{split}
 \left|\frac{\partial h }{\partial z_{m}}(\mathbf{z})\right| +
 \left|\frac{\partial h }{\partial \overline{z}_{m}}(\mathbf{z})\right|&\leq C\sum_{j=1}^{l}|z_{j}|^{p},\quad m=1,\ldots,l.
 \end{split}
 \end{equation}
 Then, for any  $\mathbf{z},\mathbf{z}'\in \mathbb{C}^l$,
 \begin{equation*}
 \left|\int_{0}^{1}\frac{\partial h }{\partial z_{m}}(\mathbf{z}'+\theta (\mathbf{z}-\mathbf{z}'))\; d\theta\right|+\left|\int_{0}^{1}\frac{\partial h }{\partial \overline{z}_{m}}(\mathbf{z}'+\theta (\mathbf{z}-\mathbf{z}'))\; d\theta\right| \leq C\sum_{j=1}^{l}(|z_{j}|^{p}+|z_{j}'|^{p}).
 \end{equation*}
 \end{lem}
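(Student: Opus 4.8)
The plan is to parametrize the segment between $\mathbf{z}'$ and $\mathbf{z}$ and estimate the integrand pointwise. Write $\mathbf{w}(\theta)=\mathbf{z}'+\theta(\mathbf{z}-\mathbf{z}')$ for $\theta\in[0,1]$, so that its $j$-th component is $w_j(\theta)=z_j'+\theta(z_j-z_j')$. Applying hypothesis \eqref{limiderivh} at the point $\mathbf{w}(\theta)$ gives
\begin{equation*}
\left|\frac{\partial h}{\partial z_m}(\mathbf{w}(\theta))\right|+\left|\frac{\partial h}{\partial \overline{z}_m}(\mathbf{w}(\theta))\right|\leq C\sum_{j=1}^{l}|w_j(\theta)|^{p}=C\sum_{j=1}^{l}|z_j'+\theta(z_j-z_j')|^{p},
\end{equation*}
valid for each $m$ and each $\theta\in[0,1]$.

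Next I would bound $|w_j(\theta)|^{p}$ uniformly in $\theta$. Since $w_j(\theta)=(1-\theta)z_j'+\theta z_j$ is a convex combination, the triangle inequality yields $|w_j(\theta)|\leq (1-\theta)|z_j'|+\theta|z_j|\leq |z_j|+|z_j'|$. Hence $|w_j(\theta)|^{p}\leq (|z_j|+|z_j'|)^{p}$, and using the elementary inequality $(a+b)^{p}\leq 2^{p}(a^{p}+b^{p})$ for $a,b\geq 0$ (or simply $(a+b)^p \le a^p + b^p$ when $0<p\le 1$), we get $|w_j(\theta)|^{p}\leq C(|z_j|^{p}+|z_j'|^{p})$ with $C$ depending only on $p$. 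Absorbing this constant, we conclude
\begin{equation*}
\left|\frac{\partial h}{\partial z_m}(\mathbf{w}(\theta))\right|+\left|\frac{\partial h}{\partial \overline{z}_m}(\mathbf{w}(\theta))\right|\leq C\sum_{j=1}^{l}(|z_j|^{p}+|z_j'|^{p}),
\end{equation*}
where the right-hand side is now independent of $\theta$.

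Finally I would integrate over $\theta\in[0,1]$ and use that the integral of the absolute value dominates the absolute value of the integral:
\begin{equation*}
\left|\int_0^1 \frac{\partial h}{\partial z_m}(\mathbf{w}(\theta))\,d\theta\right|\leq \int_0^1 \left|\frac{\partial h}{\partial z_m}(\mathbf{w}(\theta))\right|d\theta\leq C\sum_{j=1}^{l}(|z_j|^{p}+|z_j'|^{p}),
\end{equation*}
and likewise for the $\partial/\partial\overline{z}_m$ term; adding the two estimates gives the claimed bound. There is no serious obstacle here: the statement is essentially a bookkeeping exercise combining the convexity bound on the segment with monotonicity of the integral. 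The only point that requires a moment's care is the choice of the constant in $(a+b)^p\le C(a^p+b^p)$, which depends on whether $p\le 1$ or $p>1$, but in either case $C=C(p)$ suffices and can be absorbed into the constant already present in \eqref{limiderivh}.
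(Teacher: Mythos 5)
Your proof is correct and follows essentially the same route as the paper: pull the absolute value inside the integral, apply the hypothesis at the point $\mathbf{z}'+\theta(\mathbf{z}-\mathbf{z}')$, bound the convex combination via the triangle inequality and the elementary inequality $(a+b)^p\leq C(p)(a^p+b^p)$, and integrate over $[0,1]$. If anything, you are slightly more careful than the paper about where the $p$-dependent constant enters.
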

 
\begin{proof}
 We will prove the estimate only for the first term. For the second one, it follows similarly.
 Using  \eqref{limiderivh} and triangular inequality we have
 \begin{equation*}
 \begin{split}
\left|\int_{0}^{1}\frac{\partial h}{\partial z_{m}}(\mathbf{z}'+\theta (\mathbf{z}-\mathbf{z}'))\; d\theta\right|
&\leq C \sum_{j=1}^{l} \int_{0}^{1}|z_{j}'+\theta(z_{j}-z_{j}')|^{p}\;d\theta \\
&\leq C\sum_{j=1}^{l} \int_{0}^{1}[|z_{j}'|^{p}+\theta(|z_{j}|^{p}+|z_{j}'|^{p})]\;d\theta\\
&\leq C\sum_{j=1}^{l}(|z_{j}|^{p}+|z_{j}'|^{p}),
 \end{split}
 \end{equation*}
which gives the desired.
 \end{proof}

 \begin{lem}\label{estdifh} Under the assumptions of Lemma \ref{estinth},
 \begin{equation*}
\begin{split}
\left|h(\mathbf{z})-h(\mathbf{z}')\right|&\leq C \sum_{m=1}^{l}\sum_{j=1}^{l}(|z_{j}|^{p}+|z_{j}'|^{p})|z_{m}-z_{m}'|.
\end{split}
\end{equation*}
 \end{lem}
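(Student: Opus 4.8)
The plan is to reduce the difference estimate to the integral bound from Lemma \ref{estinth} by writing $h(\mathbf{z})-h(\mathbf{z}')$ as a line integral along the segment joining $\mathbf{z}'$ to $\mathbf{z}$. Concretely, set $\varphi(\theta)=h(\mathbf{z}'+\theta(\mathbf{z}-\mathbf{z}'))$ for $\theta\in[0,1]$ and apply the fundamental theorem of calculus to obtain $h(\mathbf{z})-h(\mathbf{z}')=\int_0^1 \varphi'(\theta)\,d\theta$. By the Wirtinger chain rule, $\varphi'(\theta)=\sum_{m=1}^l\left[\frac{\partial h}{\partial z_m}(\mathbf{z}'+\theta(\mathbf{z}-\mathbf{z}'))(z_m-z_m')+\frac{\partial h}{\partial \overline z_m}(\mathbf{z}'+\theta(\mathbf{z}-\mathbf{z}'))\,\overline{(z_m-z_m')}\right]$, so that
\begin{equation*}
h(\mathbf{z})-h(\mathbf{z}')=\sum_{m=1}^l (z_m-z_m')\int_0^1\frac{\partial h}{\partial z_m}(\mathbf{z}'+\theta(\mathbf{z}-\mathbf{z}'))\,d\theta+\sum_{m=1}^l \overline{(z_m-z_m')}\int_0^1\frac{\partial h}{\partial \overline z_m}(\mathbf{z}'+\theta(\mathbf{z}-\mathbf{z}'))\,d\theta.
\end{equation*}

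Next I would apply the triangle inequality, pull $|z_m-z_m'|=|\overline{(z_m-z_m')}|$ out of each term, and bound the two integrals using Lemma \ref{estinth}, which gives $\left|\int_0^1\frac{\partial h}{\partial z_m}(\mathbf{z}'+\theta(\mathbf{z}-\mathbf{z}'))\,d\theta\right|+\left|\int_0^1\frac{\partial h}{\partial \overline z_m}(\mathbf{z}'+\theta(\mathbf{z}-\mathbf{z}'))\,d\theta\right|\leq C\sum_{j=1}^l(|z_j|^p+|z_j'|^p)$. Summing over $m$ then yields
\begin{equation*}
|h(\mathbf{z})-h(\mathbf{z}')|\leq C\sum_{m=1}^l\sum_{j=1}^l(|z_j|^p+|z_j'|^p)|z_m-z_m'|,
\end{equation*}
which is exactly the claim.

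The only real subtlety, and the step I would be most careful about, is justifying the differentiation under the line integral / the chain rule for the Wirtinger derivatives — i.e. making sure $h$ is regular enough (real-differentiable, viewed as a function of the $2l$ real variables) for $\varphi$ to be absolutely continuous with the stated derivative. Since the hypothesis \eqref{limiderivh} already presupposes the existence of the partial derivatives with a polynomial bound, one should note $h\in C^1(\mathbb{C}^l)$ (or at least locally Lipschitz with a.e. derivative), so the representation of $\varphi'$ holds and $\varphi$ is indeed the integral of its derivative; this is a standard fact and I would invoke it without belaboring it. Everything else is a direct application of Lemma \ref{estinth} together with the triangle inequality, so no further obstacle is expected.
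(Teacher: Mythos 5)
Your proposal is correct and follows essentially the same route as the paper: write $h(\mathbf{z})-h(\mathbf{z}')$ via the fundamental theorem of calculus along the segment, expand the derivative with the Wirtinger chain rule, and bound the resulting integrals with Lemma \ref{estinth}. Your extra remark on the regularity needed to justify the chain rule is a reasonable caveat that the paper leaves implicit.
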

 \begin{proof}
 After applying the chain rule, we have
 \begin{equation*}
 \frac{d}{d\theta}h(\mathbf{z}'+\theta (\mathbf{z}-\mathbf{z}'))=\sum_{m=1}^{l}\frac{\partial h }{\partial z_{m}}(\mathbf{z}'+\theta (\mathbf{z}-\mathbf{z}'))(z_{m}-z_{m}')+\sum_{m=1}^{l}\frac{\partial h}{\partial \overline{z}_{m}}(\mathbf{z}'+\theta (\mathbf{z}-\mathbf{z}'))(\overline{z_{m}-z_{m}'})
 \end{equation*}
Integrating on $[0,1]$  and applying the Fundamental Theorem of  Calculus, we get
 \begin{equation*}
 \begin{split}
 \left|h(\mathbf{z})-h(\mathbf{z}')\right|&\leq \sum_{m=1}^{l}\left\{ \left|\int_{0}^{1}\frac{\partial h}{\partial z_{m}}(\mathbf{z}'+\theta (\mathbf{z}-\mathbf{z}'))\; d\theta\right|\right.\\
 &\quad+\left.\left|\int_{0}^{1}\frac{\partial h}{\partial \overline{z}_{m}}(\mathbf{z}'+\theta (\mathbf{z}-\mathbf{z}'))\; d\theta\right| \right\}|z_{m}-z_{m}'|\\
 &\leq C \sum_{m=1}^{l}\sum_{j=1}^{l}(|z_{j}|^{p}+|z_{j}'|^{p})|z_{m}-z_{m}'|,
 \end{split}
 \end{equation*}
 where we have used Lemma \ref{estinth} in the second inequality.
 \end{proof}

\begin{coro}\label{limfk}
If \textnormal{\ref{H1}} and \textnormal{\ref{H2}}  hold, then
 \begin{equation}\label{estdiffkeq}
\begin{split}
\left|f_{k}(\mathbf{z})-f_{k}(\mathbf{z}')\right|&\leq  C \sum_{m=1}^{l}\sum_{j=1}^{l}(|z_{j}|+|z_{j}'|)|z_{m}-z_{m}'|, \qquad k=1\ldots, l.
\end{split}
\end{equation}
and
\begin{equation*}
\left|f_{k}(\mathbf{z})\right|\leq C\sum_{j=1}^{l}|z_{j}|^{2}, \qquad k=1\ldots, l.
\end{equation*}
\end{coro}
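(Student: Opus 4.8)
The plan is to derive Corollary~\ref{limfk} directly from the two preceding lemmas by checking that the hypothesis \eqref{limiderivh} of Lemma~\ref{estinth} is satisfied with $h=f_k$ and $p=1$. First I would observe that \ref{H2}, applied with $(z_1',\ldots,z_l')=(0,\ldots,0)$, gives
\begin{equation*}
\left|\frac{\partial f_k}{\partial z_m}(\mathbf{z})-\frac{\partial f_k}{\partial z_m}(\mathbf{0})\right|\leq C\sum_{j=1}^{l}|z_j|,\qquad
\left|\frac{\partial f_k}{\partial \overline{z}_m}(\mathbf{z})-\frac{\partial f_k}{\partial \overline{z}_m}(\mathbf{0})\right|\leq C\sum_{j=1}^{l}|z_j|,
\end{equation*}
so it remains to control the constants $\partial f_k/\partial z_m(\mathbf{0})$ and $\partial f_k/\partial \overline{z}_m(\mathbf{0})$. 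This is where \ref{H1} enters: since $f_k(\mathbf{0})=0$ and, more to the point, the Lipschitz-type bound in \ref{H2} forces each $f_k$ to have at most quadratic growth, a short argument (or a direct appeal to the structure: the paper's nonlinearities are quadratic forms) shows these first derivatives at the origin vanish. Concretely, from the difference quotient and the bound just displayed, $\partial f_k/\partial z_m$ is globally Lipschitz and vanishes at $0$ together with its linear part, hence $\partial f_k/\partial z_m(\mathbf{0})=0$; the same for the $\overline{z}_m$ derivative. Once that is in hand, \eqref{limiderivh} holds with $p=1$.

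With the hypothesis verified, the first inequality \eqref{estdiffkeq} is precisely the conclusion of Lemma~\ref{estdifh} applied to $h=f_k$ with $p=1$, reading
\begin{equation*}
\left|f_k(\mathbf{z})-f_k(\mathbf{z}')\right|\leq C\sum_{m=1}^{l}\sum_{j=1}^{l}\left(|z_j|+|z_j'|\right)|z_m-z_m'|.
\end{equation*}
For the second inequality, I would simply specialize \eqref{estdiffkeq} to $\mathbf{z}'=\mathbf{0}$ and invoke \ref{H1} ($f_k(\mathbf{0})=0$), obtaining
\begin{equation*}
\left|f_k(\mathbf{z})\right|\leq C\sum_{m=1}^{l}\sum_{j=1}^{l}|z_j|\,|z_m|\leq C\sum_{j=1}^{l}|z_j|^2,
\end{equation*}
where the last step uses $|z_j||z_m|\leq \tfrac12(|z_j|^2+|z_m|^2)$ and relabels the finite sum.

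The only genuinely non-routine point is justifying that the first-order derivatives of $f_k$ vanish at the origin — everything else is a mechanical substitution into Lemmas~\ref{estinth} and~\ref{estdifh}. I expect the cleanest route is the difference-quotient argument sketched above: \ref{H2} says $\nabla f_k$ (in the Wirtinger sense) is globally Lipschitz with constant $C$ and, evaluated by comparing $\mathbf{z}$ with $t\mathbf{z}$ and letting $t\to0^+$, it pins the value at $0$ to be zero because a nonzero constant would contradict the linear growth bound $|\partial f_k/\partial z_m(\mathbf{z})-\partial f_k/\partial z_m(\mathbf{0})|\le C\sum_j|z_j|$ forcing, upon integration, $|f_k(\mathbf{z})|\sim|\partial f_k(\mathbf 0)|\,|\mathbf z|$ near $0$, which is incompatible with the quadratic nature encoded in \ref{H2}. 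Alternatively, one notes \ref{H1} and \ref{H2} together are exactly the standing hypotheses under which the paper treats $f_k$ as quadratic, so the vanishing of the linear part may simply be read off. Either way the remainder of the proof is immediate.
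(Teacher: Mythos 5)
Your overall skeleton coincides with the paper's proof: inequality \eqref{estdiffkeq} is obtained by applying Lemma~\ref{estdifh} with $p=1$, and the second bound follows by setting $\mathbf{z}'=\mathbf{0}$ and using Young's inequality ($|z_j||z_m|\le \tfrac12(|z_j|^2+|z_m|^2)$). That part is exactly what the authors do.

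The problem lies in the auxiliary step you yourself single out as the only non-routine point, namely that $\frac{\partial f_k}{\partial z_m}(\mathbf{0})=\frac{\partial f_k}{\partial \overline{z}_m}(\mathbf{0})=0$, which is indeed what is needed for \eqref{limiderivh} to hold with $p=1$. Your argument for it is circular: you assert that the derivative ``vanishes at $0$ together with its linear part, hence $\partial f_k/\partial z_m(\mathbf{0})=0$,'' which presupposes the conclusion, and the appeal to ``the quadratic nature encoded in \textnormal{\ref{H2}}'' does not rescue it, because \textnormal{\ref{H1}} and \textnormal{\ref{H2}} by themselves encode no quadratic structure. Concretely, $f_k(\mathbf{z})=z_1$ satisfies $f_k(\mathbf{0})=0$ and has constant Wirtinger derivatives, so the Lipschitz-type bounds of \textnormal{\ref{H2}} hold trivially (the differences of derivatives vanish identically), yet $|f_k(\mathbf{z})|=|z_1|$ is certainly not bounded by $C\sum_j|z_j|^2$. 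Hence the vanishing of the first-order derivatives at the origin cannot be deduced from \textnormal{\ref{H1}}--\textnormal{\ref{H2}}; it is an additional, implicit hypothesis. To be fair, the paper's own one-line proof silently reads \eqref{limiderivh} with $p=1$ off these assumptions without comment, so it carries the same omission; but your proposal, by trying to close the gap with a fallacious difference-quotient argument, turns a tacit assumption into an incorrect claim. The honest fix is either to add $\nabla f_k(\mathbf{0})=0$ (in the Wirtinger sense) as a standing hypothesis, consistent with the intended quadratic-type nonlinearities (cf.\ Lemma~\ref{fkhomog2}), or to state explicitly that it does not follow from \textnormal{\ref{H1}}--\textnormal{\ref{H2}} alone.
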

 \begin{proof}
 	Inequality \eqref{estdiffkeq} follows immediately from Lemma \ref{estdifh} with $p=1$. For the second part, it suffices to take
 $\mathbf{z}'=0$ in \eqref{estdiffkeq} and apply Young's inequality.
 \end{proof}

 Note that Corollary \ref{limfk} gives us that our nonlinearities has indeed quadratic growth.
 
 \begin{lem}\label{estgraddiffk}
Let assumptions \textnormal{\ref{H1}} and \textnormal{\ref{H2}} hold. Let $\mathbf{u}$ and $\mathbf{u}'$ be complex-valued functions defined on $\R^n$. Then,
 \begin{equation*}
 \begin{split}
 |\nabla[f_{k}(\mathbf{u})-f_{k}(\mathbf{u}'
 )]|&\leq  C\sum_{m=1}^{l}\sum_{j=1}^{l}|u_{j}||\nabla(  u_{m}-u_{m}')|+C\sum_{m=1}^{l}\sum_{j=1}^{l}|u_{j}-u_{j}'||\nabla u_{m}'|.
 \end{split}
 \end{equation*}
\end{lem}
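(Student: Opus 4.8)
The plan is to compute $\nabla[f_{k}(\mathbf{u})-f_{k}(\mathbf{u}')]$ via the chain rule and then reorganize the terms with an add-and-subtract argument, exactly in the spirit of the proof of Lemma~\ref{estdifh}. Writing $u_{m}=a_{m}+ib_{m}$ with $a_{m},b_{m}$ real and using $\partial/\partial a_{m}=\partial/\partial z_{m}+\partial/\partial\overline{z}_{m}$ and $\partial/\partial b_{m}=i(\partial/\partial z_{m}-\partial/\partial\overline{z}_{m})$, the standard real chain rule gives
\begin{equation*}
\nabla f_{k}(\mathbf{u})=\sum_{m=1}^{l}\frac{\partial f_{k}}{\partial z_{m}}(\mathbf{u})\,\nabla u_{m}+\sum_{m=1}^{l}\frac{\partial f_{k}}{\partial \overline{z}_{m}}(\mathbf{u})\,\nabla \overline{u}_{m},
\end{equation*}
and likewise for $\mathbf{u}'$. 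Subtracting the two identities and, for each $m$, inserting $\pm\frac{\partial f_{k}}{\partial z_{m}}(\mathbf{u})\nabla u_{m}'$ in the holomorphic part and $\pm\frac{\partial f_{k}}{\partial \overline{z}_{m}}(\mathbf{u})\nabla\overline{u}_{m}'$ in the antiholomorphic part, one obtains
\begin{equation*}
\nabla[f_{k}(\mathbf{u})-f_{k}(\mathbf{u}')]=\sum_{m=1}^{l}\frac{\partial f_{k}}{\partial z_{m}}(\mathbf{u})\,\nabla(u_{m}-u_{m}')+\sum_{m=1}^{l}\Big[\tfrac{\partial f_{k}}{\partial z_{m}}(\mathbf{u})-\tfrac{\partial f_{k}}{\partial z_{m}}(\mathbf{u}')\Big]\nabla u_{m}'
\end{equation*}
plus the two completely analogous terms coming from $\partial/\partial\overline{z}_{m}$ together with $\nabla\overline{u}_{m}$, $\nabla\overline{u}_{m}'$.

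Next I would estimate the two kinds of terms pointwise in $x$. For the first type I use that, under \ref{H1} and \ref{H2}, the first-order derivatives of $f_{k}$ satisfy the bound \eqref{limiderivh} with $p=1$, namely $\big|\frac{\partial f_{k}}{\partial z_{m}}(\mathbf{z})\big|+\big|\frac{\partial f_{k}}{\partial\overline{z}_{m}}(\mathbf{z})\big|\le C\sum_{j=1}^{l}|z_{j}|$ (this is precisely the input that makes Corollary~\ref{limfk} applicable); evaluating at $\mathbf{z}=\mathbf{u}(x)$ gives $\big|\frac{\partial f_{k}}{\partial z_{m}}(\mathbf{u})\big|\le C\sum_{j}|u_{j}|$, which, combined with $|\nabla\overline{u}_{m}|=|\nabla u_{m}|$, yields the first double sum in the claimed inequality. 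For the second type I apply \ref{H2} pointwise with $\mathbf{z}=\mathbf{u}(x)$ and $\mathbf{z}'=\mathbf{u}'(x)$, which gives $\big|\frac{\partial f_{k}}{\partial z_{m}}(\mathbf{u})-\frac{\partial f_{k}}{\partial z_{m}}(\mathbf{u}')\big|\le C\sum_{j}|u_{j}-u_{j}'|$, and again $|\nabla\overline{u}_{m}'|=|\nabla u_{m}'|$; this produces the second double sum. Summing over $m$, using the triangle inequality, and relabeling the constant $C$ completes the estimate.

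I do not expect a genuine obstacle here: the argument is a routine combination of the chain rule, the triangle inequality, and the derivative bounds coming from \ref{H1}--\ref{H2}. The only two points that deserve a line of care are (i) the validity of the chain rule for the composition $f_{k}\circ\mathbf{u}$, which is fine since \ref{H2} forces the partial derivatives of $f_{k}$ to be globally Lipschitz, hence continuous, so that $f_{k}\in C^{1}$, while $\mathbf{u},\mathbf{u}'$ are differentiable in the relevant (weak) sense; and (ii) the symmetric handling of the $\partial/\partial z_{m}$ and $\partial/\partial\overline{z}_{m}$ contributions, which is immediate because $|\nabla\overline{v}|=|\nabla v|$ for any scalar function $v$.
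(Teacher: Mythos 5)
Your proposal is correct and follows essentially the same route as the paper: the chain-rule identity for $\nabla f_k(\mathbf{u})$, the add-and-subtract decomposition into a term with $\nabla(u_m-u_m')$ against $\partial f_k/\partial z_m(\mathbf{u})$ (and its conjugate counterpart) plus a term with $\nabla u_m'$ against the difference of derivatives, and then the pointwise bounds $|\partial f_k/\partial z_m(\mathbf{u})|\le C\sum_j|u_j|$ and $|\partial_{z_m}f_k(\mathbf{u})-\partial_{z_m}f_k(\mathbf{u}')|\le C\sum_j|u_j-u_j'|$ coming from \ref{H1}--\ref{H2}. No gap; the two write-ups differ only in notation.
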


\begin{proof}
 Writing  $x=(x_{1},\ldots,x_{n})\in\R^n$ and recalling the chain rule
\begin{equation*}
\frac{\partial}{\partial x_{j}}f_{k}(\mathbf{u}(x))=\sum_{m=1}^{l}\left(\frac{\partial f_k}{\partial z_{m}}\frac{\partial u_{m}}{\partial x_{j}}+\frac{\partial f_k}{\partial \overline{z}_{m}}\frac{\partial \overline{u}_{m}}{\partial x_{j}}\right)
\end{equation*}
we obtain
\begin{equation*}
\nabla f_{k}(\mathbf{u})=\sum_{m=1}^{l}\frac{\partial f_k}{\partial z_{m}}\nabla  u_{m}+\sum_{m=1}^{l}\frac{\partial f_k}{\partial \overline{z}_{m}}\nabla \overline{u}_{m}.
\end{equation*}
 Thus,
 \begin{equation}\label{graddiffk}
 \begin{split}
 \nabla[f_{k}(\mathbf{u})-f_{k}(\mathbf{u}'
 )]&=\sum_{m=1}^{l}\frac{\partial f_{k} }{\partial z_{m}}(\mathbf{u})\nabla  u_{m}+\sum_{m=1}^{l}\frac{\partial f_{k} }{\partial \overline{z}_{m}}(\mathbf{u})\nabla \overline{u}_{m}\\ 
&\quad-\sum_{m=1}^{l}\frac{\partial f_{k}}{\partial z_{m}}(\mathbf{u}')\nabla  u_{m}'-\sum_{m=1}^{l}\frac{\partial f_{k} }{\partial \overline{z}_{m}}(\mathbf{u}')\nabla \overline{u}_{m}'\\
&=\sum_{m=1}^{l}\frac{\partial f_{k} }{\partial z_{m}}(\mathbf{u})\nabla(  u_{m}-u_{m}')+\sum_{m=1}^{l}\frac{\partial f_{k} }{\partial \overline{z}_{m}}(\mathbf{u})\nabla( \overline{ u_{m}-u_{m}'})\\
&\quad+\sum_{m=1}^{l}\frac{\partial }{\partial z_{m}}[f_{k}(\mathbf{u})-f_{k}(\mathbf{u}')]\nabla u_{m}'+\sum_{m=1}^{l}\frac{\partial }{\partial \overline{z}_{m}}[f_{k}(\mathbf{u})-f_{k}(\mathbf{u}')]\nabla \overline{u}_{m}'.
 \end{split}
 \end{equation}
 Taking into account \ref{H1} and  \ref{H2} we have, for the first and third terms in \eqref{graddiffk},
  \begin{equation*}
 \begin{split}
\left |\sum_{m=1}^{l}\frac{\partial f_{k} }{\partial z_{m}}(\mathbf{u})\nabla(  u_{m}-u_{m}')\right|&\leq C\sum_{m=1}^{l}\left(\sum_{j=1}^{l}|u_{j}|\right)|\nabla(  u_{m}-u_{m}')|= C\sum_{m=1}^{l}\sum_{j=1}^{l}|u_{j}||\nabla(  u_{m}-u_{m}')|
 \end{split}
 \end{equation*}
and
 \begin{equation*}
 \begin{split}
\left |\sum_{m=1}^{l}\frac{\partial }{\partial z_{m}}[f_{k}(\mathbf{u})-f_{k}(\mathbf{u}')]\nabla u_{m}'\right|&\leq C\sum_{m=1}^{l}\left(\sum_{j=1}^{l}|u_{j}-u_{j}'|\right)|\nabla  u_{m}'|= C\sum_{m=1}^{l}\sum_{j=1}^{l}|u_{j}-u_{j}'||\nabla u_{m}'|.
 \end{split}
 \end{equation*}
We obtain similar bounds for the second and fourth terms, which establishes the desired.
\end{proof}

 The next lemma says how we can estimate the gradient of the nonlinearities, $f_{k}$,  in $L^{p}$-spaces.
\begin{lem} \label{gradfkLr}
 Let $1\leq p,q,r\leq \infty$ such that $\frac{1}{r}=\frac{1}{p}+\frac{1}{q}$. Assume that $\mathbf{u}\in \mathbf{L}^{p}(\R^{n}) $ and $\nabla \mathbf{u}\in \mathbf{L}^{q}(\R^{n}) $. Then, for $k=1,\ldots,l$,
 \begin{equation*}
 \|\nabla f_{k}(\mathbf{u})\|_{\mathbf{L}^{r}}\leq C\|\mathbf{u}\|_{\mathbf{L}^{p}}\|\nabla \mathbf{u}\|_{\mathbf{L}^{q}}.
 \end{equation*}
\end{lem}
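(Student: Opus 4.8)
The plan is to reduce the statement to a pointwise bound coming from Lemma \ref{estgraddiffk} and then apply H\"older's inequality. First I would use Lemma \ref{estgraddiffk} with $\mathbf{u}'\equiv 0$. Since $f_k(0)$ is a constant, $\nabla f_k(\mathbf{u})=\nabla[f_k(\mathbf{u})-f_k(0)]$, and the second double sum in the conclusion of Lemma \ref{estgraddiffk} drops out because $\nabla u_m'=0$. This gives the pointwise estimate
\begin{equation*}
|\nabla f_k(\mathbf{u}(x))|\leq C\sum_{m=1}^{l}\sum_{j=1}^{l}|u_j(x)|\,|\nabla u_m(x)|,\qquad x\in\R^n.
\end{equation*}
Equivalently, one could start from the chain-rule identity $\nabla f_k(\mathbf{u})=\sum_m \frac{\partial f_k}{\partial z_m}(\mathbf{u})\nabla u_m+\sum_m\frac{\partial f_k}{\partial\overline{z}_m}(\mathbf{u})\nabla\overline{u}_m$ used in the proof of Lemma \ref{estgraddiffk}, and bound $\left|\frac{\partial f_k}{\partial z_m}(\mathbf{u})\right|+\left|\frac{\partial f_k}{\partial\overline{z}_m}(\mathbf{u})\right|\leq C\sum_j|u_j|$ via \ref{H1}--\ref{H2}.

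Next I would take the $L^r$-norm of both sides. By the triangle inequality in $L^r$ it is enough to bound each term $\big\||u_j|\,|\nabla u_m|\big\|_{L^r}$. Because $\frac1r=\frac1p+\frac1q$, H\"older's inequality yields $\big\||u_j|\,|\nabla u_m|\big\|_{L^r}\leq\|u_j\|_{L^p}\|\nabla u_m\|_{L^q}$, with the usual interpretation when $p$, $q$ or $r$ equals $\infty$. Summing over the finitely many indices $j,m\in\{1,\ldots,l\}$ and using $\|u_j\|_{L^p}\leq\|\mathbf{u}\|_{\mathbf{L}^p}$ and $\|\nabla u_m\|_{L^q}\leq\|\nabla\mathbf{u}\|_{\mathbf{L}^q}$ (recall that the norm on a product space is the sum of the component norms), the $l^2$ summands are absorbed into the constant $C$ and we obtain $\|\nabla f_k(\mathbf{u})\|_{L^r}\leq C\|\mathbf{u}\|_{\mathbf{L}^p}\|\nabla\mathbf{u}\|_{\mathbf{L}^q}$, which is the assertion (the $\mathbf{L}^r$-norm on the left being read as the $L^r$-norm of the single $\C^n$-valued function $\nabla f_k(\mathbf{u})$).

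I do not expect any genuine difficulty here: the only points that need a word of care are the justification of taking $\mathbf{u}'\equiv0$ in Lemma \ref{estgraddiffk} (i.e.\ that $f_k(0)$ has vanishing gradient, which uses that $f_k$ is differentiable, already built into \ref{H1}--\ref{H2}) and the endpoint bookkeeping in H\"older's inequality; both are routine. The conceptual content is entirely contained in the pointwise estimate already established in Lemma \ref{estgraddiffk}.
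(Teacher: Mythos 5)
Your proposal is correct and follows essentially the same route as the paper: both reduce to the pointwise bound $|\nabla f_k(\mathbf{u})|\leq C\sum_{j,m}|u_j|\,|\nabla u_m|$ obtained from Lemma \ref{estgraddiffk} with $\mathbf{u}'=0$, and then conclude by H\"older's inequality with $\tfrac1r=\tfrac1p+\tfrac1q$. The only cosmetic difference is that the paper integrates $|\nabla f_k(\mathbf{u})|^r$ directly rather than using the triangle inequality in $L^r$ first; this changes nothing of substance.
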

\begin{proof}
First note that from Lemma \ref{estgraddiffk} (with $\mathbf{u}'=0$) we have
\begin{equation*}
|\nabla f_{k}(\mathbf{u})|\leq  C\sum_{j=1}^{l}\sum_{m=1}^{l}|u_{j}||\nabla u_{m}|,
\end{equation*}
which combined with  H\"older's inequality yields
\begin{equation*}
\begin{split}
    \int|\nabla f_{k}(\mathbf{u})|^{r}\;dx&\leq  C\sum_{j=1}^{l}\sum_{m=1}^{l} \int|u_{j}|^{r}|\nabla u_{m}|^{r}\;dx\\
    &\leq  C\sum_{j=1}^{l}\sum_{m=1}^{l}\|u_{j}\|_{L^{p}}^{r}\|\nabla u_{m}\|_{L^{q}}^{r}<\infty,
\end{split}
\end{equation*}
completing thus the proof of the lemma. 
\end{proof}

 For our next result we start with the following definition. 
\begin{defi}\label{defGC}
We say that functions $f_{k}$ satisfy the \textbf{Gauge condition} if for any $\theta \in \R$,
 \begin{equation}\label{gaugeCon}
f_{k}\left(e^{i\frac{\alpha_{1}}{\gamma_{1}}\theta }z_{1},\ldots,e^{i\frac{\alpha_{l}}{\gamma_{l}}\theta }z_{l}\right)=e^{i\frac{\alpha_{k}}{\gamma_{k}}\theta }f_{k}(z_{1},\ldots,z_{l}),\qquad k=1,\ldots, l.\tag{GC}
\end{equation}
\end{defi}

\begin{obs}\label{fksegexp}
Note that, from the definition of operators  $\partial/\partial z_k$ and $\partial/\partial \overline{z}_k$,  assumption \textnormal{\ref{H3}} can be rewritten as
\begin{equation}\label{Cond3II}
\begin{split}
f_{k}(\mathbf{z})&=\frac{\partial F}{\partial \overline{z}_{k}}(\mathbf{z})+\overline{\frac{\partial F }{\partial z_{k}}}(\mathbf{z})
=\frac{\partial F}{\partial \overline{z}_{k}}(\mathbf{z})+\frac{\partial \overline{F}}{\partial \overline{z}_{k}}(\mathbf{z})
=2\frac{\partial  }{\partial \overline{z}_{k}}\mathrm{Re} \,F(\mathbf{z}).
\end{split}
\end{equation}
\end{obs}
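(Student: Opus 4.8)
The plan is simply to unwind \eqref{Cond3II} from the definitions. Granting \textnormal{\ref{H3}}, which states $f_k(\mathbf{z})=\frac{\partial F}{\partial\overline{z}_k}(\mathbf{z})+\overline{\frac{\partial F}{\partial z_k}}(\mathbf{z})$, the first equality in \eqref{Cond3II} requires nothing; what remains is to establish the two purely formal identities
\[
\overline{\frac{\partial F}{\partial z_k}}=\frac{\partial\overline{F}}{\partial\overline{z}_k}
\qquad\text{and}\qquad
\frac{\partial F}{\partial\overline{z}_k}+\frac{\partial\overline{F}}{\partial\overline{z}_k}=2\,\frac{\partial}{\partial\overline{z}_k}\,\mathrm{Re}\,F,
\]
valid for any (say $C^1$) map $F\colon\C^l\to\C$, from the definitions of the Wirtinger operators recalled just above this remark.

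For the first identity I would argue one variable at a time. Writing $z_k=x_k+iy_k$ with $x_k,y_k\in\R$ and using that complex conjugation commutes with the real partial derivatives $\partial_{x_k}$ and $\partial_{y_k}$, one has
\[
\overline{\frac{\partial F}{\partial z_k}}
=\overline{\tfrac12\Bigl(\frac{\partial F}{\partial x_k}-i\frac{\partial F}{\partial y_k}\Bigr)}
=\tfrac12\Bigl(\frac{\partial\overline{F}}{\partial x_k}+i\frac{\partial\overline{F}}{\partial y_k}\Bigr)
=\frac{\partial\overline{F}}{\partial\overline{z}_k}.
\]
For the second identity I would only invoke the linearity of $\partial/\partial\overline{z}_k$ — immediate from its definition as a linear combination of $\partial_{x_k}$ and $\partial_{y_k}$ — together with $\mathrm{Re}\,F=\tfrac12(F+\overline{F})$:
\[
\frac{\partial F}{\partial\overline{z}_k}+\frac{\partial\overline{F}}{\partial\overline{z}_k}
=\frac{\partial}{\partial\overline{z}_k}\bigl(F+\overline{F}\bigr)
=\frac{\partial}{\partial\overline{z}_k}\bigl(2\,\mathrm{Re}\,F\bigr)
=2\,\frac{\partial}{\partial\overline{z}_k}\,\mathrm{Re}\,F.
\]
Concatenating these two displays with \textnormal{\ref{H3}} reproduces \eqref{Cond3II} exactly.

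I do not anticipate any real obstacle: the remark is a reformulation, not a statement with genuine content, and the argument is a two-line computation. The only point deserving a little care is that $\overline{F}$ is \emph{not} holomorphic, so its Wirtinger derivatives must be evaluated via the conjugation rule $\overline{\partial_{z_k}F}=\partial_{\overline{z}_k}\overline{F}$ derived above (and, symmetrically, $\overline{\partial_{\overline{z}_k}F}=\partial_{z_k}\overline{F}$), rather than by treating $\partial/\partial z_k$ as an ordinary derivative; keeping track of the factors $\tfrac12$ and of the signs is the entire bookkeeping involved.
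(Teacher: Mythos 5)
Your computation is correct and is exactly what the paper intends: the remark is stated without a separate proof, resting precisely on the two Wirtinger identities $\overline{\partial_{z_k}F}=\partial_{\overline{z}_k}\overline{F}$ and the linearity of $\partial/\partial\overline{z}_k$ applied to $F+\overline{F}=2\,\mathrm{Re}\,F$, which you verify from the definitions given just before the remark. No gap; same approach.
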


\begin{lem}\label{H34impGC}
Assume that \textnormal{\ref{H3}} and \textnormal{\ref{H4}} hold. Then $f_k, k=1,\ldots, l$, satisfy the Gauge condition \eqref{gaugeCon}.
\end{lem}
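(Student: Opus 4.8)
The plan is to differentiate the hypothesis \ref{H4} with respect to the parameter $\theta$ and evaluate at $\theta=0$, thereby converting the one-parameter invariance of $\mathrm{Re}\,F$ into an infinitesimal (first-order) identity relating the partial derivatives $\partial F/\partial z_m$ and $\partial F/\partial\overline{z}_m$; then I would re-assemble this identity into the statement that $f_k$, expressed via Remark \ref{fksegexp} as $f_k=2\,\partial_{\overline{z}_k}\mathrm{Re}\,F$, transforms with the correct phase. Concretely, set $\Phi(\theta,\mathbf{z})=\mathrm{Re}\,F(e^{i\frac{\alpha_1}{\gamma_1}\theta}z_1,\ldots,e^{i\frac{\alpha_l}{\gamma_l}\theta}z_l)$. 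Assumption \ref{H4} says $\Phi(\theta,\mathbf{z})=\Phi(0,\mathbf{z})$ for all $\theta$, hence $\partial_\theta\Phi\equiv 0$. Using the chain rule in the Wirtinger calculus, $\partial_\theta\Phi(\theta,\mathbf{z})=\sum_{m=1}^l\left(\frac{\partial\,\mathrm{Re}\,F}{\partial z_m}\cdot i\frac{\alpha_m}{\gamma_m}e^{i\frac{\alpha_m}{\gamma_m}\theta}z_m+\frac{\partial\,\mathrm{Re}\,F}{\partial\overline{z}_m}\cdot\overline{i\frac{\alpha_m}{\gamma_m}e^{i\frac{\alpha_m}{\gamma_m}\theta}z_m}\right)$ evaluated at the rotated point; setting this to zero for every $\theta$ gives, after relabeling the rotated variables as new independent variables $w_m=e^{i\frac{\alpha_m}{\gamma_m}\theta}z_m$ (which range over all of $\mathbb{C}^l$), the identity
\begin{equation*}
\sum_{m=1}^l \frac{\alpha_m}{\gamma_m}\left(i w_m\frac{\partial\,\mathrm{Re}\,F}{\partial z_m}(\mathbf{w})-i\overline{w}_m\frac{\partial\,\mathrm{Re}\,F}{\partial\overline{z}_m}(\mathbf{w})\right)=0,\qquad \mathbf{w}\in\mathbb{C}^l.
\end{equation*}

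Next I would promote this global identity to the desired covariance of $f_k$. The cleanest route is to work directly with the one-parameter family: define $g_k(\theta,\mathbf{z}):=e^{-i\frac{\alpha_k}{\gamma_k}\theta}f_k\!\left(e^{i\frac{\alpha_1}{\gamma_1}\theta}z_1,\ldots,e^{i\frac{\alpha_l}{\gamma_l}\theta}z_l\right)$, and show $g_k$ is independent of $\theta$; since $g_k(0,\mathbf{z})=f_k(\mathbf{z})$, this yields \eqref{gaugeCon}. Differentiating in $\theta$ and using $f_k=2\partial_{\overline{z}_k}\mathrm{Re}\,F$ from Remark \ref{fksegexp}, one gets $\partial_\theta g_k=e^{-i\frac{\alpha_k}{\gamma_k}\theta}\big(-i\frac{\alpha_k}{\gamma_k}\cdot 2\partial_{\overline{z}_k}\mathrm{Re}\,F(\mathbf{w})+2\partial_\theta[\partial_{\overline{z}_k}\mathrm{Re}\,F(\mathbf{w})]\big)$ with $\mathbf{w}=(e^{i\frac{\alpha_m}{\gamma_m}\theta}z_m)_m$; the bracketed term is handled by differentiating the $\theta$-invariance of $\mathrm{Re}\,F$ \emph{first} in $\overline{z}_k$ and \emph{then} in $\theta$ (equivalently, applying $\partial_{\overline{z}_k}$ to the boxed identity above with the appropriate chain-rule bookkeeping of how $\partial_{\overline{z}_k}$ interacts with the rotation). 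Carrying out this computation, the two contributions cancel precisely because the coefficient attached to the $m=k$ term in the differentiated identity is $\frac{\alpha_k}{\gamma_k}$, matching the phase weight of $f_k$; hence $\partial_\theta g_k\equiv 0$.

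The main obstacle I anticipate is purely bookkeeping: one must be careful with Wirtinger chain-rule computations through the composition $\mathbf{z}\mapsto(e^{i\frac{\alpha_m}{\gamma_m}\theta}z_m)_m$, keeping track of which derivatives hit the exponential factors (holomorphic in $z_m$, so only $\partial/\partial z_m$ sees them, picking up $i\frac{\alpha_m}{\gamma_m}$) versus the conjugated ones, and to justify the substitution $w_m=e^{i\frac{\alpha_m}{\gamma_m}\theta}z_m$ as a genuine change to arbitrary points of $\mathbb{C}^l$ (valid since each $e^{i\frac{\alpha_m}{\gamma_m}\theta}$ is a unimodular bijection of $\mathbb{C}$). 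An alternative, perhaps more transparent, presentation avoids differentiating twice: differentiate only the scalar invariance $\mathrm{Re}\,F(\mathbf{w})=\mathrm{Re}\,F(\mathbf{z})$ once in $\theta$ to obtain the first-order identity, then observe that this identity is itself the statement that the vector field $V=\sum_m \frac{\alpha_m}{\gamma_m}(iz_m\partial_{z_m}-i\overline{z}_m\partial_{\overline{z}_m})$ annihilates $\mathrm{Re}\,F$, and that $[\,V,\,\partial_{\overline{z}_k}\,]=i\frac{\alpha_k}{\gamma_k}\partial_{\overline{z}_k}$; applying this commutation to $V(\mathrm{Re}\,F)=0$ gives $V(\partial_{\overline{z}_k}\mathrm{Re}\,F)=-i\frac{\alpha_k}{\gamma_k}\partial_{\overline{z}_k}\mathrm{Re}\,F$, which is exactly the infinitesimal form of \eqref{gaugeCon} for $f_k=2\partial_{\overline{z}_k}\mathrm{Re}\,F$; integrating the resulting linear ODE in $\theta$ closes the argument. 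Either way, no deep ingredient beyond \ref{H3}, \ref{H4}, and Remark \ref{fksegexp} is needed; the content is entirely the "differentiate the symmetry and integrate it back" mechanism.
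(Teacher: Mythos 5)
Your proposal is correct in substance but takes a genuinely different route from the paper. The paper never differentiates in $\theta$: for each \emph{fixed} $\theta$ it applies $\partial/\partial\overline{z}_k$ to the identity $\mathrm{Re}\,F(\mathbf{w})=\mathrm{Re}\,F(\mathbf{z})$, $w_m=e^{i\frac{\alpha_m}{\gamma_m}\theta}z_m$, and since each $w_m$ is holomorphic in $z_m$ the chain rule collapses to the single term $\frac{\partial\,\mathrm{Re}\,F}{\partial\overline{z}_k}(\mathbf{z})=e^{-i\frac{\alpha_k}{\gamma_k}\theta}\frac{\partial\,\mathrm{Re}\,F}{\partial\overline{w}_k}(\mathbf{w})$; combined with $f_k=2\partial_{\overline{z}_k}\mathrm{Re}\,F$ from Remark \ref{fksegexp} this is already \eqref{gaugeCon}, with no infinitesimal identity and no integration back. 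Your ``differentiate the symmetry in $\theta$, commute with $\partial_{\overline{z}_k}$, integrate the ODE'' mechanism does work and is the standard Lie-theoretic way to exploit a one-parameter invariance, but it costs more: computing $[V,\partial_{\overline{z}_k}]$ (or interchanging $\partial_\theta$ and $\partial_{\overline{z}_k}$) requires $\mathrm{Re}\,F$ to be twice differentiable with commuting mixed partials, which is not supplied by \ref{H3}--\ref{H4} alone — you would need to invoke \ref{H2} to justify it — whereas the paper's argument uses only first-order differentiability. Also note a sign slip: with your $V=\sum_m\frac{\alpha_m}{\gamma_m}(iz_m\partial_{z_m}-i\overline{z}_m\partial_{\overline{z}_m})$ and your (correct) commutator $[V,\partial_{\overline{z}_k}]=i\frac{\alpha_k}{\gamma_k}\partial_{\overline{z}_k}$, applying it to $V(\mathrm{Re}\,F)=0$ gives $V(\partial_{\overline{z}_k}\mathrm{Re}\,F)=+i\frac{\alpha_k}{\gamma_k}\partial_{\overline{z}_k}\mathrm{Re}\,F$, not $-i$; the $+$ sign is the one consistent with the infinitesimal form $Vf_k=i\frac{\alpha_k}{\gamma_k}f_k$ of \eqref{gaugeCon}, so the argument closes once the sign is fixed.
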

\begin{proof}
By setting	$w_{m}:=e^{i\frac{\alpha_{m}}{\gamma_{m}}\theta}z_{m}$, from \ref{H4} we obtain
\begin{equation}\label{deriReF}
\mathrm{Re}\,F\left(\mathbf{w}\right)=\mathrm{Re}\,F(\mathbf{z}).
\end{equation}
Since the functions $w_m$ are holomorphic we have $\partial w_m/\partial \overline{z}_m=0$. Hence, from
\eqref{deriReF} and the chain rule,
 \begin{equation}\label{dev0}
 \frac{\partial \mbox{Re} F}{\partial \overline{z}_{k}}(\mathbf{z})=\frac{\partial \mbox{Re}F}{\partial \overline{w}_{k}}(\mathbf{w})e^{-i\frac{\alpha_{k}}{\gamma_{k}}\theta  }.
 \end{equation}
In view of  \eqref{dev0} and Remark \ref{fksegexp},
\begin{equation*}
\begin{split}
f_{k}(\mathbf{z})=2\frac{\partial \mbox{Re}F}{\partial \overline{w}_{k}}(\mathbf{w})e^{-i\frac{\alpha_{k}}{\gamma_{k}}\theta  }
=e^{-i\frac{\alpha_{k}}{\gamma_{k}}\theta  }f_{k}(\mathbf{w})
=e^{-i\frac{\alpha_{k}}{\gamma_{k}}\theta  }f_{k}\left(e^{i\frac{\alpha_{1}}{\gamma_{1}}\theta  }z_{1},\ldots,e^{i\frac{\alpha_{l}}{\gamma_{l}}\theta  }z_{l}\right),
\end{split}
\end{equation*}
which completes the proof.
\end{proof}

\begin{lem}\label{ReFinvari}
 Assume that  \textsc{\ref{H3}} and \textsc{\ref{H4}} hold. Then,
there exist positive constants $\sigma_{1},\ldots,\sigma_{l}$ such that, for any $\mathbf{z}\in \mathbb{C}^{l}$,
\begin{equation*}
\mathrm{Im}\sum_{k=1}^{l}\sigma_{k}f_{k}(\mathbf{z})\overline{z}_{k}=0.
\end{equation*}
 \end{lem}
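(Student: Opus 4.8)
The plan is to differentiate the phase-invariance relation \ref{H4} with respect to the parameter $\theta$, evaluate at $\theta=0$, and then translate the resulting identity into the claimed one by means of Remark~\ref{fksegexp}. Throughout I use that $F$ is at least $C^{1}$ (the same regularity already implicitly invoked in the proof of Lemma~\ref{H34impGC}, where the chain rule was applied to $\mathrm{Re}\,F$).

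\textbf{Step 1: differentiate \ref{H4}.} Set $G:=\mathrm{Re}\,F$ and, for a fixed $\mathbf{z}\in\mathbb{C}^{l}$, put $w_{m}(\theta):=e^{i\frac{\alpha_{m}}{\gamma_{m}}\theta}z_{m}$ and $\phi(\theta):=G(w_{1}(\theta),\ldots,w_{l}(\theta))$. By \ref{H4} the function $\phi$ is constant, so $\phi'(0)=0$. Writing $z_{m}=x_{m}+iy_{m}$ and using the Wirtinger operators, the chain rule gives
$$
\phi'(\theta)=\sum_{m=1}^{l}\left(\frac{\partial G}{\partial z_{m}}(\mathbf{w}(\theta))\,w_{m}'(\theta)+\frac{\partial G}{\partial \overline{z}_{m}}(\mathbf{w}(\theta))\,\overline{w_{m}'(\theta)}\right),
$$
and since $w_{m}'(\theta)=i\frac{\alpha_{m}}{\gamma_{m}}w_{m}(\theta)$, evaluating at $\theta=0$ (where $\mathbf{w}(0)=\mathbf{z}$) yields
$$
0=\sum_{m=1}^{l} i\,\frac{\alpha_{m}}{\gamma_{m}}\left(\frac{\partial G}{\partial z_{m}}(\mathbf{z})\,z_{m}-\frac{\partial G}{\partial \overline{z}_{m}}(\mathbf{z})\,\overline{z}_{m}\right).
$$

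\textbf{Step 2: use that $G$ is real and convert to $f_k$.} Since $G=\mathrm{Re}\,F$ is real valued, one has $\overline{\partial G/\partial z_{m}}=\partial G/\partial \overline{z}_{m}$; hence the two summands inside the parentheses are complex conjugates, and their difference equals $2i\,\mathrm{Im}\!\big((\partial G/\partial z_{m})(\mathbf{z})\,z_{m}\big)$. The identity of Step~1 then becomes $\sum_{m=1}^{l}\frac{\alpha_{m}}{\gamma_{m}}\,\mathrm{Im}\!\big((\partial G/\partial z_{m})(\mathbf{z})\,z_{m}\big)=0$. By Remark~\ref{fksegexp}, $\partial G/\partial \overline{z}_{m}=\tfrac12 f_{m}$, so $\partial G/\partial z_{m}=\tfrac12\,\overline{f_{m}}$; substituting and using $\mathrm{Im}\!\big(\overline{f_{m}(\mathbf{z})}\,z_{m}\big)=-\,\mathrm{Im}\!\big(f_{m}(\mathbf{z})\,\overline{z}_{m}\big)$ gives $\mathrm{Im}\sum_{m=1}^{l}\frac{\alpha_{m}}{\gamma_{m}}f_{m}(\mathbf{z})\,\overline{z}_{m}=0$. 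Thus the lemma holds with the explicit choice $\sigma_{k}=\alpha_{k}/\gamma_{k}>0$.

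\textbf{Main difficulty.} There is no serious obstacle here; the computation is short. The only points that need care are the bookkeeping in the Wirtinger chain rule — in particular that the antiholomorphic part contributes $\overline{w_{m}'(\theta)}$ rather than $w_{m}'(\theta)$ — and the use of the reality of $G=\mathrm{Re}\,F$ to pass from a difference of conjugate derivatives to an imaginary part. One could alternatively differentiate the Gauge identity \eqref{gaugeCon} from Lemma~\ref{H34impGC}, but differentiating \ref{H4} directly is the most economical route and makes the value $\sigma_{k}=\alpha_{k}/\gamma_{k}$ transparent.
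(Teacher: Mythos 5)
Your proof is correct and follows essentially the same route as the paper: differentiate the phase-invariance \ref{H4} in $\theta$, evaluate at $\theta=0$, and identify the derivative with $\mathrm{Im}\sum_k\frac{\alpha_k}{\gamma_k}f_k(\mathbf{z})\overline{z}_k$ via Remark \ref{fksegexp}, arriving at $\sigma_k=\alpha_k/\gamma_k$. The only (cosmetic) difference is that you work with $\mathrm{Re}\,F$ throughout and thereby avoid the paper's detour through the Gauge condition of Lemma \ref{H34impGC}; the Wirtinger bookkeeping and the conjugation step are handled correctly.
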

 \begin{proof}
 Denote by  $\mathbf{w}$ the vector $(w_{1},\ldots,w_{l}):=\left(e^{i\frac{\alpha_{1}}{\gamma_{1}}\theta  }z_{1},\ldots,e^{i\frac{\alpha_{l}}{\gamma_{l}}\theta  }z_{l}\right)$. By Lemma \ref{H34impGC}, the nonlinearities $f_{k}$  satisfy the Gauge condition \eqref{gaugeCon}. Then
 \begin{equation}\label{invfkw}
\begin{split}
 f_{k}(\mathbf{w})\overline{w}_{k}=f_{k}\left(e^{i\frac{\alpha_{1}}{\gamma_{1}}\theta  }z_{1},\ldots,e^{i\frac{\alpha_{l}}{\gamma_{l}}\theta  }z_{l}\right)e^{-i\frac{\alpha_{k}}{\gamma_{k}}\theta  }\overline{z_{k}}
 =f_{k}(\mathbf{z})\overline{z}_k.
\end{split}
 \end{equation}
 Define $h(\theta):=F\left(\mathbf{w}\right)$. By the chain rule,
 \begin{equation}
 \begin{split}\label{derh}
 \frac{dh}{d\theta}&=\sum_{k=1}^{l}\frac{\partial F}{\partial w_{k}}(\mathbf{w})\frac{\partial w_{k} }{\partial \theta}+\sum_{k=1}^{l}\frac{\partial F }{\partial \overline{w}_{k}}(\mathbf{w})\frac{\partial \overline{w}_{k} }{\partial \theta}\\
 &=\sum_{k=1}^{l}\frac{\partial F}{\partial w_{k}}(\mathbf{w})\left(\frac{\alpha_{k}}{\gamma_{k}} i\right)e^{i\frac{\alpha_{k}}{\gamma_{k}}\theta  }z_{k}+\sum_{k=1}^{l}\frac{\partial F }{\partial \overline{w}_{k}}(\mathbf{w})\left(-\frac{\alpha_{k}}{\gamma_{k}} i\right)\overline{e^{i\frac{\alpha_{k}}{\gamma_{k}}\theta  }z_{k}}\\
 &=\sum_{k=1}^{l}\overline{\overline{\frac{\partial F}{\partial w_{k}}}(\mathbf{w})\left(-\frac{\alpha_{k}}{\gamma_{k}} i\right)\overline{w}_{k}}+\sum_{k=1}^{l}\frac{\partial F }{\partial \overline{w}_{k}}(\mathbf{w})\left(-\frac{\alpha_{k}}{\gamma_{k}} i\right)\overline{w}_{k}.
 \end{split}
 \end{equation}
Taking the real part on both sides  of \eqref{derh}, in view of Remark \ref{fksegexp} and \eqref{invfkw} we obtain
\begin{equation*}
\begin{split}
\mathrm{Re}\frac{dh}{d\theta}
&=\mathrm{Im}\sum_{k=1}^{l}\left(\frac{\alpha_{k}}{\gamma_{k}} \right)f_{k}(\mathbf{z})\overline{z_{k}}.
\end{split}
\end{equation*}
On the other hand, taking the derivative with respect to $\theta$ on both sides of \textsc{\ref{H4}}, we have that  $\displaystyle \mbox{Re}\frac{dh}{d\theta}=0$; thus the conclusion follows by taking $\displaystyle\sigma_{k}=\frac{\alpha_{k}}{\gamma_{k}}$, for $k=1,\ldots,l$.
 \end{proof}

\begin{lem}\label{estdifF} Assume that  \textnormal{\ref{H1}-\ref{H3}} and \textnormal{\ref{H6}} hold. Then,
 \begin{equation}\label{estdifFeq}
\begin{split}
\left|\mathrm{Re}\,F(\mathbf{z})-\mathrm{Re}\,F(\mathbf{z}')\right|&\leq  C \sum_{m=1}^{l}\sum_{j=1}^{l}(|z_{j}|^{2}+|z_{j}'|^{2})|z_{m}-z_{m}'|,
\end{split}
\end{equation}
and
\begin{equation}\label{estdifFeq1}
|\mathrm{Re}\,F(\mathbf{z})|\leq C \sum_{j=1}^{l}|z_{j}|^{3}, 
\end{equation}
 \end{lem}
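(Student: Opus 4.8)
The plan is to deduce both inequalities by applying the general machinery already developed for a function $h:\mathbb{C}^l\to\mathbb{C}$ satisfying the derivative bound \eqref{limiderivh}, namely Lemma \ref{estinth} and Lemma \ref{estdifh}, to the choice $h=\operatorname{Re}F$ with $p=2$. For this I first need to verify that $\operatorname{Re}F$ actually satisfies the hypothesis \eqref{limiderivh} with $p=2$, i.e. that
$$
\left|\frac{\partial (\operatorname{Re}F)}{\partial z_m}(\mathbf{z})\right|+\left|\frac{\partial (\operatorname{Re}F)}{\partial \overline{z}_m}(\mathbf{z})\right|\leq C\sum_{j=1}^l|z_j|^2,\qquad m=1,\ldots,l.
$$
By Remark \ref{fksegexp} we have $f_k(\mathbf{z})=2\,\partial_{\overline{z}_k}\operatorname{Re}F(\mathbf{z})$, and since $\operatorname{Re}F$ is real-valued, $\partial_{z_k}\operatorname{Re}F=\overline{\partial_{\overline{z}_k}\operatorname{Re}F}=\tfrac12\overline{f_k(\mathbf{z})}$. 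Hence $\big|\partial_{z_m}\operatorname{Re}F\big|+\big|\partial_{\overline z_m}\operatorname{Re}F\big|=|f_m(\mathbf{z})|$, which by the second estimate in Corollary \ref{limfk} is bounded by $C\sum_{j=1}^l|z_j|^2$. This is exactly \eqref{limiderivh} with $h=\operatorname{Re}F$ and $p=2$. (Here \ref{H6} is not literally needed for this computation, but it is listed among the hypotheses because it guarantees the integral version $\int F(|u_1|,\ldots,|u_l|)\,dx$ appearing elsewhere is finite and consistent; the essential inputs are \ref{H1}, \ref{H2}, \ref{H3}.)

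With \eqref{limiderivh} verified, inequality \eqref{estdifFeq} is immediate from Lemma \ref{estdifh} applied to $h=\operatorname{Re}F$ with $p=2$: that lemma gives
$$
|\operatorname{Re}F(\mathbf{z})-\operatorname{Re}F(\mathbf{z}')|\leq C\sum_{m=1}^l\sum_{j=1}^l(|z_j|^2+|z_j'|^2)|z_m-z_m'|,
$$
which is precisely \eqref{estdifFeq}. For \eqref{estdifFeq1} I would set $\mathbf{z}'=0$ in \eqref{estdifFeq}; since $F(0,\ldots,0)=0$ — which follows from \ref{H5} (homogeneity of degree $3$), or alternatively from \ref{H1} together with \ref{H3} after noting $F$ is determined up to the relevant normalization — we get
$$
|\operatorname{Re}F(\mathbf{z})|\leq C\sum_{m=1}^l\sum_{j=1}^l|z_j|^2|z_m|,
$$
and then Young's inequality $|z_j|^2|z_m|\leq C(|z_j|^3+|z_m|^3)$ collapses the double sum into $C\sum_{j=1}^l|z_j|^3$, giving \eqref{estdifFeq1}.

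I do not expect a serious obstacle here: the argument is a direct instantiation of the already-proved Lemmas \ref{estinth}–\ref{estdifh}, and the only genuinely new point is the identification $\big|\partial_{z_m}\operatorname{Re}F\big|+\big|\partial_{\overline z_m}\operatorname{Re}F\big|=|f_m(\mathbf{z})|$ via Remark \ref{fksegexp} and the reality of $\operatorname{Re}F$, together with the observation that $F(0)=0$ so that the $\mathbf{z}'=0$ specialization is legitimate. The mild care needed is just to record which hypotheses are actually invoked (\ref{H1}, \ref{H2}, \ref{H3}, and $F(0)=0$) and to not overclaim the role of \ref{H6}.
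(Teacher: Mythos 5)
Your argument is correct and is essentially the paper's own proof: both verify the derivative bound $\left|\partial_{z_m}\mathrm{Re}\,F\right|+\left|\partial_{\overline{z}_m}\mathrm{Re}\,F\right|\leq C\sum_{j}|z_j|^2$ from Remark \ref{fksegexp}, the reality of $\mathrm{Re}\,F$, and Corollary \ref{limfk}, then invoke Lemma \ref{estdifh} with $p=2$, and obtain the cubic bound by setting $\mathbf{z}'=0$ (using $F(\mathbf{0})=0$, which comes from the degree-$3$ homogeneity of $F$) and applying Young's inequality. The only quibble is that the homogeneity hypothesis is listed precisely because it yields $F(\mathbf{0})=0$, so your parenthetical suggesting it is dispensable, and the vaguer alternative ``normalization'' argument for $F(\mathbf{0})=0$, should be dropped in favor of that observation.
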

\begin{proof}
 Since $\displaystyle\overline{\frac{\partial \mbox{Re}F}{\partial z_{k}}} =\frac{\partial \overline{\mbox{Re}F}}{\partial \overline{z}_{k}}=\frac{\partial \mbox{Re}F }{\partial \overline{z}_{k}}$, Corollary \ref{limfk} and    \eqref{Cond3II} lead to
\begin{equation*}
\begin{split}
\left|\frac{\partial \mbox{Re}F }{\partial z_{m}}(\mathbf{z})\right|+\left|\frac{\partial \mbox{Re}F }{\partial \overline{z}_{m}}(\mathbf{z})\right|\leq C\sum_{j=1}^{l}|z_{j}|^{2},\qquad m=1,\ldots,l.
\end{split}
\end{equation*}
Thus,  \eqref{estdifFeq} follows from Lemma \ref{estdifh} with $p=2$. In addition, from \ref{H6} we have $F(\mathbf{0})=0$. So, \eqref{estdifFeq1} follows from \eqref{estdifFeq} and Young's inequality.
 \end{proof}

The next lemma is usefully to construct  Virial-type identities.

\begin{lem}\label{propertiesF}
Assume that \textnormal{\ref{H3}} holds and let $\mathbf{u}$ be a complex-valued function defined on $\R^n$. Then, 
\begin{enumerate}
\item[(i)]
\begin{equation*}
\mathrm{Re}\sum_{k=1}^{l}f_{k}(\mathbf{u})\nabla \overline{u}_{k}=\mathrm{Re}[\nabla F(\mathbf{u})].
\end{equation*}
\item[(ii)] In addition, if assumption \textnormal{\ref{H6}} holds, then
\begin{equation*}
\mathrm{Re}\sum_{k=1}^{l}f_{k}(\mathbf{u})\overline{u}_{k}=\mathrm{Re}[3F(\mathbf{u})].
\end{equation*}
\end{enumerate}
\end{lem}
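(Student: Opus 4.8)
The plan is to prove both identities by a direct pointwise computation, expressing everything in terms of the function $F$ via assumption \ref{H3} (in the form \eqref{Cond3II}) and then using the Wirtinger calculus. For part (i), I would start from the chain rule for $\nabla F(\mathbf{u})$, exactly as in the proof of Lemma \ref{estgraddiffk}: writing $u_m = a_m + i b_m$, one has
\begin{equation*}
\nabla F(\mathbf{u}) = \sum_{m=1}^{l}\frac{\partial F}{\partial z_m}(\mathbf{u})\,\nabla u_m + \sum_{m=1}^{l}\frac{\partial F}{\partial \overline{z}_m}(\mathbf{u})\,\nabla \overline{u}_m.
\end{equation*}
Taking real parts and using that for any complex numbers $w_1 = \overline{\partial F/\partial z_m}$ one has $\mathrm{Re}(\overline{w}\,\nabla u) = \mathrm{Re}(w\,\nabla\overline{u})$ (since $\mathrm{Re}(\zeta) = \mathrm{Re}(\overline\zeta)$ applied componentwise to the gradient), I would combine the two sums into $\mathrm{Re}\sum_m \bigl(\partial F/\partial\overline z_m + \overline{\partial F/\partial z_m}\bigr)(\mathbf u)\,\nabla\overline u_m$, and recognize the coefficient as $f_m(\mathbf{u})$ by \ref{H3}. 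This yields $\mathrm{Re}[\nabla F(\mathbf{u})] = \mathrm{Re}\sum_{k=1}^l f_k(\mathbf{u})\nabla\overline{u}_k$, which is (i).

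For part (ii), the idea is the same Euler-type identity but with the radial/dilation derivative in $\mathbb{C}^l$ replacing the spatial gradient. Concretely, I would differentiate $\theta \mapsto F(e^{\theta} u_1, \ldots, e^{\theta} u_l)$ at $\theta = 0$ (or equivalently differentiate $\lambda\mapsto F(\lambda\mathbf u)$ at $\lambda=1$); by the chain rule this derivative equals $\sum_{k=1}^l\bigl(\partial F/\partial z_k)(\mathbf u)\,u_k + \sum_{k=1}^l(\partial F/\partial\overline z_k)(\mathbf u)\,\overline u_k$. On the other hand, homogeneity of degree $3$ (assumption \ref{H6}, i.e.\ \ref{H6} as labeled $\mathrm{(H5)}$) gives $F(\lambda\mathbf u) = \lambda^3 F(\mathbf u)$, whose $\lambda$-derivative at $\lambda = 1$ is $3F(\mathbf u)$. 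Taking real parts of the resulting identity and using $\mathrm{Re}\bigl((\partial F/\partial z_k)(\mathbf u) u_k\bigr) = \mathrm{Re}\bigl(\overline{(\partial F/\partial z_k)(\mathbf u)}\,\overline{u}_k\bigr)$, the two sums collapse to $\mathrm{Re}\sum_{k=1}^l\bigl(\partial F/\partial\overline z_k + \overline{\partial F/\partial z_k}\bigr)(\mathbf u)\,\overline u_k = \mathrm{Re}\sum_{k=1}^l f_k(\mathbf u)\overline u_k$ by \ref{H3}, giving $\mathrm{Re}\sum_k f_k(\mathbf u)\overline u_k = \mathrm{Re}[3F(\mathbf u)]$.

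The computations are entirely routine; the only point requiring a little care is the bookkeeping with the Wirtinger operators and complex conjugation, namely the repeated use of the identities $\overline{\partial F/\partial z_k} = \partial\overline F/\partial\overline z_k$ and $\mathrm{Re}(\zeta) = \mathrm{Re}(\overline\zeta)$ to merge the holomorphic and anti-holomorphic sums into the combination that equals $f_k$ by \ref{H3}. I expect this repackaging step — and making sure the $e^{i(\alpha_k/\gamma_k)\theta}$ phases are irrelevant here (we use the plain dilation $\lambda\mathbf u$, not the gauge action) — to be the only subtlety; everything else follows from the chain rule and Euler's homogeneity identity.
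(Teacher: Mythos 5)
Your proposal is correct and follows essentially the same route as the paper: part (i) by the Wirtinger chain rule for $\nabla F(\mathbf{u})$ followed by taking real parts and invoking \ref{H3}, and part (ii) by differentiating the homogeneity identity in $\lambda$ at $\lambda=1$ (Euler's identity) and again taking real parts with \ref{H3}. The conjugation bookkeeping you spell out is exactly the content of Remark \ref{fksegexp}, which the paper uses implicitly.
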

\begin{proof}
By differentiating $F$ with respect to $x_{j}$ and using the chain rule we obtain
\begin{equation*}
\nabla F(\mathbf{u})=\sum_{k=1}^{l}\frac{\partial F }{\partial z_{k}}(\mathbf{u})\nabla u_{k}+\sum_{k=1}^{l}\frac{\partial F }{\partial \overline{z}_{k} }(\mathbf{u})\nabla \overline{u}_{k} .
\end{equation*}
Taking the real part on both side and using \ref{H3} (or Remark \ref{fksegexp}) we get part (i).

For  (ii) we differentiate  both sides  of \ref{H6} with respect to $\lambda$ and  evaluate at $\lambda=1$ to deduce that
\begin{equation*}
\sum_{k=1}^{l}\frac{\partial F }{\partial z_{k}}(\mathbf{u})u_{k}+\sum_{k=1}^{l}\frac{\partial F}{\partial \overline{z}_{k}}(\mathbf{u})\overline{u}_{k}=3F(\mathbf{u}).
\end{equation*}
Now taking the real part and using \ref{H3} the proof is completed.
\end{proof}

 The next result is a natural consequence of  \ref{H6}. Since $F$ is homogeneous  of degree 3 its derivative is  homogeneous  of degree 2, which means that the nonlinearities $f_{k}$ inherit this property.

\begin{lem}\label{fkhomog2}
 Assumptions \textnormal{\ref{H3}} and \textnormal{\ref{H6}} imply that the nonlinearities $f_{k}$, $k=1,\ldots,l$ are homogeneous  functions of degree 2. 
\end{lem}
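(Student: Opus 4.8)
The plan is to deduce the degree-$2$ homogeneity of each $f_k$ from the degree-$3$ homogeneity of $F$ by differentiating the homogeneity relation in the spatial variables $\mathbf{z}$, and then using the representation of $f_k$ provided by \ref{H3}. In other words, I would make precise the heuristic stated right before the lemma: the derivative of a degree-$3$ homogeneous function is degree-$2$ homogeneous.

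Concretely, I would fix $\lambda>0$ and view the identity $F(\lambda\mathbf{z})=\lambda^3F(\mathbf{z})$ furnished by \ref{H6} as an equality of functions of $\mathbf{z}\in\mathbb{C}^l$. Writing $w_m:=\lambda z_m$, the maps $\mathbf{z}\mapsto w_m$ are holomorphic, so $\partial w_m/\partial\overline{z}_k=0$ and $\partial\overline{w}_m/\partial\overline{z}_k=\lambda\,\delta_{mk}$; hence the Wirtinger chain rule (the same manipulation used in the proof of Lemma \ref{H34impGC}) gives
\[
\frac{\partial}{\partial\overline{z}_k}F(\lambda\mathbf{z})=\lambda\,\frac{\partial F}{\partial\overline{z}_k}(\lambda\mathbf{z}),
\]
whereas differentiating the right-hand side of the homogeneity relation yields $\lambda^3\,\frac{\partial F}{\partial\overline{z}_k}(\mathbf{z})$. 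Cancelling one power of $\lambda$ gives $\frac{\partial F}{\partial\overline{z}_k}(\lambda\mathbf{z})=\lambda^2\,\frac{\partial F}{\partial\overline{z}_k}(\mathbf{z})$. The same computation with $\partial/\partial z_k$ gives $\frac{\partial F}{\partial z_k}(\lambda\mathbf{z})=\lambda^2\,\frac{\partial F}{\partial z_k}(\mathbf{z})$, and since $\lambda$ is real, conjugating this last identity also yields $\overline{\frac{\partial F}{\partial z_k}}(\lambda\mathbf{z})=\lambda^2\,\overline{\frac{\partial F}{\partial z_k}}(\mathbf{z})$. Adding the two relations and invoking \ref{H3},
\[
f_k(\lambda\mathbf{z})=\frac{\partial F}{\partial\overline{z}_k}(\lambda\mathbf{z})+\overline{\frac{\partial F}{\partial z_k}}(\lambda\mathbf{z})=\lambda^2\Bigl(\frac{\partial F}{\partial\overline{z}_k}(\mathbf{z})+\overline{\frac{\partial F}{\partial z_k}}(\mathbf{z})\Bigr)=\lambda^2 f_k(\mathbf{z}),
\]
for every $\lambda>0$ and every $\mathbf{z}\in\mathbb{C}^l$, which is precisely the asserted homogeneity of degree $2$.

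There is no genuine obstacle here: the only point requiring a little care is the application of the Wirtinger chain rule to $\mathbf{z}\mapsto F(\lambda\mathbf{z})$, namely keeping track of the vanishing of $\partial w_m/\partial\overline{z}_k$ versus the nonvanishing of $\partial\overline{w}_m/\partial\overline{z}_k$ — but this is identical to a step already carried out in Lemma \ref{H34impGC}. Alternatively one could first differentiate $F(\lambda\mathbf{z})=\lambda^3F(\mathbf{z})$ with respect to $\lambda$ to recover the Euler-type identity used in Lemma \ref{propertiesF}(ii), and then differentiate once more in $\mathbf{z}$; but the direct argument above is shorter and self-contained.
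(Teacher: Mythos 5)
Your proof is correct and follows essentially the same route as the paper, which simply says to differentiate both sides of \ref{H6} and use \ref{H3}; you have merely spelled out the Wirtinger chain-rule details that the paper leaves implicit. No issues.
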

\begin{proof}
It suffices to take the derivative on both sides of \ref{H6} and use \ref{H3}.
\end{proof}

\begin{lem}\label{fkreal}
	If  $F$ satisfies \textsc{\ref{H7}} then 
	\begin{equation*}\label{Cond4.2}
	f_{k}(\mathbf{x})=\frac{\partial F}{\partial x_{k}}(\mathbf{x}), \qquad \forall \mathbf{x}\in \R^{l}.
	\end{equation*}
	In addition,   $F$ is positive on the positive cone of $\R^l$.
\end{lem}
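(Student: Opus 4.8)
The plan is to unwind the Wirtinger-derivative formalism under the extra hypothesis \ref{H7}. First I would observe that \ref{H7} says $F$ restricted to $\R^l$ is real valued, so for $\mathbf{x}\in\R^l$ we have $\overline{F(\mathbf{x})}=F(\mathbf{x})$, i.e. $\mathrm{Re}\,F(\mathbf{x})=F(\mathbf{x})$ and $\mathrm{Im}\,F(\mathbf{x})=0$. The key point is to evaluate the Wirtinger derivatives $\partial F/\partial z_k$ and $\partial F/\partial\overline{z}_k$ at a real point. Writing $z_k=x_k+iy_k$, by definition
\[
\frac{\partial F}{\partial z_k}=\frac12\left(\frac{\partial F}{\partial x_k}-i\frac{\partial F}{\partial y_k}\right),\qquad
\frac{\partial F}{\partial\overline{z}_k}=\frac12\left(\frac{\partial F}{\partial x_k}+i\frac{\partial F}{\partial y_k}\right),
\]
so that $\dfrac{\partial F}{\partial z_k}+\dfrac{\partial F}{\partial\overline{z}_k}=\dfrac{\partial F}{\partial x_k}$. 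Combining this with the reformulation \eqref{Cond3II} of \ref{H3}, namely $f_k(\mathbf{z})=\dfrac{\partial F}{\partial\overline{z}_k}(\mathbf{z})+\overline{\dfrac{\partial F}{\partial z_k}}(\mathbf{z})$, evaluated at $\mathbf{z}=\mathbf{x}\in\R^l$, I would need to show the two Wirtinger derivatives are complex conjugates of each other there, so that $f_k(\mathbf{x})=2\,\mathrm{Re}\,\dfrac{\partial F}{\partial\overline{z}_k}(\mathbf{x})$, and then that this real part equals $\dfrac{\partial F}{\partial x_k}(\mathbf{x})$.

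The mechanism for that is the reality of $F$ on $\R^l$: since $F(\mathbf{x})\in\R$ for all real arguments, the partial derivative $\partial F/\partial x_k$ taken along the real axis is real, and the partial derivative $\partial F/\partial y_k$ evaluated at a real point is obtained by differentiating $F(x_1,\ldots,x_k+iy_k,\ldots,x_l)$ in $y_k$ at $y_k=0$. I would argue that $\overline{\partial F/\partial z_k}(\mathbf{x}) = \partial F/\partial\overline{z}_k(\mathbf{x})$ by passing complex conjugation through the (holomorphic, hence conjugate-linear-compatible) structure — concretely, because $F$ agrees with its own conjugate on $\R^l$ one gets $\overline{\partial_{z_k}F(\mathbf{x})}=\partial_{\overline z_k}\overline F(\mathbf{x})=\partial_{\overline z_k}F(\mathbf{x})$, where the last identity uses that $F$ and $\overline F$ have the same restriction to the real cone and the Wirtinger derivative at a real point only depends on first-order behaviour which can be computed from that restriction together with the given expression; here one really uses \ref{H2}/Corollary \ref{limfk} to know the relevant derivatives exist and are continuous, so the computation is legitimate. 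Then \eqref{Cond3II} gives $f_k(\mathbf{x})=\partial_{\overline z_k}F(\mathbf{x})+\overline{\partial_{z_k}F(\mathbf{x})}=\partial_{\overline z_k}F(\mathbf{x})+\partial_{\overline z_k}F(\mathbf{x})=2\,\partial_{\overline z_k}F(\mathbf{x})=\partial_{z_k}F(\mathbf{x})+\partial_{\overline z_k}F(\mathbf{x})=\partial F/\partial x_k(\mathbf{x})$, which is the claimed identity.

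For the last sentence — $F$ is positive on the positive cone of $\R^l$ — I would integrate the just-proved gradient identity. By homogeneity \ref{H5}, $F(\mathbf{0})=0$, and Euler's identity (Lemma \ref{propertiesF}(ii), or differentiating \ref{H5} in $\lambda$) gives $\sum_{k=1}^l x_k\,\partial F/\partial x_k(\mathbf{x})=3F(\mathbf{x})$ on $\R^l$; hence $3F(\mathbf{x})=\sum_{k=1}^l x_k f_k(\mathbf{x})$. On the positive cone $x_k>0$ for all $k$, and by \ref{H7} each $f_k(\mathbf{x})\geq0$ there, so $F(\mathbf{x})=\frac13\sum_k x_k f_k(\mathbf{x})\geq0$. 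To upgrade non-negativity to strict positivity I would note that if $F(\mathbf{x})=0$ at an interior point of the cone then every term $x_kf_k(\mathbf{x})$ vanishes, forcing $f_k(\mathbf{x})=0$ for all $k$, i.e. $\nabla F(\mathbf{x})=0$; invoking Euler's identity again along the ray through $\mathbf{x}$ shows $F\equiv0$ on that ray, and one can then use the structural assumption \ref{H8} (super-modularity, $\partial^2F_s/\partial x_i\partial x_j\geq0$) together with homogeneity to conclude $F$ is not identically zero unless it is the trivial nonlinearity — alternatively, for the purposes of this paper "positive" may simply mean "$\geq 0$", in which case the previous line already suffices. I expect the main obstacle to be the careful justification that the two Wirtinger derivatives are genuinely complex conjugate at real points purely from the given algebraic hypotheses (rather than from holomorphy of $F$, which is \emph{not} assumed) — one must lean on \ref{H2} to guarantee enough regularity, and possibly on \ref{H8}'s $C^2$ reformulation, to make the chain-rule manipulations rigorous.
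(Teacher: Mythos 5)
Your route is the same one the paper takes: part one is the unwinding of Remark \ref{fksegexp}, and part two is Euler's identity from Lemma \ref{propertiesF}\,(ii) combined with \ref{H7} (the paper's proof is literally these two sentences, and it too only establishes $F\geq 0$ on the positive cone, which is all that is ever used later -- so your closing remark that ``positive'' here means ``$\geq 0$'' is the right reading). The second half of your argument is therefore fine as written.

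The problem is the mechanism you offer for the first half. You reduce the claim to $\overline{\partial_{z_k}F(\mathbf{x})}=\partial_{\overline z_k}F(\mathbf{x})$ at real points, which is the correct crux, and the identity $\overline{\partial_{z_k}F}=\partial_{\overline z_k}\overline F$ is a valid general fact. But the next step -- $\partial_{\overline z_k}\overline F(\mathbf{x})=\partial_{\overline z_k}F(\mathbf{x})$ ``because $F$ and $\overline F$ have the same restriction to $\R^l$ and the Wirtinger derivative at a real point only depends on first-order behaviour computable from that restriction'' -- is false as a matter of principle. The operator $\partial_{\overline z_k}=\tfrac12(\partial_{x_k}+i\partial_{y_k})$ contains the derivative in the imaginary direction, which is transverse to $\R^l$ and is not determined by the values of the function on $\R^l$: the function $G(z)=\mathrm{Im}\,z$ vanishes identically on $\R$ yet $\partial_{\overline z}G=i/2$. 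Concretely, \ref{H7} gives $\mathrm{Im}\,F\equiv 0$ on $\R^l$ and hence $\partial_{x_j}\mathrm{Im}\,F=0$ there, but what you actually need is $\partial_{y_k}\mathrm{Re}\,F(\mathbf{x})=0$ (equivalently, that $\partial_{z_k}F(\mathbf{x})$ is real), and no amount of regularity from \ref{H2} or Corollary \ref{limfk} supplies that; it is an additional structural fact about $F$ (true for the paper's examples, which are polynomials in $z_j,\overline z_j$ with real coefficients, and implicitly swept into the paper's ``clear''). So you have correctly located the obstacle, but the argument you propose to overcome it does not work; a correct proof must produce the vanishing of $\partial_{y_k}\mathrm{Re}\,F$ on $\R^l$ from some genuine input rather than from the restriction of $F$ to the reals.
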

\begin{proof}
	The first part is clear from Remark \ref{fksegexp}. For the second part, we use Lemma \ref{propertiesF} (ii).
\end{proof}

We finish this section with a regularity lemma which imply that our system make sense when we consider  the $H^{-1}-H^{1}$ duality product. First we recall the following result  

 \begin{lem}[Sobolev multiplication law]\label{Hmen1prod}
 Let $n\geq 1$. Assume that $s,s_1,s_2$ are real numbers satisfying either
 \begin{enumerate}
 \item[(i)]  $s_{1}+s_{2}\geq 0$, $s_{1},s_{2}\geq s$ and $s_{1}+s_{2}> s+n/2$; or
 \item[(ii)]  $s_{1}+s_{2}> 0$, $s_{1},s_{2}>s$ and $s_{1}+s_{2}\geq  s+n/2$.
 \end{enumerate}
Then there is a continuous multiplication map
 \begin{equation*}
 H^{s_{1}}(\R^{n})\times H^{s_{2}}(\R^{n}) \to H^{s}(\R^{n}),
 \end{equation*}
 taking
 \begin{equation*}
( u,v)\to uv, 
 \end{equation*}
  and satisfying the estimate
 \begin{equation*}
 \|uv\|_{H^{s}(\R^{n})}\leq  C\|u\|_{H^{s_{1}}(\R^{n})}\|v\|_{H^{s_{2}}(\R^{n})}.
 \end{equation*}
 \end{lem}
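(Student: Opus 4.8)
The plan is to prove this classical product inequality by Fourier analysis together with a Littlewood--Paley (Bony paraproduct) decomposition, hypotheses (i)--(ii) being exactly the bookkeeping conditions that make the resulting dyadic sums converge. Writing $u=\sum_{N}P_{N}u$ and $v=\sum_{M}P_{M}v$ for the usual dyadic frequency pieces ($N,M$ running over powers of $2$, the lowest frequencies grouped into one block), I would split
\[
uv=\Pi_{LH}+\Pi_{HL}+\Pi_{HH},\qquad
\Pi_{LH}=\sum_{N\ll M}(P_{N}u)(P_{M}v),\quad
\Pi_{HH}=\sum_{N\sim M}(P_{N}u)(P_{M}v),
\]
with $\Pi_{HL}$ the analogue of $\Pi_{LH}$ with the roles of $u$ and $v$ exchanged. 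The point of the splitting is that in $\Pi_{LH}$ the product $(P_{N}u)(P_{M}v)$ has frequency $\sim M$, so a fixed output frequency $K$ only sees $M\sim K$, whereas in $\Pi_{HH}$ it has frequency $\lesssim N\sim M$, so the output frequency $K$ sees every $N\gtrsim K$.

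To estimate $\Pi_{LH}$ (and $\Pi_{HL}$ by the obvious symmetry), I would use the square-function characterization of $H^{s}$, together with H\"older and Bernstein, to get
\[
\|\Pi_{LH}\|_{H^{s}}^{2}\lesssim\sum_{K}K^{2s}\,\|P_{\ll K}u\|_{L^{\infty}}^{2}\,\|\widetilde{P}_{K}v\|_{L^{2}}^{2},\qquad
\|P_{\ll K}u\|_{L^{\infty}}\lesssim\sum_{N\ll K}N^{n/2}\|P_{N}u\|_{L^{2}},
\]
where $\widetilde{P}_{K}$ is a slightly fattened frequency-$\sim K$ projection. Setting $a_{N}=N^{s_{1}}\|P_{N}u\|_{L^{2}}$, so that $(a_{N})\in\ell^{2}$ with $\|(a_{N})\|_{\ell^{2}}\sim\|u\|_{H^{s_{1}}}$, the sum $\sum_{N\ll K}N^{n/2-s_{1}}a_{N}$ is, by Cauchy--Schwarz, $\lesssim\|u\|_{H^{s_{1}}}$ if $s_{1}>n/2$, $\lesssim K^{n/2-s_{1}}\|u\|_{H^{s_{1}}}$ if $s_{1}<n/2$, and $\lesssim(\log\langle K\rangle)^{1/2}\|u\|_{H^{s_{1}}}$ if $s_{1}=n/2$. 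Feeding this back, $\|\Pi_{LH}\|_{H^{s}}$ is bounded by $\|u\|_{H^{s_{1}}}$ times $\|v\|_{H^{s}}$, $\|v\|_{H^{s+n/2-s_{1}}}$, or $\|v\|_{H^{s+\varepsilon}}$ ($\varepsilon>0$ arbitrary) in the three cases, and the embeddings $H^{s_{2}}\hookrightarrow H^{s}$, $H^{s_{2}}\hookrightarrow H^{s+n/2-s_{1}}$, $H^{s_{2}}\hookrightarrow H^{s+\varepsilon}$ hold because $s_{2}\ge s$, because $s_{1}+s_{2}\ge s+n/2$, and --- in the logarithmic case $s_{1}=n/2$ --- because $s_{2}>s$, which both (i) and (ii) do force then.

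For $\Pi_{HH}$, Bernstein on the output block gives $\|P_{K}\big((P_{N}u)(\widetilde{P}_{N}v)\big)\|_{L^{2}}\lesssim K^{n/2}\|P_{N}u\|_{L^{2}}\|\widetilde{P}_{N}v\|_{L^{2}}$, so with $b_{N}=N^{s_{2}}\|\widetilde{P}_{N}v\|_{L^{2}}$ one is left with
\[
\|\Pi_{HH}\|_{H^{s}}^{2}\lesssim\sum_{K}K^{2s+n}\Big(\sum_{N\gtrsim K}N^{-(s_{1}+s_{2})}a_{N}b_{N}\Big)^{2}.
\]
When $s_{1}+s_{2}>0$ I would observe that the dyadic kernel $K^{s+n/2}N^{-(s_{1}+s_{2})}\mathbf{1}_{\{N\ge K\}}$ has uniformly bounded row and column sums (this is where $s+n/2\le s_{1}+s_{2}$ enters) and is therefore bounded on $\ell^{2}$ by Schur's test, so the right-hand side is $\lesssim\|(a_{N}b_{N})\|_{\ell^{2}}^{2}\lesssim\|(a_{N})\|_{\ell^{2}}^{2}\,\|(b_{N})\|_{\ell^{2}}^{2}\sim\|u\|_{H^{s_{1}}}^{2}\|v\|_{H^{s_{2}}}^{2}$; when $s_{1}+s_{2}=0$ (which under (i) forces $2s+n<0$ and under (ii) cannot occur) I would instead bound the inner sum trivially by $\|(a_{N})\|_{\ell^{2}}\|(b_{N})\|_{\ell^{2}}$ and sum the convergent geometric series $\sum_{K}K^{2s+n}$. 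Adding the three pieces gives $\|uv\|_{H^{s}}\lesssim\|u\|_{H^{s_{1}}}\|v\|_{H^{s_{2}}}$, and bilinearity yields continuity of the multiplication map.

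The step I expect to be the genuine obstacle is precisely this borderline bookkeeping: the logarithmic blow-up at $s_{1}=n/2$ (or $s_{2}=n/2$) in the paraproduct terms, and the endpoints $s_{1}+s_{2}=s+n/2$ and $s_{1}+s_{2}=0$ in the high--high term. The strict inequalities in (i) and (ii), together with the side conditions $s_{j}\ge s$ (respectively $s_{j}>s$), are exactly what is needed to absorb them. As an alternative route, for $s\ge 0$ one may first apply Peetre's inequality $\langle\xi\rangle^{s}\lesssim\langle\xi-\eta\rangle^{s}+\langle\eta\rangle^{s}$ to reduce to the case $s=0$ with the shifted exponent pairs $(s_{1}-s,s_{2})$ and $(s_{1},s_{2}-s)$, and then reach $s<0$ by duality; but the paraproduct argument above treats all $s$ simultaneously.
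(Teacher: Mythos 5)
Your argument is essentially correct, but note that the paper does not prove this lemma at all: it simply cites \cite[Corollary 3.16]{Tao}, where the estimate is obtained from bounds on weighted convolutions of $L^2$ functions on the Fourier side (one dualizes, reduces to a trilinear integral over $\xi_1+\xi_2+\xi_3=0$ with weights $\langle\xi\rangle^{s}\langle\xi_1\rangle^{-s_1}\langle\xi_2\rangle^{-s_2}$, and runs a dyadic case analysis there). Your Bony paraproduct decomposition is the physical-space packaging of the same dyadic bookkeeping, and it is self-contained: the three regimes $s_1>n/2$, $s_1<n/2$, $s_1=n/2$ in $\Pi_{LH}$ (and symmetrically in $\Pi_{HL}$), the Schur test for $\Pi_{HH}$ when $s_1+s_2>0$ with $s+n/2\le s_1+s_2$, and the separate treatment of $s_1+s_2=0$ under (i) (where $2s+n<0$ makes $\sum_K K^{2s+n}$ converge) exactly account for every hypothesis in (i) and (ii), including the strict inequality $s_2>s$ needed to absorb the logarithm at $s_1=n/2$. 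What the citation buys the paper is brevity and a statement calibrated to be sharp; what your proof buys is transparency about which hypothesis kills which endpoint, which is genuinely useful since the paper only ever invokes the lemma with $s_1=s_2=1$, $s=-1$, a completely non-borderline instance. Two small points to make explicit if you write this up: the inhomogeneous square-function characterization requires grouping all frequencies $\lesssim 1$ into a single block with $\langle\xi\rangle\sim 1$ (so the dyadic parameters run over $K\ge 1$, which is what makes your negative-exponent row and column sums uniformly bounded), and the $\Pi_{HL}$ term requires repeating the three-case analysis with $s_2$ in place of $s_1$, which the symmetry of (i) and (ii) in $(s_1,s_2)$ permits.
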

 \begin{proof}
 See \cite[Corollary 3.16]{Tao}. 
 \end{proof}

 \begin{lem}\label{lemfkcontH}
 Let $1\leq n\leq 6$. Assume that the nonlinearities $f_{k}$ satisfy   \textnormal{\ref{H1}} and \textnormal{\ref{H2}}.  Then, for all  $k=1,\ldots,l$
 we have $f_{k}\in \mathcal{C}(\mathbf{H}^{1}(\R^{n}),H^{-1}(\R^{n}))$.
 \end{lem}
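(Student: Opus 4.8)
The plan is to verify that $\mathbf{u}\mapsto f_{k}(\mathbf{u})$ is a well-defined, locally Lipschitz (hence continuous) map from $\mathbf{H}^{1}(\R^{n})$ into $H^{-1}(\R^{n})$, using only the quadratic bounds furnished by Corollary \ref{limfk} together with the Sobolev embedding $H^{1}(\R^{n})\hookrightarrow L^{3}(\R^{n})$. The latter holds exactly in the range $1\le n\le 6$, since $3\le 2^{*}=2n/(n-2)$ is equivalent to $n\le 6$ (the cases $n=1,2$ being immediate), which is precisely the dimensional restriction in the statement.

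First I would check well-definedness. Fix $\mathbf{u}\in\mathbf{H}^{1}(\R^{n})$. Corollary \ref{limfk} gives $|f_{k}(\mathbf{u})|\le C\sum_{j=1}^{l}|u_{j}|^{2}$ pointwise, so $f_{k}(\mathbf{u})\in L^{1}(\R^{n})$ because each $u_{j}\in L^{2}$. For $\varphi\in H^{1}(\R^{n})$, the triangle inequality, Hölder's inequality with three exponents equal to $3$, and the Sobolev embedding yield
\[
\left|\int f_{k}(\mathbf{u})\,\overline{\varphi}\,dx\right|\le C\sum_{j=1}^{l}\int|u_{j}|^{2}|\varphi|\,dx\le C\sum_{j=1}^{l}\|u_{j}\|_{L^{3}}^{2}\|\varphi\|_{L^{3}}\le C\|\mathbf{u}\|_{\mathbf{H}^{1}}^{2}\,\|\varphi\|_{H^{1}}.
\]
In particular the pairing is absolutely convergent, so $f_{k}(\mathbf{u})$ is a bounded linear functional on $H^{1}(\R^{n})$, i.e. $f_{k}(\mathbf{u})\in H^{-1}(\R^{n})$ with $\|f_{k}(\mathbf{u})\|_{H^{-1}}\le C\|\mathbf{u}\|_{\mathbf{H}^{1}}^{2}$. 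Alternatively, since $|u_{j}|^{2}=u_{j}\overline{u}_{j}$ is a product of two $H^{1}$-functions, one may invoke the Sobolev multiplication law, Lemma \ref{Hmen1prod}, with $s_{1}=s_{2}=1$ and $s=-1$: item (i) covers $1\le n<6$ and item (ii) the endpoint $n=6$.

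Next I would prove continuity, in fact local Lipschitz continuity, which is stronger. Let $\mathbf{u}_{m}\to\mathbf{u}$ in $\mathbf{H}^{1}(\R^{n})$, and write $\mathbf{u}_{m}=(u_{m,1},\ldots,u_{m,l})$. By \eqref{estdiffkeq},
\[
|f_{k}(\mathbf{u}_{m})-f_{k}(\mathbf{u})|\le C\sum_{i,j=1}^{l}\big(|u_{m,j}|+|u_{j}|\big)\,|u_{m,i}-u_{i}|.
\]
Given $\varphi\in H^{1}(\R^{n})$, multiplying by $|\varphi|$, integrating, and applying Hölder's inequality (again with all three exponents equal to $3$) and the Sobolev embedding $H^{1}\hookrightarrow L^{3}$ gives
\[
\left|\int\big(f_{k}(\mathbf{u}_{m})-f_{k}(\mathbf{u})\big)\overline{\varphi}\,dx\right|\le C\big(\|\mathbf{u}_{m}\|_{\mathbf{H}^{1}}+\|\mathbf{u}\|_{\mathbf{H}^{1}}\big)\|\mathbf{u}_{m}-\mathbf{u}\|_{\mathbf{H}^{1}}\,\|\varphi\|_{H^{1}},
\]
and taking the supremum over $\|\varphi\|_{H^{1}}\le 1$ yields $\|f_{k}(\mathbf{u}_{m})-f_{k}(\mathbf{u})\|_{H^{-1}}\le C\big(\|\mathbf{u}_{m}\|_{\mathbf{H}^{1}}+\|\mathbf{u}\|_{\mathbf{H}^{1}}\big)\|\mathbf{u}_{m}-\mathbf{u}\|_{\mathbf{H}^{1}}$. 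Since $\{\|\mathbf{u}_{m}\|_{\mathbf{H}^{1}}\}$ is bounded and $\|\mathbf{u}_{m}-\mathbf{u}\|_{\mathbf{H}^{1}}\to 0$, the right-hand side tends to $0$, which proves $f_{k}\in\mathcal{C}(\mathbf{H}^{1}(\R^{n}),H^{-1}(\R^{n}))$.

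There is no deep obstacle in this argument; the only delicate point is the sharp dimension count. Everything hinges on $H^{1}(\R^{n})\hookrightarrow L^{3}(\R^{n})$, i.e. on $2n/(n-2)\ge 3$, which is exactly $n\le 6$; at $n=6$ one is working with the critical Sobolev exponent, but the embedding (equivalently, case (ii) of the multiplication law) still holds. One should also be careful to confirm that $\int f_{k}(\mathbf{u})\,\overline{\varphi}\,dx$ converges absolutely before identifying $f_{k}(\mathbf{u})$ with an element of $H^{-1}$, which is precisely what the Hölder bound in the second paragraph provides.
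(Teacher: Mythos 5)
Your proof is correct, and it reaches the same conclusion through the same basic skeleton as the paper: start from the pointwise quadratic and Lipschitz bounds of Corollary \ref{limfk}, and then push the product of two $H^{1}$ functions into $H^{-1}$. The only real difference is how that last step is justified. The paper applies the Sobolev multiplication law (Lemma \ref{Hmen1prod}, case (ii), with $s_{1}=s_{2}=1$, $s=-1$) directly to the right-hand side of \eqref{estdiffkeq}, obtaining $\|(|u_{ji}|+|u_{j}|)|u_{mi}-u_{m}|\|_{H^{-1}}\le C\|\cdot\|_{H^{1}}\|\cdot\|_{H^{1}}$ in one stroke. You instead realize $f_{k}(\mathbf{u})$ as a functional on $H^{1}$ and estimate the pairing by H\"older with exponents $(3,3,3)$ together with the embedding $H^{1}(\R^{n})\hookrightarrow L^{3}(\R^{n})$, which amounts to the elementary chain $H^{1}\times H^{1}\to L^{3/2}\hookrightarrow H^{-1}$ by duality. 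Your route is more self-contained (it avoids the black-box citation of the multiplication law, which you correctly note as an alternative), makes the origin of the restriction $n\le 6$ completely transparent via $2n/(n-2)\ge 3$, and yields the stronger conclusion of local Lipschitz continuity plus the explicit bound $\|f_{k}(\mathbf{u})\|_{H^{-1}}\le C\|\mathbf{u}\|_{\mathbf{H}^{1}}^{2}$; the paper's route is shorter on the page. One small remark: in your dichotomy of Lemma \ref{Hmen1prod}, case (ii) already covers the whole range $1\le n\le 6$ with these indices, so the split between (i) for $n<6$ and (ii) for $n=6$ is harmless but unnecessary.
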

 \begin{proof} 
 Let  $(\mathbf{u}_{i})\subset \mathbf{H}^{1}(\R^{n})$ be  such that $\mathbf{u}_{i} \to \mathbf{u}$ in $\mathbf{H}^{1}(\R^{n})$. In particular, there exist $M>0$ such that $\|\mathbf{u}_{i}\|_{\mathbf{H}^{1}}\leq M$. 
Corollary \ref{limfk} and part ii) in Lemma \ref{Hmen1prod}, with $s_{1}=s_{2}=1$ and $s=-1$,  lead to
 \begin{equation*}
 \begin{split}
 \|f_{k}(\mathbf{u}_{i})&-f_{k}(\mathbf{u})\|_{H^{-1}}\leq C\sum_{m=1}^{l}\sum_{j=1}^{l}\|(|u_{ji}|+|u_{j}|)|u_{mi}-u_{m}|\|_{H^{-1}}\\
  &\leq C\sum_{m=1}^{l}\sum_{j=1}^{l}\|u_{ji}\|_{H^{1}}\|u_{mi}-u_{m}\|_{H^{1}}
  +C\sum_{m=1}^{l}\sum_{j=1}^{l}\|u_{j}\|_{H^{1}}\|u_{mi}-u_{m}\|_{H^{1}}\\
 &\leq CM\sum_{m=1}^{l}\|u_{mi}-u_{m}\|_{H^{1}}
 +C\sum_{m=1}^{l}\sum_{j=1}^{l}\|u_{j}\|_{H^{1}}\|u_{mi}-u_{m}\|_{H^{1}}.
 \end{split}
 \end{equation*}
 We note that the right-hand side goes to $0$ as $i\to \infty$, which implies that $f_{k}(\mathbf{u}_{i})\to f_{k}(\mathbf{u})$ in  $H^{-1}(\R^{n})$.
 \end{proof}

 \section{Local and global well-posedness in $L^{2}$ and $H^{1}$}\label{sec.lgt}

In this section we will study the dynamics of system \eqref{system1} in  $L^{2}$  and   $H^{1}$ frameworks. Since $f_{k}$ are homogeneous functions of degree 2 (see Lemma \ref{fkhomog2}), by using the scaling
 \begin{equation*}
     u_{k}^{\lambda}(x,t)=\lambda^{2}u_{k}( \lambda x,\lambda^{2} t), \qquad k=1,\ldots,l,
 \end{equation*}
 we can see that the Sobolev space $\dot{H}^{n/2-2}$ is critical for system \eqref{system1} (with $\beta_{k}=0$) in the sense that it is invariant by the above scaling. In particular, $L^{2}$ and $\dot{H}^{1}$ are critical for dimensions $n=4$ and $n=6$, respectively. More precisely, we adopt the following regimes: we will say that system \eqref{system1} is
 \begin{equation*}
     L^{2}-
     \begin{cases}
     \mbox{subcritical}, \quad\mbox{if}\quad 1\leq n\leq 3,\\
     \mbox{critical},\quad\mbox{if}\quad n=4,\\
     \mbox{supercritical},\quad\mbox{if}\quad n\geq5;
     \end{cases}\quad \mbox{and} \quad
     H^{1}-
     \begin{cases}
     \mbox{subcritical},\quad\mbox{if}\quad 1\leq n\leq 5\\
     \mbox{critical},\quad\mbox{if}\quad n=6\\
     \mbox{supercritical},\quad\mbox{if}\quad n\geq 7.
     \end{cases}
 \end{equation*}

We are primarily interested in studying the local and global well-posedness for the Cauchy problem \eqref{system1} in the spaces $L^{2}$ and $\dot{H}^{1}$ in subcritical and critical regimes. For that, we will consider the associated system of integral equations
\begin{equation}\label{system2}
\begin{cases}
 u_{k}(t)= \displaystyle U_{k}(t)u_{k0}+i\int_{0}^{t}U_{k}(t-t') \frac{1}{\alpha_{k}}f_{k}(u_{1},\ldots,u_{l})\;dt',\\
(u_{1}(x,0),\ldots,u_{l}(x,0))=(u_{10},\ldots,u_{l0}),
\end{cases}
\end{equation}
where $U_{k}(t)$ is the Schr\"odinger evolution group defined by $\displaystyle U_{k}(t)=e^{i\frac{t}{\alpha_{k}}(\gamma_{k}\Delta-\beta_{k})}$, $ k=1\ldots, l.$

\subsection{Local existence of $L^2$-solutions}
This section is devoted to study the existence of local $L^2$ solutions in the subcritical and critical regimes, that is, we study  (\ref{system2}) in $L^2$ with $1\leq n\leq 4$. The results here follow close the ones in \cite{Hayashi}. For any $u_{10},\ldots,u_{l0}\in L^2$ we solve (\ref{system2}) in the  spaces
\begin{equation*}
X(I)=\begin{cases}
(\mathcal{C}\cap L^{\infty})(I; L^2)\cap L^{12/n}(I;L^{3}),\qquad 1\leq n\leq 3;\\
(\mathcal{C}\cap L^{\infty})(I; L^2)\cap L^{2}(I;L^{2n/(n-2)}), \qquad n\geq 4,
\end{cases}
\end{equation*}
for some time interval $I=[-T,T]$ with $T>0$. The norm in $X(I)$ is defined as
\begin{equation*}
\|f\|_{X(I)}=\begin{cases}
\|f\|_{L^{\infty}( L^2)}+ \|f\|_{L^{12/n}( L^{3})},\qquad 1\leq n\leq 3;\\
\|f\|_{L^{\infty}( L^2)}+ \|f\|_{L^{2}( L^{2n/(n-2)})}, \qquad n\geq 4.
\end{cases}
\end{equation*}

H\"{o}lder's inequality in space and time variables allow us to prove the following lemma.
 \begin{lem}\label{estfg} Let $1\leq n\leq 4$ and $f, g\in X(I)$. 
\begin{enumerate}
\item[(i)] If $1\leq n\leq 3$, then
\begin{equation*}
\|fg\|_{L^{12/(12-n)}(L^{3/2})}\leq C T^{\frac{4-n}{4}}\|f\|_{X(I)}\|g\|_{X(I)}.
\end{equation*}
\item[(ii)] If $n=4$, then 
\begin{equation*}
\|fg\|_{L^{1}(L^2)}\leq C\|f\|_{L^{2}(L^4)}\|g\|_{L^{2}(L^4)}.
\end{equation*}
\end{enumerate}
\end{lem}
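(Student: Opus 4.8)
The plan is to reduce both estimates to a single application of Hölder's inequality in the spatial variable, followed by (for part (i)) a second Hölder in the time variable, where the factor $T^{(4-n)/4}$ will be produced. First I would fix the relevant exponent triples and check the arithmetic. For part (ii), with $n=4$, the spatial Hölder $\|fg\|_{L^2_x}\leq \|f\|_{L^4_x}\|g\|_{L^4_x}$ holds since $\tfrac14+\tfrac14=\tfrac12$; then by Cauchy--Schwarz in $t$ over $I$ (or just Hölder with exponents $2,2\to 1$) we get $\|fg\|_{L^1(L^2)}\leq \|f\|_{L^2(L^4)}\|g\|_{L^2(L^4)}$, and the right-hand side is finite since $L^2(I;L^{2n/(n-2)})=L^2(I;L^4)$ is exactly the second component of the norm of $X(I)$ when $n=4$. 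So part (ii) is essentially immediate.

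For part (i), with $1\leq n\leq 3$, I would first do Hölder in $x$: since $\tfrac{2}{3}=\tfrac12+\tfrac16$ is not the combination available, instead note $\tfrac{1}{3/2}=\tfrac13+\tfrac13$, so $\|fg\|_{L^{3/2}_x}\leq \|f\|_{L^3_x}\|g\|_{L^3_x}$. This leaves me to bound $\|\,\|f\|_{L^3_x}\|g\|_{L^3_x}\,\|_{L^{12/(12-n)}_t}$. By Hölder in $t$ with three exponents — two copies of $\tfrac{12}{n}$ (the time-exponent appearing in $X(I)$ when $1\leq n\leq 3$) and one copy of $q$ chosen so that $\tfrac{12-n}{12}=\tfrac{n}{12}+\tfrac{n}{12}+\tfrac1q$, i.e. $\tfrac1q=\tfrac{12-3n}{12}=\tfrac{4-n}{4}$ — the constant function $1$ on $I$ contributes $\|1\|_{L^q(I)}=(2T)^{1/q}=(2T)^{(4-n)/4}$, while the two genuine factors contribute $\|f\|_{L^{12/n}(L^3)}\|g\|_{L^{12/n}(L^3)}\leq \|f\|_{X(I)}\|g\|_{X(I)}$. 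Absorbing the harmless $2^{(4-n)/4}$ into $C$ gives the claimed $CT^{(4-n)/4}\|f\|_{X(I)}\|g\|_{X(I)}$. Note that when $n=4$ this exponent $q$ would be infinite (no time gain), which is consistent with part (ii) being stated separately without a power of $T$.

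The only point requiring care — and the one I would present explicitly rather than leave to the reader — is the bookkeeping of the three time exponents in part (i): one must verify both that they are all $\geq 1$ (so Hölder applies: $\tfrac{12}{n}\geq 1$ and $q=\tfrac{4}{4-n}\geq 1$ for $1\leq n\leq 3$) and that the reciprocals sum correctly to $\tfrac{12-n}{12}$. There is no real obstacle here; it is a purely computational verification of exponent relations, and the dimensional restriction $1\leq n\leq 4$ is precisely what makes all exponents admissible. Hence the lemma follows directly from Hölder's inequality applied twice.
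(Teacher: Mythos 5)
Your proof is correct and is exactly the argument the paper intends: the lemma is stated there with only the remark that it follows from H\"older's inequality in the space and time variables, and your two-step application (spatial H\"older $L^{3/2}\leq L^3\cdot L^3$, resp. $L^2\leq L^4\cdot L^4$, followed by a three-exponent H\"older in time producing $|I|^{(4-n)/4}$) is the standard way to fill that in. The exponent bookkeeping $\tfrac{12-n}{12}=\tfrac{n}{12}+\tfrac{n}{12}+\tfrac{4-n}{4}\cdot\tfrac{1}{1}$ checks out, so nothing is missing.
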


Next we recall that a pair $(q,r)$ is called \textit{admissible} if
$$
\frac{2}{q}=n\left(\frac{1}{2}-\frac{1}{r}\right)
$$
and
$$
2\leq r\leq \frac{2n}{n-2} \quad (2\leq r\leq \infty\;\; \mbox{if}\;\; n=1\;\; \mbox{and} \;\; 2\leq r<\infty\;\; \mbox{if}\;\; n=2).
$$
In particular the pair $(\infty,2)$ is always admissible.  Note also that the pair $(12/n,3)$ is admissible if $1\leq n\leq 3$ and $(2,4)$ is admissible if $n=4$. In the following we will use the well known Strichartz inequalities (see, for instance, Theorem 2.3.3 in \cite{Cazenave}).

\begin{pro}[Strichartz's inequality]\label{stricha}
Let $(q_1,r_1)$ and $(q_2,r_2)$ be two admissible pairs and $I=[-T,T]$ for some $T>0$. Then, for $k=1,\ldots,l$,
$$
\|U_k(t)f\|_{L^{q_1}(\R;L^{r_1})}\leq C\|f\|_{L^2}
$$
and
$$
\left\| \int_0^tU_k(t-s)f(\cdot,s)ds \right\|_{L^{q_1}(I;L^{r_1})} \leq C\|f\|_{L^{q_2'}(I;L^{r_2'})},
$$
where $q_2'$ and $r_2'$ are the H\"older conjugate of $q_2$ and $r_2$, respectively.
\end{pro}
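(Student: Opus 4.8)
Since this is a classical fact, the plan is to reduce it to the free Schr\"odinger propagator and then quote the standard machinery. First I would write $U_k(t)=e^{-i\beta_k t/\alpha_k}\,e^{i(\gamma_k/\alpha_k)t\Delta}$ and note that the scalar factor $e^{-i\beta_k t/\alpha_k}$ is unimodular, hence irrelevant for any space--time $L^p$-norm; in the inhomogeneous estimate one splits $e^{-i\beta_k(t-s)/\alpha_k}=e^{-i\beta_k t/\alpha_k}e^{i\beta_k s/\alpha_k}$, the first factor pulling out of the time integral and the second being harmless inside the source term. After the admissible change of time variable $s=(\gamma_k/\alpha_k)t$, which only changes the constant $C$ (recall $\alpha_k,\gamma_k>0$ are fixed) and leaves the relation $\tfrac{2}{q}=n(\tfrac12-\tfrac1r)$ untouched, the operator $U_k(t)$ becomes the free group $e^{it\Delta}$, so it suffices to prove both inequalities for $e^{it\Delta}$.

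For $e^{it\Delta}$ the two building blocks are the $L^2$-isometry $\|e^{it\Delta}f\|_{L^2}=\|f\|_{L^2}$ and the dispersive decay $\|e^{it\Delta}f\|_{L^\infty}\le(4\pi|t|)^{-n/2}\|f\|_{L^1}$, read off the explicit Gaussian-type convolution kernel. Riesz--Thorin interpolation between them gives $\|e^{it\Delta}f\|_{L^r}\le C|t|^{-n(\frac12-\frac1r)}\|f\|_{L^{r'}}$ for $2\le r\le\infty$. From here the homogeneous estimate follows from the $TT^\ast$ argument: with $Tf=e^{it\Delta}f$ one bounds $\|T^\ast G\|_{L^2}^2=\langle TT^\ast G,G\rangle$ by observing that $TT^\ast G(t)=\int e^{i(t-s)\Delta}G(s)\,ds$ is controlled, via the decay estimate above and the one-dimensional Hardy--Littlewood--Sobolev inequality in $t$, by $C\|G\|_{L^{q'}(L^{r'})}^2$; the exponents match precisely because admissibility forces the kernel exponent $n(\tfrac12-\tfrac1r)$ to equal $\tfrac{2}{q}$. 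Duality then yields $\|e^{it\Delta}f\|_{L^q(\R;L^r)}\le C\|f\|_{L^2}$.

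For the inhomogeneous (retarded) bound I would first derive the non-retarded bilinear estimate $\big\|\int_{\R}U_k(t-s)f(s)\,ds\big\|_{L^{q_1}(L^{r_1})}\le C\|f\|_{L^{q_2'}(L^{r_2'})}$ for any two admissible pairs, by composing the two homogeneous estimates through $L^2$, and then pass to the truncated integral $\int_0^t$ using the Christ--Kiselev lemma, which applies as soon as $q_2'<q_1$; this covers every pair combination needed later in the subcritical range $1\le n\le3$, where the relevant pairs are $(\infty,2)$ and $(12/n,3)$. I expect the only genuinely delicate point to be the $L^2$-critical dimension $n=4$, where the pair $(2,4)$ is the Keel--Tao double endpoint: there the Hardy--Littlewood--Sobolev step fails and one must invoke instead the endpoint Strichartz estimate together with its inhomogeneous companion. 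Away from that endpoint everything reduces to the textbook argument, which is why it suffices to cite \cite[Theorem 2.3.3]{Cazenave}.
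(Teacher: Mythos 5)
Your proposal is correct and is precisely the standard argument (dispersive estimate, $TT^*$ with Hardy--Littlewood--Sobolev, Christ--Kiselev for the retarded integral, Keel--Tao at the endpoint) that the paper invokes by citing \cite[Theorem 2.3.3]{Cazenave}; the reduction of $U_k(t)$ to $e^{it\Delta}$ via the unimodular factor and time rescaling is also the right way to handle the constants $\alpha_k,\gamma_k,\beta_k$. No gaps.
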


 A combination of the last two results gives us the following. 
 
 \begin{lem}\label{estinormdiffk}
 Let $1\leq n\leq 4$ and suppose $\mathbf{u},\mathbf{u}'\in \mathbf{X}(I)$. Assume that \textnormal{\ref{H1}} and \textnormal{\ref{H2}} hold.
 \begin{enumerate}
 \item[(i)] If $1\leq n\leq 3$, then
 \begin{equation*}
\begin{split}
\left\|\int_{0}^{t}U_{k}(t-t')\frac{1}{\alpha_{k}}[ f_{k}(\mathbf{u})-f_{k}(\mathbf{u}')]\;dt'\right\|_{X(I)}&\leq CT^{\frac{4-n}{4}}\left(\|\mathbf{u}\|_{\mathbf{X}(I)}+\|\mathbf{u}'\|_{\mathbf{X}(I)}\right)\|\mathbf{u}-\mathbf{u}'\|_{\mathbf{X}(I)}.
\end{split}
\end{equation*}
\item[(ii)] If $n=4$, then
 \begin{equation*}
\begin{split}
\left\|\int_{0}^{t}U_{k}(t-t')\frac{1}{\alpha_{k}}[ f_{k}(\mathbf{u})-f_{k}(\mathbf{u}')]\;dt'\right\|_{X(I)}&\leq C\left(\|\mathbf{u}\|_{\mathbf{{L}}^{2}(\mathbf{L}^{4})}+\|\mathbf{u}'\|_{\mathbf{{L}}^{2}(\mathbf{L}^{4})}\right)\|\mathbf{u}-\mathbf{u}'\|_{\mathbf{{L}}^{2}(\mathbf{L}^{4})}.
\end{split}
\end{equation*}
 \end{enumerate}
 \end{lem}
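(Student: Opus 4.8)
The plan is to combine the Strichartz inequality from Proposition \ref{stricha} with the pointwise estimate for the difference $f_k(\mathbf{u})-f_k(\mathbf{u}')$ from Corollary \ref{limfk} and the product estimates in Lemma \ref{estfg}. The key observation is that the inhomogeneous Strichartz estimate lets us bound the Duhamel term in $X(I)$ by a space-time norm of $f_k(\mathbf{u})-f_k(\mathbf{u}')$ on a dual admissible pair, and the relevant dual pair is exactly the one appearing on the left-hand side of Lemma \ref{estfg}. First I would apply Proposition \ref{stricha} with $(q_1,r_1)$ ranging over the two admissible pairs defining $X(I)$, namely $(\infty,2)$ together with $(12/n,3)$ when $1\le n\le 3$, and $(\infty,2)$ together with $(2,2n/(n-2))$ when $n=4$; in the case $1\le n\le 3$ the second admissible pair $(12/n,3)$ has dual exponents giving source space $L^{12/(12-n)}(I;L^{3/2})$, while for $n=4$ the pair $(2,4)$ has dual $L^{1}(I;L^{4/3})$ — wait, more precisely one uses the pair $(2,2n/(n-2))=(2,4)$ whose Hölder conjugates yield $L^{2}(I;L^{4/3})$, but the cleaner route is to estimate directly in $L^{1}(I;L^2)$ via the admissible pair $(\infty,2)$, matching part (ii) of Lemma \ref{estfg}.

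The core of the argument is then purely about estimating $f_k(\mathbf{u})-f_k(\mathbf{u}')$. By Corollary \ref{limfk}, equation \eqref{estdiffkeq}, we have the pointwise bound
\begin{equation*}
|f_k(\mathbf{u})-f_k(\mathbf{u}')|\le C\sum_{m=1}^{l}\sum_{j=1}^{l}(|u_j|+|u_j'|)|u_m-u_m'|,
\end{equation*}
so each summand is a product of two functions, one of which lies in $\mathbf{X}(I)$ with norm controlled by $\|\mathbf{u}\|_{\mathbf{X}(I)}+\|\mathbf{u}'\|_{\mathbf{X}(I)}$ and the other is a component of $\mathbf{u}-\mathbf{u}'\in\mathbf{X}(I)$. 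Then I would apply Lemma \ref{estfg}: in case $1\le n\le 3$, part (i) gives
\begin{equation*}
\|(|u_j|+|u_j'|)|u_m-u_m'|\|_{L^{12/(12-n)}(L^{3/2})}\le CT^{\frac{4-n}{4}}(\|\mathbf{u}\|_{\mathbf{X}(I)}+\|\mathbf{u}'\|_{\mathbf{X}(I)})\|\mathbf{u}-\mathbf{u}'\|_{\mathbf{X}(I)},
\end{equation*}
and in case $n=4$, part (ii) gives the analogous bound in $L^{1}(L^2)$ with the $\mathbf{L}^2(\mathbf{L}^4)$ norms and no power of $T$. Summing over the finitely many indices $j,m,k$ and folding the factor $1/\alpha_k$ into the constant $C$ finishes the estimate.

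The only mild subtlety — and the step I would be most careful about — is bookkeeping the admissible/dual pairs so that the exponents match Lemma \ref{estfg} exactly: one must check that $(12/n,3)$ is admissible for $1\le n\le 3$ (already noted in the text) and that its conjugate pair is $(12/(12-n),3/2)$, and for $n=4$ that estimating in $L^1(I;L^2)$ via the $(\infty,2)$ endpoint is legitimate. There is also the trivial point that $|u_j|+|u_j'|$ has the same $X(I)$ and $L^2(L^4)$ norms (up to a constant) as the pair $(u_j,u_j')$, since $X(I)$ and these mixed-norm spaces are built from absolute values. Once these identifications are in place the proof is a direct concatenation of Proposition \ref{stricha}, Corollary \ref{limfk} and Lemma \ref{estfg}, with no genuine obstacle.
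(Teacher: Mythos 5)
Your proposal is correct and follows essentially the same route as the paper: the paper also bounds the Duhamel term via Proposition \ref{stricha} with dual pair $(12/(12-n),3/2)$ for $1\leq n\leq 3$ (resp.\ $L^1(L^2)$ via the $(\infty,2)$ endpoint for $n=4$), then applies the pointwise bound of Corollary \ref{limfk} and Lemma \ref{estfg}. Your choice of $L^1(I;L^2)$ for the critical case is exactly what the paper's Lemma \ref{estfg}(ii) is set up for, so there is no gap.
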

 \begin{proof}
 In view of Proposition \ref{stricha}, Corollary \ref{limfk} and  Lemma \ref{estfg} (i) we get
  \begin{equation*}
  \begin{split}
 \bigg\|\int_{0}^{t}U_{k}(t-t')&\frac{1}{\alpha_{k}}[ f_{k}(\mathbf{u})-f_{k}(\mathbf{u}')]\;dt'\bigg\|_{X(I)} \leq C\left\| f_{k}(\mathbf{u})-f_{k}(\mathbf{u}')]\right\|_{L^{12/(12-n)}(L^{3/2})}\\
 &\leq C\sum_{j=1}^{l}\sum_{m=1}^{l}\|(|u_{j}|+|u_{j}'|)|u_{m}-u_{m}'|\|_{L^{12/(12-n)}(L^{3/2})}\\
 &\leq CT^{\frac{4-n}{4}}\sum_{j=1}^{l}\sum_{m=1}^{l}(\|u_{j}\|_{X(I)}+\|u_{j}'\|_{X(I)})\|u_{m}-u_{m}'\|_{X(I)}\\
& \leq CT^{\frac{4-n}{4}} \left(\|\mathbf{u}\|_{\mathbf{X}(I)}+\|\mathbf{u}'\|_{\mathbf{X}(I)}\right)\|\mathbf{u}-\mathbf{u}'\|_{\mathbf{X}(I)}.
  \end{split}
  \end{equation*}
For $n=4$, the proof follows similar steps, taking into account   Lemma \ref{estfg} (ii).
 \end{proof}
 
 Now we are able to prove the existence of local solutions.
 
 \begin{teore}[Existence of local $L^2$-solutions: subcritical case]\label{localexistenceL2} Let $1\leq n\leq 3$. Assume that   \textnormal{\ref{H1}} and \textnormal{\ref{H2}} hold. Then for any $r>0$ there exists $T=T(r)>0$ such that for any $\mathbf{u}_0\in \mathbf{L}^2 $ with $\|\mathbf{u}_0\|_{\mathbf{L}^2}\leq r$, system \eqref{system1}
 has a unique   solution $\mathbf{u}\in \mathbf{X}(I)$ with $I=[-T,T]$.
\end{teore}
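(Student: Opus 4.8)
The plan is to run a standard fixed-point argument for the integral equation \eqref{system2} in a closed ball of the Banach space $\mathbf{X}(I)$. First I would define, for $\mathbf{u}\in\mathbf{X}(I)$, the map $\Phi=(\Phi_1,\ldots,\Phi_l)$ by
\[
\Phi_k(\mathbf{u})(t)=U_k(t)u_{k0}+\frac{i}{\alpha_k}\int_0^t U_k(t-t')f_k(u_1,\ldots,u_l)\,dt',
\]
and show it is a contraction on $B_R=\{\mathbf{u}\in\mathbf{X}(I):\|\mathbf{u}\|_{\mathbf{X}(I)}\leq R\}$ for a suitable radius $R$ (to be taken of the order of $r$, say $R=2Cr$ where $C$ is the Strichartz constant for the linear part) and a suitably small $T=T(r)>0$. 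The linear term is controlled by Strichartz (Proposition \ref{stricha}) using that $(\infty,2)$ and $(12/n,3)$ are admissible pairs: $\|U_k(\cdot)u_{k0}\|_{X(I)}\leq C\|u_{k0}\|_{L^2}\leq Cr$, so summing over $k$, $\|(U_k(\cdot)u_{k0})_k\|_{\mathbf{X}(I)}\leq Cr$.

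For the nonlinear term I would invoke Lemma \ref{estinormdiffk}(i): applied with $\mathbf{u}'=\mathbf{0}$ (and using \ref{H1}, so $f_k(\mathbf 0)=0$) it gives
\[
\left\|\frac{1}{\alpha_k}\int_0^t U_k(t-t')f_k(\mathbf u)\,dt'\right\|_{X(I)}\leq C T^{\frac{4-n}{4}}\|\mathbf u\|_{\mathbf X(I)}^2,
\]
while the full statement of Lemma \ref{estinormdiffk}(i) gives the Lipschitz bound
\[
\|\Phi(\mathbf u)-\Phi(\mathbf u')\|_{\mathbf X(I)}\leq C T^{\frac{4-n}{4}}\big(\|\mathbf u\|_{\mathbf X(I)}+\|\mathbf u'\|_{\mathbf X(I)}\big)\|\mathbf u-\mathbf u'\|_{\mathbf X(I)}.
\]
Hence for $\mathbf u,\mathbf u'\in B_R$ one has $\|\Phi(\mathbf u)\|_{\mathbf X(I)}\leq Cr+CT^{\frac{4-n}{4}}R^2$ and $\|\Phi(\mathbf u)-\Phi(\mathbf u')\|_{\mathbf X(I)}\leq 2CT^{\frac{4-n}{4}}R\,\|\mathbf u-\mathbf u'\|_{\mathbf X(I)}$. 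Choosing $R=2Cr$ and then $T=T(r)>0$ small enough that $CT^{\frac{4-n}{4}}R^2\leq R/2$ and $2CT^{\frac{4-n}{4}}R\leq 1/2$ (possible precisely because $1\leq n\leq 3$ makes the exponent $\frac{4-n}{4}>0$, so $T^{\frac{4-n}{4}}\to 0$ as $T\to 0$), $\Phi$ maps $B_R$ into itself and is a $\tfrac12$-contraction there. The Banach fixed-point theorem yields a unique fixed point $\mathbf u\in B_R$, which solves \eqref{system2}; that this is equivalent to \eqref{system1} in the appropriate (distributional/Duhamel) sense is routine. Continuity in $t$ with values in $\mathbf L^2$ is built into the definition of $\mathbf X(I)$, and one checks $\Phi$ preserves it using the strong continuity of $U_k(\cdot)$ and dominated convergence on the Duhamel term.

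For uniqueness in the full space $\mathbf X(I)$ (not merely within $B_R$), I would use the standard argument: if $\mathbf u,\mathbf v\in\mathbf X(I)$ are two solutions, apply the Lipschitz estimate of Lemma \ref{estinormdiffk}(i) on a subinterval $[-T',T']$ with $T'$ small to get $\|\mathbf u-\mathbf v\|_{\mathbf X([-T',T'])}\leq \tfrac12\|\mathbf u-\mathbf v\|_{\mathbf X([-T',T'])}$, forcing $\mathbf u=\mathbf v$ there, and then bootstrap along $I$ by a continuity/connectedness argument since the local existence time depends only on the $\mathbf L^2$-norm, which is controlled on $I$. The only mild subtlety — and the step I would be most careful about — is the $T'$-dependence: the nonlinear estimates gain a positive power of $T$ (because the nonlinearity is $L^2$-subcritical in these dimensions), and this is exactly what lets the contraction radius be chosen independently of the fine structure of the data while the time depends only on $\|\mathbf u_0\|_{\mathbf L^2}\leq r$. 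No genuinely hard obstacle arises here; all the analytic content has been front-loaded into Lemma \ref{estfg}, Proposition \ref{stricha}, Corollary \ref{limfk} and Lemma \ref{estinormdiffk}.
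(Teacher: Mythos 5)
Your proposal is correct and follows essentially the same route as the paper: a contraction mapping argument in the ball $B(T,a)$ with $a=2Cr$, using Strichartz estimates for the linear part and Lemma \ref{estinormdiffk}(i) (with $\mathbf{u}'=\mathbf{0}$ and in full) for the Duhamel term, with $T$ chosen so that $CT^{1-n/4}a<1/2$. The extra remarks you add on uniqueness in all of $\mathbf{X}(I)$ and on continuity in time are standard refinements the paper leaves implicit.
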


\begin{proof}
The proof relies on the contraction mapping principle.  Define the operator
$$\Gamma(\mathbf{u})=(\Phi_{1}(\mathbf{u}),\ldots,\Phi_{l}(\mathbf{u})),$$
where 
\begin{equation*}
\Phi_{k}(\mathbf{u})(t)=\displaystyle U_{k}(t)u_{k0}+i\int_{0}^{t}U_{k}(t-t') \frac{1}{\alpha_{k}}f_{k}(\mathbf{u})\;dt',\qquad k=1,\ldots,l.
\end{equation*}
For some $T>0$ to be determined later, introduce the ball of radius $a$:
	$$B(T,a)=\left\{\mathbf{u}\in \mathbf{X}(I):\|\mathbf{u}\|_{\mathbf{X}(I)}:=\sum_{j=1}^{l}\|u_{j}\|_{X(I)}\leq a \right\}.$$
Using Strichartz estimates and Lemma \ref{estinormdiffk} (with $\mathbf{u}'=0$) we get 
\begin{equation*}
\begin{split}
\|\Gamma(\mathbf{u})\|_{\mathbf{X}(I)}
&\leq C\|\mathbf{u}_0\|_{\mathbf{L}^{2}}+CT^{1-n/4} \|\mathbf{u}\|_{\mathbf{X}(I)}^{2}.
\end{split}
\end{equation*}
Let us choose $a=2Cr$. Thus, if $\mathbf{u}\in B(T,a)$,  
 $$\|\Gamma(\mathbf{u})\|_{\mathbf{X}(I)}\leq \frac{a}{2}+CT^{1-n/4}a^2=\left(\frac{1}{2}+CT^{1-n/4}a\right)a.$$
So,   fixing $T>0$ such that 
$ CT^{1-n/4}a< 1/2$ (which means that $T=T(r)\approx r^{\frac{4}{n-4}}$)
we have $\|\Gamma(\mathbf{u})\|_{\mathbf{X}(I)}\leq a$. Therefore $\Gamma:B(T,a)\to B(T,a)$ is well defined. Moreover, similar arguments
show that $\Gamma$ is a contraction. The result then follows from the contraction mapping principle.
\end{proof}

\begin{teore}[Existence of local $L^2$-solutions: critical case]\label{locexistL2n=4}
    Let $n=4$. Assume that  \textnormal{\ref{H1}} and \textnormal{\ref{H2}} hold. Then for any $\mathbf{u}_0\in \mathbf{L}^2$, there exists $T(\mathbf{u}_0)>0$ (depending on $\mathbf{u}_0$) such that  system \eqref{system1}
 has a unique  solution $\mathbf{u}\in\mathbf{X}(I)$ with $I=[-T(\mathbf{u}_0),T(\mathbf{u}_0)]$.
\end{teore}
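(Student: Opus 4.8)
The plan is to prove Theorem \ref{locexistL2n=4} by the contraction mapping principle, mimicking the scheme used in Theorem \ref{localexistenceL2}, but exploiting the fact that in the $L^2$-critical dimension $n=4$ the relevant nonlinear estimate (Lemma \ref{estinormdiffk}(ii)) carries no positive power of $T$ in front. Since we cannot make the contraction constant small by shrinking $T$, we must instead make the free term $U_k(t)u_{k0}$ small in the auxiliary norm $L^2(I;L^4)$; this is possible, for fixed $\mathbf{u}_0\in\mathbf{L}^2$, because $\|U_k(t)u_{k0}\|_{L^2(\R;L^4)}<\infty$ by the Strichartz estimate in Proposition \ref{stricha} (with the admissible pair $(2,4)$), hence $\|U_k(t)u_{k0}\|_{L^2(I;L^4)}\to 0$ as $|I|\to 0$ by dominated convergence. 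This is precisely the place where the time of existence ceases to depend only on the norm of the data and becomes genuinely data-dependent.

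Concretely, I would fix $\mathbf{u}_0\in\mathbf{L}^2$ and, given $T>0$ with $I=[-T,T]$, set
$$
\delta(T)=\sum_{k=1}^{l}\|U_k(t)u_{k0}\|_{L^2(I;L^4)},
$$
so that $\delta(T)\to 0$ as $T\to 0^+$. Define the map $\Gamma$ and the operators $\Phi_k$ exactly as in the proof of Theorem \ref{localexistenceL2}, and work on the set
$$
B(T,a)=\left\{\mathbf{u}\in\mathbf{X}(I):\|\mathbf{u}\|_{\mathbf{X}(I)}\leq a,\ \sum_{k=1}^{l}\|u_k\|_{L^2(I;L^4)}\leq b\right\}
$$
for suitable radii $a,b>0$ to be chosen. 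Using Strichartz's inequality together with Lemma \ref{estinormdiffk}(ii) (with $\mathbf{u}'=0$), one gets both $\|\Gamma(\mathbf{u})\|_{\mathbf{X}(I)}\leq C\|\mathbf{u}_0\|_{\mathbf{L}^2}+C\|\mathbf{u}\|^2_{\mathbf{L}^2(\mathbf{L}^4)}$ and a control of $\sum_k\|\Phi_k(\mathbf{u})\|_{L^2(I;L^4)}$ by $\delta(T)+C\,b^2$. Choosing first $b$ small enough that $C b<1/2$, and then $T$ small enough that $\delta(T)\leq b/2$ and so that the resulting $a$ (of order $\|\mathbf{u}_0\|_{\mathbf{L}^2}+b^2$) is consistent, makes $\Gamma$ map $B(T,a)$ into itself. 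The same estimates, applied to the difference $f_k(\mathbf{u})-f_k(\mathbf{u}')$ via Lemma \ref{estinormdiffk}(ii) again, show that $\Gamma$ is a contraction on $B(T,a)$ in the $\mathbf{L}^2(\mathbf{L}^4)$-metric (which is complete on the closed ball), so Banach's fixed point theorem yields the unique solution $\mathbf{u}\in\mathbf{X}(I)$.

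The main obstacle, and the only real departure from the subcritical argument, is the smallness mechanism: one must be careful to close the fixed-point argument using two coupled radii — one for the full $\mathbf{X}(I)$-norm and one for the scale-critical piece $\mathbf{L}^2(\mathbf{L}^4)$ — since it is only the latter that can be made small, and the contraction property must be established in that weaker metric while still landing back inside the ball measured in the stronger norm. A minor technical point to check is that $B(T,a)$ endowed with the $\mathbf{L}^2(\mathbf{L}^4)$-distance is a complete metric space, which follows because it is a closed subset of the Banach space $\mathbf{X}(I)$ and the two relevant bounds defining $B(T,a)$ are preserved under $\mathbf{X}(I)$-limits. Uniqueness in the full space $\mathbf{X}(I)$ (not merely in the ball) follows by a standard continuation/connectedness argument on the interval, again using that the nonlinear estimate gains smallness on sufficiently short subintervals via $\delta(\cdot)$.
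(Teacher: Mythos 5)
Your proposal is correct and follows essentially the same route as the paper: the contraction is closed in the scale-critical $\mathbf{L}^{2}(I;\mathbf{L}^{4})$ metric, with smallness coming from the Strichartz bound $\|U_k(\cdot)u_{k0}\|_{L^{2}(\R;L^{4})}<\infty$ forcing $\|U_k(\cdot)u_{k0}\|_{L^{2}(I;L^{4})}\to 0$ as $|I|\to 0$, exactly as in the paper's proof. The only (cosmetic) difference is that you carry the $\mathbf{X}(I)$-bound as a second radius inside the ball, whereas the paper runs the fixed point purely in $\mathbf{L}^{2}(I;\mathbf{L}^{4})$ and then upgrades the solution to $\mathbf{X}(I)$ a posteriori by feeding it back into the integral equation via Strichartz and Lemma \ref{estinormdiffk}.
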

 \begin{proof}
 We apply the contraction mapping principle again.
 From Proposition \ref{stricha} we have $(U_{1}(\cdot)u_{10},\ldots,U_{l}(\cdot)u_{l0})\in \mathbf{{L}}^{2}(\R;\mathbf{L}^4)$. Therefore, given any $\epsilon>0$ we can choose $T=T(\mathbf{u}_0)>0$ such that 
 $$
 \|(U_{1}(\cdot)u_{10},\ldots,U_{l}(\cdot)u_{l0})\|_{\mathbf{{L}}^{2}(I;\mathbf{L}^4)}\leq\epsilon,
 $$
where $I=[-T,T]$. Let $\Gamma$ be defined as in the proof of Theorem \ref{localexistenceL2} and define the ball	
$$
\tilde{B}(T,a)=\left\{\mathbf{u}\in \mathbf{{L}}^{2}(I;\mathbf{L}^4):\|\mathbf{u}\|_{ \mathbf{{L}}^{2}(I;\mathbf{L}^4)}:=\sum_{j=1}^{l}\|u_{j}\|_{L^{2}(I;L^{4})}\leq a \right\}.
$$
If $\mathbf{u}\in \tilde{B}(T,a)$, we have from Lemma \ref{estinormdiffk}
\begin{equation*}
\begin{split}
\sum_{k=1}^{l}\|\Phi_{k}(\mathbf{u})\|_{L^{2}(I;L^{4})} 
&\leq  lC\epsilon+lCa^{2}.
\end{split} 
\end{equation*}
Taking $a$ such that $a=2Cl\epsilon$, we have
$$ \|\Gamma(\mathbf{u})\|_{\mathbf{{L}}^{2}(I;\mathbf{L}^4)}\leq \frac{a}{2}+lCa^2=\left(\frac{1}{2}+lCa\right)a.$$
So, fixing $\epsilon$  such that $2 l^{2}C^{2}\epsilon< 1/2$, which means that $ lCa<1/2$,
we conclude that $\|\Gamma(\mathbf{u})\|_{\mathbf{{L}}^{2}(I;\mathbf{L}^4)}\leq a$. Therefore $\Gamma:\tilde{B}(T,a)\to\tilde{B}(T,a)$ is well defined.
A similar argument also shows that $\Gamma$ is a contraction. The contraction mapping principle then gives a unique solution in ${\mathbf{{L}}^{2}(I;\mathbf{L}^4)}$. Too see that such a solution indeed belongs to $\mathbf{X}(I)$ it suffices to use Strichartz's inequality and Lemma \ref{estinormdiffk} in \eqref{system2}. \end{proof}

 \subsection{Local existence of $H^1$-solutions}
Next we will study the existence of local solutions in the $H^1$ subcritical and critical regimes, that is, in dimensions $1\leq n\leq 6$. Thus, we assume that $u_{01},\ldots,u_{0l}\in {H}^1$ and  solve (\ref{system2}) in the following spaces:
\begin{equation*}
Y(I)=\begin{cases}
(\mathcal{C}\cap L^{\infty})(I; H^1)\cap L^{12/n}\left(I;W^{1}_{3}\right),\qquad 1\leq n\leq 3,\\
(\mathcal{C}\cap L^{\infty})(I; H^1)\cap L^{2}(I;W^{1}_{2n/(n-2)}),\qquad n\geq 4.
\end{cases}
\end{equation*}
on the time interval $I=[-T,T]$ with $T>0$. The norm in $Y(I)$ is defined as
\begin{equation*}
\|f\|_{Y(I)}=\begin{cases}
\|f\|_{L^{\infty}( H^1)}+ \|f\|_{L^{12/n}( W^{1}_{3})},\qquad 1\leq n\leq 3;\\
\|f\|_{L^{\infty}( H^1)}+ \|f\|_{L^{2}( W^{1}_{2n/(n-2)})}, \qquad n\geq 4.
\end{cases}
\end{equation*}

\begin{obs}\label{embedXY}
We point out the followings facts about the spaces $X(I)$ and $Y(I)$.
\begin{enumerate}
\item[(i)] $Y(I)\hookrightarrow X(I)$.
\item[(ii)] For $n\geq 1$ we have $\|\nabla f\|_{X(I)}\leq  \|f\|_{Y(I)}$.
\end{enumerate}
\end{obs}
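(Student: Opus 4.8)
The plan is to read off both inclusions directly from the definitions, exploiting the fact that $X(I)$ and $Y(I)$ are built in exactly parallel ways: each is the intersection of a time-endpoint component $(\mathcal{C}\cap L^{\infty})(I;\cdot)$ with a Strichartz-type component $L^{q}(I;\cdot)$, and passing from $X$ to $Y$ merely replaces the Lebesgue spaces $L^2$ and $L^r$ (with $r=3$ when $1\le n\le 3$ and $r=2n/(n-2)$ when $n\ge 4$) by the Sobolev spaces $H^1$ and $W^1_r$ carrying the same integrability exponents. I would therefore treat the two regimes $1\le n\le 3$ and $n\ge 4$ at once, writing $r$ for the spatial exponent and $q$ for the corresponding (admissible) time exponent, and recalling the defining norm structures $\|g\|_{H^1}=\|g\|_{L^2}+\|\nabla g\|_{L^2}$ and $\|g\|_{W^1_r}=\|g\|_{L^r}+\|\nabla g\|_{L^r}$.

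For part (i), I would first record the elementary pointwise-in-time bounds $\|g\|_{L^2}\le\|g\|_{H^1}$ and $\|g\|_{L^r}\le\|g\|_{W^1_r}$, which hold simply because the full Sobolev norm dominates its $L^p$ summand. Taking the supremum, respectively the $L^q$ norm, in $t\in I$ gives $\|f\|_{L^{\infty}(L^2)}\le\|f\|_{L^{\infty}(H^1)}$ and $\|f\|_{L^{q}(L^r)}\le\|f\|_{L^{q}(W^1_r)}$, and adding these two inequalities yields $\|f\|_{X(I)}\le\|f\|_{Y(I)}$. The continuity-in-time clause transfers for free, since the embeddings $H^1\hookrightarrow L^2$ and $W^1_r\hookrightarrow L^r$ are continuous, whence $\mathcal{C}(I;H^1)\subset\mathcal{C}(I;L^2)$; thus $Y(I)\subset X(I)$ with norm control, i.e.\ $Y(I)\hookrightarrow X(I)$.

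For part (ii), the one extra observation is that the $W^1_p$-norm controls the $L^p$-norm of the gradient, $\|\nabla g\|_{L^p}\le\|g\|_{W^1_p}$, again because $\|\nabla g\|_{L^p}$ is one of the summands defining the $W^1_p$ norm; likewise $\|\nabla g\|_{L^2}\le\|g\|_{H^1}$. Applying these with $p=r$ and then taking $\sup$ or the $L^q$ norm in time gives $\|\nabla f\|_{L^{\infty}(L^2)}\le\|f\|_{L^{\infty}(H^1)}$ and $\|\nabla f\|_{L^{q}(L^r)}\le\|f\|_{L^{q}(W^1_r)}$; summing these produces $\|\nabla f\|_{X(I)}\le\|f\|_{Y(I)}$, which is the asserted estimate.

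I do not anticipate any real obstacle: the whole remark is a bookkeeping consequence of the definitions of the Sobolev norms together with the trivial monotonicity that a norm dominates each of its defining summands. The only mild points of care are (a) carrying the two cases $1\le n\le 3$ and $n\ge 4$, with their different admissible pairs, through the argument—this is purely notational—and (b) not forgetting the time-continuity part of (i), which is automatic from the continuity of the underlying embeddings.
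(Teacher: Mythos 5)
Your proof is correct and is exactly the elementary verification the paper intends: the remark is stated without proof precisely because both parts follow from the fact that the $H^1$ and $W^1_r$ norms dominate, term by term, the $L^2$ and $L^r$ norms of the function and of its gradient. Nothing is missing; the handling of the two regimes and of the time-continuity clause is as it should be.
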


Using Remark \ref{embedXY}, H\"{o}lder inequalities and Sobolev's embedding  we first establish the following.
\begin{lem}\label{estfgY} Assume $1\leq n\leq 6$ and $f,g\in Y(I)$. Define
 \begin{equation*}
 \theta(n)=
 \begin{cases}
 (4-n)/4, \quad\mbox{if}\quad 1\leq n\leq 3;\\
 (6-n)/4,\quad\mbox{if}\quad 4\leq n\leq 6.
 \end{cases}
 \end{equation*}
 Then,
 \begin{equation}\label{estfg1}
 \|fg\|_{L^{12/(12-n)}(L^{3/2})}\leq C T^{\theta(n)}\|f\|_{Y(I)}\|g\|_{Y(I)}, \quad if\quad 1\leq n\leq 3.
 \end{equation}
 and
 \begin{equation}\label{estfg2}
\|fg\|_{L^{4/(8-n)}( L^{n/(n-2)})}\leq
CT^{\theta(n)}\|f\|_{Y(I)}\|g\|_{Y(I)}, \quad\mbox{if}\quad 4\leq n\leq 6.
\end{equation}
 \end{lem}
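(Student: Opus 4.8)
The plan is to prove the two product estimates \eqref{estfg1} and \eqref{estfg2} by the same bookkeeping device used in Lemma \ref{estfg}: split the left-hand norm into a space part and a time part, apply H\"older in the spatial variable to distribute the exponents, pick up a power of $T$ from H\"older in time, and finally absorb everything into the $Y(I)$-norm using the definition of $Y(I)$, Remark \ref{embedXY}(i), and Sobolev's embedding $W^1_p\hookrightarrow L^{p^*}$. Throughout I would keep in mind that membership in $Y(I)$ controls, via $Y(I)\hookrightarrow X(I)$, the $L^\infty(L^2)$ norm and one ``Strichartz'' norm ($L^{12/n}(L^3)$ for $1\le n\le3$, $L^2(L^{2n/(n-2)})$ for $n\ge4$), and that by Sobolev embedding it also controls the corresponding $W^1_q$-norm at a higher Lebesgue exponent; the trick is to write the target pair $(L^{3/2}$ or $L^{n/(n-2)})$ as the H\"older-conjugate combination of exactly those controlled exponents.

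For the range $1\le n\le3$ I would argue exactly as in Lemma \ref{estfg}(i): in space, $\tfrac{2}{3}=\tfrac13+\tfrac13$ gives $\|fg\|_{L^{3/2}}\le\|f\|_{L^3}\|g\|_{L^3}$, so $\|fg\|_{L^{12/(12-n)}(L^{3/2})}\le\|\,\|f\|_{L^3}\|g\|_{L^3}\,\|_{L^{12/(12-n)}_t}$; then in time I would use H\"older with the splitting of $\tfrac{12-n}{12}$ into $\tfrac{1}{q_0}+\tfrac{n}{12}+\tfrac{n}{12}$ where $1/q_0=\theta(n)=(4-n)/4$, which is legitimate precisely because $\theta(n)>0$ for $n\le3$; this produces the factor $T^{\theta(n)}$ times $\|f\|_{L^{12/n}(L^3)}\|g\|_{L^{12/n}(L^3)}$, and the last two norms are bounded by $\|f\|_{X(I)}\|g\|_{X(I)}\le\|f\|_{Y(I)}\|g\|_{Y(I)}$ by Remark \ref{embedXY}(i). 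For $4\le n\le6$ the structure is the same but with the admissible pair $(2,\tfrac{2n}{n-2})$: in space I would check that $\tfrac{n-2}{n}=\tfrac12\cdot\tfrac{n-2}{n}+\tfrac12\cdot\tfrac{n-2}{n}$, i.e.\ $L^{n/(n-2)}$ sits at the conjugate of two copies of $L^{2n/(n-2)}$ via $\tfrac{n-2}{n}=\tfrac{n-2}{2n}+\tfrac{n-2}{2n}$, so $\|fg\|_{L^{n/(n-2)}}\le\|f\|_{L^{2n/(n-2)}}\|g\|_{L^{2n/(n-2)}}$; then H\"older in time with $\tfrac{8-n}{4}=\tfrac{1}{q_0}+\tfrac12+\tfrac12$, i.e.\ $1/q_0=\theta(n)=(6-n)/4>0$ for $n\le6$, yields $T^{\theta(n)}\|f\|_{L^2(L^{2n/(n-2)})}\|g\|_{L^2(L^{2n/(n-2)})}$, and again these norms are dominated by $\|f\|_{Y(I)}\|g\|_{Y(I)}$. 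In both cases I would remark that when $n=4$ (respectively $n=6$) one has $\theta(n)=0$ and the estimate is $T$-independent, consistent with the critical scaling, so no positive power of $T$ is claimed there.

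The routine but slightly delicate point is the arithmetic of the H\"older exponents: one must verify that the exponents chosen are all $\ge1$ (so the H\"older inequality applies) and that they sum correctly, and that the time-exponent defect is exactly $\theta(n)$ as defined; I would carry out this check once in each regime and then the rest is immediate. The main obstacle, such as it is, is bookkeeping rather than anything conceptual: making sure the spatial H\"older split lands precisely on the Lebesgue exponents that $Y(I)$ controls (which forces the choice $L^3$ for $n\le3$ and $L^{2n/(n-2)}$ for $n\ge4$) and that the leftover time-integrability is nonnegative, which is exactly the content of $\theta(n)\ge0$ on the stated range $1\le n\le6$. Once those are in place, the estimates \eqref{estfg1} and \eqref{estfg2} follow, completing the proof.
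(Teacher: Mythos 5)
Your treatment of the range $1\le n\le 3$ is correct and is exactly the paper's route: the spatial H\"older $\|fg\|_{L^{3/2}}\le\|f\|_{L^3}\|g\|_{L^3}$ followed by the time splitting $\tfrac{12-n}{12}=\tfrac{4-n}{4}+\tfrac{n}{12}+\tfrac{n}{12}$ is precisely Lemma \ref{estfg}(i), and the passage to $Y(I)$ via $Y(I)\hookrightarrow X(I)$ is how the paper disposes of \eqref{estfg1}.

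The case $4\le n\le 6$, however, contains a genuine error. With both factors measured in $L^2_t\bigl(L^{2n/(n-2)}\bigr)$, the time-H\"older identity you write, $\tfrac{8-n}{4}=\tfrac{1}{q_0}+\tfrac12+\tfrac12$, forces $1/q_0=\tfrac{8-n}{4}-1=\tfrac{4-n}{4}$, not $\tfrac{6-n}{4}$; this is negative for $n=5,6$, so the inequality cannot be closed that way. Put differently, two $L^2_t$ factors only give the product $t\mapsto\|f(t)\|_{L^{2n/(n-2)}}\|g(t)\|_{L^{2n/(n-2)}}$ in $L^1_t$, and $L^1(I)$ does not control the $L^{4/(8-n)}(I)$ norm when $\tfrac{4}{8-n}>1$, i.e.\ for $n=5,6$. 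The missing ingredient — which you mention in your opening paragraph but do not use in the computation — is the $L^\infty_t(H^1)$ component of the $Y(I)$ norm together with the Sobolev embedding $H^1\hookrightarrow L^{2n/(n-2)}$: the paper estimates $\|fg\|_{L^{n/(n-2)}}\le\|f\|_{L^{2n/(n-2)}}\|g\|_{L^{2n/(n-2)}}\le C\|f\|_{H^1}\|g\|_{L^{2n/(n-2)}}$ and then applies H\"older in time with the splitting $\tfrac{8-n}{4}=\tfrac{6-n}{4}+0+\tfrac12$, placing $f$ in $L^\infty_t(H^1)$ and $g$ in $L^2_t\bigl(W^1_{2n/(n-2)}\bigr)$; this yields $1/q_0=\tfrac{6-n}{4}=\theta(n)\ge0$ on the whole range. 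Relatedly, your closing remark that $\theta(4)=0$ is incorrect: $\theta(4)=\tfrac{6-4}{4}=\tfrac12$, and only $\theta(6)=0$ (the $H^1$-critical dimension); the value $0$ at $n=4$ is an artifact of your erroneous splitting.
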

\begin{proof}
Estimate \eqref{estfg1} follows immediately from Lemma \ref{estfg} taking into account Remark \ref{embedXY}. For \eqref{estfg2}, note that from H\"older and Sobolev's inequalities, 
$$
\|fg\|_{L^{n/(n-2)}}\leq \|f\|_{L^{2n/(n-2)}}\|g\|_{L^{2n/(n-2)}}\leq C\|f\|_{H^1}\|g\|_{L^{2n/(n-2)}}.
$$
Hence,
\[
\begin{split}
\|fg\|_{L^{4/(8-n)}( L^{n/(n-2)})}&\leq CT^{\theta(n)}\|f\|_{L^\infty (H^1)}\|g\|_{L^2(W^1_{2n/(n-2)})}\\
&\leq CT^{\theta(n)}\|f\|_{Y(I)}\|g\|_{Y(I)},
\end{split}
\]
which gives the desired.
\end{proof}

As a consequence of Lemma \ref{estgraddiffk} we have the following estimate for the integral part in system \eqref{system2}.

 \begin{lem}\label{estinormdiffkY}
 Let $1\leq n\leq 6$ and    assume that  \textnormal{\ref{H1}} and \textnormal{\ref{H2}} hold. Let $\theta(n)$ be defined as in Lemma \ref{estfgY}. If $\mathbf{u}, \mathbf{u}'\in \mathbf{Y}(I)$, for some time interval $I$, then
 \begin{equation*}
\left\|\int_{0}^{t}U_{k}(t-t') \frac{1}{\alpha_{k}}[ f_{k}(\mathbf{u})-f_{k}(\mathbf{u}')]\;dt'\right\|_{Y(I)}\leq CT^{\theta(n)}\left(\|\mathbf{u}\|_{\mathbf{Y}(I)}+\|\mathbf{u}'\|_{\mathbf{Y}(I)}\right)\|\mathbf{u}-\mathbf{u}'\|_{\mathbf{Y}(I)}.
\end{equation*}
\end{lem}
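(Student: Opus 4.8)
The plan is to exploit that the group $U_k(t)$ commutes with spatial derivatives, so that estimating the $Y(I)$-norm of the Duhamel term
\[
\int_0^t U_k(t-t')\frac{1}{\alpha_k}\bigl[f_k(\mathbf{u})-f_k(\mathbf{u}')\bigr]\,dt'
\]
reduces, by the very definition of the norms, to estimating the $X(I)$-norm of that term together with the $X(I)$-norm of $\int_0^t U_k(t-t')\frac{1}{\alpha_k}\nabla[f_k(\mathbf{u})-f_k(\mathbf{u}')]\,dt'$; indeed $\|h\|_{Y(I)}\approx\|h\|_{X(I)}+\|\nabla h\|_{X(I)}$. To each of these two terms I would apply the inhomogeneous Strichartz inequality of Proposition~\ref{stricha}, pairing the admissible pairs that define $X(I)$ ($(\infty,2)$ together with $(12/n,3)$ for $1\le n\le 3$, and $(\infty,2)$ together with $(2,2n/(n-2))$ for $4\le n\le 6$) with the admissible pair whose dual space is the one appearing in Lemma~\ref{estfgY}: namely $(12/n,3)$ itself when $1\le n\le 3$, and $(4/(n-4),n/2)$ when $4\le n\le 6$. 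A direct check of $\frac{2}{q}=n(\frac12-\frac1r)$ and $2\le r\le 2n/(n-2)$ shows the latter pair is admissible for $4\le n\le 6$ (it degenerates to $(\infty,2)$ at $n=4$ and reaches the endpoint $r=2n/(n-2)$ at $n=6$), and that its dual norm is exactly $L^{12/(12-n)}(L^{3/2})$ for $1\le n\le 3$ and $L^{4/(8-n)}(L^{n/(n-2)})$ for $4\le n\le 6$. Hence the matter is reduced to bounding $f_k(\mathbf{u})-f_k(\mathbf{u}')$ and $\nabla[f_k(\mathbf{u})-f_k(\mathbf{u}')]$ in precisely the spaces handled by Lemma~\ref{estfgY}.

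For the part without derivatives I would use the pointwise quadratic bound \eqref{estdiffkeq} of Corollary~\ref{limfk},
\[
|f_k(\mathbf{u})-f_k(\mathbf{u}')|\le C\sum_{m=1}^{l}\sum_{j=1}^{l}\bigl(|u_j|+|u_j'|\bigr)\,|u_m-u_m'|,
\]
and then apply Lemma~\ref{estfgY} to each product (the absolute values are harmless, that lemma being proved purely via Hölder and Sobolev inequalities). Summing over $j,m$ and recalling $\sum_j\|u_j\|_{Y(I)}=\|\mathbf{u}\|_{\mathbf{Y}(I)}$ gives, for this piece, the bound $CT^{\theta(n)}(\|\mathbf{u}\|_{\mathbf{Y}(I)}+\|\mathbf{u}'\|_{\mathbf{Y}(I)})\|\mathbf{u}-\mathbf{u}'\|_{\mathbf{Y}(I)}$. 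For $1\le n\le 4$ this is the computation already carried out in Lemma~\ref{estinormdiffk}; Lemma~\ref{estfgY} is what additionally covers $n=5,6$.

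For the part with a derivative I would invoke Lemma~\ref{estgraddiffk}, which bounds $|\nabla[f_k(\mathbf{u})-f_k(\mathbf{u}')]|$ by $C\sum_{m,j}|u_j|\,|\nabla(u_m-u_m')|+C\sum_{m,j}|u_j-u_j'|\,|\nabla u_m'|$, again a sum of products of two factors, one of which is the gradient of a component. Each such product I would estimate in $L^{12/(12-n)}(L^{3/2})$ (for $1\le n\le 3$) or $L^{4/(8-n)}(L^{n/(n-2)})$ (for $4\le n\le 6$) by repeating verbatim the Hölder/Sobolev computation in the proofs of Lemma~\ref{estfg} and Lemma~\ref{estfgY}, the only change being that the gradient factor is controlled through $\|\nabla w\|_{X(I)}\le\|w\|_{Y(I)}$ (Remark~\ref{embedXY}(ii)) and the non-gradient factor through $Y(I)\hookrightarrow X(I)$ together with its space Lebesgue norms (for $n\le 3$) or the Sobolev embedding $H^1\hookrightarrow L^{2n/(n-2)}$ (for $4\le n\le 6$). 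Since the time-integrability exponents used are unchanged, the very same power $T^{\theta(n)}$ comes out, and summing over $j,m$ produces the same right-hand side as in the previous step.

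Putting the three steps together with the Strichartz reduction yields the lemma. The step demanding the most care is the first one: matching the admissible pairs of $X(I)$ with the dual pairs that produce the norms of Lemma~\ref{estfgY}, and in particular handling the endpoint cases $n=4$ (where the auxiliary pair degenerates to $(\infty,2)$, so the target space is $L^1(L^2)$, controlled by \eqref{estfg2} with $n=4$) and $n=6$ (where one sits at the Sobolev/Strichartz endpoint $r=2n/(n-2)$). Once this bookkeeping is in place, the argument is a routine extension of the $L^2$ proof of Lemma~\ref{estinormdiffk}, supplemented by Lemma~\ref{estgraddiffk} to absorb the extra derivative.
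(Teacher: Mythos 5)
Your argument is correct and follows essentially the same route as the paper's proof: reduce the $Y(I)$-norm to the $X(I)$-norms of the Duhamel term and of its gradient, apply the inhomogeneous Strichartz estimate with the admissible pairs whose duals produce the norms of Lemma \ref{estfgY}, and control the integrand pointwise via Corollary \ref{limfk} and Lemma \ref{estgraddiffk} together with Remark \ref{embedXY}. Your admissible pair $\left(\frac{4}{n-4},\frac{n}{2}\right)$ is in fact the correct one whose dual exponents give $L^{4/(8-n)}(L^{n/(n-2)})$; the paper writes $\left(\frac{4}{n-2},\frac{n}{2}\right)$ at this point, which appears to be a typo.
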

\begin{proof}
Note that a similar estimate as in Lemma \ref{estfgY} holds if we replace the product $fg$ by $\nabla(fg)$. Hence, for $1\leq n\leq 3$ the result follows from \eqref{estfg1} combined with Lemma \ref{estgraddiffk}  and Remark \ref{embedXY}. On the other hand, for $4\leq n\leq 6$, note that  $\left(\frac{4}{n-2},\frac{n}{2}\right)$ is an admissible pair with dual $\left(\frac{4}{8-n},\frac{n}{n-2}\right)$. Hence, the result follows as a combination of \eqref{estfg2}, Proposition \ref{stricha}, and Lemma \ref{estgraddiffk}.
\end{proof}

 Similarly to Theorems \ref{localexistenceL2} and \ref{locexistL2n=4}, a combination of the contraction mapping theorem with   Lemma \ref{estinormdiffkY}   allow us to prove the existence of local $H^{1}$ solutions as follows.

 \begin{teore}[Existence of local $H^1$-solutions: subcritical case]\label{localexistenceH1} Let $1\leq n\leq 5$. Assume that \textnormal{\ref{H1}} and \textnormal{\ref{H2}} hold. Then for any $r>0$ there exists $T(r)>0$ such that for any $\mathbf{u}_0\in \mathbf{H}^1 $ with $\|\mathbf{u}_0\|_{\mathbf{H}^1}\leq r$,  system \eqref{system1}
 has a unique  solution $\mathbf{u}\in \mathbf{Y}(I)$ with $I=[-T(r),T(r)]$.
\end{teore}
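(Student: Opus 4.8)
The strategy is the standard fixed-point argument in the space $\mathbf{Y}(I)$, entirely parallel to the $L^2$ cases handled in Theorems \ref{localexistenceL2} and \ref{locexistL2n=4}, but now the relevant nonlinear estimate is the $H^1$-level bound furnished by Lemma \ref{estinormdiffkY}. First I would fix $r>0$, let $\mathbf{u}_0\in\mathbf{H}^1$ with $\|\mathbf{u}_0\|_{\mathbf{H}^1}\leq r$, and define $\Gamma(\mathbf{u})=(\Phi_1(\mathbf{u}),\ldots,\Phi_l(\mathbf{u}))$ exactly as in the proof of Theorem \ref{localexistenceL2}, namely
\begin{equation*}
\Phi_k(\mathbf{u})(t)=U_k(t)u_{k0}+i\int_0^t U_k(t-t')\frac{1}{\alpha_k}f_k(\mathbf{u})\,dt',\qquad k=1,\ldots,l.
\end{equation*}
On the ball $B(T,a)=\{\mathbf{u}\in\mathbf{Y}(I):\|\mathbf{u}\|_{\mathbf{Y}(I)}\leq a\}$ with $I=[-T,T]$, I would show $\Gamma$ maps $B(T,a)$ into itself and is a contraction, for a suitable choice of $a\approx r$ and $T=T(r)$ small.

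For the self-mapping property, apply Strichartz's inequality (Proposition \ref{stricha}) to the linear part; since $(\infty,2)$ and the pair $(12/n,3)$ (for $1\leq n\leq3$) or $(2,2n/(n-2))$ (for $4\leq n\leq5$) are admissible, and since $U_k(t)$ commutes with $\nabla$, one gets $\|U_k(\cdot)u_{k0}\|_{Y(I)}\leq C\|u_{k0}\|_{H^1}$. For the Duhamel term, Lemma \ref{estinormdiffkY} with $\mathbf{u}'=0$ gives
\begin{equation*}
\left\|\int_0^t U_k(t-t')\frac{1}{\alpha_k}f_k(\mathbf{u})\,dt'\right\|_{Y(I)}\leq CT^{\theta(n)}\|\mathbf{u}\|_{\mathbf{Y}(I)}^2,
\end{equation*}
hence $\|\Gamma(\mathbf{u})\|_{\mathbf{Y}(I)}\leq C\|\mathbf{u}_0\|_{\mathbf{H}^1}+CT^{\theta(n)}\|\mathbf{u}\|_{\mathbf{Y}(I)}^2$. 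Choosing $a=2Cr$ and then $T=T(r)>0$ so small that $CT^{\theta(n)}a<1/2$ yields $\|\Gamma(\mathbf{u})\|_{\mathbf{Y}(I)}\leq a/2+CT^{\theta(n)}a^2\leq a$ on $B(T,a)$. Note $\theta(n)>0$ precisely because $1\leq n\leq5$ is the subcritical range, so such a $T$ exists; this is why the hypothesis $n\leq5$ is essential here. For the contraction estimate, apply Lemma \ref{estinormdiffkY} in full generality to $\mathbf{u},\mathbf{u}'\in B(T,a)$ to obtain $\|\Gamma(\mathbf{u})-\Gamma(\mathbf{u}')\|_{\mathbf{Y}(I)}\leq CT^{\theta(n)}(\|\mathbf{u}\|_{\mathbf{Y}(I)}+\|\mathbf{u}'\|_{\mathbf{Y}(I)})\|\mathbf{u}-\mathbf{u}'\|_{\mathbf{Y}(I)}\leq 2CT^{\theta(n)}a\,\|\mathbf{u}-\mathbf{u}'\|_{\mathbf{Y}(I)}$, and (shrinking $T$ further if needed so that $2CT^{\theta(n)}a<1$) this is a strict contraction.

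The contraction mapping principle then produces a unique fixed point $\mathbf{u}\in B(T,a)\subset\mathbf{Y}(I)$, which solves the integral system \eqref{system2} and hence \eqref{system1}; continuity in $t$ with values in $\mathbf{H}^1$ is built into the definition of $Y(I)$. Uniqueness in the whole space $\mathbf{Y}(I)$ (not merely in the ball) follows by a standard argument: any two solutions agree on a possibly smaller interval by the contraction estimate, and a continuity/connectedness argument extends the coincidence set to all of $I$. I do not expect a genuine obstacle here — all the hard analytic work (the nonlinear estimates, the Strichartz bounds, the reduction of the quadratic nonlinearity via Corollary \ref{limfk} and Lemma \ref{estgraddiffk}) has already been carried out in the preceding lemmas. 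The only point requiring a little care is bookkeeping the two dimension regimes $1\leq n\leq3$ and $4\leq n\leq5$ through the definition of $Y(I)$ and the exponent $\theta(n)$, but Lemma \ref{estinormdiffkY} already packages both cases uniformly, so the proof can be written without case distinction.
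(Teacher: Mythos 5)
Your proposal is correct and follows exactly the route the paper intends: the paper itself states that Theorem \ref{localexistenceH1} follows from the contraction mapping principle combined with Lemma \ref{estinormdiffkY}, precisely as in the proof of Theorem \ref{localexistenceL2}, and your choice of ball radius $a\approx r$ and time $T$ with $CT^{\theta(n)}a<1/2$ (valid since $\theta(n)>0$ for $1\leq n\leq 5$) is the standard implementation. Nothing is missing.
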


\begin{teore}[Existence of local $H^1$-solutions: critical case]\label{locexistH1n=6} Let $n=6$. Assume that  \textnormal{\ref{H1}} and \textnormal{\ref{H2}} hold. Then for any  $\mathbf{u}_0:=(u_{10},\ldots,u_{l0})\in \mathbf{H}^1 $ there exists $T(\mathbf{u}_0)>0$ such that  system \eqref{system1}
 has a unique  solution $\mathbf{u}=(u_{1},\ldots,u_{l})\in\mathbf{Y}(I)$ with $I=[-T(\mathbf{u}_0),T(\mathbf{u}_0)]$.
\end{teore}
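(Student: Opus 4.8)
The plan is to run a fixed-point argument for the integral system \eqref{system2} exactly as in Theorems \ref{localexistenceL2} and \ref{locexistL2n=4}, but now in the critical space $\mathbf{Y}(I)$ with $n=6$, where $\theta(6)=0$ so that no power of $T$ is gained and the smallness must instead come from the free evolution. First I would observe, via Proposition \ref{stricha} with the admissible pair $\left(\frac{4}{n-2},\frac{n}{2}\right)=(2,3)$ (together with the endpoint pair $(\infty,2)$ for the $L^\infty(H^1)$ part), that the vector of free solutions $(U_1(\cdot)u_{10},\ldots,U_l(\cdot)u_{l0})$ belongs to $\mathbf{Y}(\mathbb{R})$; hence, by dominated convergence on the time integral, for every $\epsilon>0$ there is $T=T(\mathbf{u}_0)>0$ with $I=[-T,T]$ such that
\begin{equation*}
\sum_{k=1}^{l}\|U_k(\cdot)u_{k0}\|_{Y(I)}\leq \epsilon .
\end{equation*}

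Next I would define $\Gamma=(\Phi_1,\ldots,\Phi_l)$ as in Theorem \ref{localexistenceL2} and the ball $B(T,a)=\{\mathbf{u}\in\mathbf{Y}(I):\|\mathbf{u}\|_{\mathbf{Y}(I)}\leq a\}$. Applying the inhomogeneous Strichartz estimate together with Lemma \ref{estinormdiffkY} (which at $n=6$ reads with $T^{\theta(6)}=1$), one gets
\begin{equation*}
\|\Gamma(\mathbf{u})\|_{\mathbf{Y}(I)}\leq C\epsilon + C\|\mathbf{u}\|_{\mathbf{Y}(I)}^2,\qquad \|\Gamma(\mathbf{u})-\Gamma(\mathbf{v})\|_{\mathbf{Y}(I)}\leq C\big(\|\mathbf{u}\|_{\mathbf{Y}(I)}+\|\mathbf{v}\|_{\mathbf{Y}(I)}\big)\|\mathbf{u}-\mathbf{v}\|_{\mathbf{Y}(I)}.
\end{equation*}
Choosing $a=2C\epsilon$ and then $\epsilon$ small enough that $2C^2\epsilon<1/2$ (equivalently $Ca<1/2$) makes $\Gamma$ map $B(T,a)$ into itself and be a contraction there, so the contraction mapping principle yields a unique solution $\mathbf{u}\in B(T,a)\subset\mathbf{Y}(I)$. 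Uniqueness in all of $\mathbf{Y}(I)$ follows by the usual argument: any two solutions agree on a possibly smaller interval by the contraction estimate, and a continuation/connectedness argument extends the agreement to all of $I$.

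The main obstacle, and the only place the critical nature of $n=6$ really bites, is that one cannot close the estimate merely by shrinking $T$ (since $\theta(6)=0$); the smallness parameter must be the $\mathbf{Y}(I)$-norm of the free evolution, which is why the existence time depends on $\mathbf{u}_0$ itself rather than only on $\|\mathbf{u}_0\|_{\mathbf{H}^1}$. A secondary technical point is verifying that $(2,3)$ is admissible in dimension $6$ and that its dual exponent pair $\left(\frac{4}{8-n},\frac{n}{n-2}\right)=(2,3)$ matches the space in which Lemma \ref{estinormdiffkY} controls $\nabla[f_k(\mathbf{u})-f_k(\mathbf{u}')]$ — but this is precisely the content already packaged into Lemma \ref{estinormdiffkY}, so no new computation is needed here. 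Everything else is routine and parallels the proof of Theorem \ref{locexistL2n=4} line by line, replacing $\mathbf{L}^2(I;\mathbf{L}^4)$ by $\mathbf{Y}(I)$.
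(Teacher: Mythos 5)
Your overall strategy (contraction with smallness coming from the free evolution rather than from a power of $T$) is the right one and is what the paper intends, but there is a genuine gap in the way you implement it: the claim that $\sum_{k}\|U_k(\cdot)u_{k0}\|_{Y(I)}\leq\epsilon$ for $T$ small is false. The $Y(I)$ norm contains the component $\|\cdot\|_{L^\infty(I;H^1)}$, and $U_k(t)$ is an isometry on $H^1$, so $\|U_k(\cdot)u_{k0}\|_{L^\infty(I;H^1)}=\|u_{k0}\|_{H^1}$ for every $T>0$. Consequently the free evolution never enters the ball $B(T,a)$ with $a=2C\epsilon$ small, and $\Gamma$ cannot map that ball into itself. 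The paper's own critical argument (Theorem \ref{locexistL2n=4}) avoids exactly this trap: the ball and the smallness are taken in the \emph{auxiliary} Strichartz norm only ($\mathbf{L}^2(I;\mathbf{L}^4)$ there), where absolute continuity of the time integral does give smallness, and membership in the full space is recovered a posteriori. The correct analogue here is to work in $\tilde B(T,a)=\{\mathbf{u}:\|\mathbf{u}\|_{\mathbf{L}^{2}(I;\mathbf{W}^{1}_{3})}\leq a\}$ with $\|U_k(\cdot)u_{k0}\|_{L^{2}(I;W^{1}_{3})}\leq\epsilon$.

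A second, related issue is that Lemma \ref{estinormdiffkY} as stated bounds the Duhamel term by the \emph{full} $\mathbf{Y}(I)$ norms (its proof uses $\|fg\|_{L^{3/2}}\leq C\|f\|_{H^1}\|g\|_{L^{3}}$, i.e. an $L^\infty(H^1)$ factor), so even after fixing the ball it does not close a contraction measured purely in $\mathbf{L}^{2}(I;\mathbf{W}^{1}_{3})$. To close the argument you need a bilinear estimate with only auxiliary norms on the right; for $n=6$ this is available via the Sobolev embedding $W^{1,3}(\R^6)\hookrightarrow L^{6}(\R^6)$, which gives $\|u\,\nabla v\|_{L^{1}(I;L^{2})}\leq\|u\|_{L^{2}(I;L^{6})}\|\nabla v\|_{L^{2}(I;L^{3})}\leq C\|u\|_{L^{2}(I;W^{1}_{3})}\|v\|_{L^{2}(I;W^{1}_{3})}$, and then the inhomogeneous Strichartz estimate with the admissible pair $(\infty,2)$ (nonlinearity measured in $L^{1}(I;H^{1})$) yields the needed self-map and contraction; Strichartz then upgrades the fixed point to $\mathbf{Y}(I)$. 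Finally, two small arithmetic slips in your side remarks: for $n=6$ the admissible pair is $\left(\tfrac{4}{n-4},\tfrac{n}{2}\right)=(2,3)$ (not $\tfrac{4}{n-2}=1$), and its H\"older-dual exponents are $(2,\tfrac32)$, not $(2,3)$.
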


\subsection{Global solutions}
This subsection is devoted to  extend globally-in-time the solutions given by Theorems \ref{localexistenceL2} and  \ref{localexistenceH1}. Since in such subcritical cases, the existence time depends only on the norm of the initial data, in addition to the conclusion of Theorems \ref{localexistenceL2} and  \ref{localexistenceH1}, a blow up alternative also holds, that is, there exist $T_*,T^*\in(0,\infty]$ such that the local solutions can be extend to the interval $(-T_*,T^*)$; moreover if $T_*<\infty$  (respect. $T^*<\infty$), then
$$
\lim_{t\to -T_*}\|\mathbf{u}(t)\|_{\mathbf{L}^2}=\infty, \qquad (respect.\lim_{t\to T^*}\|\mathbf{u}(t)\|_{\mathbf{L}^2}=\infty  ),
$$
for $L^2$-solutions, and 
$$
\lim_{t\to -T_*}\|\mathbf{u}(t)\|_{\mathbf{H}^1}=\infty, \qquad (respect.\lim_{t\to T^*}\|\mathbf{u}(t)\|_{\mathbf{H}^1}=\infty  ),
$$
for $H^1$-solutions. Thus, the idea to get global solutions is to find an \textit{a priori}  estimate for the local solution in $L^{2}$ and  $H^{1}$ based on the conservation of the charge and the energy.

 \subsubsection{Global existence of $L^2$-solutions}

Here, let us introduce the spaces
\begin{equation*}
X(\R)=\begin{cases}
(\mathcal{C}\cap L^{\infty})(\R; L^2)\cap L_{loc}^{12/n}(\R;L^{3}),\qquad 1\leq n\leq 3;\\
(\mathcal{C}\cap L^{\infty})(\R; L^2)\cap L_{loc}^{2}(\R;L^{2n/(n-2)}), \qquad n\geq 4.
\end{cases}
\end{equation*}
Our goal is to show that the solution indeed belongs to such spaces.  To do so, we need the conservation of the charge. To obtain this, we proceed formally, but the procedure can be made rigorous by
taking sufficient regular solutions and then passing to the limit or using the strategy in \cite{Ozawa}.

\begin{lem}\label{l2cons}
If  \textnormal{\ref{H3}} and \textnormal{\ref{H4}} hold, then the charge of system \eqref{system1} given by 
\begin{equation}\label{conservationcharge}	Q(\mathbf{u}(t)):=\sum_{k=1}^{l}\frac{\alpha_{k}^{2}}{\gamma_{k}}\|u_{k}(t)\|_{L^2}^{2},
	\end{equation}
is a conserved quantity.
\end{lem}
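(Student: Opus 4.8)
The plan is to compute $\frac{d}{dt}Q(\mathbf{u}(t))$ directly and show it vanishes. First I would take the $L^2$ inner product of the $k$-th equation of \eqref{system1} with $u_k$, i.e. multiply $i\alpha_k\partial_t u_k+\gamma_k\Delta u_k-\beta_k u_k=-f_k(\mathbf{u})$ by $\overline{u}_k$, integrate over $\R^n$, and take the real (or imaginary) part. Using $\mathrm{Re}\int \overline{u}_k\partial_t u_k\,dx=\frac12\frac{d}{dt}\|u_k\|_{L^2}^2$, that $\mathrm{Re}\int \overline{u}_k\Delta u_k\,dx=-\|\nabla u_k\|_{L^2}^2\in\R$ so it contributes nothing to the imaginary part, and that $\beta_k\|u_k\|_{L^2}^2\in\R$, one gets
\begin{equation*}
\frac{\alpha_k}{2}\frac{d}{dt}\|u_k(t)\|_{L^2}^2=-\mathrm{Im}\int_{\R^n} f_k(\mathbf{u})\overline{u}_k\,dx,\qquad k=1,\ldots,l.
\end{equation*}

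Next I would multiply the $k$-th identity by the constant $\sigma_k:=\alpha_k/\gamma_k>0$ and sum over $k$. Recalling the definition \eqref{conservationcharge}, $\frac{\alpha_k}{\gamma_k}\cdot\frac{\alpha_k}{2}=\frac12\frac{\alpha_k^2}{\gamma_k}$, so the left-hand side becomes $\frac12\frac{d}{dt}Q(\mathbf{u}(t))$. The right-hand side is $-\mathrm{Im}\sum_{k=1}^{l}\sigma_k\int_{\R^n} f_k(\mathbf{u})\overline{u}_k\,dx=-\int_{\R^n}\mathrm{Im}\sum_{k=1}^{l}\sigma_k f_k(\mathbf{u})\overline{u}_k\,dx$. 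By Lemma \ref{ReFinvari} — which applies since \ref{H3} and \ref{H4} are assumed — with exactly these weights $\sigma_k=\alpha_k/\gamma_k$, the integrand vanishes pointwise, hence $\frac{d}{dt}Q(\mathbf{u}(t))=0$.

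I would add the standard caveat, as the paragraph preceding the lemma already flags, that this computation is formal: $\mathbf{u}$ lies only in $\mathbf{X}(I)$ or $\mathbf{Y}(I)$, so $\Delta u_k$ and $\partial_t u_k$ need not be in $L^2$, and the integrations by parts and $\frac{d}{dt}$ exchanges are not literally justified. The rigorous route is to regularize — approximate the initial data by smoother data, use persistence of regularity to get classical solutions for which every step above is valid, and then pass to the limit using continuous dependence in the relevant norm — or invoke the mollification/limiting argument of \cite{Ozawa}. The only genuine ingredient beyond bookkeeping is the cancellation identity of Lemma \ref{ReFinvari}; everything else is the routine energy-method computation, so I expect the main (and only mild) obstacle to be phrasing the regularization argument cleanly rather than anything in the algebra.
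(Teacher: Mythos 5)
Your proposal is correct and follows essentially the same route as the paper: multiply the $k$-th equation by $\overline{u}_k$, integrate, take imaginary parts, weight by $\sigma_k=\alpha_k/\gamma_k$, sum over $k$, and invoke Lemma \ref{ReFinvari} for the cancellation, with the same remark that the computation is formal and is made rigorous by regularization or the strategy of \cite{Ozawa}. Nothing to add.
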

\begin{proof}
 Multiply \eqref{system1} by $\overline{u}_{k}$, integrate on $\R^{n}$ and take the imaginary part. Then summing over $k$ and using Lemma \ref{ReFinvari} the result follows.
\end{proof}

As an immediate consequence of Lemma \ref{l2cons} we have.

\begin{teore}
	Let $1\leq n\leq 3$. Assume that   \textnormal{\ref{H1}-\ref{H4}} hold. Then for any $\mathbf{u}_0 \in \mathbf{L}^2$, system \eqref{system1} has a unique solution $\mathbf{u}\in \mathbf{X}(\R)$. Moreover,
	\begin{equation*}
	Q(\mathbf{u}(t))=Q(\mathbf{u}_0),\qquad\forall t\in \R.
	\end{equation*}
\end{teore}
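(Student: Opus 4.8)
The plan is to combine the local existence theory of Theorem~\ref{localexistenceL2} with the conserved charge of Lemma~\ref{l2cons} to rule out blow-up in finite time, via the blow-up alternative. First I would observe that by Theorem~\ref{localexistenceL2}, for any $\mathbf{u}_0\in\mathbf{L}^2$ the system has a unique local solution $\mathbf{u}\in\mathbf{X}(I)$ on an interval $I=[-T,T]$ with $T=T(\|\mathbf{u}_0\|_{\mathbf{L}^2})$; moreover, as explained in the paragraph preceding Lemma~\ref{l2cons}, since the existence time depends only on the size of the data, the standard continuation argument yields a maximal solution on $(-T_*,T^*)$ together with the blow-up alternative: if $T^*<\infty$ then $\|\mathbf{u}(t)\|_{\mathbf{L}^2}\to\infty$ as $t\to T^*$, and similarly at $-T_*$.

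Next I would invoke conservation of charge. By Lemma~\ref{l2cons} (which uses \ref{H3}--\ref{H4}, available here), the quantity $Q(\mathbf{u}(t))=\sum_{k=1}^l\frac{\alpha_k^2}{\gamma_k}\|u_k(t)\|_{L^2}^2$ is constant along the flow, so $Q(\mathbf{u}(t))=Q(\mathbf{u}_0)$ for all $t$ in the maximal interval. Since $\alpha_k,\gamma_k>0$, the expression $Q$ is equivalent to the squared $\mathbf{L}^2$-norm: there are constants $c,C>0$ with $c\|\mathbf{u}(t)\|_{\mathbf{L}^2}^2\le Q(\mathbf{u}(t))\le C\|\mathbf{u}(t)\|_{\mathbf{L}^2}^2$. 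Hence $\|\mathbf{u}(t)\|_{\mathbf{L}^2}$ stays bounded by a constant depending only on $\mathbf{u}_0$, uniformly on $(-T_*,T^*)$. This directly contradicts the blow-up alternative unless $T_*=T^*=\infty$, so the solution is global, and $\mathbf{u}\in\mathbf{X}(\R)$ because on every compact subinterval of $\R$ it coincides with the local solution furnished by Theorem~\ref{localexistenceL2}.

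The only genuinely delicate point is the rigorous justification of charge conservation, since the formal computation in Lemma~\ref{l2cons} (multiply by $\overline{u}_k$, integrate, take imaginary parts, sum, and use Lemma~\ref{ReFinvari}) requires enough regularity to make the manipulations legitimate for merely $L^2$ solutions. As already indicated in the text, this is handled either by a regularization/approximation argument — proving the identity first for smooth data and passing to the limit using the continuous dependence implicit in the contraction estimate of Lemma~\ref{estinormdiffk} — or by the method of \cite{Ozawa}; I would simply cite Lemma~\ref{l2cons} and this remark rather than redo it. With that in hand, the remainder is the short soft argument above: uniqueness and the bound $Q(\mathbf{u}(t))=Q(\mathbf{u}_0)$ together close the continuation and yield the stated global existence with the conservation law.
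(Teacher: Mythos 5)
Your proposal is correct and follows the same route as the paper, which presents this theorem as an immediate consequence of Lemma \ref{l2cons}: the local theory of Theorem \ref{localexistenceL2} plus the blow-up alternative plus conservation of the charge (equivalent to the squared $\mathbf{L}^2$-norm) yields global existence. Your added remark on the rigorous justification of the conservation law matches the paper's own caveat (formal computation made rigorous by approximation or the method of \cite{Ozawa}).
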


\subsubsection{Global existence of $H^1$-solutions} 
Similarly to the case of $L^2$ solutions, here we consider
\begin{equation*}
Y(\R)=\begin{cases}
(\mathcal{C}\cap L^{\infty})(\R; H^1)\cap L_{loc}^{12/n}\left(\R;W^{1}_{3}\right),\qquad 1\leq n\leq 3,\\
(\mathcal{C}\cap L^{\infty})(\R; H^1)\cap L^{2}_{loc}(\R;W^{1}_{2n/(n-2)}),\qquad n\geq 4.
\end{cases}
\end{equation*}

Next lemma establishes the conservation of the energy associated with \eqref{system1}.

\begin{lem}\label{lemconservenerg} If  \textnormal{\ref{H3}} holds, then the energy  associated with \eqref{system1} given by 
\begin{equation}
\label{conservationenergy}
	E(\mathbf{u}(t))=\sum_{k=1}^{l}\gamma_{k}\|\nabla u_{k}(t)\|_{L^2}^{2}+\sum_{k=1}^{l}\beta_{k}\|u_{k}(t)\|_{L^2}^{2}
    -2\mathrm{Re}\int F(\mathbf{u}(t))\;dx,
\end{equation}
is a conserved quantity.
\end{lem}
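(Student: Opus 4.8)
The plan is to establish conservation of the energy $E(\mathbf{u}(t))$ defined in \eqref{conservationenergy} by differentiating it in time along a solution of \eqref{system1} and showing the derivative vanishes. As with the charge in Lemma \ref{l2cons}, I will argue formally — the computation is rigorous for sufficiently regular solutions (e.g. data in $\mathbf{H}^2$ with the nonlinearity mapping $\mathbf{H}^2$ to $H^1$, via Lemma \ref{gradfkLr} and the Sobolev multiplication law), and then a standard density/limiting argument (or the method of \cite{Ozawa}) extends it to the $\mathbf{H}^1$ solutions produced by Theorem \ref{localexistenceH1} and Theorem \ref{locexistH1n=6}.

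First I would write $E(\mathbf{u}(t)) = \sum_k \gamma_k\|\nabla u_k\|_{L^2}^2 + \sum_k \beta_k \|u_k\|_{L^2}^2 - 2\,\mathrm{Re}\int F(\mathbf{u})\,dx$ and differentiate term by term. For the quadratic (linear-flow) part, $\frac{d}{dt}\big(\gamma_k\|\nabla u_k\|^2 + \beta_k\|u_k\|^2\big) = 2\,\mathrm{Re}\int \big(-\gamma_k \Delta u_k + \beta_k u_k\big)\,\partial_t\overline{u}_k\,dx$ after an integration by parts. From the equation \eqref{system1}, $\gamma_k\Delta u_k - \beta_k u_k = -f_k(\mathbf{u}) - i\alpha_k\partial_t u_k$, so $-\gamma_k\Delta u_k + \beta_k u_k = f_k(\mathbf{u}) + i\alpha_k\partial_t u_k$. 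Hence $\frac{d}{dt}\big(\gamma_k\|\nabla u_k\|^2 + \beta_k\|u_k\|^2\big) = 2\,\mathrm{Re}\int f_k(\mathbf{u})\,\partial_t\overline{u}_k\,dx + 2\,\mathrm{Re}\int i\alpha_k \partial_t u_k\,\partial_t\overline{u}_k\,dx$, and the last integrand is $i\alpha_k|\partial_t u_k|^2$, which is purely imaginary, so its real part is zero. Summing over $k$, the first group contributes $2\,\mathrm{Re}\sum_k \int f_k(\mathbf{u})\,\partial_t\overline{u}_k\,dx$.

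For the potential term, using \ref{H3} in the form of Remark \ref{fksegexp}, namely $f_k(\mathbf{u}) = \frac{\partial F}{\partial\overline{z}_k}(\mathbf{u}) + \overline{\frac{\partial F}{\partial z_k}}(\mathbf{u})$, together with the chain rule $\frac{d}{dt}F(\mathbf{u}) = \sum_k \frac{\partial F}{\partial z_k}(\mathbf{u})\,\partial_t u_k + \sum_k \frac{\partial F}{\partial\overline{z}_k}(\mathbf{u})\,\partial_t\overline{u}_k$, I get $\frac{d}{dt}\,\mathrm{Re}\int F(\mathbf{u})\,dx = \mathrm{Re}\sum_k\int\big(\frac{\partial F}{\partial z_k}(\mathbf{u})\,\partial_t u_k + \frac{\partial F}{\partial\overline{z}_k}(\mathbf{u})\,\partial_t\overline{u}_k\big)\,dx$. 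Since $\mathrm{Re}\big(\frac{\partial F}{\partial z_k}\,\partial_t u_k\big) = \mathrm{Re}\big(\overline{\frac{\partial F}{\partial z_k}}\,\partial_t\overline{u}_k\big)$, this equals $\mathrm{Re}\sum_k\int\big(\overline{\frac{\partial F}{\partial z_k}}(\mathbf{u}) + \frac{\partial F}{\partial\overline{z}_k}(\mathbf{u})\big)\,\partial_t\overline{u}_k\,dx = \mathrm{Re}\sum_k\int f_k(\mathbf{u})\,\partial_t\overline{u}_k\,dx$. Therefore $\frac{d}{dt}\big(-2\,\mathrm{Re}\int F(\mathbf{u})\,dx\big) = -2\,\mathrm{Re}\sum_k\int f_k(\mathbf{u})\,\partial_t\overline{u}_k\,dx$, which exactly cancels the contribution from the quadratic part, giving $\frac{d}{dt}E(\mathbf{u}(t)) = 0$.

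The main obstacle is not the algebra but the justification of the formal manipulations at the $\mathbf{H}^1$ regularity level: $\partial_t u_k$ only lives in $\mathbf{H}^{-1}$ a priori, so the pairings $\int f_k(\mathbf{u})\,\partial_t\overline{u}_k\,dx$ and $\int\Delta u_k\,\partial_t\overline{u}_k\,dx$ must be read as $H^{-1}$–$H^1$ duality pairings, and differentiating $t\mapsto\mathrm{Re}\int F(\mathbf{u}(t))\,dx$ requires knowing $F(\mathbf{u})\in L^1$ (which follows from \eqref{estdifFeq1} and Sobolev embedding for $1\le n\le 6$) and controlling the time derivative. The clean way around this is to first prove the identity for smoother solutions, where every integral is classical and Lemma \ref{lemfkcontH} and Lemma \ref{gradfkLr} guarantee the requisite mapping properties, and then pass to the limit using continuous dependence of the $\mathbf{H}^1$ flow on the data together with the continuity of $\mathbf{u}\mapsto E(\mathbf{u})$ on $\mathbf{H}^1$; alternatively one invokes the regularization scheme of \cite{Ozawa} directly, as already announced in the text preceding Lemma \ref{l2cons}.
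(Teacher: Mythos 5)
Your proof is correct and follows essentially the same route as the paper: multiplying the equation by $\partial_t\overline{u}_k$ (equivalently, substituting the equation into the time derivative of the quadratic terms), discarding the purely imaginary term $i\alpha_k|\partial_t u_k|^2$, and using \ref{H3} via Remark \ref{fksegexp} to identify $\mathrm{Re}\,\frac{d}{dt}F(\mathbf{u})$ with $\mathrm{Re}\sum_k f_k\,\partial_t\overline{u}_k$. Your closing remarks on justifying the formal computation at the $\mathbf{H}^1$ level are exactly the regularization issue the paper delegates to \cite{Ozawa}.
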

\begin{proof}
As in Lemma \ref{l2cons} we proceed formally, see \cite{Ozawa}. By multiplying \eqref{system1} by $\partial_{t}\overline{u}_{k}$, adding with its complex conjugate, integrating on $\R^{n}$ and  then 
summing over $k$ we see that
\begin{equation*}
\frac{d}{dt}\left(\sum_{k=1}^{l}\gamma_{k}\|\nabla u_{k}\|_{L^{2}}^{2}+\sum_{k=1}^{l}\beta_{k}\| u_{k}\|_{L^{2}}^{2}\right)=2\mathrm{Re}\int \sum_{k=1}^{l}  f_{k}\partial_{t}\overline{u}_{k}\;dx.
\end{equation*}
But in view of \ref{H3},
\begin{equation*}
\begin{split}
\mathrm{Re}\left[\frac{d}{dt}F(\mathbf{u}(t))\right]
&=\mathrm{Re}\left[\sum_{k=1}^{l}f_{k}\partial_{t}\overline{u}_{k}\right],
\end{split}
\end{equation*}
from which the result follows.
\end{proof}

Next, for $\mathbf{u}=(u_{1},\ldots,u_{l})$ we define the functionals
\begin{equation}\label{funcK1}
K(\mathbf{u})=\sum_{k=1}^{l}\gamma_{k}\|\nabla u_{k}\|_{L^2}^{2},
\end{equation}
\begin{equation}\label{funcL1}
L(\mathbf{u})=\sum_{k=1}^{l}\beta_{k}\|u_{k}\|_{L^2}^{2},
\end{equation}
\begin{equation*}
P(\mathbf{u})=\mbox{Re}\int F(\mathbf{u})\;dx,
\end{equation*}
\begin{equation*}
J(\mathbf{u})=\frac{Q(\mathbf{u})^{\frac{3}{2}-\frac{n}{4}}K(\mathbf{u})^{\frac{n}{4}}}{|P(\mathbf{u})|},
\end{equation*}
and the real number
\begin{equation}\label{xi0def}
\xi_{0}=\inf\{J(\mathbf{u});\mathbf{u}\in\mathbf{H}^{1},\quad P(\mathbf{u})\neq 0\},
\end{equation}
where $Q$ is defined in \eqref{conservationcharge}.

\begin{obs}
Using the previous functionals we can express the energy in \eqref{conservationenergy} as
\begin{equation}\label{conserenerfunc}
E(\mathbf{u}(t))=K(\mathbf{u}(t))+L(\mathbf{u}(t))-2P(\mathbf{u}(t)).
\end{equation}
\end{obs}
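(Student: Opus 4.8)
The plan is simply to unwind the definitions; no machinery beyond elementary algebra is required. Recall that the energy in \eqref{conservationenergy} reads
\begin{equation*}
E(\mathbf{u}(t))=\sum_{k=1}^{l}\gamma_{k}\|\nabla u_{k}(t)\|_{L^2}^{2}+\sum_{k=1}^{l}\beta_{k}\|u_{k}(t)\|_{L^2}^{2}-2\mathrm{Re}\int F(\mathbf{u}(t))\;dx,
\end{equation*}
while the functionals introduced just above are $K(\mathbf{u})=\sum_{k=1}^l\gamma_k\|\nabla u_k\|_{L^2}^2$ by \eqref{funcK1}, $L(\mathbf{u})=\sum_{k=1}^l\beta_k\|u_k\|_{L^2}^2$ by \eqref{funcL1}, and $P(\mathbf{u})=\mathrm{Re}\int F(\mathbf{u})\,dx$. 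Substituting these three identities (evaluated at $\mathbf{u}(t)$) into the displayed expression for $E$ yields \eqref{conserenerfunc} directly.

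The only points worth verifying are the signs and the numerical factors: the kinetic and mass contributions enter with a plus sign and coefficient $1$, matching $K(\mathbf{u}(t))$ and $L(\mathbf{u}(t))$ respectively, whereas the nonlinear contribution enters with the factor $-2$, which is precisely $-2P(\mathbf{u}(t))$. Since $F$ is evaluated along the (time-dependent) solution, the identity is pointwise in $t$, hence \eqref{conserenerfunc} holds for every $t$ in the interval of existence of $\mathbf{u}$.

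There is, in effect, no obstacle here: the statement is a bookkeeping reformulation of \eqref{conservationenergy}, whose sole purpose is notational, so that the subsequent analysis (the sharp Gagliardo--Nirenberg-type inequality, the study of $\xi_0$ in \eqref{xi0def}, and the virial-type identities) can be phrased compactly in terms of $K$, $L$ and $P$. Accordingly, the proof consists of the one-line substitution described above.
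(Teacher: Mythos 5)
Your proof is correct and matches the paper, which states this remark without proof precisely because it is the direct substitution of the definitions \eqref{funcK1}, \eqref{funcL1} and $P(\mathbf{u})=\mathrm{Re}\int F(\mathbf{u})\,dx$ into \eqref{conservationenergy}. Nothing further is required.
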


Let us observe that $\xi_{0}$ is indeed a positive constant.

\begin{lem}
Assume that \textnormal{\ref{H1}-\ref{H3}} and \textnormal{\ref{H6}} hold. Then, $\xi_{0}$ is a positive constant.
\end{lem}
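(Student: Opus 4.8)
The plan is to show $\xi_0>0$ by establishing a uniform lower bound on the Weinstein-type quotient $J(\mathbf{u})$ over all $\mathbf{u}\in\mathbf{H}^1$ with $P(\mathbf{u})\neq0$. The natural tool is a Gagliardo--Nirenberg-type inequality: I would first bound the numerator of $J(\mathbf{u})^{-1}$, namely $|P(\mathbf{u})|=\bigl|\mathrm{Re}\int F(\mathbf{u})\,dx\bigr|$, from above in terms of the charge $Q(\mathbf{u})$ and the kinetic term $K(\mathbf{u})$. By \eqref{estdifFeq1} in Lemma \ref{estdifF} (which uses \ref{H1}--\ref{H3} and \ref{H6}) we have the pointwise bound $|\mathrm{Re}\,F(\mathbf{u})|\leq C\sum_{j=1}^{l}|u_{j}|^{3}$, so
\begin{equation*}
|P(\mathbf{u})|\leq C\sum_{j=1}^{l}\int_{\R^n}|u_{j}|^{3}\,dx=C\sum_{j=1}^{l}\|u_{j}\|_{L^3}^{3}.
\end{equation*}

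Next I would apply the classical Gagliardo--Nirenberg inequality to each term $\|u_j\|_{L^3}^3$. For $1\leq n\leq 6$ one has $\|v\|_{L^3}\leq C\|v\|_{L^2}^{1-\theta}\|\nabla v\|_{L^2}^{\theta}$ with $\theta=\theta(n)=\frac{n}{6}$, hence $\|u_j\|_{L^3}^3\leq C\|u_j\|_{L^2}^{3-n/2}\|\nabla u_j\|_{L^2}^{n/2}$. Summing over $j$ and using that all the constants $\alpha_k^2/\gamma_k$ and $\gamma_k$ appearing in $Q$ and $K$ are positive, together with the elementary inequality relating $\sum_j a_j^{p}b_j^{q}$ to $(\sum_j a_j)^p(\sum_j b_j)^q$ for the relevant exponents (Hölder in the counting measure, since $\frac{3-n/2}{3}+\frac{n/2}{3}=1$ after normalizing), I would obtain
\begin{equation*}
|P(\mathbf{u})|\leq C\,Q(\mathbf{u})^{\frac{3}{2}-\frac{n}{4}}K(\mathbf{u})^{\frac{n}{4}}.
\end{equation*}
This is exactly the statement that $J(\mathbf{u})\geq C^{-1}>0$ for every admissible $\mathbf{u}$, so taking the infimum gives $\xi_0\geq C^{-1}>0$. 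Finiteness of $\xi_0$ (that it is a genuine real number, not $+\infty$) follows by exhibiting a single test function with $P(\mathbf{u})\neq0$; for instance any fixed smooth compactly supported $\mathbf{u}$ for which $\mathrm{Re}\int F(\mathbf{u})\,dx\neq0$, whose existence is guaranteed because $F$ is homogeneous of degree $3$ and not identically zero on the relevant cone (this uses \ref{H6} and, if needed, \ref{H7}).

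The main obstacle is purely bookkeeping: getting the exponents of $Q$ and $K$ to match the definition of $J$ exactly, which forces the use of $\theta(n)=n/6$ in Gagliardo--Nirenberg and a careful application of the discrete Hölder inequality across the index $k$ so that the individual weights $\alpha_k^2/\gamma_k$ and $\gamma_k$ get absorbed into the constant $C$ without spoiling the homogeneity. I do not anticipate any analytic difficulty beyond this, since all pointwise bounds needed are already available from Corollary \ref{limfk} and Lemma \ref{estdifF}, and the Gagliardo--Nirenberg inequality is standard in the range $1\leq n\leq6$ (the borderline $n=6$ corresponds to $\theta=1$, where $\|v\|_{L^3}^3\leq C\|\nabla v\|_{L^2}^{3}$ by Sobolev embedding, so the argument degenerates gracefully). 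One should also note that positivity of $\xi_0$ does not require \ref{H5}, \ref{H7}, or \ref{H8} — only \ref{H1}--\ref{H3} and \ref{H6} — consistent with the hypotheses stated in the lemma.
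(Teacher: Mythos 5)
Your proposal is correct and follows essentially the same route as the paper: bound $|P(\mathbf{u})|$ by $C\sum_j\|u_j\|_{L^3}^3$ via Lemma \ref{estdifF}, then apply Gagliardo--Nirenberg with $\theta=n/6$ to get $|P(\mathbf{u})|\leq C_0\,Q(\mathbf{u})^{\frac{3}{2}-\frac{n}{4}}K(\mathbf{u})^{\frac{n}{4}}$, which yields $J(\mathbf{u})\geq C_0^{-1}>0$. The only cosmetic difference is that the discrete H\"older step you worry about is unnecessary — since all terms are non-negative one simply bounds $\|u_j\|_{L^2}^2$ and $\|\nabla u_j\|_{L^2}^2$ termwise by constant multiples of $Q(\mathbf{u})$ and $K(\mathbf{u})$, exactly as the paper does.
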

\begin{proof}
First note that from the Gagliardo-Nirenberg inequality, for each $k=1,\ldots,l,$
	\begin{equation}\label{GNIuk}
	\|u_{k}\|_{L^3}^{3}\leq C^3\gamma_{k}^{-\frac{n}{4}}\left(\frac{\alpha_{k}^{2}}{\gamma_{k}}\right)^{\frac{n}{4}-\frac{3}{2}}Q(\mathbf{u})^{\frac{3}{2}-\frac{n}{4}}K(\mathbf{u})^{\frac{n}{4}}.
	\end{equation}

Using  Lemma \ref{estdifF}   and \eqref{GNIuk} we have
\begin{equation*}
\begin{split}
|P(\mathbf{u})|
&\leq \int \left|\mathrm{Re}\,F(\mathbf{u})\right|\;dx\leq C \sum_{k=1}^{l}\int |u_{k}|^{3}\;dx\leq  C_0Q(\mathbf{u})^{\frac{3}{2}-\frac{n}{4}}K(\mathbf{u})^{\frac{n}{4}},
\end{split}
\end{equation*}
where $C_0$ is a positive constant depending on $\alpha_{k}$ and $\gamma_{k}$, for $k=1,\ldots,l.$ Now, if $P(\mathbf{u})\neq 0$, then $|P(\mathbf{u})|>0$. So,
\begin{equation*}
0<\frac{1}{C_{0}}\leq J(\mathbf{u}),
\end{equation*}
and the conclusion follows.
\end{proof}

The above lemma allows us to establish the following Gagliardo-Nirenberg-type inequality:
\begin{equation}\label{GNE2}
|P(\mathbf{u})| \leq \frac{1}{\xi_{0}}Q(\mathbf{u})^{\frac{3}{2}-\frac{n}{4}}K(\mathbf{u})^{\frac{n}{4}}.
\end{equation}

We now  prove the existence of global $H^{1}$-solutions for  (\ref{system1}) in dimensions $1\leq n\leq 5$.

\begin{teore}\label{thm:globalwellposH1}
	Let $1 \leq n\leq 5$. Assume that \textnormal{\ref{H1}-\ref{H6}} hold.
	\begin{enumerate}
		\item[(i)] If $1 \leq n\leq 3$, then for any $\mathbf{u}_0\in \mathbf{H}^{1}$,  system \eqref{system1} has a unique  solution $\mathbf{u}\in \mathbf{Y}(\R)$.
		\item[(ii)] If $n=4$, then for any $\mathbf{u}_0\in \mathbf{H}^{1}$ satisfying
		\begin{equation}\label{smaldatL2norm}
		2Q(\mathbf{u}_0)^{\frac{1}{2}}<\xi_{0},
		\end{equation}
		system \eqref{system1} has a unique  solution $\mathbf{u}\in \mathbf{Y}(\R)$.
        \item[(iii)] $If n=5$, then for any $\mathbf{u}_0\in \mathbf{H}^{1}$ satisfying     
        \begin{equation}\label{smalldataH1}
		Q(\mathbf{u}_0)K(\mathbf{u}_0)<\left(\frac{2}{5}\xi_0\right)^{4}
		\end{equation}
        and
     \begin{equation}\label{smallenergyH1}
		Q(\mathbf{u}_0)E(\mathbf{u}_0)<\frac{1}{5}\left(\frac{2}{5}\xi_0 \right)^{4},
		\end{equation}
 system \eqref{system1} has a unique  solution $\mathbf{u}\in \mathbf{Y}(\R)$.   
	\end{enumerate}
\end{teore}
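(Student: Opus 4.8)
\emph{Proof proposal.} The strategy is the standard one: produce an a priori bound on $\|\mathbf{u}(t)\|_{\mathbf{H}^1}$ along the maximal existence interval $(-T_*,T^*)$ and conclude via the blow-up alternative attached to Theorem~\ref{localexistenceH1}. The local solution lies in $\mathbf{Y}(I)\hookrightarrow \mathcal{C}(I;\mathbf{H}^1)$ and can be continued as long as its $\mathbf{H}^1$-norm remains bounded; by Lemmas~\ref{l2cons} and~\ref{lemconservenerg} (made rigorous through the regularization procedure mentioned before Lemma~\ref{l2cons}) the charge $Q$ and the energy $E$ are conserved. Since the coefficients $\alpha_k^2/\gamma_k$ are bounded below by a positive constant, conservation of $Q$ already yields a uniform bound for $\sum_k\|u_k(t)\|_{L^2}^2$, so it only remains to control $K(\mathbf{u}(t))=\sum_k\gamma_k\|\nabla u_k(t)\|_{L^2}^2$. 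We may assume $Q(\mathbf{u}_0)=:Q_0>0$, the case $\mathbf{u}_0\equiv 0$ being trivial.

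Combining the energy decomposition \eqref{conserenerfunc}, conservation of $E$, and $L(\mathbf{u}(t))\geq 0$ gives $K(\mathbf{u}(t))\leq E(\mathbf{u}_0)+2|P(\mathbf{u}(t))|$. Inserting the Gagliardo--Nirenberg inequality \eqref{GNE2} together with $Q(\mathbf{u}(t))=Q_0$ produces the closed scalar inequality
\begin{equation*}
K(\mathbf{u}(t))\leq E(\mathbf{u}_0)+\frac{2}{\xi_0}\,Q_0^{\frac{3}{2}-\frac{n}{4}}\,K(\mathbf{u}(t))^{\frac{n}{4}},\qquad t\in(-T_*,T^*),
\end{equation*}
and the whole proof reduces to analysing this inequality according to whether $n/4$ is $<1$, $=1$, or $>1$.

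For $1\leq n\leq 3$ one has $n/4<1$; Young's inequality with exponents $4/n$ and $4/(4-n)$ absorbs the nonlinear term into $\tfrac12 K(\mathbf{u}(t))$ plus a constant depending only on $n$, $Q_0$, $\xi_0$, and one reads off a uniform bound $K(\mathbf{u}(t))\leq 2E(\mathbf{u}_0)+C$ with no restriction on the data, which proves (i). For $n=4$ the exponent equals $1$ and the inequality becomes $\bigl(1-\tfrac{2}{\xi_0}Q_0^{1/2}\bigr)K(\mathbf{u}(t))\leq E(\mathbf{u}_0)$; hypothesis \eqref{smaldatL2norm}, i.e. $2Q_0^{1/2}<\xi_0$, makes the coefficient strictly positive and hence bounds $K(\mathbf{u}(t))$, proving (ii).

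The case $n=5$ is the genuine difficulty: now $n/4=5/4>1$, $3/2-n/4=1/4$, and the map $K\mapsto K-\tfrac{2}{\xi_0}Q_0^{1/4}K^{5/4}$ is no longer monotone, so no unconditional bound holds. Multiplying the scalar inequality by $Q_0$ and setting $\mathcal{K}(t):=Q_0K(\mathbf{u}(t))$ rewrites it as $h(\mathcal{K}(t))\leq Q_0E(\mathbf{u}_0)$, where $h(x)=x-\tfrac{2}{\xi_0}x^{5/4}$. An elementary one-variable computation shows $h$ is increasing on $[0,x_0]$ and decreasing on $[x_0,\infty)$ with $x_0=(\tfrac25\xi_0)^4$ and maximal value $h(x_0)=\tfrac15(\tfrac25\xi_0)^4$; hypotheses \eqref{smalldataH1} and \eqref{smallenergyH1} say precisely that $\mathcal{K}(0)<x_0$ and $Q_0E(\mathbf{u}_0)<h(x_0)$. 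Since $t\mapsto\mathcal{K}(t)$ is continuous, were it to attain the value $x_0$ there would be a first such time $t^\ast$, giving $h(x_0)=h(\mathcal{K}(t^\ast))\leq Q_0E(\mathbf{u}_0)<h(x_0)$, a contradiction; hence $\mathcal{K}(t)<x_0$ and so $K(\mathbf{u}(t))<x_0/Q_0$ throughout the existence interval, proving (iii). In all three cases the resulting uniform $\mathbf{H}^1$ bound is incompatible with $T_*<\infty$ or $T^*<\infty$, so the solution is global and, iterating the local theory on a uniform time grid, belongs to $\mathbf{Y}(\R)$. I expect the only real obstacle to be this last ($n=5$) step --- namely identifying $Q_0K$ as the correct quantity to play against the mountain-pass-shaped function $h$ and matching its critical level with the stated smallness conditions; the cases $n\leq 4$ are routine once the scalar inequality is established.
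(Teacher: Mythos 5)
Your proposal is correct and follows essentially the same route as the paper: conservation of $Q$ and $E$ plus the Gagliardo--Nirenberg bound \eqref{GNE2} yield the scalar inequality in $K(\mathbf{u}(t))$, which is handled by Young's inequality for $n\leq 3$, by direct absorption for $n=4$, and by a continuity/bootstrap argument for $n=5$. The only cosmetic difference is that for $n=5$ you normalize by $Q_0$ and re-derive the continuity argument inline, whereas the paper applies the packaged Lemma~\ref{supercritcalcase} with $a=E_0$, $b=\tfrac{2}{\xi_0}Q_0^{1/4}$, $q=\tfrac54$ --- the threshold $x_0=\left(\tfrac25\xi_0\right)^4$ and critical level $\tfrac15 x_0$ you compute coincide with the lemma's $\gamma$ and $\left(1-\tfrac1q\right)\gamma$ after rescaling.
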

\begin{proof}
Clearly, it suffices to get an \textit{a priori} bound for $K(\mathbf{u}(t))$.
For  (i), by \eqref{GNE2} and  Young's inequality we can write, for any $\epsilon>0$,
 \begin{equation*}
    2|P(\mathbf{u})|\leq \epsilon K(\mathbf{u})+C_{\epsilon}Q(\mathbf{u})^{\frac{6-n}{4-n}}
    \end{equation*}
    for some constant $C_\epsilon$.
    Using the last inequality, the conservation of the energy and the fact that $-L(\mathbf{u})\leq 0$ we get an \textit{a priori} bound for $K(\mathbf{u})$. Indeed, from \eqref{conserenerfunc},  if $E_{0}=E(\mathbf{u}_0)$ and $Q_0=Q(\mathbf{u}_0)$, we deduce
\begin{equation*}
\begin{split}
K(\mathbf{u})&=E_{0}-L(\mathbf{u})+2P(\mathbf{u})\\
&\leq E_{0}+2\left|P(\mathbf{u})\right|\\
&\leq E_{0}+\epsilon K(\mathbf{u})+C_{\epsilon}Q_0^{\frac{6-n}{4-n}}
\end{split}
    \end{equation*}
Thus,  if $0<\epsilon<1$, then
\begin{equation}\label{GNE2.1}
K(\mathbf{u})\leq (1-\epsilon)^{-1}\left[E_{0}+C_{\epsilon}Q_0^{\frac{6-n}{4-n}}\right],
\end{equation}
as required. 

For (ii), from \eqref{conserenerfunc} and \eqref{GNE2}, we have
\begin{equation*}
K(\mathbf{u})\leq E_{0}+\frac{2}{\xi_{0}}Q_0^{\frac{1}{2}}K(\mathbf{u}),
\end{equation*}
or, equivalently,
\begin{equation}\label{positenerg1}
\left[1-\frac{2}{\xi_{0}}Q_0^{\frac{1}{2}}\right]K(\mathbf{u})\leq E_{0}.
\end{equation}
Hence, if \eqref{smaldatL2norm} holds then
$$K(\mathbf{u})\leq \left[1-\frac{2}{\xi_{0}}Q_0^{\frac{1}{2}}\right]^{-1}E_{0}, $$
as required.

In order to proof (iii), we use the following lemma (see, for instance, \cite{beg}, \cite{Esfahani} or \cite{Pastor} for its proof).

\begin{lem}\label{supercritcalcase}
Let $I$ an open interval with $0\in I$. Let $a\in \R$, $b>0$ and $q>1$. Define $\gamma=(bq)^{-\frac{1}{q-1}}$ and $f(r)=a-r+br^{q}$, for $r\geq 0$. Let $G(t)$ a non-negative continuous  function such that $f\circ G\geq 0$ on $I$. Assume that $a<\left(1-\frac{1}{q}\right)\gamma$.
\begin{enumerate}
\item[(i)] If $G(0)<\gamma$, then $G(t)<\gamma$, $\forall t\in I$.
\item[(ii)] If $G(0)>\gamma$, then $G(t)>\gamma$, $\forall t\in I$.
\end{enumerate}
\end{lem}

To apply Lemma \ref{supercritcalcase} in our case, we first note that
$$
K(\mathbf{u})\leq E_0+\frac{2}{\xi_{0}}Q_0^{\frac{1}{4}}K(\mathbf{u})^{\frac{5}{4}}.
$$
Therefore, we set $a=E_0$, $b=\frac{2}{\xi_{0}}Q_0^{\frac{1}{4}}$, $q=\frac{5}{4}$, and $G(t)=K(\mathbf{u}(t))$. Thus, since
$$
\gamma=(bq)^{-\frac{1}{q-1}}=\left(\frac{2}{5}\xi_{0}\right)^4\frac{1}{Q_0},
$$
it is easy to see that $a<\left(1-\frac{1}{q}\right)\gamma$ is equivalent to \eqref{smallenergyH1} and $G(0)<\gamma$ is equivalent to \eqref{smalldataH1}. Hence, Lemma \ref{supercritcalcase} gives the desired bound and the proof of the theorem is completed. 
\end{proof}

 \section{Existence of ground state solutions}\label{sec.gs}

 In this section we will prove the existence of ground state solutions for \eqref{system1}. Thus, we will assume that  \textnormal{\ref{H1}-\ref{H8}} hold. Recall that a standing wave solution for  \eqref{system1} is a solution of the form
\begin{equation}\label{standing}
u_{k}(x,t)=e^{i\frac{\alpha_{k}}{\gamma_{k}}\omega t}\psi_{k}(x),\qquad k=1,\ldots,l,
\end{equation}
where $\psi_{k}$ are real functions decaying to zero at infinity. Note that under the assumptions of Lemma \ref{H34impGC}, for $k=1,\ldots,l$ and any $\omega\in \R$, we have
\begin{equation*}
f_{k}\left(e^{i\frac{\alpha_{1}}{\gamma_{1}}\omega t}\psi_{1},\ldots,e^{\frac{\alpha_{l}}{\gamma_{l}}\omega it}\psi_{l}\right)=e^{i\frac{\alpha_{k}}{\gamma_{k}}\omega t}f_{k}(\psib),
\end{equation*}
where $\psib=(\psi_{1},\ldots,\psi_{l})$. 
 Thus, by replacing \eqref{standing} into \eqref{system1}, we see that  $\psi_{k}$ must satisfy the following elliptic system
\begin{equation}\label{system3}
\displaystyle -\gamma_{k}\Delta \psi_{k}+\left(\frac{\alpha_{k}^{2}}{\gamma_{k}}\omega+\beta_{k}\right) \psi_{k}=f_{k}(\psib),\qquad k=1,\ldots,l.
\end{equation}

\begin{obs}\label{fkrealremk}
\begin{enumerate}
\item[(i)] It is clear from  Lemma \ref{fkreal} that  $f_{k}$ are real-valued functions, i.e.,  
 $f_{k}(\psib)\in \R,$ $ k=1,\ldots,l$. Thus, system \eqref{system3} makes sense, since the right-hand side of the system is real.
 \item[(ii)]    Observe that $\psib=\mathbf{0}$ is always a solution (trivial solution) of \eqref{system3}. Hence, we  will always be  interested in non-trivial solutions. 
 \item[(iii)]  In order to obtain non-trivial solutions, here we restrict  the values of $\omega$ to those such that $\displaystyle \omega  > -\frac{\beta_{k}\gamma_{k}}{\alpha_{k}^{2}}$.
\end{enumerate}
\end{obs}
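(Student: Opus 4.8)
The three assertions have quite different character, so I would dispatch (i) and (ii) at once and concentrate on (iii). Part (i) is a pointwise application of Lemma \ref{fkreal}: by \ref{H7} the function $F$ is real valued on $\R^l$, hence each $\partial F/\partial x_k$ is real there, and Lemma \ref{fkreal} gives $f_k(\mathbf{x})=\partial F/\partial x_k(\mathbf{x})$ for $\mathbf{x}\in\R^l$; evaluating at $\mathbf{x}=\psib(x)$, with the $\psi_k$ real, yields $f_k(\psib)\in\R$, so the right-hand side of \eqref{system3} is a real function and the elliptic system is consistent. Part (ii) is simply the observation that $f_k(\mathbf{0})=0$ by \ref{H1}, while the linear part of \eqref{system3} annihilates $\psib\equiv\mathbf{0}$; hence the zero vector solves \eqref{system3} for every admissible $\omega$.

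For (iii) I would set $c_k:=\frac{\alpha_k^2}{\gamma_k}\omega+\beta_k$, so that the restriction on $\omega$ reads exactly $c_k>0$ for all $k$. The reason to impose it is that $\mathbf{u}\mapsto\sum_{k=1}^l\left(\gamma_k\|\nabla u_k\|_{L^2}^2+c_k\|u_k\|_{L^2}^2\right)$ is then an equivalent norm on $\mathbf{H}^1$, i.e., the coercive quadratic form underlying the variational construction of ground states in the next section; if some $c_k\le 0$ the operator $-\gamma_k\Delta+c_k$ is not positive and the minimization loses its footing. To see that such a restriction is also essentially forced in the subcritical dimensions $1\le n\le 5$, I would derive two integral identities for a sufficiently regular, decaying solution of \eqref{system3}. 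Testing the $k$-th equation with $\psi_k$, integrating, summing over $k$, and using Lemma \ref{propertiesF}(ii) in the real case ($\sum_k f_k(\psib)\psi_k=3F(\psib)$) gives $K(\psib)+\sum_{k=1}^l c_k\|\psi_k\|_{L^2}^2=3\int F(\psib)\,dx$, with $K$ as in \eqref{funcK1}. Testing instead with $x\cdot\nabla\psi_k$, integrating, summing, and using Lemma \ref{propertiesF}(i) ($\sum_k f_k(\psib)\nabla\psi_k=\nabla[F(\psib)]$) together with the classical identities $\int(-\Delta u)(x\cdot\nabla u)\,dx=-\frac{n-2}{2}\int|\nabla u|^2\,dx$ and $\int x\cdot\nabla g\,dx=-n\int g\,dx$ gives $\frac{n-2}{2}K(\psib)+\frac n2\sum_{k=1}^l c_k\|\psi_k\|_{L^2}^2=n\int F(\psib)\,dx$. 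Eliminating $\int F(\psib)\,dx$ between the two relations yields $\sum_{k=1}^l c_k\|\psi_k\|_{L^2}^2=\frac{6-n}{n}K(\psib)$, whose right-hand side is strictly positive for $1\le n\le 5$ and any non-trivial $\psib$; hence $\omega$ cannot be taken too negative, which both motivates and (in this averaged sense) forces the restriction in (iii).

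The only place with genuine content is (iii), and the honest difficulty is twofold. First, the Pohozaev-type computation must be justified: one needs the elliptic regularity and the decay of solutions of \eqref{system3} that make all the integrations by parts legitimate, which is standard for Schr\"odinger-type elliptic systems but has to be invoked. Second, the last identity controls only the weighted sum $\sum_k c_k\|\psi_k\|_{L^2}^2$, not each $c_k$ separately, so the passage from it to the componentwise condition $c_k>0$ of (iii) is not a strict implication; one therefore adopts (iii) as the standing hypothesis under which the variational scheme is run, the identity serving only as its motivation. Parts (i) and (ii) present no obstacle.
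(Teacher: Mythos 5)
Your treatment of (i) and (ii) is exactly what the paper intends: the remark is stated without proof, and these two parts are immediate from Lemma \ref{fkreal} together with \ref{H7}, and from \ref{H1}, respectively. For (iii) the paper also offers no argument at the point of the remark, but the identity you derive, $\sum_k c_k\|\psi_k\|_{L^2}^2=\frac{6-n}{n}K(\psib)$, is precisely the combination of \eqref{d} and \eqref{e} that the paper establishes later in Lemma \ref{identitiesfunctionals} and exploits in Remark \ref{remkharge}. The only methodological difference is how the dilation identity \eqref{Pohosaeviden} is obtained: you use the classical Pohozaev multiplier $x\cdot\nabla\psi_k$ with integration by parts, whereas the paper differentiates $\lambda\mapsto I(\delta_\lambda\psib)$ at $\lambda=1$. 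The two routes are equivalent in content and require the same regularity and decay input (supplied in the paper by Lemma \ref{regellpsys}); the scaling argument avoids the explicit boundary-term bookkeeping, while your multiplier computation makes the identity's origin more transparent. Your closing caveat is also the right one: the identity constrains only the weighted sum $\sum_k c_k\|\psi_k\|_{L^2}^2$, so the componentwise condition $b_k>0$ is adopted as a standing hypothesis (ensuring $\mathcal{Q}$ is an equivalent $\mathbf{L}^2$-type norm for the variational scheme), not deduced; the paper does the same.
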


 To simplify notation, we note that system (\ref{system3}) can be written as
\begin{equation}\label{systemelip}
\displaystyle -\gamma_{k}\Delta \psi_{k}+b_{k} \psi_{k}=f_{k}(\psib),\quad k=1\ldots,l.
\end{equation}
where
\begin{equation*}\label{bdef}
b_{k}:=\frac{\alpha_{k}^{2}}{\gamma_{k}}\omega+\beta_{k}>0.
\end{equation*} 

Our goal then will be to find ground state solutions for \eqref{systemelip}. The action functional associated to  (\ref{systemelip}) is defined, for $\psib\in \mathbf{H}^{1}$, as
\begin{equation*}
I(\mbox{\boldmath$\psi$}):=\frac{1}{2}\left[\sum_{k=1}^{l}\gamma_{k}\|\nabla \psi_{k}\|_{L^2}^{2}+\sum_{k=1}^{l}b_{k}\| \psi_{k}\|_{L^2}^{2}\right]
-\int F(\psib)\;dx.
\end{equation*}

In addition, on $\mathbf{H}^{1}$, we define
\begin{equation*}
K(\psib):=\sum_{k=1}^{l}\gamma_{k}\|\nabla \psi_{k}\|_{L^2}^{2};
\end{equation*}
\begin{equation*}
\mathcal{Q}(\psib):=\sum_{k=1}^{l}b_{k}\| \psi_{k}\|_{L^2}^{2};
\end{equation*}
\begin{equation*}
P(\psib):=\int F(\psib)\;dx;
\end{equation*}
and
\begin{equation}\label{functionalJ}
J(\psib):=\frac{\mathcal{Q}(\psib)^{\frac{3}{2}-\frac{n}{4}}K(\psib)^{\frac{n}{4}}}{P(\psib)},\quad \quad P(\psib)\neq 0.
\end{equation}
Thus, the action $I$ can be expressed as
\begin{equation}\label{FunctionalI2}
I(\psib)=\frac{1}{2}\left[K(\psib)+\mathcal{Q}(\psib)\right]-P(\psib).
\end{equation}

Note that the functionals $K$, $\mathcal{Q}$, and $P$ are continuous on $ \mathbf{H}^1$ (the continuity of  $P$ follows from Lemma \ref{estdifF}). Next we show that indeed such functionals have Fr\'echet derivatives. In what follows, the primes represent the Fr\'echet derivatives.

\begin{lem}\label{frede}
If $\mathbf{g}\in \mathbf{H}^1$. Then
\begin{equation*}
K'(\psib)(\mathbf{g})=2\sum_{k=1}^{l}\gamma_{k}\int\nabla\psi_{k}\nabla g_{k}\;dx,
\end{equation*}
\begin{equation*}
\mathcal{Q}'(\psib)(\mathbf{g})=2\sum_{k=1}^{l}b_{k}\int \psi_{k} g_{k}\;dx,
\end{equation*}
and
\begin{equation*}
P'(\psib)(\mathbf{g})=\sum_{k=1}^{l}\int f_{k}(\psib)g_{k}\;dx.
\end{equation*}
\end{lem}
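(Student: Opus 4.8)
The plan is to verify each of the three Fréchet derivative formulas by the standard two-step procedure: first compute the Gâteaux derivative (the directional derivative) by expanding the functional along $\psib + t\mathbf{g}$ and differentiating in $t$ at $t=0$, and then show that the resulting candidate linear functional is in fact the Fréchet derivative by checking that it is bounded on $\mathbf{H}^1$ and that the remainder term is $o(\|\mathbf{g}\|_{\mathbf{H}^1})$. The first two functionals, $K$ and $\mathcal{Q}$, are quadratic, so the computation is exact and immediate: expanding $K(\psib+\mathbf{g})$ gives $K(\psib) + 2\sum_k \gamma_k \int \nabla\psi_k \nabla g_k\,dx + K(\mathbf{g})$, the middle term is the claimed linear functional (bounded by Cauchy--Schwarz), and the remainder $K(\mathbf{g})$ is $O(\|\mathbf{g}\|_{\mathbf{H}^1}^2) = o(\|\mathbf{g}\|_{\mathbf{H}^1})$; the argument for $\mathcal{Q}$ is identical.

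The substantive case is $P(\psib) = \mathrm{Re}\int F(\psib)\,dx$ (recall $F$ may be complex-valued, but $P$ is defined via the real part). First I would write, using the chain rule for the Wirtinger operators together with Remark \ref{fksegexp} (which identifies $f_k = 2\partial_{\overline{z}_k}\mathrm{Re}\,F$), that along the real-valued perturbation the directional derivative is $\frac{d}{dt}\big|_{t=0}\mathrm{Re}\int F(\psib+t\mathbf{g})\,dx = \sum_k \int f_k(\psib) g_k\,dx$, where I have used that the $g_k$ are real so the $\partial z_k$ and $\partial\overline{z}_k$ contributions combine exactly into $2\,\mathrm{Re}\,\partial_{\overline{z}_k}$. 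To make this rigorous and upgrade to a Fréchet derivative, I would use the integral form of the remainder: $\mathrm{Re}\,F(\psib+\mathbf{g}) - \mathrm{Re}\,F(\psib) - \sum_k f_k(\psib)g_k$ can be estimated by the second-order growth of $\mathrm{Re}\,F$. Concretely, Lemma \ref{estdifF} (and its proof, which bounds the first derivatives of $\mathrm{Re}\,F$ by $C\sum_j|z_j|^2$) together with Corollary \ref{limfk} (giving $|f_k(\mathbf z)|\le C\sum_j|z_j|^2$) yields a pointwise bound of the remainder by $C\sum_{j,m}(|\psi_j|+|g_j|)|g_j||g_m|$-type terms, whose $L^1(\R^n)$ norm is controlled via Hölder and the Sobolev embedding $H^1 \hookrightarrow L^3$ (valid for $1\le n\le 6$) by $C(\|\psib\|_{\mathbf H^1}+\|\mathbf g\|_{\mathbf H^1})\|\mathbf g\|_{\mathbf H^1}^2 = o(\|\mathbf g\|_{\mathbf H^1})$. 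Boundedness of $\mathbf g\mapsto \sum_k\int f_k(\psib)g_k\,dx$ on $\mathbf H^1$ follows from $|f_k(\psib)|\le C\sum_j|\psi_j|^2\in L^{3/2}\subset L^2 + H^{-1}$ and the $H^{-1}$--$H^1$ duality, or more directly from Lemma \ref{lemfkcontH} which places $f_k(\psib)\in H^{-1}$.

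The only mild obstacle is bookkeeping in the $P$ case: one must be careful that $F$ is complex-valued while $P$ takes the real part, so the derivative of $P$ genuinely pairs with $f_k$ (not with $\partial_{\overline z_k}F$ alone), and this is exactly where Remark \ref{fksegexp}'s identity $f_k = 2\partial_{\overline z_k}\mathrm{Re}\,F$ is needed; also one should note that since the test functions $g_k$ are real, only the real-tangent directions matter and no separate treatment of real and imaginary parts is required. Once these points are observed, continuity of the derivatives (needed to conclude $K,\mathcal Q,P\in C^1$, which the paper will use implicitly) follows from the continuity statements already recorded after \eqref{functionalJ} and from Lemma \ref{lemfkcontH}. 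Assembling these pieces gives the three formulas.
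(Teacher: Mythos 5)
Your proposal is correct and is precisely the ``quite standard'' argument that the paper alludes to, since the paper's own proof of Lemma \ref{frede} is omitted: exact quadratic expansion for $K$ and $\mathcal{Q}$, and for $P$ the Wirtinger chain rule with the identity $f_k=2\partial_{\overline z_k}\mathrm{Re}\,F$ from Remark \ref{fksegexp} together with a remainder bound of the form $C\sum(|\psi_j|+|g_j|)|g_m||g_k|$ controlled in $L^1$ via H\"older and $H^1\hookrightarrow L^3$. The bookkeeping points you flag (real test directions, $f_k$ real-valued on real arguments via Lemma \ref{fkreal}, and boundedness of $\mathbf g\mapsto\sum_k\int f_k(\psib)g_k\,dx$ from $f_k(\psib)\in L^{3/2}$) are exactly the right ones, so the proof is complete.
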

\begin{proof}
The proof is quite standard in view of our assumptions. So, we omit the details.
\end{proof}

In particular, Lemma \ref{frede} implies that $I$ has Fr\'echet derivative. The critical points of $I$ are the solutions of \eqref{systemelip}. More precisely,

\begin{defi}
We say that $\psib\in \mathbf{H}^1$ is a (weak) solution  of \eqref{systemelip} if for any $\mathbf{g}\in\mathbf{H}^1$, 
	\begin{equation}\label{infI}
	\displaystyle \gamma_{k} \int\nabla \psi_{k}  \nabla g_{k}\;dx+b_{k}\int \psi_{k} g_{k}\;dx=\int f_{k}(\psib)g_{k}\;dx,\quad k=1,\ldots,l.
	\end{equation}
	\end{defi}

\begin{defi}\label{defgroundstate} Let $\mathcal{C}$  be the set of non-trivial critical points of $I$. We say that $\psib\in \mathbf{H}^1$ is a ground state solution of \eqref{systemelip} if 
\begin{equation*}
I(\psib)=\inf\left\{I(\boldsymbol{\phi}); \boldsymbol{\phi}\in \mathcal{C}\right\}. 
\end{equation*}
We denote by $\mathcal{G}(\omega,\boldsymbol{\beta})$ the set of all ground states for system \eqref{systemelip}, where  $(\omega,\boldsymbol{\beta})$ indicates the dependence on the parameters $\omega$ and $\boldsymbol{\beta}$.
\end{defi}

Now we establish some relations between the functionals $K,\mathcal{Q}, P$ and $I$. This is similar to the well known Pohozaev's identities for elliptic equations.  
\begin{lem}
\label{identitiesfunctionals}
Let $\psib$ be a solution of \eqref{systemelip}. Then,
\begin{equation}
P(\psib)=2I(\psib),\label{b}\\
\end{equation}
\begin{equation}
K(\psib)=nI(\psib),\label{d}\\
\end{equation}
\begin{equation}
\mathcal{Q}(\psib)=(6-n)I(\psib).\label{e}
\end{equation}
\end{lem}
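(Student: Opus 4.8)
The plan is to derive the three identities \eqref{b}, \eqref{d}, \eqref{e} from two scalar relations: the \emph{Nehari identity} obtained by pairing the equation with the solution itself, and the \emph{Pohozaev identity} obtained by pairing with the dilation generator $x\cdot\nabla\psib$. First I would use $\mathbf g=\psib$ in the weak formulation \eqref{infI} and sum over $k$; since $F$ is homogeneous of degree $3$, Lemma \ref{propertiesF}(ii) gives $\sum_k\int f_k(\psib)\psi_k\,dx=3\int F(\psib)\,dx=3P(\psib)$ (here $F$ is real on $\R^l$ by \ref{H7}, so no real parts are needed), yielding the Nehari relation
\begin{equation*}
K(\psib)+\mathcal Q(\psib)=3P(\psib).
\end{equation*}

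Next I would establish the Pohozaev identity. Multiplying the $k$-th equation in \eqref{systemelip} by $x\cdot\nabla\psi_k$, integrating over $\R^n$, and using the standard integration-by-parts computations $-\int \Delta\psi_k\,(x\cdot\nabla\psi_k)\,dx=\frac{n-2}{2}\|\nabla\psi_k\|_{L^2}^2$ and $\int\psi_k\,(x\cdot\nabla\psi_k)\,dx=-\frac n2\|\psi_k\|_{L^2}^2$, the left side becomes $\frac{n-2}{2}K(\psib)-\frac n2\mathcal Q(\psib)$ after multiplying through by the coefficients $\gamma_k$, $b_k$ and summing. For the right side, $\sum_k\int f_k(\psib)\,(x\cdot\nabla\psi_k)\,dx=\int x\cdot\nabla\big(F(\psib)\big)\,dx=-n\int F(\psib)\,dx=-nP(\psib)$, using Lemma \ref{propertiesF}(i) (again $F$ real by \ref{H7}) and one more integration by parts. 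This gives the Pohozaev relation
\begin{equation*}
\frac{n-2}{2}K(\psib)-\frac n2\mathcal Q(\psib)=-nP(\psib),\qquad\text{i.e.}\qquad (n-2)K(\psib)-n\mathcal Q(\psib)=-2nP(\psib).
\end{equation*}

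Finally I would solve the linear system. From the Nehari relation, $P=\frac13(K+\mathcal Q)$; substituting into Pohozaev gives $(n-2)K-n\mathcal Q=-\frac{2n}{3}(K+\mathcal Q)$, which simplifies to $K=\frac n{6-n}\mathcal Q$, equivalently $\mathcal Q=\frac{6-n}{n}K$. Then from \eqref{FunctionalI2}, $I(\psib)=\frac12(K+\mathcal Q)-P=\frac12(K+\mathcal Q)-\frac13(K+\mathcal Q)=\frac16(K+\mathcal Q)=\frac16\big(K+\frac{6-n}{n}K\big)=\frac{K}{n}$, which is \eqref{d}; then \eqref{e} follows from $\mathcal Q=\frac{6-n}{n}K=(6-n)I$, and \eqref{b} from $P=\frac13(K+\mathcal Q)=\frac13(n+6-n)I=2I$.

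The one delicate point is the rigor of the Pohozaev computation: multiplying by $x\cdot\nabla\psi_k$ requires enough decay/regularity of the weak solution to justify the integrations by parts, and the unbounded multiplier $x$ must be handled by a cutoff/truncation argument with the error terms shown to vanish. This is the standard regularity-plus-truncation argument for Pohozaev identities (elliptic regularity bootstrapping from \eqref{systemelip} with the quadratic nonlinearity controlled via Lemma \ref{lemfkcontH} and Lemma \ref{estdifF}), so I expect the authors either invoke it as routine or defer to a regularity lemma; the algebraic extraction of \eqref{b}, \eqref{d}, \eqref{e} from the two identities is then immediate.
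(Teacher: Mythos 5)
Your overall strategy---combine the Nehari identity $K(\psib)+\mathcal{Q}(\psib)=3P(\psib)$ with a Pohozaev identity and then solve the resulting linear system---is exactly the paper's, and your Nehari step and final algebra match the paper's proof. The only methodological difference is how Pohozaev is obtained: you multiply by $x\cdot\nabla\psi_k$ and integrate by parts, whereas the paper evaluates $\frac{d}{d\lambda}\big|_{\lambda=1}I(\delta_\lambda\psib)=0$ using the scaling laws of Lemma \ref{functrans}, so that $I(\delta_\lambda\psib)=\frac{1}{2}\left[\lambda^{n-2}K(\psib)+\lambda^{n}\mathcal{Q}(\psib)\right]-\lambda^{n}P(\psib)$ is an explicit polynomial in $\lambda$ and no unbounded multiplier or truncation ever enters. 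This is precisely the ``delicate point'' you flag at the end, and the paper's route dissolves it.

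There is, however, a sign error in your displayed Pohozaev identity. The correct integration by parts gives
\begin{equation*}
-\int\Delta\psi_k\,(x\cdot\nabla\psi_k)\,dx=-\frac{n-2}{2}\|\nabla\psi_k\|_{L^2}^{2},
\end{equation*}
not $+\frac{n-2}{2}\|\nabla\psi_k\|_{L^2}^{2}$ (test with $n=1$, $\psi=e^{-x^{2}/2}$). Consequently the identity should read
\begin{equation*}
\frac{n-2}{2}K(\psib)+\frac{n}{2}\mathcal{Q}(\psib)=nP(\psib),
\end{equation*}
which is the paper's \eqref{Pohosaeviden}, rather than $\frac{n-2}{2}K(\psib)-\frac{n}{2}\mathcal{Q}(\psib)=-nP(\psib)$ as you wrote. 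Your subsequent algebra is silently carried out with the \emph{correct} relation: substituting $P=\frac{1}{3}(K+\mathcal{Q})$ into the relation you actually display yields $(5n-6)K(\psib)=n\mathcal{Q}(\psib)$, not $(6-n)K(\psib)=n\mathcal{Q}(\psib)$; the two agree only for $n=2$. With the sign fixed, $(n-2)K+n\mathcal{Q}=2nP$ together with Nehari does give $(6-n)K=n\mathcal{Q}$ and then \eqref{d}, \eqref{e}, \eqref{b} exactly as you conclude. So the conclusions stand, but you must correct the multiplier computation (or adopt the paper's scaling derivation, which also spares you the regularity and decay discussion).
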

\begin{proof}
We first note that letting $g_{k}=\psi_{k}$, $k=1,\ldots,l$ in \eqref{infI} we have
\begin{equation*}
\displaystyle \gamma_{k}\|\nabla \psi_{k}  \|_{L^{2}}^{2}+b_{k}\| \psi_{k} \|_{L^{2}}^{2}=\int f_{k}(\psib)\psi_{k}\;dx,\quad k=1,\ldots,l.
\end{equation*}
From  Lemma \ref{propertiesF}, Remark \ref{fkrealremk} (i) and assumption \ref{H7}  we deduce
\begin{equation}\label{b1}
\sum_{k=1}^{l}f_{k}(\psib)\psi_{k}=3F(\psib).
\end{equation}
By summing over $k$ and using \eqref{b1}, we then get
\begin{equation}\label{1}
K(\psib)+ \mathcal{Q}(\psib)=3P(\psib).
\end{equation}
Therefore, (\ref{b}) follows from \eqref{FunctionalI2} and \eqref{1}.

In order to show  (\ref{d}) define $(\delta_{\lambda}f)(x)=f(x/\lambda)$. Then the function $\lambda\mapsto h(\lambda)=I(\delta_{\lambda}\psib)$ has a critical point at $\lambda=1$ or equivalently
$$h'(1)=\left.\frac{d}{d\lambda}\right|_{\lambda=1}I(\delta_{\lambda}\psib)=0.$$
But since
\begin{equation*}
\begin{split}
    h'(1)
&=\frac{n-2}{2}K(\psib)+n\left[\frac{1}{2}\mathcal{Q}(\psib)-P(\psib)\right],
\end{split}
\end{equation*}
we obtain
\begin{equation}\label{Pohosaeviden}
\frac{n-2}{2}K(\psib)+n\left[\frac{1}{2}\mathcal{Q}(\psib)-P(\psib)\right]=0.
\end{equation}
which combined with \eqref{FunctionalI2} gives \eqref{d}.

Finally, \eqref{e} follows as a combination of \eqref{Pohosaeviden} and \eqref{1} with \eqref{b}.
\end{proof}

\begin{obs}\label{remkharge}
Since $\mathcal{Q}(\psib)>0$ for any $\psib\neq\mathbf{0}$, it follows from \eqref{e} that \eqref{systemelip} has no  non-trivial solutions if $6-n\leq 0$. In addition, $\mathcal{Q}$ remains constant along $\mathcal{G}(\omega,\boldsymbol{\beta})$ and $\psib$ is a ground state  if and only if  $\mathcal{Q}(\psib)$ is minimal.
\end{obs}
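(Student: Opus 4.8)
The first assertion is immediate: by the identity \eqref{e}, $\mathcal{Q}(\psib)=(6-n)I(\psib)$, while on the other hand $I(\psib)=\frac{1}{n}K(\psib)\geq 0$ by \eqref{d}. Since $K(\psib)=0$ forces $\nabla\psi_k=0$ for all $k$, which together with \eqref{systemelip} and $b_k>0$ gives $\psib=\mathbf{0}$, every non-trivial solution satisfies $I(\psib)>0$ and hence $K(\psib)>0$. If $6-n\leq0$, the left side of \eqref{e} is positive while the right side is $\leq0$, a contradiction; so no non-trivial solution exists. I would present this as a short paragraph, being careful to invoke \eqref{d} and \eqref{e} in tandem and to note that $I(\psib)>0$ on $\mathcal{C}$.

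For the second assertion, the plan is to use the proportionality of all four functionals to $I(\psib)$ on $\mathcal{C}$. From \eqref{d} and \eqref{e}, both $K$ and $\mathcal{Q}$ are fixed positive multiples of $I$ along $\mathcal{C}$; in particular $\mathcal{Q}(\psib)=\frac{6-n}{n}K(\psib)$ and $I(\psib)=\frac{1}{6-n}\mathcal{Q}(\psib)$ for every $\psib\in\mathcal{C}$. Therefore minimizing $I$ over $\mathcal{C}$ is equivalent to minimizing $\mathcal{Q}$ over $\mathcal{C}$: if $\psib\in\mathcal{G}(\omega,\boldsymbol{\beta})$ then $I(\psib)$ is minimal, hence $\mathcal{Q}(\psib)=(6-n)I(\psib)$ is minimal; conversely if $\mathcal{Q}(\psib)$ is minimal among critical points, then $I(\psib)=\frac{1}{6-n}\mathcal{Q}(\psib)$ is minimal, so $\psib$ is a ground state. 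Consequently $\mathcal{Q}$ takes the constant value $(6-n)\,\inf_{\boldsymbol{\phi}\in\mathcal{C}}I(\boldsymbol{\phi})$ on all of $\mathcal{G}(\omega,\boldsymbol{\beta})$.

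There is essentially no hard step here: the whole remark is a bookkeeping consequence of Lemma \ref{identitiesfunctionals}. The only point requiring a little care is the positivity $I(\psib)>0$ on $\mathcal{C}$ — one must rule out $I(\psib)=0$, which by \eqref{d} means $K(\psib)=0$ and then, feeding $\nabla\psi_k\equiv0$ back into the weak formulation \eqref{infI} with $g_k=\psi_k$ and using $b_k>0$ together with $f_k(\psib)\psi_k$ integrating to $3P(\psib)=\frac32(K+\mathcal{Q})$ via \eqref{1}, forces $\mathcal{Q}(\psib)=0$ and hence $\psib=\mathbf{0}$, contradicting non-triviality. Once this is in hand, the equivalence "minimal $I$ $\iff$ minimal $\mathcal{Q}$" and the constancy of $\mathcal{Q}$ on $\mathcal{G}(\omega,\boldsymbol{\beta})$ follow directly, and the non-existence for $n\geq6$ is read off from the sign mismatch in \eqref{e}.
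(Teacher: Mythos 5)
Your argument is correct and coincides with the paper's own (implicit) justification: the remark is read off directly from the Pohozaev-type identities \eqref{d} and \eqref{e} of Lemma \ref{identitiesfunctionals}, namely $K(\psib)=nI(\psib)\geq0$ forces $I(\psib)\geq0$, so $\mathcal{Q}(\psib)=(6-n)I(\psib)>0$ is impossible when $6-n\leq0$, and the fixed proportionality $\mathcal{Q}=(6-n)I$ on $\mathcal{C}$ gives both the constancy of $\mathcal{Q}$ on $\mathcal{G}(\omega,\boldsymbol{\beta})$ and the equivalence of minimizing $I$ and minimizing $\mathcal{Q}$. Your extra care in ruling out $I(\psib)=0$ is fine but not strictly needed for the nonexistence claim, since $I\geq0$ already suffices.
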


Next we will prove that \eqref{systemelip} has at least one ground state solution. The idea is to minimize the Weinstein-type functional \eqref{functionalJ}. Before that, we need some preliminary results.

\begin{lem}\label{lemma4.4} Assume  $1\leq n\leq 5$ and define the set
$$ \mathcal{P}:=\{\psib\in \mathbf{H}^{1};\, P(\psib)>0\}.$$
Then,
\begin{enumerate}
\item[(i)] $
\mathcal{C}\subset \mathcal{P};
$
\item[(ii)] 
$
\xi_{1}:=\inf\{J(\psib);\;\psib\in \mathcal{P} \}>0.
$
\end{enumerate}
\end{lem}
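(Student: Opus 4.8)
The plan is to prove the two statements separately, using the Pohozaev-type identities from Lemma \ref{identitiesfunctionals} for part (i) and the Gagliardo--Nirenberg-type bound for part (ii).

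For part (i), I would take $\psib\in\mathcal{C}$, i.e. a non-trivial solution of \eqref{systemelip}. Since $\psib\neq\mathbf{0}$ we have $\mathcal{Q}(\psib)>0$ (recall $b_k>0$), so identity \eqref{e} from Lemma \ref{identitiesfunctionals} forces $I(\psib)>0$ (using $1\leq n\leq 5$, so $6-n>0$). Then identity \eqref{b} gives $P(\psib)=2I(\psib)>0$, which is exactly the statement $\psib\in\mathcal{P}$. This is immediate once the identities are in hand; the only subtlety worth a sentence is that $\psib$ being a weak solution is what licenses applying Lemma \ref{identitiesfunctionals}.

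For part (ii), note first that $\mathcal{P}$ is nonempty: taking any fixed non-trivial $\psib_0$ with all components equal to a positive bump function, assumption \ref{H7} (together with Lemma \ref{fkreal}, giving $F>0$ on the positive cone) yields $P(\psib_0)>0$, so $\xi_1$ is an infimum over a nonempty set and $J$ is finite there. For positivity, I would argue exactly as in the proof that $\xi_0>0$ earlier in the paper: by Lemma \ref{estdifF} (inequality \eqref{estdifFeq1}) one has $|P(\psib)|=\left|\mathrm{Re}\int F(\psib)\,dx\right|\leq C\sum_{k=1}^l\|\psi_k\|_{L^3}^3$, and then the Gagliardo--Nirenberg inequality bounds $\|\psi_k\|_{L^3}^3$ by a constant times $\|\psi_k\|_{L^2}^{3-n/2}\|\nabla\psi_k\|_{L^2}^{n/2}$, which in turn is controlled by $C_0\,\mathcal{Q}(\psib)^{\frac32-\frac n4}K(\psib)^{\frac n4}$ with $C_0$ depending on the $b_k,\gamma_k$. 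Hence for $\psib\in\mathcal{P}$, $0<P(\psib)\leq C_0\,\mathcal{Q}(\psib)^{\frac32-\frac n4}K(\psib)^{\frac n4}$, which rearranges to $J(\psib)\geq 1/C_0>0$, and taking the infimum gives $\xi_1\geq 1/C_0>0$.

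The main obstacle, such as it is, is mostly bookkeeping: one must make sure the Gagliardo--Nirenberg exponents genuinely match the exponents $\frac32-\frac n4$ and $\frac n4$ appearing in $J$ (this is where the restriction $1\leq n\leq 5$, equivalently $0<\frac n4<\frac32$, is used so that both exponents are positive and the interpolation inequality is valid), and that the weights $\gamma_k$ and $b_k$ are absorbed correctly into $C_0$ — precisely the computation already carried out for \eqref{GNIuk}. No genuinely new idea is needed beyond reusing that earlier argument with $\mathcal{Q}$ in place of $Q$.
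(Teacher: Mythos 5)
Your proposal is correct and follows essentially the same route as the paper: part (i) is exactly the combination of \eqref{b} and \eqref{e} (with the observation $\mathcal{Q}(\psib)>0$ for non-trivial $\psib$ and $6-n>0$), and part (ii) is the same Gagliardo--Nirenberg argument via \eqref{estdifFeq1} and \eqref{GNIuk} that the paper uses. The added remark that $\mathcal{P}\neq\emptyset$ is a harmless (and reasonable) extra check not spelled out in the paper.
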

\begin{proof}
 Statement (i) follows immediately from \eqref{b} and  \eqref{e}. For (ii) it  suffices to show that there exists a positive constant $B$ such that, for any $\psib\in \mathcal{P}$,
\begin{equation}\label{GNIbest}
P(\psib)\leq B \mathcal{Q}(\psib)^{\frac{3}{2}-\frac{n}{4}}K(\psib)^{\frac{n}{4}}.
\end{equation}
Now, from \eqref{GNIuk} we conclude  that, for $k=1\ldots, l$, 
\begin{equation*}
\|\psi_{k}\|_{L^3}^{3}\leq C^3\gamma_{k}^{-\frac{n}{4}}b_{k}^{\frac{n}{4}-\frac{3}{2}}\mathcal{Q}(\psib)^{\frac{3}{2}-\frac{n}{4}}K(\psib)^{\frac{n}{4}}.
\end{equation*}
Using this and \eqref{estdifFeq1} we reach the desired estimate. 
\end{proof}

Next we present a direct relation between functionals $J$ and $I$.
\begin{lem}\label{lemma3}
Assume	$1\leq n\leq 5$.
If $\psib$ is a non-trivial solution of \eqref{systemelip} then
\begin{eqnarray*}
J(\psib)=\frac{n^{\frac{n}{4}}}{2}\left(6-n\right)^{\frac{3}{2}-\frac{n}{4}}I(\psib)^{\frac{1}{2}}.
\end{eqnarray*}
In particular, any non-trivial solution $\psib \in \mathcal{P}$ of \eqref{systemelip}  which is a minimizer of $J$ is a ground state of \eqref{systemelip}.
\end{lem}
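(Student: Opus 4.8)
The plan is to obtain the displayed identity by a direct substitution of the Pohozaev-type relations from Lemma \ref{identitiesfunctionals} into the definition \eqref{functionalJ} of $J$, and then to deduce the ground state claim by combining this identity with the inclusion $\mathcal{C}\subset\mathcal{P}$ from Lemma \ref{lemma4.4}(i).

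First I would record that, since $\psib$ solves \eqref{systemelip}, equations \eqref{b}, \eqref{d} and \eqref{e} give $P(\psib)=2I(\psib)$, $K(\psib)=nI(\psib)$ and $\mathcal{Q}(\psib)=(6-n)I(\psib)$. Plugging these into \eqref{functionalJ} yields
\begin{equation*}
J(\psib)=\frac{\bigl[(6-n)I(\psib)\bigr]^{\frac{3}{2}-\frac{n}{4}}\bigl[nI(\psib)\bigr]^{\frac{n}{4}}}{2\,I(\psib)},
\end{equation*}
and collecting the powers of $I(\psib)$, namely $\bigl(\tfrac{3}{2}-\tfrac{n}{4}\bigr)+\tfrac{n}{4}-1=\tfrac12$, produces exactly $J(\psib)=\tfrac{n^{n/4}}{2}(6-n)^{\frac32-\frac n4}I(\psib)^{1/2}$. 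Here I would also observe that $I(\psib)>0$: indeed $I(\psib)=\mathcal{Q}(\psib)/(6-n)$ by \eqref{e}, and $\mathcal{Q}(\psib)>0$ for $\psib\neq\mathbf{0}$ while $6-n>0$ since $n\le 5$, so the square root is meaningful and $t\mapsto t^{1/2}$ is strictly increasing on the relevant range.

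For the second assertion, let $\psib\in\mathcal{P}$ be a non-trivial solution of \eqref{systemelip} with $J(\psib)=\xi_1=\inf_{\mathcal{P}}J$. Since the (weak) solutions of \eqref{systemelip} are precisely the non-trivial critical points of $I$, we have $\psib\in\mathcal{C}$, so $\psib$ is an admissible competitor in Definition \ref{defgroundstate}. Now take any $\boldsymbol{\phi}\in\mathcal{C}$. By Lemma \ref{lemma4.4}(i), $\boldsymbol{\phi}\in\mathcal{P}$, hence $J(\boldsymbol{\phi})\ge\xi_1=J(\psib)$. Applying the identity just proved to both $\psib$ and $\boldsymbol{\phi}$ (each a non-trivial solution) and cancelling the common positive constant $\tfrac{n^{n/4}}{2}(6-n)^{\frac32-\frac n4}$, we obtain $I(\boldsymbol{\phi})^{1/2}\ge I(\psib)^{1/2}$, that is $I(\psib)\le I(\boldsymbol{\phi})$. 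As $\boldsymbol{\phi}\in\mathcal{C}$ was arbitrary, $I(\psib)=\inf\{I(\boldsymbol{\phi}):\boldsymbol{\phi}\in\mathcal{C}\}$, so $\psib$ is a ground state.

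I do not anticipate a genuine obstacle here: the statement is essentially a bookkeeping consequence of the already-established identities in Lemma \ref{identitiesfunctionals}. The only point requiring a little care is the positivity of $I$ on non-trivial solutions, which is what makes $t\mapsto t^{1/2}$ order-preserving and is exactly where the hypothesis $n\le 5$ (equivalently $6-n>0$) enters; this is the content already flagged in Remark \ref{remkharge}.
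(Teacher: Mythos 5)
Your proof is correct and follows exactly the route the paper intends: substitute the Pohozaev-type identities \eqref{b}--\eqref{e} into the definition \eqref{functionalJ} of $J$, check the exponent bookkeeping, and use $\mathcal{C}\subset\mathcal{P}$ together with the monotonicity of $t\mapsto t^{1/2}$ to pass from minimality of $J$ to minimality of $I$. The paper's own proof is a one-line reference to the same combination, so your write-up is simply a fuller version of it (and your explicit remark that $I(\psib)>0$ on non-trivial solutions is a worthwhile detail the paper leaves implicit).
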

\begin{proof}
The result follows by combining \eqref{b}-\eqref{e} in Lemma \ref{identitiesfunctionals} and the definition of $J$. 
\end{proof}

In what follows, given any non-negative function $f\in H^1(\mathbb{R}^n)$ we denote by $f^*$ its symmetric-decreasing rearrangement (see, for instance, \cite{Leoni} or \cite{Lieb}). Also, for any $\lambda>0$, $(\delta_{\lambda}g)(x)=g\left(\frac{x}{\lambda}\right)$. Thus, if $\mathbf{g}=(g_1,\ldots,g_l)\in \mathbf{H}^1$, we set $\mathbf{g}^*=(g_1^*,\ldots,g_l^*)$ and $(\delta_{\lambda}\mathbf{g})(x)=\left(g_1\left(\frac{x}{\lambda}\right), \ldots, g_l\left(\frac{x}{\lambda}\right)\right)$
The functionals introduced in this section satisfy the following  properties about scaling transformations and symmetric-decreasing rearrangement.

\begin{lem}\label{functrans}
Let $n\geq 1$ and $a,\lambda>0$. If  $\psib\in \mathcal{P}$ and $\mathbf{g}\in (\mathcal{C}_{0}^{\infty}(\R^{n}))^{l}$ we have
\begin{enumerate}
\item[(i)] $\mathcal{Q}(a\delta_{\lambda}\psib)=a^{2}\lambda^{n}\mathcal{Q}(\psib)$;
\item[(ii)] $K(a\delta_{\lambda}\psib)=a^{2}\lambda^{n-2}K(\psib)$;
\item[(iii)] $P(a\delta_{\lambda}\psib)=a^{3}\lambda^{n}P(\psib)$;
\item[(iv)] $K'(a\delta_{\lambda}\psib)(\mathbf{g})=a\lambda^{n-2}K'(\psib)(\delta_{\lambda^{-1}}\mathbf{g})$;
\item[(v)] $\mathcal{Q}'(a\delta_{\lambda}\psib)(\mathbf{g})=a\lambda^{n}\mathcal{Q}'(\psib)(\delta_{\lambda^{-1}}\mathbf{g})$;
\item[(vi)] $P'(a\delta_{\lambda}\psib)(\mathbf{g})=a^{2}\lambda^{n}P'(\psib)(\delta_{\lambda^{-1}}\mathbf{g})$.\\
In addition, if $\psi_{k}$ is non-negative, for $k=1,\ldots,l$, then
\item[(vii)] $\mathcal{Q}(\psib^*)= \mathcal{Q}(\psib)$;
\item[(viii)] $K(\psib^*)\leq K(\psib)$;
\item[(ix)] $P(\psib^*)\geq P(\psib)$.
\end{enumerate}
\end{lem}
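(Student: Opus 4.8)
The plan is to obtain (i)--(vi) by elementary changes of variables together with the homogeneity properties established earlier, and (vii)--(ix) by the standard rearrangement toolbox combined with hypothesis \ref{H8}.

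For (i)--(iii) I would insert $a\delta_{\lambda}\psib$ into the definitions of $\mathcal{Q}$, $K$ and $P$ and substitute $x=\lambda y$, $dx=\lambda^{n}\,dy$. Since $(a\delta_{\lambda}\psi_{k})(x)=a\psi_{k}(x/\lambda)$ and $\nabla(a\delta_{\lambda}\psi_{k})(x)=(a/\lambda)(\nabla\psi_{k})(x/\lambda)$, the factors $a^{2}\lambda^{n}$ and $a^{2}\lambda^{n-2}$ come out immediately for $\mathcal{Q}$ and $K$; for $P$ one uses in addition that $F$ is homogeneous of degree $3$ (assumption \ref{H6}), so $F(a\delta_{\lambda}\psib(x))=a^{3}F(\psib(x/\lambda))$, which produces the factor $a^{3}\lambda^{n}$. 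For (iv)--(vi) I would start from the explicit formulas for $K'$, $\mathcal{Q}'$, $P'$ in Lemma \ref{frede}, make the same substitution $x=\lambda y$, and recognize that $g_{k}(\lambda y)=(\delta_{\lambda^{-1}}g_{k})(y)$ and $(\nabla g_{k})(\lambda y)=\lambda^{-1}\nabla(\delta_{\lambda^{-1}}g_{k})(y)$; for $P'$ one also uses that $f_{k}$ is homogeneous of degree $2$ (Lemma \ref{fkhomog2}), so that $f_{k}(a\delta_{\lambda}\psib)=a^{2}f_{k}(\delta_{\lambda}\psib)$. Collecting the powers of $a$ and $\lambda$ then yields the stated identities. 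The hypotheses $\psib\in\mathcal{P}$ and $\mathbf{g}\in(\mathcal{C}_{0}^{\infty}(\R^{n}))^{l}$ only serve to keep every integral manifestly finite.

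For (vii)--(ix) I use that each $\psi_{k}\geq0$, so $\psi_{k}^{*}$ is well defined. Item (vii) is just the equimeasurability of $\psi_{k}$ and $\psi_{k}^{*}$, which gives $\|\psi_{k}^{*}\|_{L^{2}}=\|\psi_{k}\|_{L^{2}}$ and hence $\mathcal{Q}(\psib^{*})=\mathcal{Q}(\psib)$; item (viii) is the term-by-term P\'olya--Szeg\H{o} inequality $\|\nabla\psi_{k}^{*}\|_{L^{2}}\leq\|\nabla\psi_{k}\|_{L^{2}}$.

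Item (ix) is the crux, and it is exactly where \ref{H8} enters. Writing $F=F_{1}+\cdots+F_{m}$, it suffices to prove $\int F_{s}(\psib^{*})\,dx\geq\int F_{s}(\psib)\,dx$ for each $s$. Because $F_{s}$ is super-modular on $\R^{d}_{+}$ and vanishes whenever one of its first $d$ arguments vanishes, all its mixed increments are non-negative; this permits a multi-dimensional ``layer-cake'' representation of $F_{s}$ against a non-negative measure, so that $\int F_{s}(\psib)\,dx$ becomes a non-negative superposition of the quantities $\big|\{\psi_{k_{1}}>t_{1}\}\cap\cdots\cap\{\psi_{k_{j}}>t_{j}\}\big|$. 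Since the superlevel sets of the $\psi_{k}^{*}$ are concentric balls with $|\{\psi_{k}^{*}>t\}|=|\{\psi_{k}>t\}|$, each such intersection becomes, after rearrangement, the smallest of the corresponding balls, whose measure is the minimum of the individual measures and hence at least the measure of the original intersection; integrating against the non-negative measure and summing over $s$ gives (ix). Equivalently, one may approximate the symmetric-decreasing rearrangement by two-point rearrangements and note that super-modularity of $F_{s}$ makes each polarization step non-decreasing for $\int F_{s}$. I expect the main obstacle to be making this rearrangement inequality for super-modular integrands precise --- in particular the case $d\geq3$, where super-modularity controls only pairwise mixed increments, together with the justification of the representation formula and the interchange of integrations --- but the two-variable picture above already exhibits the mechanism.
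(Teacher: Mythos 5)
Your treatment of (i)--(viii) is correct and coincides with the paper's: items (i)--(vi) are exactly the change-of-variables computation from the formulas in Lemma \ref{frede} together with the degree-$3$ homogeneity of $F$ (equivalently the degree-$2$ homogeneity of the $f_k$), and (vii)--(viii) are equimeasurability and the P\'olya--Szeg\H{o} inequality, which is all the paper says. The only divergence is item (ix): the paper does not prove it at all, but simply invokes \cite[Theorem 1]{burchard} and \cite[Proposition 3.1]{haj}, which state precisely that super-modularity of the integrand (together with vanishing on the coordinate hyperplanes) is sufficient --- and essentially necessary --- for $\int F_s(\psib^*)\geq\int F_s(\psib)$. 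Your sketch of the mechanism behind that theorem is sound in spirit, but be aware that the ``non-negative layer-cake measure'' picture you describe is only immediate for $d=2$, where super-modularity is literally the non-negativity of the representing measure of the second mixed difference; for $d\geq3$ pairwise super-modularity does not yield a non-negative measure for the $d$-th order mixed increment, and the cited proofs instead proceed by approximating the symmetric-decreasing rearrangement by two-point rearrangements (polarizations), for which each step is monotone under pairwise super-modularity --- exactly the alternative you mention in your last sentence. So your proposal is complete once you either carry out that polarization argument or, as the authors do, simply cite the rearrangement literature; nothing in your outline would fail, but the layer-cake route should not be presented as the proof in the case $d\geq3$.
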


\begin{proof}
Properties (i)-(vi) follows immediately from the definitions taking into account Lemma \ref{frede}. Properties (vii) and (viii) follows from the facts that (see, for instance, \cite[Theorems 16.10 and 16.17]{Leoni})
\begin{equation*}
\|\psi^{*}\|_{L^{2}}=\|\psi\|_{L^{2}}, \qquad \mbox{and} \qquad\|\nabla\psi^{*}\|_{L^{2}}\leq\|\nabla \psi\|_{L^{2}}. 
\end{equation*}
Property (ix) is a little bit more delicate. Actually, our assumption \ref{H8} is a necessary and sufficient condition for (ix) to hold. See \cite[Theorem 1]{burchard} and \cite[Propostition 3.1]{haj}.
\end{proof}

\begin{lem}\label{lemma2}
Under the assumptions of Lemma \ref{functrans},
\begin{enumerate}
\item[(i)] $J(a\delta_{\lambda}\psib)=J(\psib)$;
\item[(ii)] $J(|\psib|)\leq J(\psib)$, where $|\psib|=(|\psi_{1}|,\ldots,|\psi_{l}|)$;
    \item[(iii)] $J'(a\delta_{\lambda}\psib)=a^{-1}J'(\psib)(\delta_{\lambda^{-1}}\mathbf{g})$.\\
    In addition, if $\psi_{k}$ is non-negative, for $k=1,\ldots,l$, then
    \item[(iv)] $J(\psib^{*})\leq J(\psib)$.
\end{enumerate}
\end{lem}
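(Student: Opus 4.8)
The plan is to prove each of the four properties by reducing them to the corresponding facts about the basic functionals $K$, $\mathcal{Q}$, and $P$ that were established in Lemma \ref{functrans}, together with the definition \eqref{functionalJ} of $J$. The overall strategy is purely algebraic manipulation: since $J$ is a ratio built from $K$, $\mathcal{Q}$ and $P$, each scaling or rearrangement identity for those three pieces propagates to $J$ in a transparent way.

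\medskip

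\noindent\textbf{Proof of (i).} Using parts (i)--(iii) of Lemma \ref{functrans} we substitute into \eqref{functionalJ}:
\begin{equation*}
J(a\delta_\lambda\psib)=\frac{\mathcal{Q}(a\delta_\lambda\psib)^{\frac{3}{2}-\frac{n}{4}}K(a\delta_\lambda\psib)^{\frac{n}{4}}}{P(a\delta_\lambda\psib)}
=\frac{(a^2\lambda^n)^{\frac{3}{2}-\frac{n}{4}}(a^2\lambda^{n-2})^{\frac{n}{4}}}{a^3\lambda^n}\cdot\frac{\mathcal{Q}(\psib)^{\frac{3}{2}-\frac{n}{4}}K(\psib)^{\frac{n}{4}}}{P(\psib)}.
\end{equation*}
The plan is then to check that the exponents of $a$ and of $\lambda$ in the prefactor both vanish. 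For $a$: $2\left(\frac{3}{2}-\frac{n}{4}\right)+2\cdot\frac{n}{4}-3=3-\frac{n}{2}+\frac{n}{2}-3=0$. For $\lambda$: $n\left(\frac{3}{2}-\frac{n}{4}\right)+(n-2)\frac{n}{4}-n=\frac{3n}{2}-\frac{n^2}{4}+\frac{n^2}{4}-\frac{n}{2}-n=0$. Hence the prefactor is $1$ and $J(a\delta_\lambda\psib)=J(\psib)$.

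\medskip

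\noindent\textbf{Proof of (ii) and (iv).} These follow from the monotonicity properties (vii)--(ix) of Lemma \ref{functrans}. For (iv), assume $\psi_k\geq 0$. Since $J$ is increasing in $\mathcal{Q}$ (to a positive power, and $\mathcal{Q}(\psib^*)=\mathcal{Q}(\psib)$), increasing in $K$, and decreasing in $P$, the inequalities $\mathcal{Q}(\psib^*)=\mathcal{Q}(\psib)$, $K(\psib^*)\leq K(\psib)$, and $P(\psib^*)\geq P(\psib)>0$ (the last using $\psib\in\mathcal{P}$, so that the denominator stays positive and the inequality goes the right way) give directly $J(\psib^*)\leq J(\psib)$. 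For (ii), the same reasoning applies with $|\psib|$ in place of $\psib^*$: one needs $\mathcal{Q}(|\psib|)=\mathcal{Q}(\psib)$ and $K(|\psib|)\leq K(\psib)$, which hold because $\||\psi_k|\|_{L^2}=\|\psi_k\|_{L^2}$ and $\|\nabla|\psi_k|\|_{L^2}\leq\|\nabla\psi_k\|_{L^2}$ (the diamagnetic/Kato inequality for $H^1$ functions), together with $P(|\psib|)\geq P(\psib)$, which follows from assumption \ref{H6} (the hypothesis $\left|\mathrm{Re}\int F(u_1,\ldots,u_l)\,dx\right|\leq \int F(|u_1|,\ldots,|u_l|)\,dx$). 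Note that one must also argue that $|\psib|\in\mathcal{P}$, i.e. $P(|\psib|)>0$; this is immediate once $P(|\psib|)\geq P(\psib)>0$, so the formula for $J(|\psib|)$ makes sense.

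\medskip

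\noindent\textbf{Proof of (iii).} This is the Fr\'echet-derivative version of (i). Differentiating the identity $J(a\delta_\lambda\psib)=J(\psib)$ is not quite what is wanted since the stated formula reads $J'(a\delta_\lambda\psib)(\mathbf{g})=a^{-1}J'(\psib)(\delta_{\lambda^{-1}}\mathbf{g})$ (there appears to be a typo in the statement, with $J'(a\delta_\lambda\psib)$ missing its argument $(\mathbf{g})$). The plan is to compute $J'$ by the quotient rule from Lemma \ref{frede} and Lemma \ref{functrans}(iv)--(vi): writing $N(\psib)=\mathcal{Q}(\psib)^{\frac{3}{2}-\frac{n}{4}}K(\psib)^{\frac{n}{4}}$, one has $J'=N'/P-NP'/P^2$, and each of $K'$, $\mathcal{Q}'$, $P'$ transforms under $a\delta_\lambda$ picking up a factor of $a$ (or $a^2$ for $P'$) and appropriate powers of $\lambda$, while evaluating $J'(\psib)$ at the rescaled test function $\delta_{\lambda^{-1}}\mathbf{g}$. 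Combining these with the already-verified fact that the $a$- and $\lambda$-exponents of $N$ and $P$ cancel, the net power of $a$ is $-1$ and the net power of $\lambda$ is $0$, giving the claimed identity; this is a routine bookkeeping check analogous to part (i).

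\medskip

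\noindent The main (though modest) obstacle is part (ii): it is the only place that genuinely uses the structural hypothesis \ref{H6}, and one must be careful that the rearrangement/absolute-value operation does not destroy membership in $\mathcal{P}$ and that all inequalities point consistently so that the monotonicity of the ratio $J$ is applied in the correct direction. Everything else is exponent arithmetic that has essentially been done already in the proof of (i).
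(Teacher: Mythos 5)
Your proposal is correct and follows essentially the same route as the paper: parts (i), (iii) and (iv) are exactly the exponent/monotonicity bookkeeping from Lemma \ref{functrans}, and part (ii) combines $\|\,|\psi_k|\,\|_{L^2}=\|\psi_k\|_{L^2}$, $\|\nabla|\psi_k|\|_{L^2}\leq\|\nabla\psi_k\|_{L^2}$ with the hypothesis $\left|\mathrm{Re}\int F(\mathbf{u})\,dx\right|\leq\int F(|\mathbf{u}|)\,dx$, which is precisely the assumption the paper invokes (note only that, owing to the paper's swapped labels, that hypothesis is the one displayed as (H6) but carrying the internal label \ref{H5}).
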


\begin{proof}
The  proof of (i), (iii) and (iv) are immediate consequences of Lemma \ref{functrans}. For (ii) we must  use assumption \ref{H5}. 
\end{proof}

With the above lemmas in hand, we are able to present our main result concerning ground states. As usual, we will say that a function $\psib\in\mathbf{H}^1$ is positive (non-negative), and write $\psib>0$ ($\psib\geq0$), if each one of its components are positive (non-negative). Also, $\psib$ is radially symmetric if each one of its components are radially symmetric.

\begin{teore}[Existence of ground state solutions]\label{thm:existenceGSJgeral}
 Assume that  \textnormal{\ref{H1}-\ref{H8}} hold. For  $1\leq n\leq 5$, the infimum
\begin{equation}\label{xi1}
\xi_1=\inf\limits_{\psib\in \mathcal{P}}J(\psib),
\end{equation}
introduced in Lemma \ref{lemma4.4}, is attained at a function $\psib_0\in \mathcal{P} $ such that
\begin{enumerate}
\item[(i)] $\psib_0$ is a non-negative and radially symmetric function.
\item[(ii)]   There exist $t_{0}>0$ and $\lambda_{0}>0$ such that $\psib=t_{0}\delta_{\lambda_{0}}\psib_0$ is a positive ground state solution of \eqref{systemelip}.
In addition,
if $\tilde{\psib}$ is any ground state of \eqref{systemelip} then
\begin{equation}\label{inffunctionalJ}
\xi_{1}=\frac{n^{\frac{n}{4}}}{2}\left(6-n \right)^{1-\frac{n}{4}}\mathcal{Q}(\tilde{\psib})^{1/2}.
\end{equation}
\end{enumerate}
\end{teore}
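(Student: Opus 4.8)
The strategy is the classical minimization-of-the-Weinstein-functional argument, exploiting the scaling and rearrangement invariances collected in Lemmas \ref{functrans} and \ref{lemma2}. First I would take a minimizing sequence $\psib^{(j)}\in\mathcal{P}$ for $J$, i.e.\ $J(\psib^{(j)})\to\xi_1$. Using Lemma \ref{lemma2}(ii) and (iv) I may replace each $\psib^{(j)}$ by $|\psib^{(j)}|^*$ without increasing $J$, so the minimizing sequence can be taken non-negative and radially symmetric (Schwarz-symmetric), hence lying in $\mathbf{H}^1_{rd}$. Next, by the scaling invariance Lemma \ref{lemma2}(i), for each $j$ I may rescale $\psib^{(j)}\mapsto a_j\,\delta_{\lambda_j}\psib^{(j)}$ to normalize, say, $\mathcal{Q}(\psib^{(j)})=1$ and $K(\psib^{(j)})=1$; then $P(\psib^{(j)})=1/J(\psib^{(j)})\to 1/\xi_1>0$. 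This produces a bounded sequence in $\mathbf{H}^1$, in fact in $\mathbf{H}^1_{rd}$.

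The key step is passing to the limit. Since $1\le n\le 5$ and the sequence is radial, I would invoke the compact embedding $H^1_{rd}(\R^n)\hookrightarrow L^3(\R^n)$ (for $3<2n/(n-2)$ when $n\ge3$, which holds precisely in this range, and trivially for $n=1,2$) to extract a subsequence converging weakly in $\mathbf{H}^1$ to some $\psib_0\ge0$ radial, and strongly in $\mathbf{L}^3$. Weak lower semicontinuity of the $H^1$-norm gives $\mathcal{Q}(\psib_0)\le 1$ and $K(\psib_0)\le 1$, while the $L^3$-strong convergence together with the growth estimate \eqref{estdifFeq} for $\mathrm{Re}\,F$ (and \ref{H7}, so $F$ is real on $\R^l$) yields $P(\psib_0)=\lim P(\psib^{(j)})=1/\xi_1>0$; in particular $\psib_0\ne\mathbf{0}$ and $\psib_0\in\mathcal{P}$. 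Then $J(\psib_0)=\mathcal{Q}(\psib_0)^{3/2-n/4}K(\psib_0)^{n/4}/P(\psib_0)\le 1/P(\psib_0)=\xi_1\le J(\psib_0)$, so equality holds throughout; this forces $\mathcal{Q}(\psib_0)=K(\psib_0)=1$, so the weak limits are actually strong, and $\psib_0$ attains $\xi_1$. This proves (i).

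For (ii), since $\psib_0$ minimizes the $C^1$ functional $J$ over the open set $\mathcal{P}$, it is a free critical point: $J'(\psib_0)=0$. Computing $J'$ via Lemma \ref{frede} and the quotient rule, the Euler–Lagrange equation reads, for each $k$,
\begin{equation*}
\left(\tfrac{3}{2}-\tfrac{n}{4}\right)\frac{b_k(\psi_{0k},\cdot)}{\mathcal{Q}(\psib_0)}
+\frac{n}{4}\,\frac{\gamma_k(\nabla\psi_{0k},\nabla\cdot)}{K(\psib_0)}
=\frac{\int f_k(\psib_0)\,(\cdot)\,dx}{P(\psib_0)},
\end{equation*}
i.e.\ $-A\gamma_k\Delta\psi_{0k}+B\,b_k\psi_{0k}=D\,f_k(\psib_0)$ with positive constants $A,B,D$ depending only on $n,\mathcal{Q}(\psib_0),K(\psib_0),P(\psib_0)$. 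Using the homogeneity of $f_k$ (degree $2$, Lemma \ref{fkhomog2}), the rescaling $\psib:=t_0\delta_{\lambda_0}\psib_0$ with $t_0,\lambda_0>0$ chosen to absorb $A,B,D$ turns this into \eqref{systemelip} exactly; this is where the explicit $t_0,\lambda_0$ come from. That $\psib$ is a ground state follows from Lemma \ref{lemma3}: any minimizer of $J$ in $\mathcal{P}$ that solves \eqref{systemelip} minimizes $I$ over $\mathcal{C}\subset\mathcal{P}$, and conversely any other ground state $\tilde\psib$ satisfies $J(\tilde\psib)=\frac{n^{n/4}}{2}(6-n)^{3/2-n/4}I(\tilde\psib)^{1/2}\ge\xi_1$, with equality (by minimality of $I(\tilde\psib)$ and of $J$), which combined with \eqref{e}, $\mathcal{Q}(\tilde\psib)=(6-n)I(\tilde\psib)$, gives \eqref{inffunctionalJ}. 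Positivity of $\psib$ — upgrading $\psib_0\ge0$ to $\psib_0>0$ — follows from the strong maximum principle applied componentwise to \eqref{systemelip}, using \ref{H7} (so $f_k(\psib_0)\ge0$ on the positive cone) and elliptic regularity; here one also needs that the system is "fully coupled'' in the relevant sense, or else handles each component separately since $f_k\ge0$ already suffices to conclude $\psi_{0k}\equiv0$ or $\psi_{0k}>0$, and $\psib_0\ne\mathbf{0}$ with $P(\psib_0)>0$ rules out all components vanishing. The main obstacle is the compactness step: verifying that radial compactness in $L^3$ is available across the whole range $1\le n\le 5$ and that no mass escapes to infinity — this is precisely why the argument is restricted to subcritical dimensions and why the radial rearrangement was performed first.
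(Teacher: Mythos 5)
Your proposal is correct and follows essentially the same route as the paper: Schwarz symmetrization of a minimizing sequence, scaling normalization $K=\mathcal{Q}=1$, the compact embedding $H^1_{rd}\hookrightarrow L^3$ plus weak lower semicontinuity to identify the minimizer, the Euler--Lagrange equation for $J$ on the open set $\mathcal{P}$ rescaled into \eqref{systemelip}, Lemma \ref{lemma3} to conclude it is a ground state, the strong maximum principle for positivity, and the Pohozaev relations \eqref{b}--\eqref{e} for \eqref{inffunctionalJ}. No substantive differences to report.
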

\begin{proof}
Let $(\psib_j)\subset \mathcal{P} $ be a minimizing sequence for \eqref{xi1}, i.e., 
$$\lim_{j\to \infty}J(\psib_j)=\xi_{1}.$$
Replacing $\psib_j$ by $|\psib_j|^*$, from Lemma \ref{lemma2}   we  may assume that $\psib_j$ are radially symmetric and non-increasing  functions in $\mathbf{H}^{1}$. Define $\tilde{\psib}_{j}=t_{j}\delta_{\lambda_{j}}\psib_{j}$,  where
$$t_{j}=\frac{\mathcal{Q}(\psib_j)^{\frac{n}{4}-\frac{1}{2}}}{K(\psib_j)^{\frac{n}{4}}}\qquad\mbox{and}\qquad \lambda_{j}=\frac{K(\psib_j)^{\frac{1}{2}}}{\mathcal{Q}(\psib_j)^{\frac{1}{2}}}.$$
 Lemmas \ref{functrans} and \ref{lemma2},  with $a=t_{j}$ and $\lambda=\lambda _{j}$ give
\begin{equation}\label{KQlimitado}
K(\tilde{\psib}_{j})=\mathcal{Q}(\tilde{\psib}_{j})=1 \quad \mbox{and} \quad J(\tilde{\psib}_j)=J(\psib_j).
\end{equation}
Hence,
\begin{equation}\label{convergenceP3}
\begin{split}
\frac{1}{P(\tilde{\psib}_{j})}=J(\tilde{\psib}_{j})=J(\psib_j)\to \xi_{1}>0.
\end{split}
\end{equation}
In view of \eqref{KQlimitado}, the sequence $(\tilde{\psib}_j)$ is bounded in $\mathbf{H}_{rd}^{1}$. By recalling that the embedding $H^{1}_{rd}(\R^{n}) \hookrightarrow L^{3}(\R^{n})$ is compact for $1\leq n\leq 5$ (see Proposition 1.7.1 in \cite{Cazenave}), there exist a subsequence, still denoted by $(\tilde{\psib}_{j})$, and $\psib_0\in \mathbf{H}_{rd}^{1}$  such that
\begin{equation}\label{strongcnvergenceL3Rn2}
\begin{cases}
\tilde{\psib}_j\rightharpoonup \psib_{0}, \qquad\mbox{in \quad $\mathbf{H}^{1}$},\\
\tilde{\psib}_{j}\to \psib_{0}\qquad\mbox{in\quad $\mathbf{L}^3$},\\
\tilde{\psib}_{j}\to \psib_{0}\qquad\mbox{ a.e \quad in\quad $\R^n$}.
\end{cases}
 \end{equation}
The last convergence in \eqref{strongcnvergenceL3Rn2} implies that $\psib_{0}$ is non-negative and radially symmetric. In addition, since by  Lemma \ref{estdifF},
\begin{equation*}
\begin{split}
\left|P(\tilde{\psib}_{j})-P(\psib_{0})\right|
&\leq \int\left|F(\tilde{\psib}_{j})-F(\psib_{0})\right|\;dx\\
&\leq C \sum_{m=1}^{l}\sum_{k=1}^{l}\int(|\tilde{\psi}_{kj}|^{2}+|\psi_{k0}|^{2})|\tilde{\psi}_{mj}-\psi_{m0}|\;dx \\
&\leq C \sum_{m=1}^{l}\sum_{k=1}^{l}(\|\tilde{\psi}_{kj}\|^{2}_{L^{3}}+\|\psi_{k0}\|^{2}_{L^{3}})\|\tilde{\psi}_{mj}-\psi_{m0}\|_{L^{3}},
\end{split}
\end{equation*}
we deduce from \eqref{strongcnvergenceL3Rn2} and (\ref{convergenceP3}) that
\begin{equation}\label{relationPandalpha2}
P(\psib_{0})=\lim_{j\to \infty}P(\tilde{\psib}_{j})=\xi_{1}^{-1}>0,
\end{equation}
which means that $\psib_0\in \mathcal{P}$.

On the other hand, the lower semi-continuity of the weak convergence  gives 
\begin{equation*}
K(\psib_{0})\leq \liminf_{j}K(\tilde{\psib}_{j})=1\quad
\mbox{and}\quad
\mathcal{Q}(\psib_{0})
\leq\liminf_{j}\mathcal{Q}(\tilde{\psib}_{j})=1.
\end{equation*}
Therefore, (\ref{relationPandalpha2}) yields
\begin{equation}\label{inequJKandQ2}
\xi_{1}\leq  J(\psib_{0})=\frac{\mathcal{Q}(\psib_{0})^{\frac{3}{2}-\frac{n}{4}}K(\psib_{0})^{\frac{n}{4}}}{P(\psib_{0})}\leq \frac{1}{P(\psib_{0})}=\xi_{1}.
\end{equation}
From \eqref{inequJKandQ2} we conclude that 
\begin{equation*}
    J(\psib_{0})=\xi_{1}
\end{equation*}
and
\begin{equation}\label{equal1}
    K(\psib_{0})=\mathcal{Q}(\psib_{0})=1.
\end{equation}
A combination of the last assertion with \eqref{strongcnvergenceL3Rn2} also implies that
 $\tilde{\psib}_{j}\to \psib_{0}$ strongly in $\mathbf{H}^{1}$. Part (i) of the theorem is thus established.
 
  For  part (ii) we  note that for $t$ sufficiently small and $\mathbf{u}\in\mathbf{H}^1$, $(\boldsymbol{\psi}_{0}+t\mathbf{u})\in \mathcal{P}$. Thus, since $\boldsymbol{\psi}_{0} $ is a minimizer of $J$ on $\mathcal{P} $  we have  
\begin{equation*}
\left.\frac{d}{dt}\right|_{t=0}J(\boldsymbol{\psi}_{0}+t\mathbf{u})=0,
\end{equation*}
which in view of Lemma \ref{lemma2} is equivalent to
\begin{multline*}
\frac{\mathcal{Q}(\boldsymbol{\psi}_{0})^{\frac{3}{2}-\frac{n}{4}}K(\boldsymbol{\psi}_{0})^{\frac{n}{4}}}{P(\boldsymbol{\psi}_{0})}\left[\frac{n}{4}\frac{K'(\boldsymbol{\psi}_{0})(\mathbf{u})}{K(\boldsymbol{\psi}_{0})}+\left(\frac{3}{2}-\frac{n}{4}\right)\frac{\mathcal{Q}'(\boldsymbol{\psi}_{0})}{\mathcal{Q}(\boldsymbol{\psi}_{0})}\right]
=\frac{\mathcal{Q}(\boldsymbol{\psi}_{0})^{\frac{3}{2}-\frac{n}{4}}K(\boldsymbol{\psi}_{0})^{\frac{n}{4}}}{P(\boldsymbol{\psi}_{0})^{2}}P'(\boldsymbol{\psi}_{0})(\mathbf{u}).
\end{multline*}
From \eqref{relationPandalpha2} and \eqref{equal1}  this yields
\begin{equation}
K'(\boldsymbol{\psi}_{0})(\mathbf{u})+\frac{6-n}{n}\mathcal{Q}'(\boldsymbol{\psi}_{0})(\mathbf{u})=\frac{4\xi_{1}}{n}P'(\boldsymbol{\psi}_{0})(\mathbf{u}).\label{relationKQandPgeral}
\end{equation}
Next, define  $\psib=t_{0}\delta_{\lambda_{0}}\psib_{0}$ with 
$$t_{0}=\displaystyle \frac{2 \xi_{1} }{6-n}\qquad\mbox{and}\qquad \lambda_{0}=\left(\frac{6-n}{ n}\right)^{1/2}.$$
 We claim that $\psib$ is a solution of (\ref{systemelip}). Indeed,   for any  $\mathbf{u}\in \mathbf{H}^{1}$ in view of Lemma \ref{functrans} and \eqref{relationKQandPgeral},
\begin{equation*}
\begin{split}
\quad I'(\boldsymbol{\psi})(\boldsymbol{u})
&=\frac{1}{2}\left[K'(\boldsymbol{\psi})(\boldsymbol{u})+\mathcal{Q}'(\boldsymbol{\psi})(\boldsymbol{u})\right]-P'(\boldsymbol{\psi})(\boldsymbol{u})\\
&=\frac{1}{2}\left[K'(t_{0}\delta_{\lambda_{0}}\boldsymbol{\psi}_{0})(\boldsymbol{u})+ \mathcal{Q}'(t_{0}\delta_{\lambda_{0}}\boldsymbol{\psi}_{0})(\boldsymbol{u})\right]-P'(t_{0}\delta_{\lambda_{0}}\boldsymbol{\psi}_{0})(\boldsymbol{u})\\
&=\frac{t_{0}}{2}\left[K'(\delta_{\lambda_{0}}\boldsymbol{\psi}_{0})(\boldsymbol{u})+ \mathcal{Q}'(\delta_{\lambda_{0}}\boldsymbol{\psi}_{0})(\boldsymbol{u})\right]-t_{0}^{2}P'(\delta_{\lambda_{0}}\boldsymbol{\psi}_{0})(\boldsymbol{u})\\
&=\frac{t_{0}}{2}\left[\lambda_{0}^{n-2}K'(\boldsymbol{\psi}_{0})(\delta_{\lambda_{0}^{-1}}\boldsymbol{u})+\lambda_{0}^{n}\mathcal{Q}'(\boldsymbol{\psi}_{0})(\delta_{\lambda_{0}^{-1}}\boldsymbol{u})\right]-t_{0}^{2}\lambda_{0}^{n}P'(\boldsymbol{\psi}_{0})(\delta_{\lambda_{0}^{-1}}\boldsymbol{u})\\
&=\frac{t_{0}\lambda_{0}^{n-2}}{2}\left[K'(\boldsymbol{\psi}_{0})(\delta_{\lambda_{0}^{-1}}\boldsymbol{u})+\lambda_{0}^{2} \mathcal{Q}'(\boldsymbol{\psi}_{0})(\delta_{\lambda_{0}^{-1}}\boldsymbol{u})-2t_{0}\lambda_{0}^{2}P'(\boldsymbol{\psi}_{0})(\delta_{\lambda_{0}^{-1}}\boldsymbol{u})\right]\\
&=\frac{t_{0}\lambda_{0}^{n-2}}{2}\left[K'(\boldsymbol{\psi}_{0})(\delta_{\lambda_{0}^{-1}}\boldsymbol{u})+\frac{6-n}{n}\mathcal{Q}'(\boldsymbol{\psi}_{0})(\delta_{\lambda_{0}^{-1}}\boldsymbol{u})-\frac{4\xi_{1} }{n}P'(\boldsymbol{\psi}_{0})(\delta_{\lambda_{0}^{-1}}\boldsymbol{u})\right]\\
&=0.
\end{split}
\end{equation*}

Now from Lemmas \ref{lemma2} and \ref{lemma3}, we have that $\psib$ is also a critical point of $J$ with $J(\psib)=J(\psib_{0})$. Since $\psib_{0}$ is a minimizer of $J$, so is $\psib$. Another application of Lemma \ref{lemma3} gives that $\psib$ is a ground state of (\ref{systemelip}). To see that $\psib$ is positive, we note that
\begin{equation*}
\Delta \psi_{k} -\frac{b_{k}}{\gamma_{k}} \psi_{k}=-\frac{1}{\gamma_{k}}f_{k}(\psib)\leq 0,
\end{equation*}
because $\gamma_{k}>0$,  $\psi_{k}$ are non-negatives  and $f_{k}$ satisfies \ref{H7}. Therefore by the strong maximum principle (see, for instance, \cite[Theorem 3.5]{gil})  we obtain the positiveness of $\psib$.
 
Finally, we will prove   (\ref{inffunctionalJ}). Indeed, if $\psib$ is as in part (ii), Lemmas \ref{identitiesfunctionals} and \ref{lemma3} imply,
\begin{equation*}
\xi_{1}=J(\psib)
=\frac{n^{\frac{n}{4}}}{2}\left(6-n\right)^{\frac{3}{2}-\frac{n}{4}}I(\psib)^{\frac{1}{2}}
=\frac{n^{\frac{n}{4}}}{2}\left(6-n \right)^{1-\frac{n}{4}}\mathcal{Q}(\psib)^{\frac{1}{2}}.
\end{equation*}
Therefore, if $\tilde{\psib}\in\mathcal{G}(\omega,\boldsymbol{\beta})$, from Remark \ref{remkharge} we get
$$\xi_{1}=\frac{n^{\frac{n}{4}}}{2}\left(6-n \right)^{1-\frac{n}{4}}\mathcal{Q}(\tilde{\psib})^{1/2},$$
completing the proof of the theorem.
\end{proof}

As a consequence of Theorem \ref{thm:existenceGSJgeral} we can obtain the sharp constant one can place in \eqref{GNIbest}. More precisely, we have

\begin{coro}\label{corollarybestconstant}
Let $1\leq n\leq 5$. The inequality 
\begin{equation*}
P(\mathbf{u})\leq C_{op}\mathcal{Q}(\mathbf{u})^{\frac{3}{2}-\frac{n}{4}}K(\mathbf{u})^{\frac{n}{4}},
\end{equation*}
holds, for any $\mathbf{u}\in \mathcal{P}$, with
\begin{equation*}
C_{op}=\frac{2\left(6-n\right)^{\frac{n}{4}-1}}{n^{\frac{n}{4}}}\frac{1}{\mathcal{Q}(\psib)^{\frac{1}{2}}},
\end{equation*}
where $\psib \in \mathcal{G}(\omega,\boldsymbol{\beta})$.
\end{coro}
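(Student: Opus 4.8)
The plan is to read the inequality directly off the variational characterization of $\xi_1$ and then substitute the value of $\xi_1$ computed in Theorem \ref{thm:existenceGSJgeral}. First I would recall from Lemma \ref{lemma4.4}(ii) that $\xi_1=\inf_{\psib\in\mathcal{P}}J(\psib)>0$, with $J$ the Weinstein-type functional \eqref{functionalJ}. Hence for every $\mathbf{u}\in\mathcal{P}$ one has $J(\mathbf{u})\geq\xi_1$, that is,
$$
\frac{\mathcal{Q}(\mathbf{u})^{\frac{3}{2}-\frac{n}{4}}K(\mathbf{u})^{\frac{n}{4}}}{P(\mathbf{u})}\geq\xi_1,
$$
and since $P(\mathbf{u})>0$ on $\mathcal{P}$ this rearranges immediately to
$$
P(\mathbf{u})\leq\frac{1}{\xi_1}\,\mathcal{Q}(\mathbf{u})^{\frac{3}{2}-\frac{n}{4}}K(\mathbf{u})^{\frac{n}{4}}.
$$

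It then remains only to identify $1/\xi_1$ with the claimed constant. For this I would invoke formula \eqref{inffunctionalJ} of Theorem \ref{thm:existenceGSJgeral}: for any $\tilde{\psib}\in\mathcal{G}(\omega,\boldsymbol{\beta})$,
$$
\xi_1=\frac{n^{\frac{n}{4}}}{2}\left(6-n\right)^{1-\frac{n}{4}}\mathcal{Q}(\tilde{\psib})^{1/2}.
$$
Taking reciprocals and writing $\left(6-n\right)^{-(1-n/4)}=\left(6-n\right)^{\frac{n}{4}-1}$ gives
$$
\frac{1}{\xi_1}=\frac{2\left(6-n\right)^{\frac{n}{4}-1}}{n^{\frac{n}{4}}}\,\frac{1}{\mathcal{Q}(\tilde{\psib})^{1/2}}=C_{op},
$$
which is exactly the asserted constant. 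I would also note that, by Remark \ref{remkharge}, $\mathcal{Q}(\psib)$ takes the same value for every $\psib\in\mathcal{G}(\omega,\boldsymbol{\beta})$, so $C_{op}$ is well defined independently of the chosen ground state, and that since the infimum defining $\xi_1$ is attained (at $\psib_0$, equivalently at the ground state $\psib$), equality holds in the inequality when $\mathbf{u}=\psib$, so $C_{op}$ is genuinely optimal.

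There is essentially no obstacle here: the statement is a bookkeeping consequence of Lemma \ref{lemma4.4} together with Theorem \ref{thm:existenceGSJgeral}, and the only thing that requires any care is the harmless algebraic rearrangement of the powers of $6-n$ and the remark that the constant does not depend on which ground state is used.
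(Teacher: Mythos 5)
Your proof is correct and follows exactly the route the paper intends: the paper states this corollary without proof as an immediate consequence of Lemma \ref{lemma4.4} and formula \eqref{inffunctionalJ} in Theorem \ref{thm:existenceGSJgeral}, which is precisely the rearrangement of $J(\mathbf{u})\geq\xi_1$ combined with the substitution $C_{op}=1/\xi_1$ that you carry out. Your added remarks on the well-definedness of $C_{op}$ via Remark \ref{remkharge} and on the attainment of equality at the ground state are accurate and consistent with the paper.
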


We finish this section with the following  regularity result.

\begin{lem}\label{regellpsys}
Assume $1\leq n\leq5$ and let $\mbox{\boldmath$\varphi$}\in \mathbf{H}^1$ be a solution of 
	\begin{equation*}
	\displaystyle -\Delta \varphi_{k}+c_{k} \varphi_{k}=d_{k}f_{k}(\boldsymbol{\varphi}), \qquad k=1,\ldots,l,
	\end{equation*}
	where $c_{k},d_{k}$ are positive constants. Then,
	\begin{enumerate}
		\item[(i)] $\mbox{\boldmath$\varphi$}\in \mathbf{W}^{3,p}$ for $2\leq p<\infty$. In particular $\varphi_{k}$ is of class $C^2$ and $\sum_{k=1}^{l}|D^{\beta}\varphi_{k}(x)|\to 0$ as $|x|\to \infty$ for $|\beta|\leq 2$.
		\item[(ii)] There exist $\epsilon>0$ such that 
		\begin{equation*}
		e^{\epsilon |x|}\sum_{k=1}^{l}\left(|\varphi_{k}(x)|+|\nabla\varphi_{k}(x)|\right)\in L^{\infty}. 
		\end{equation*}
		In particular $|\cdot|\mbox{\boldmath$\varphi$}\in \mathbf{L}^{2}$.
	\end{enumerate}
\end{lem}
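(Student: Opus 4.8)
The plan is to prove the regularity statement by a bootstrap argument on the elliptic system, followed by a standard comparison-function argument for the exponential decay. I treat each equation

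\[
-\Delta\varphi_k + c_k\varphi_k = d_k f_k(\boldsymbol{\varphi})
\]

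as a linear Schr\"odinger-type equation for $\varphi_k$ with right-hand side $g_k := d_k f_k(\boldsymbol{\varphi})$, and I use the fact (from Corollary \ref{limfk}) that $|f_k(\boldsymbol{\varphi})| \le C\sum_j |\varphi_j|^2$, so that the nonlinearity behaves like a quadratic term. The operator $(-\Delta + c_k)^{-1}$ is the Bessel potential $(I - c_k^{-1}\Delta)^{-1}$ up to rescaling, and it maps $L^p$ into $W^{2,p}$ for every $1 < p < \infty$.

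\emph{Step 1 (bootstrap in $L^p$).} Start from $\boldsymbol{\varphi}\in\mathbf{H}^1 = \mathbf{W}^{1,2}$. For $1\le n\le 5$, Sobolev embedding gives $\varphi_k\in L^p$ for all $2\le p\le 2n/(n-2)$ (all $p<\infty$ if $n\le2$). Hence $|\varphi_k|^2\in L^{p/2}$ for such $p$, so $g_k\in L^q$ for $q$ up to $n/(n-2)$ (when $n\ge3$). Elliptic regularity then upgrades $\varphi_k\in W^{2,q}$, and Sobolev embedding of $W^{2,q}$ gives a strictly larger range of Lebesgue exponents than we started with (the gain is $2 - n/q'$ in differentiability-adjusted scaling, which is positive in the range $n\le5$). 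Iterating this finitely many times — the key point being that each step strictly improves the integrability exponent and the process terminates once $q$ exceeds $n/2$, at which point $W^{2,q}\hookrightarrow L^\infty$ — we conclude $\varphi_k\in L^\infty$ and then $g_k\in L^p$ for \emph{all} $2\le p<\infty$. A further application of elliptic regularity yields $\varphi_k\in W^{2,p}$ for all such $p$. To reach $\mathbf{W}^{3,p}$ as claimed, differentiate the equation: $\partial_i\varphi_k$ solves $-\Delta(\partial_i\varphi_k) + c_k\partial_i\varphi_k = d_k\partial_i f_k(\boldsymbol{\varphi})$, and by Lemma \ref{estgraddiffk} (with $\mathbf{u}'=0$) the right-hand side is bounded by $C\sum_{j,m}|\varphi_j||\nabla\varphi_m|\in L^p$ (H\"older, since $\varphi_j\in L^\infty$ and $\nabla\varphi_m\in L^p$). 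Hence $\partial_i\varphi_k\in W^{2,p}$, i.e. $\varphi_k\in W^{3,p}$, for all $2\le p<\infty$. By Morrey's embedding $W^{3,p}\hookrightarrow C^2$ for $p$ large, and $W^{3,p}\hookrightarrow C^{2}_0$ (decay at infinity) follows from $W^{2,p}\hookrightarrow C^{0}$ combined with the fact that $\mathbf{W}^{3,p}$-functions and their derivatives up to order $2$ lie in $L^p$; standard density/interpolation arguments give $\sum_k|D^\beta\varphi_k(x)|\to0$ as $|x|\to\infty$ for $|\beta|\le2$. This proves (i).

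\emph{Step 2 (exponential decay).} With (i) in hand, $|f_k(\boldsymbol{\varphi})(x)|\le C\sum_j|\varphi_j(x)|^2 \to 0$ as $|x|\to\infty$, so there is $R>0$ with $d_k|f_k(\boldsymbol{\varphi})(x)| \le \tfrac{c_k}{2}|\varphi_k(x)|$ for $|x|>R$ — here we use that $f_k$ is ``diagonally controlled'' by its own component near infinity, which actually needs a little care: from $|f_k|\le C\sum_j|\varphi_j|^2$ one only gets control by $\sum_j|\varphi_j|^2$, not by $|\varphi_k|$ alone. To handle this I work with the scalar function $\Phi := \sum_k|\varphi_k|$ (or, to stay smooth, $\Phi_\varepsilon := \sum_k(\varphi_k^2+\varepsilon)^{1/2}$) and sum the equations; Kato's inequality gives $-\Delta|\varphi_k| \le -c_k|\varphi_k| + d_k|f_k(\boldsymbol{\varphi})|$ in the distributional sense, so with $c_0 := \min_k c_k > 0$ and $C_0$ absorbing the constants, $-\Delta\Phi + \tfrac{c_0}{2}\Phi \le 0$ for $|x|>R$ once $R$ is chosen large enough that $C_0\Phi(x)\le\tfrac{c_0}{2}$ there (possible since $\Phi\to0$). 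Then for any $0<\epsilon<\sqrt{c_0/2}$ the function $w(x) = Me^{-\epsilon|x|}$ is a supersolution of the same operator on $|x|>R$, and choosing $M$ so that $w\ge\Phi$ on $|x|=R$, the maximum principle gives $\Phi(x)\le Me^{-\epsilon|x|}$ for all $|x|>R$. Combined with boundedness on $|x|\le R$, this yields $e^{\epsilon|x|}\sum_k|\varphi_k(x)|\in L^\infty$. For the gradient bound, differentiate once more: $\nabla\varphi_k$ satisfies an elliptic equation whose right-hand side, by Lemma \ref{estgraddiffk}, is pointwise bounded by $C\Phi|\nabla\boldsymbol{\varphi}| + C\Phi\cdot(\text{lower order})$; applying local elliptic estimates (e.g. interior $W^{2,p}$ or $C^{1,\alpha}$ estimates on unit balls centered at $x$, $|x|$ large) together with the already-established exponential decay of $\Phi$ and the uniform $C^2$-bound from (i), one propagates the factor $e^{\epsilon|x|}$ (possibly shrinking $\epsilon$) to $|\nabla\varphi_k|$ as well. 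Finally, $e^{\epsilon|x|}(|\varphi_k|+|\nabla\varphi_k|)\in L^\infty$ immediately gives $|x|\,|\varphi_k|\in L^2$ since $|x|^2e^{-2\epsilon|x|}$ is integrable on $\R^n$. This proves (ii).

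\emph{Main obstacle.} The delicate point is Step 2: the estimate $|f_k|\le C\sum_j|\varphi_j|^2$ does not directly yield the ``diagonal'' inequality $-\Delta\varphi_k+c_k\varphi_k\le(\text{small})\,\varphi_k$ needed to run a componentwise comparison argument, because $f_k$ may be driven by components other than the $k$-th. The fix — summing the equations and using Kato's inequality to get a single scalar differential inequality for $\Phi=\sum_k|\varphi_k|$ with the \emph{uniform} constant $c_0=\min_k c_k$ — is what makes the comparison go through; I would also remark that the subcriticality constraint $n\le5$ is exactly what makes the $L^p$-bootstrap in Step 1 terminate (it is the range where $H^1\hookrightarrow L^3$ is compact and the quadratic nonlinearity is energy-subcritical), so no separate hypothesis is needed there. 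Everything else is routine elliptic regularity and is standard enough to state without detailed computation, citing, e.g., \cite{gil} for the Schauder/$L^p$ theory and the strong maximum principle.
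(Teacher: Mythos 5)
Your proposal is correct and is essentially the paper's approach: the paper simply defers to Theorem 8.1.1 in \cite{Cazenave}, whose proof is exactly the $L^p$-bootstrap for regularity followed by a comparison/maximum-principle argument for exponential decay, and your adaptation to the system (in particular, summing the componentwise Kato inequalities to get a scalar differential inequality for $\Phi=\sum_k|\varphi_k|$ with the uniform constant $c_0=\min_k c_k$) is the right way to handle the off-diagonal coupling in $f_k$.
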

\begin{proof}
The proof is similar to that of Theorem 8.1.1 in \cite{Cazenave}. So, we omit the details.
\end{proof}

 \section{Global solutions versus blow-up}\label{sec.gsbu}
 In this section we establish global and  blow-up results for system \eqref{system1}. We assume that   \ref{H1}-\ref{H8} hold again.
 
 \subsection{Virial Identities}\label{virialindet}
 
Let us start with the following.

\begin{teore}\label{persistL2wheit} Let $1\leq n\leq 6$. Assume $\mathbf{u}_0\in \mathbf{H}^1$ and $x\mathbf{u}_0\in \mathbf{L}^2$. Let $\mathbf{u}$ be the corresponding  local solution given by Theorems \ref{localexistenceH1} and \ref{locexistH1n=6}. Then, the function $t\to |\cdot|\mathbf{u}(\cdot,t)$ belongs to $\mathcal{C}(I,\mathbf{L}^{2})$. Moreover, the function \begin{equation*}
 t\to V(t)=\sum_{k=1}^{l}\frac{\alpha_{k}^{2}}{\gamma_{k}}\|xu_{k}(t)\|_{L^2}^{2}=\sum_{k=1}^{l}\frac{\alpha_{k}^{2}}{\gamma_{k}}\int|x|^2|u_{k}(x,t)|^{2}\;dx
 \end{equation*}
  is in $\mathcal{C}^{2}(I)$, 
 \begin{equation*}
 V'(t)=4\sum_{k=1}^{l}\alpha_{k}\mathrm{Im}\int\nabla u_{k}\cdot x\overline{u}_{k}\,dx,
 \end{equation*}
 and
 \begin{equation}\label{secderV}
 V''(t)=2 nE_0-2 n L(\mathbf{u})+2 (4-n)K(\mathbf{u}), \qquad \mbox{for all}\;\; t\in I,
 \end{equation}
 where $E_0=E(\mathbf{u}_0)$, and $K$ and $L$ are defined in \eqref{funcK1} and \eqref{funcL1}.
\end{teore}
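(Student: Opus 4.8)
The plan is to first derive the two identities at a formal level and then make everything rigorous by an approximation argument; the formal part is a standard virial computation, and the rigor is the only real issue.

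For the formal computation, I write the $k$-th equation in \eqref{system1} as $\partial_t u_k=\tfrac{i}{\alpha_k}\bigl(\gamma_k\Delta u_k-\beta_k u_k+f_k(\mathbf{u})\bigr)$. To get $V'$, I multiply by $|x|^2\overline{u}_k$, take twice the real part, integrate over $\R^n$ and integrate by parts once: the $\beta_k$-term is purely imaginary and disappears, the term from $\gamma_k\Delta u_k$ becomes $\tfrac{4\gamma_k}{\alpha_k}\mathrm{Im}\int x\cdot\overline{u}_k\nabla u_k\,dx$, and the nonlinear term becomes $-\tfrac{2}{\alpha_k}\mathrm{Im}\int|x|^2\overline{u}_k f_k(\mathbf{u})\,dx$. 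Multiplying by $\alpha_k^2/\gamma_k$ and summing over $k$, the nonlinear contribution is $-2\int|x|^2\,\mathrm{Im}\sum_{k=1}^l\tfrac{\alpha_k}{\gamma_k}\overline{u}_k f_k(\mathbf{u})\,dx$, which vanishes pointwise by Lemma \ref{ReFinvari} applied to $\mathbf{z}=\mathbf{u}(x,t)$ with $\sigma_k=\alpha_k/\gamma_k$; this gives the expression for $V'(t)$. Differentiating $V'(t)=4\sum_k\alpha_k\,\mathrm{Im}\int\nabla u_k\cdot x\,\overline{u}_k\,dx$ once more and using the equation together with the identities $\int x\cdot\overline{u}_k\nabla\partial_t u_k\,dx=-n\int\overline{u}_k\partial_t u_k\,dx-\int (x\cdot\nabla\overline{u}_k)\partial_t u_k\,dx$, $\mathrm{Re}\int(x\cdot\nabla\overline{u}_k)\Delta u_k\,dx=\tfrac{n-2}{2}\|\nabla u_k\|_{L^2}^2$ and $\int x\cdot\nabla|u_k|^2\,dx=-n\|u_k\|_{L^2}^2$, one finds (the $\beta_k$-terms again cancelling)
\[
V''(t)=8K(\mathbf{u})-4n\,\mathrm{Re}\int\sum_{k=1}^l f_k(\mathbf{u})\overline{u}_k\,dx-8\,\mathrm{Re}\int x\cdot\sum_{k=1}^l f_k(\mathbf{u})\nabla\overline{u}_k\,dx.
\]
By Lemma \ref{propertiesF}(ii) the first integral equals $3P(\mathbf{u})$, and by Lemma \ref{propertiesF}(i) followed by one more integration by parts the second equals $-nP(\mathbf{u})$; hence $V''(t)=8K(\mathbf{u})-4nP(\mathbf{u})$. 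Conservation of the energy (Lemma \ref{lemconservenerg}) and \eqref{conserenerfunc} give $4nP(\mathbf{u})=2nK(\mathbf{u})+2nL(\mathbf{u})-2nE_0$, and substituting this produces \eqref{secderV}.

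To justify the above I would approximate: pick $\mathbf{u}_0^j$ smooth and rapidly decaying with $\mathbf{u}_0^j\to\mathbf{u}_0$ in $\mathbf{H}^1$ and $|\cdot|\mathbf{u}_0^j\to|\cdot|\mathbf{u}_0$ in $\mathbf{L}^2$, and let $\mathbf{u}_j$ be the corresponding solutions from Theorems \ref{localexistenceH1} and \ref{locexistH1n=6}; by continuous dependence $\mathbf{u}_j\to\mathbf{u}$ in $\mathcal{C}(I';\mathbf{H}^1)$ on each compact $I'\subset I$. For these regularized solutions all the computations above are legitimate (an equivalent route is to replace $|x|^2$ by a smooth truncated weight $\varphi_m$ with Hessian bounded uniformly in $m$ and then let $m\to\infty$). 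Writing $V_j$ for the corresponding weighted quantity, the first identity together with Cauchy--Schwarz give $|V_j'(t)|\leq C\|\nabla\mathbf{u}_j(t)\|_{\mathbf{L}^2}V_j(t)^{1/2}$, hence $V_j(t)^{1/2}$ is bounded on $I'$ uniformly in $j$ (the $\mathbf{H}^1$-norms being uniformly bounded there); Fatou's lemma then gives $|\cdot|\mathbf{u}(\cdot,t)\in\mathbf{L}^2$ with a locally uniform bound. Weak continuity of $t\mapsto|\cdot|\mathbf{u}(\cdot,t)$ in $\mathbf{L}^2$ combined with continuity of $t\mapsto V(t)$ (which now makes sense) upgrades to strong continuity in $\mathcal{C}(I;\mathbf{L}^2)$, and passing to the limit $j\to\infty$ in the identities for $V_j'$ and $V_j''$ --- every term being continuous under $\mathbf{H}^1$-convergence plus the weighted bound --- gives $V\in\mathcal{C}^2(I)$ and the two stated formulas. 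I expect the main obstacle to be this last limiting step, namely the uniform control of the weighted norm and the upgrade from weak to strong continuity; the integrations by parts themselves are routine.
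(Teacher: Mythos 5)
Your proposal is correct, and the formal identities you derive agree with the paper's: your intermediate formula $V''=8K(\mathbf{u})-4nP(\mathbf{u})$ is exactly \eqref{viriuse}, and the reductions via Lemma \ref{ReFinvari} (for $V'$) and Lemma \ref{propertiesF} together with \eqref{conserenerfunc} (for $V''$) are the same structural inputs the paper uses. The route, however, is genuinely different. You perform the classical direct virial computation — multiply the equation by $|x|^2\overline{u}_k$, respectively differentiate $V'$, and integrate by parts — which is the Cazenave-style argument the paper explicitly mentions as an alternative before choosing not to follow it. The paper instead exploits the Hamiltonian structure $\frac{d}{dt}X=JE'(X)$ following \cite{Corcho}: the Poisson bracket $\langle\mathcal{V}',JE'\rangle$ is computed as $-\frac{d}{dt}E(\tilde X(t))\big|_{t=0}$ along the auxiliary flow generated by $\mathcal{V}$ (and then by $\mathcal{G}$ for the second derivative), and since those auxiliary flows are explicitly solvable (multiplication by $e^{-i\frac{\alpha_k}{\gamma_k}|x|^2t}$, respectively dilation), the whole computation reduces to differentiating the energy along explicit one-parameter families, with \ref{H4} and \ref{H6} entering as invariance/homogeneity of $F$ under those families rather than through pointwise identities for $f_k$. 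Your approach is more elementary and self-contained but requires the integrations by parts involving $f_k(\mathbf{u})\nabla\overline{u}_k$; the paper's approach trades those for the algebra of the dual flows. On the rigor side you actually go further than the paper: the printed proof is purely formal and never addresses $|\cdot|\mathbf{u}\in\mathcal{C}(I,\mathbf{L}^2)$, whereas your approximation scheme (uniform bound on $V_j$ from $|V_j'|\leq CV_j^{1/2}$, Fatou, then upgrading weak to strong continuity) is the standard and correct way to close that gap — just note that if you regularize the data rather than truncate the weight, you must also invoke persistence of regularity and decay for the approximating solutions, so the truncated-weight variant you mention parenthetically is the cleaner option.
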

\begin{proof}
	 The proof  can be performed adapting the arguments of Proposition 6.5.1 in \cite{Cazenave}. Nevertheless, we adopt the  technique   presented in   \cite{Corcho}, which  explores  the Hamiltonian structure of the system.  
  The Hamiltonian form of system (\ref{system1}) is given by
\begin{equation*}
\frac{d}{dt}X(t)=JE'(X(t)),
\end{equation*}
where  $X(t)=\mathbf{u}(t)$, $J$ is the skew-adjoint operator given by
\begin{equation*}
J=\left(\begin{array}{cccc}
-i/2\alpha_{1}&\cdots&0&0\\
0&-i/2\alpha_{2} &\cdots&0\\
\vdots&\cdots&\ddots&\vdots\\
0&\cdots&0&-i/2\alpha_{l}
\end{array}\right),
\end{equation*}
and
$ E'$ stands for the Fr\'echet derivative of the energy $E$ in \eqref{conservationenergy}.

Let us now introduce the variance functional
\begin{equation*}
\mathcal{V}(\mathbf{u})=\int|x|^{2}\sum_{k=1}^{l}\frac{\alpha_{k}^{2}}{\gamma_{k}}|u_{k}|^{2}\;dx.
\end{equation*}
Note that 
\begin{equation}\label{Vprima}
\mathcal{V}'(\mathbf{u})=\left(\begin{array}{c}
2\frac{\alpha_{1}^{2}}{\gamma_{1}}|x|^{2}u_{1}\\
\displaystyle \vdots\\
2\frac{\alpha_{l}^{2}}{\gamma_{l}}|x|^{2}u_{l}
\end{array}\right).
\end{equation}
and $V(t)=\mathcal{V}(X(t))$. 
Thus,
\begin{equation*}
V'(t)=\frac{d}{dt}\mathcal{V}(X(t))=\langle \mathcal{V}'(X(t)),\frac{d}{dt}X(t)\rangle=\langle \mathcal{V}'(X(t)),JE'(X(t))\rangle=:\mathcal{P}(X(t)).
\end{equation*}
Thus, in order to determine $V'(t)$, it suffices to determine the functional $\mathcal{P}$. The idea to do that is to use a dual Hamiltonian system. Indeed, given $\tilde{X}_0$, assume the initial-value problem
\begin{equation}\label{auxHam}
\frac{d}{dt}\tilde{X}(t)=J\mathcal{V}'(\tilde{X}(t)), \qquad \tilde{X}(0)=\tilde{X}_0
\end{equation}
is (at least) locally well-posed. Then
\begin{equation*}
\frac{d}{dt}E(\tilde{X}(t))=\langle E'(\tilde{X}(t)),\frac{d}{dt}\tilde{X}(t)\rangle=\langle E'(\tilde{X}(t)),J\mathcal{V}'(\tilde{X}(t))\rangle =-\langle \mathcal{V}'(\tilde{X}(t)),J E'(\tilde{X}(t))\rangle=-\mathcal{P}(\tilde{X}(t)).
\end{equation*}
Evaluating at $t=0$, we deduce
$$
\mathcal{P}(\tilde{X}_0)=-\frac{d}{dt}E(\tilde{X}(t))\Big|_{t=0}.
$$
To summarize, in order to determine  $V'(t)$, it suffices to solve \eqref{auxHam} and then take the derivative of the energy at this solution evaluated at $t=0$.

In our case, in view of \eqref{Vprima}, problem \eqref{auxHam} takes the form
\begin{equation}\label{auxiliarsystem}
\begin{cases}
\partial_{t}\tilde{u}_{1}=-i\frac{\alpha_{1}}{\gamma_{1}}|x|^{2}\tilde{u}_{1},\qquad\tilde{u}_{1}(0)=\tilde{u}_{10},\\
\qquad\quad\vdots\\
\partial_{t}\tilde{u}_{l}=-i\frac{\alpha_{l}}{\gamma_{l}}|x|^{2}\tilde{u}_{l},\qquad\tilde{u}_{l}(0)=\tilde{u}_{l0},
\end{cases}
\end{equation}
with $\tilde{X}_0=(\tilde{u}_{10},\ldots,\tilde{u}_{l0})$.
Integrating (\ref{auxiliarsystem}) we obtain
\begin{equation*}
(\tilde{u}_{1}(x,t),\ldots,\tilde{u}_{l}(x,t))=\left(e^{-i\frac{\alpha_{1}}{\gamma_{1}}|x|^{2}t}\tilde{u}_{10},\ldots,e^{-i\frac{\alpha_{l}}{\gamma_{l}}|x|^{2}t}\tilde{u}_{l0}\right).
\end{equation*}
Since, for $k=1,\ldots,l$,
\begin{equation*}
\begin{split}
|\nabla \tilde{u}_{k}|^{2}
&=|\nabla \tilde{u}_{k0}|^{2}+4t\frac{\alpha_{k}}{\gamma_{k}}\mbox{Re}[ix\cdot \nabla \tilde{u}_{k0} \overline{\tilde{u}}_{k0}]+4t^{2}\frac{\alpha_{k}^{2}}{\gamma_{k}^{2}}|x\tilde{u}_{k0}|^{2},
\end{split}
\end{equation*}
we deduce
\begin{equation*}
\begin{split}
E(\tilde{X}(t))&=\sum_{k=1}^{l}\gamma_{k}\|\nabla \tilde{u}_{k0}\|_{L^2}^{2}+\sum_{k=1}^{l}\beta_{k}\|\tilde{u}_{k0}\|_{L^2}^{2}+4t\sum_{k=1}^{l}\alpha_{k}\int\mathrm{Re}[ix\cdot \nabla \tilde{u}_{k0} \overline{\tilde{u}}_{k0}]\;dx\\
&\quad+4t^{2}\sum_{k=1}^{l}\frac{\alpha_{k}^{2}}{\gamma_{k}}\|x\tilde{u}_{k0}\|^{2}_{L^{2}}-2\mathrm{Re}\int F\left(e^{-i\frac{\alpha_{1}}{\gamma_{1}}|x|^{2}t}\tilde{u}_{10},\ldots,e^{-i\frac{\alpha_{l}}{\gamma_{l}}|x|^{2}t}\tilde{u}_{l0}\right)\;dx.
 \end{split}
\end{equation*}
Now, from   \ref{H4} with  $\theta=\displaystyle -|x|^{2}t$ we infer
\begin{equation*}
\begin{split}
\mbox{Re}\,F\left(e^{-i\frac{\alpha_{1}}{\gamma_{1}}|x|^{2}t}\tilde{u}_{10},\ldots,e^{-i\frac{\alpha_{l}}{\gamma_{l}}|x|^{2}t}\tilde{u}_{l0}\right)
&=\mbox{Re}\,F(\tilde{X}_0),
\end{split}
\end{equation*}
which leads to
\begin{equation*}
\begin{split}
E(\tilde{X}(t))&=\sum_{k=1}^{l}\gamma_{k}\|\nabla \tilde{u}_{k0}\|_{L^2}^{2}+\sum_{k=1}^{l}\beta_{k}\|\tilde{u}_{k0}\|_{L^2}^{2}+4t\sum_{k=1}^{l}\alpha_{k}\mathrm{Re}[ix\cdot \nabla \tilde{u}_{k0} \overline{\tilde{u}}_{k0}]\;dx\\
&\quad+4t^{2}\sum_{k=1}^{l}\frac{\alpha_{k}^{2}}{\gamma_{k}}\|x\tilde{u}_{k0}\|^{2}_{L^{2}}-2\mathrm{Re}\int F(\tilde{X}_0)\;dx.
 \end{split}
\end{equation*}
Taking the derivative with respect to $t$ in the last expression and evaluating at $t=0$ gives
\begin{equation*}
\begin{split}
\frac{d}{dt}E(\tilde{X}(t))\Big|_{t=0} =4\sum_{k=1}^{l}\alpha_{k}\int\mathrm{Re}[ix\cdot \nabla \tilde{u}_{k0} \overline{\tilde{u}}_{k0}]\;dx
&=-4\mathrm{Im}\int\sum_{k=1}^{l}\alpha_{k}x\cdot \nabla \tilde{u}_{k} \overline{\tilde{u}}_{k}\;dx.
\end{split}
\end{equation*}
Therefore, 
\begin{equation*}
V'(t)=4\mathrm{Im}\int\sum_{k=1}^{l}\alpha_{k} x\cdot \nabla \tilde{u}_{k} \overline{\tilde{u}}_{k}\;dx,
\end{equation*}
as desired.

To obtain the second derivative of  $V$ we use the same idea with the functional $\mathcal{V}$ replaced by
\begin{equation*}
\mathcal{G}(\mathbf{u})=4\mathrm{Im}\int\sum_{k=1}^{l}\alpha_{k}x\cdot \nabla u_{k} \overline{u}_{k}\;dx.
\end{equation*}
We start by noticing that if $
\mathcal{F}(f)=\mathrm{Im}\int x\cdot \nabla f \overline{f}\;dx,
$
then the Fr\'echet derivative of $\mathcal{F}$ is given by
 $\mathcal{F}'(f)=-i(2x\cdot \nabla f+n f)$. Thus,
\begin{equation*}
\mathcal{G}'(\mathbf{u})=\left(\begin{array}{c}
-4\alpha_{1}i(2x\cdot \nabla u_{1}+n u_{1})\\
\vdots\\
-4\alpha_{l}i(2x\cdot \nabla u_{l}+n u_{l})
\end{array}\right),
\end{equation*}
and the IVP $\displaystyle 
\frac{d}{dt}\tilde{X}(t)=J\mathcal{G}'(\tilde{X}(t))$, $\tilde{X}(0)=\tilde{X}_{0}
$ takes the form
\begin{equation*}
\begin{cases}
(\tilde{u}_{1})_{t}=-4x\cdot \nabla u_{1}-2n u_{1},\qquad\tilde{u}_{1}(0)=\tilde{u}_{10},\\
\qquad\quad\vdots\\
(\tilde{u}_{l})_{t}=-4x\cdot \nabla u_{l}-2n u_{l},\qquad\tilde{u}_{l}(0)=\tilde{u}_{l0}.\\
\end{cases}
\end{equation*}
The solution of the last system is 
\begin{equation*}
(\tilde{u}_{1}(x,t),\ldots,\tilde{u}_{l}(x,t))=\left(e^{-2 nt}\tilde{u}_{10}\left(e^{-4 t}x\right),\ldots,e^{-2 nt}\tilde{u}_{l0}\left(e^{-4 t}x\right)\right).
\end{equation*}
Since $\displaystyle \nabla \tilde{u}_{k}=e^{-2 nt}e^{-4 t}\nabla \tilde{u}_{k0}\left(e^{-4 t}x\right)$, then
$$|\nabla \tilde{u}_{k}|^{2}=e^{-4 nt}e^{-8t}\left|\nabla \tilde{u}_{k0}\left(e^{-4 t}x\right)\right|^{2}.$$
This yields
\begin{equation}\label{energaux}
\begin{split}
E(\tilde{X}(t))&=\sum_{k=1}^{l}\gamma_{k}\int e^{-4 nt}e^{-8 t}\left|\nabla \tilde{u}_{k0}\left(e^{-4 t}x\right)\right|^{2}\;dx +\sum_{k=1}^{l}\beta_{k}\int e^{-4 nt}\left| \tilde{u}_{k0}\left(e^{-4 t}x\right)\right|^{2}\;dx\\
&\quad-2\mbox{Re}\int F\left(e^{-2nt}\tilde{u}_{10}\left(e^{-4 t}x\right),\ldots,e^{-2 nt}\tilde{u}_{l0}\left(e^{-4 t}x\right)\right)\;dx.
 \end{split}
\end{equation}
Since $F$ is homogeneous of degree 3 (see assumption \ref{H6}),
\begin{equation*}
 F\left(e^{-2 nt}\tilde{u}_{10}\left(e^{-4 t}x\right),\ldots,e^{-2 nt}\tilde{u}_{l0}\left(e^{-4 t}x\right)\right)=e^{-6nt}F\left(\tilde{u}_{10}\left(e^{-4 t}x\right),\ldots,\tilde{u}_{l0}\left(e^{-4 t}x\right)\right)
\end{equation*}
Combining this with the change of variable $e^{-4 t}x=y$, expression \eqref{energaux} reads as
\begin{equation*}
\begin{split}
E(\tilde{X}(t))&=e^{-8 t}\sum_{k=1}^{l}\gamma_{k}\|\nabla \tilde{u}_{k0}\|_{L^2}^{2}+\sum_{k=1}^{l}\beta_{k}\| \tilde{u}_{k0}\|_{L^2}^{2}-2 e^{-2 n t}\mathrm{Re}\int  F(\tilde{X}_{0}\left(y\right))\;dy,\\ 
 \end{split}
\end{equation*}
Consequently,
\begin{equation*}
\begin{split}
-\left.\frac{d}{dt}E(\tilde{X}(t))\right|_{t=0}
&=8 \sum_{k=1}^{l}\gamma_{k}\|\nabla \tilde{u}_{k0}\|_{L^2}^{2}-4 n\mathrm{Re}\int   F(\tilde{X}_{0}\left(y\right))\;dy,
\end{split}
\end{equation*}
and
\begin{equation}\label{viriuse}
V''(t)=8 \sum_{k=1}^{l}\gamma_{k}\|\nabla {u}_{k}(t)\|_{L^2}^{2}-4 n\mathrm{Re}\int F\left(\mathbf{u}(t)\right)\;dy.
\end{equation}
From the expression in the conservation law (\ref{conservationenergy}) we obtain \eqref{secderV}.
This complete the proof of the theorem.
\end{proof}

\begin{obs}\label{weightspc}
We introduce the following space
\begin{equation*}
\Sigma=\{\mathbf{u}\in \mathbf{H}^{1}; \;x\mathbf{u}\in \mathbf{L}^{2}\}.
\end{equation*}
Here the product $x\mathbf{u}$ must be understood as $(xu_1,\ldots,xu_l)$. In particular,
$$
\|x\mathbf{u}\|_{\mathbf{L}^2}=\sum_{k=1}^l\int|x|^2|u_k|^2dx.
$$
We note that $\Sigma$ equipped with the norm
\begin{equation*}
\|\mathbf{u}\|_{\Sigma}=\|\mathbf{u}\|_{\mathbf{H}^{1}}+\||\cdot|\mathbf{u}\|_{\mathbf{L}^{2}},
\end{equation*}
is a Hilbert space.
\end{obs}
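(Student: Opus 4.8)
The plan is to equip $\Sigma$ with an explicit inner product whose associated norm is equivalent to $\|\cdot\|_{\Sigma}$, and then to check completeness directly. For $\mathbf{u}=(u_{1},\ldots,u_{l})$ and $\mathbf{v}=(v_{1},\ldots,v_{l})$ in $\Sigma$, I would set
\[
\langle \mathbf{u},\mathbf{v}\rangle_{\Sigma}:=\sum_{k=1}^{l}\int_{\R^{n}}\left(u_{k}\overline{v}_{k}+\nabla u_{k}\cdot\nabla\overline{v}_{k}+|x|^{2}u_{k}\overline{v}_{k}\right)dx.
\]
First one checks that this is well defined: each term is integrable by the Cauchy--Schwarz inequality, using $\mathbf{u},\mathbf{v}\in\mathbf{H}^{1}$ and $|\cdot|\mathbf{u},|\cdot|\mathbf{v}\in\mathbf{L}^{2}$. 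Sesquilinearity and Hermitian symmetry are immediate, and $\langle\mathbf{u},\mathbf{u}\rangle_{\Sigma}=0$ forces $u_{k}=0$ a.e.\ for each $k$, so $\langle\cdot,\cdot\rangle_{\Sigma}$ is a genuine inner product. Its induced norm satisfies
\[
\langle\mathbf{u},\mathbf{u}\rangle_{\Sigma}=\sum_{k=1}^{l}\left(\|u_{k}\|_{H^{1}}^{2}+\||\cdot|u_{k}\|_{L^{2}}^{2}\right),
\]
which, by the elementary equivalence of the $\ell^{1}$ and $\ell^{2}$ norms on $\R^{2l}$, is equivalent to $\|\mathbf{u}\|_{\Sigma}=\|\mathbf{u}\|_{\mathbf{H}^{1}}+\||\cdot|\mathbf{u}\|_{\mathbf{L}^{2}}$. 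Hence it suffices to prove that $\Sigma$ is complete for this norm.

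For completeness, let $(\mathbf{u}^{(j)})_{j}$ be a Cauchy sequence in $\Sigma$. Since the $\mathbf{H}^{1}$-norm and the $\mathbf{L}^{2}$-norm of $|\cdot|\mathbf{u}$ are each controlled by $\|\cdot\|_{\Sigma}$, the sequence $(\mathbf{u}^{(j)})_{j}$ is Cauchy in $\mathbf{H}^{1}$ and converges there to some $\mathbf{u}\in\mathbf{H}^{1}$, while $(|\cdot|\mathbf{u}^{(j)})_{j}$ is Cauchy in $\mathbf{L}^{2}$ and converges to some $\mathbf{w}\in\mathbf{L}^{2}$. It then remains to identify $\mathbf{w}$ with $|\cdot|\mathbf{u}$: passing to a subsequence, one may assume $u_{k}^{(j)}\to u_{k}$ and $|x|u_{k}^{(j)}\to w_{k}$ pointwise a.e.\ in $\R^{n}$ for each $k$, whence $w_{k}=|x|u_{k}$ a.e. Consequently $|\cdot|\mathbf{u}=\mathbf{w}\in\mathbf{L}^{2}$, so $\mathbf{u}\in\Sigma$, and since the subsequence was used only to identify the limit, the full sequence satisfies
\[
\|\mathbf{u}^{(j)}-\mathbf{u}\|_{\Sigma}\leq C\Big(\|\mathbf{u}^{(j)}-\mathbf{u}\|_{\mathbf{H}^{1}}+\||\cdot|(\mathbf{u}^{(j)}-\mathbf{u})\|_{\mathbf{L}^{2}}\Big)\longrightarrow 0.
\]
This establishes completeness and hence the remark.

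The only mildly delicate point is the identification $\mathbf{w}=|\cdot|\mathbf{u}$; it is handled by the standard device of extracting a common a.e.\ convergent subsequence from the two $\mathbf{L}^{2}$-convergent sequences $(\mathbf{u}^{(j)})$ and $(|\cdot|\mathbf{u}^{(j)})$. Everything else — well-definedness of the inner product, positivity, and norm equivalence — is routine and can be stated in a line or two.
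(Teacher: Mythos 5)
Your proof is correct. The paper states this remark without any proof, treating it as routine, and your argument — the explicit inner product, the equivalence of its induced norm with the sum norm $\|\mathbf{u}\|_{\mathbf{H}^{1}}+\||\cdot|\mathbf{u}\|_{\mathbf{L}^{2}}$ via finite-dimensional norm equivalence, and completeness by identifying the $\mathbf{L}^{2}$-limit of $|\cdot|\mathbf{u}^{(j)}$ with $|\cdot|\mathbf{u}$ along an a.e.\ convergent subsequence — is exactly the standard verification the authors implicitly rely on.
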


As an immediate consequence of \eqref{secderV} we obtain.

\begin{coro}[Virial identity]\label{conservationlawweightedspace}
	Let $1\leq n\leq 6$.  Assume $\mathbf{u}_0\in \mathbf{H}^1$ and let $\mathbf{u}\in \Sigma$ be the corresponding solution given by Theorems \ref{localexistenceH1}, \ref{locexistH1n=6} and \ref{persistL2wheit}. Then
		\begin{multline*}
	Q\left(x\mathbf{u}(t)\right)=Q\left(x\mathbf{u}_0\right)+P_{0}t+nE_{0}t^{2}-2 n\int_{0}^{t}(t-s)L(\mathbf{u}(s))\; ds\\
	+2(4-n)\int_{0}^{t}(t-s)K(\mathbf{u}(s))\;ds,
	\end{multline*}
for all $t\in I$, where
$$P_{0}=4\sum_{k=1}^{l}\alpha_{k}\mathrm{Im}\int\nabla u_{k0}\cdot x\overline{u}_{k0}\;dx.$$
\end{coro}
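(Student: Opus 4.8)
The plan is to obtain the identity simply by integrating the second-order differential identity \eqref{secderV} twice. First I would note that, by the very definition of the charge functional in \eqref{conservationcharge} applied to the vector $x\mathbf{u}(t)=(xu_1(t),\ldots,xu_l(t))$, one has
\[
Q(x\mathbf{u}(t))=\sum_{k=1}^{l}\frac{\alpha_k^2}{\gamma_k}\|xu_k(t)\|_{L^2}^2=V(t),
\]
where $V$ is exactly the function introduced in Theorem \ref{persistL2wheit}. Thus the corollary amounts to producing a closed formula for $V(t)$.

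Next, under the hypotheses $\mathbf{u}_0\in\mathbf{H}^1$ and $x\mathbf{u}_0\in\mathbf{L}^2$ (so that, by Theorem \ref{persistL2wheit}, $\mathbf{u}\in\mathcal{C}(I,\Sigma)$), Theorem \ref{persistL2wheit} guarantees that $V\in\mathcal{C}^2(I)$, that
\[
V(0)=Q(x\mathbf{u}_0),\qquad V'(0)=4\sum_{k=1}^{l}\alpha_k\,\mathrm{Im}\int\nabla u_{k0}\cdot x\overline{u}_{k0}\,dx=P_0,
\]
and that $V''$ is given by the explicit expression in \eqref{secderV}. I would then apply Taylor's formula with integral remainder on the segment joining $0$ and $t$ inside $I$ (equivalently, the fundamental theorem of calculus applied twice):
\[
V(t)=V(0)+V'(0)\,t+\int_0^t (t-s)\,V''(s)\,ds.
\]

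Finally, substituting \eqref{secderV} into the remainder and splitting it into three pieces, the constant contribution yields $2nE_0\int_0^t(t-s)\,ds=nE_0 t^2$, while the terms $-2nL(\mathbf{u}(s))$ and $2(4-n)K(\mathbf{u}(s))$ pass directly under the integral sign and produce the two convolution-type integrals appearing in the statement; collecting all the terms gives the claimed formula. I do not expect any genuine obstacle here: all of the analytic content — the $\mathbf{L}^2$-differentiability of $t\mapsto|\cdot|\mathbf{u}(t)$, the $\mathcal{C}^2$-regularity of $V$, the justification of differentiating under the integral sign, and the computation of $V''$ — has already been carried out in Theorem \ref{persistL2wheit}, so this corollary is a one-line computational consequence once one recognizes $V(t)=Q(x\mathbf{u}(t))$ and integrates twice.
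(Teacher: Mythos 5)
Your proposal is correct and is exactly the argument the paper intends: the corollary is stated as an immediate consequence of \eqref{secderV}, obtained by recognizing $V(t)=Q(x\mathbf{u}(t))$ and integrating twice via Taylor's formula with integral remainder, with $V(0)=Q(x\mathbf{u}_0)$ and $V'(0)=P_0$ supplied by Theorem \ref{persistL2wheit}. The bookkeeping $2nE_0\int_0^t(t-s)\,ds=nE_0t^2$ and the passage of the $L$ and $K$ terms under the integral are exactly as you describe.
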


Next we will pay particular attention to the case where the initial data is radially symmetric. Let us start by recalling the following.

\begin{lem}\label{StraussLemaconsequence}
If $f\in H^{1}(\R^{n})$ is a radially symmetric function, then
\begin{equation*}
\|f\|^{p+1}_{L^{p+1}(|x|\geq R)}\leq \frac{C}{R^{(n-1)(p-1)/2}}\|f\|^{(p+3)/2}_{L^{2}(|x|\geq R)}\|\nabla f\|^{(p-1)/2}_{L^{2}(|x|\geq R)}.
\end{equation*}
In particular, if $n=5$ and $p=2$, then
\begin{equation}\label{StraussLemmaineq}
\|f\|^{3}_{L^{3}(|x|\geq R)}\leq \frac{C}{R^{2}}\|f\|^{5/2}_{L^{2}(|x|\geq R)}\|\nabla f\|^{1/2}_{L^{2}(|x|\geq R)}.
\end{equation}
\end{lem}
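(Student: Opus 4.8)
The plan is to reduce the estimate to the one-dimensional radial variable and to invoke the classical Strauss-type decay for radial $H^1$ functions. First I would note that both sides of the asserted inequality depend continuously on $f$ in the $H^1$-norm and that radial functions in $\mathcal{C}_0^\infty(\R^n)$ are dense in the radial subspace of $H^1(\R^n)$; hence it suffices to prove the inequality for radial $f\in\mathcal{C}_0^\infty(\R^n)$, which I write as $f=f(r)$ with $r=|x|$.

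The key step is a weighted pointwise bound. For $r\geq R$ the fundamental theorem of calculus gives $|f(r)|^2=-2\int_r^\infty\mathrm{Re}\bigl(\overline{f(s)}f'(s)\bigr)\,ds$; multiplying by $r^{n-1}$, using $r^{n-1}\leq s^{n-1}$ on $[r,\infty)$, and applying the Cauchy--Schwarz inequality, one obtains
\[
r^{n-1}|f(r)|^2\leq 2\Bigl(\int_r^\infty s^{n-1}|f(s)|^2\,ds\Bigr)^{1/2}\Bigl(\int_r^\infty s^{n-1}|f'(s)|^2\,ds\Bigr)^{1/2}.
\]
Passing to polar coordinates turns the one-dimensional integrals into $\|f\|_{L^2(|x|\geq r)}^2$ and $\|\nabla f\|_{L^2(|x|\geq r)}^2$ up to a dimensional constant, and since $\rho\mapsto\|\cdot\|_{L^2(|x|\geq\rho)}$ is non-increasing and $r\geq R$, one deduces
\[
|f(r)|^2\leq\frac{C}{R^{n-1}}\,\|f\|_{L^2(|x|\geq R)}\,\|\nabla f\|_{L^2(|x|\geq R)},\qquad r\geq R.
\]

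Finally I would write $\|f\|_{L^{p+1}(|x|\geq R)}^{p+1}=\int_{|x|\geq R}|f|^{p-1}|f|^2\,dx$, bound $|f(x)|^{p-1}$ on $\{|x|\geq R\}$ by the $\tfrac{p-1}{2}$-th power of the previous estimate and pull it outside the integral, leaving $\int_{|x|\geq R}|f|^2\,dx=\|f\|_{L^2(|x|\geq R)}^2$. Collecting the exponents — $R^{-(n-1)(p-1)/2}$, together with $\|f\|_{L^2(|x|\geq R)}^{(p-1)/2+2}=\|f\|_{L^2(|x|\geq R)}^{(p+3)/2}$ and $\|\nabla f\|_{L^2(|x|\geq R)}^{(p-1)/2}$ — gives exactly the stated inequality, and the special case is immediate on setting $n=5$, $p=2$, since then $(n-1)(p-1)/2=2$, $(p+3)/2=5/2$ and $(p-1)/2=1/2$. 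The only delicate point is the reduction step: a general radial $H^1$ function is not defined pointwise near the origin and need not obviously vanish at infinity, so the density argument is precisely what legitimizes the use of the fundamental theorem of calculus; once that is granted, the rest is a routine application of Cauchy--Schwarz and Hölder.
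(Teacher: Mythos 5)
Your proof is correct and is exactly the standard argument behind the citation the paper gives: the paper simply refers to Strauss' radial lemma (via Ogawa--Tsutsumi), whose proof is the pointwise decay bound $r^{n-1}|f(r)|^2\leq 2\|f\|_{L^2(|x|\geq r)}\|\nabla f\|_{L^2(|x|\geq r)}$ (up to a dimensional constant) obtained from the fundamental theorem of calculus and Cauchy--Schwarz, followed by the interpolation $\int_{|x|\geq R}|f|^{p-1}|f|^2\,dx$ that you perform. Your attention to the density/approximation step (or, alternatively, a Fatou argument on the left-hand side) correctly handles the only technical subtlety, so nothing is missing.
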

\begin{proof}
The proof is a consequence of  Strauss' radial lemma (see also \cite[page 323]{Ogawa}).
\end{proof}

Next we deduce a similar result as in Theorem \ref{persistL2wheit} but with a smooth cut-off function instead of $|x|$.

\begin{teore}\label{Viarialidenityradialcase} Let $1\leq n\leq 6$. Assume $ \mathbf{u}_0 \in \mathbf{H}^{1}$ and  let $\mathbf{u}$ be the corresponding  given by Theorems \ref{localexistenceH1} and \ref{locexistH1n=6}. Assume $\varphi \in C^{\infty}_{0}(\R^{n})$ and define
\begin{equation*}
V(t)=\frac{1}{2}\int \varphi(x)\left(\sum_{k=1}^{l}\frac{\alpha_{k}^{2}}{\gamma_{k}}|u_{k}|^{2}\right)\;dx.
\end{equation*}
Then,
\begin{equation*}
V'(t)=\sum_{k=1}^{l}\alpha_{k}\mathrm{Im}\int\nabla \varphi\cdot \nabla u_{k} \overline{u}_{k}\;dx,
\end{equation*}
and
\begin{equation}\label{secondervgeralcase}
\begin{split}
V''(t)&=2\sum_{1\leq m,j\leq n}\mathrm{Re}\int\frac{\partial^{2}\varphi}{\partial x_{m}\partial x_{j}}\left[\sum_{k=1}^{l}\gamma_{k}\partial x_{j}\overline{u}_{k}\partial x_{m}u_{k}\right]dx\\
&\quad-\frac{1}{2}\int\Delta^{2}\varphi\left(\sum_{k=1}^{l}\gamma_{k}|u_{k}|^{2}\right)\;dx-\mathrm{Re}\int\Delta\varphi F\left(\mathbf{u}\right)\;dx.
\end{split}
\end{equation}
\end{teore}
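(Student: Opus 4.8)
The plan is to compute the two derivatives of $V$ directly from the equation, exactly as in the proof of Theorem \ref{persistL2wheit}; since $\varphi\in C^\infty_0(\R^n)$ the weight is bounded and smooth, so no weighted-space hypothesis on $\mathbf{u}_0$ is needed and every quantity appearing in \eqref{secondervgeralcase} already makes sense for an $\mathbf{H}^1$-solution (the term $\int\Delta\varphi\,F(\mathbf{u})\,dx$ is finite by Lemma \ref{estdifF}). As usual, the manipulations below are first justified for data regular enough that $\mathbf{u}(t)$ is smooth in $x$ with $\partial_t\mathbf{u}(t)\in\mathbf{L}^2$, and then extended to arbitrary $\mathbf{u}_0\in\mathbf{H}^1$ by approximation together with the continuous dependence coming from Theorems \ref{localexistenceH1} and \ref{locexistH1n=6}; alternatively one may run the Hamiltonian/dual-system device of \cite{Corcho} used for Theorem \ref{persistL2wheit}, now with a transport equation governed by the (complete, since compactly supported) vector field $\nabla\varphi$.

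For the first derivative, write $V(t)=\tfrac12\sum_k\frac{\alpha_k^2}{\gamma_k}\int\varphi|u_k|^2\,dx$, differentiate under the integral, and substitute $\alpha_k\partial_t u_k=i(\gamma_k\Delta u_k-\beta_k u_k+f_k(\mathbf{u}))$ from \eqref{system1}. The contribution of $-\beta_k u_k$ is the imaginary part of a real quantity, hence $0$; the contribution of $f_k$ reduces to a multiple of $\int\varphi\,\mathrm{Im}\sum_k\frac{\alpha_k}{\gamma_k}f_k(\mathbf{u})\overline{u}_k\,dx$, which vanishes by Lemma \ref{ReFinvari}. For the remaining $\gamma_k\Delta u_k$ term, one integration by parts and discarding the real quantity $\int\varphi|\nabla u_k|^2\,dx$ gives $V'(t)=\sum_k\alpha_k\mathrm{Im}\int\nabla\varphi\cdot\nabla u_k\,\overline{u}_k\,dx$.

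For the second derivative, differentiate this last expression and substitute $\partial_t u_k$ once more. The $\beta_k$-terms cancel again. The $\gamma_k$-terms are the classical Schrödinger virial terms: repeated integration by parts moving the derivatives off $\Delta u_k$ onto $\varphi$, using $\mathrm{Re}(\partial_m\partial_j u_k\,\partial_m\overline{u}_k)=\tfrac12\partial_j|\partial_m u_k|^2$ and $\mathrm{Re}(\overline{u}_k\nabla u_k)=\tfrac12\nabla|u_k|^2$, collapses them into $2\sum_{1\le m,j\le n}\mathrm{Re}\int\partial_m\partial_j\varphi\,[\sum_k\gamma_k\partial_j\overline{u}_k\,\partial_m u_k]\,dx-\tfrac12\int\Delta^2\varphi\,(\sum_k\gamma_k|u_k|^2)\,dx$ (a quick check with $\varphi=|x|^2$ recovers \eqref{secderV}). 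The $f_k$-terms: one integration by parts in $x_j$ splits them into $-\int\Delta\varphi\,\mathrm{Re}(\sum_k f_k(\mathbf{u})\overline{u}_k)\,dx$ and $-2\sum_j\int\partial_j\varphi\,\mathrm{Re}(\sum_k f_k(\mathbf{u})\partial_j\overline{u}_k)\,dx$; by Lemma \ref{propertiesF}(ii) the first equals $-3\int\Delta\varphi\,\mathrm{Re}F(\mathbf{u})\,dx$, and by Lemma \ref{propertiesF}(i) followed by another integration by parts the second equals $+2\int\Delta\varphi\,\mathrm{Re}F(\mathbf{u})\,dx$, so together they give $-\mathrm{Re}\int\Delta\varphi\,F(\mathbf{u})\,dx$. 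Adding the two contributions yields \eqref{secondervgeralcase}.

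The main point requiring care is the justification of these formal computations at $\mathbf{H}^1$ regularity, which is handled by the standard regularization-and-limit argument of \cite[Proposition 6.5.1]{Cazenave} and \cite{Ozawa} (the limit passes because each term in the final identity is continuous in $\mathbf{H}^1$, using Lemma \ref{estdifF} for the $F$-term). The only other delicate step is bookkeeping the several boundary-term-free integrations by parts in the second-derivative computation, but this is routine since $\varphi$ has compact support.
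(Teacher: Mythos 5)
Your computation is correct and is essentially the paper's proof: the paper simply delegates the standard local virial manipulations to \cite{Kavian} and \cite{Pastor}, pointing out that Lemma \ref{ReFinvari} disposes of the nonlinear term in $V'$ and Lemma \ref{propertiesF} handles it in $V''$, which is exactly where and how you use those two lemmas, and your treatment of the $H^1$-regularity issue via regularization (or the Hamiltonian device of \cite{Corcho}) matches the paper's stance as well. The only nit is your parenthetical sanity check: with $\varphi=|x|^2$, identity \eqref{secondervgeralcase} recovers one half of \eqref{secderV}, since the $V$ of Theorem \ref{Viarialidenityradialcase} carries an extra factor $\tfrac12$ relative to that of Theorem \ref{persistL2wheit}.
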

\begin{proof}
The proof follows the ideas presented in  \cite[Lemma 2.9]{Kavian}. There, it was considered a local virial identity for a single Schr\"{o}dinger equation. An adapted version of it, for a system, can be founded in \cite[Theorem 2.1]{Pastor}. To get the first derivative of $V$ we use Lemma \ref{ReFinvari}. For the second derivative,  we use the consequences of \ref{H3} and \ref{H6} stayed in Lemma \ref{propertiesF}. 
\end{proof}

\begin{coro}
Under the assumptions of Theorem \ref{Viarialidenityradialcase}, if $\varphi$ and $\mathbf{u}_0$ are 
 radially symmetric functions,  we can write \eqref{secondervgeralcase} as
 \begin{equation}\label{secondervradialcase}
 \begin{split}
V''(t)&=2\int \varphi''\left(\sum_{k=1}^{l}\gamma_{k}|\nabla u_{k}|^{2}\right)dx-\frac{1}{2}\int\Delta^{2}\varphi\left(\sum_{k=1}^{l}\gamma_{k}|u_{k}|^{2}\right)dx\\
&\quad-\mathrm{Re}\int\Delta\varphi\, F\left(\mathbf{u}\right)\;dx.
\end{split}
\end{equation}
 \end{coro}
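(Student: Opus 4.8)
The plan is to deduce the radial formula \eqref{secondervradialcase} from the general identity \eqref{secondervgeralcase} by exploiting the rotational symmetry. Since $\varphi$ is radially symmetric, write $\varphi(x)=\phi(|x|)$ and recall the elementary computations for its derivatives: denoting $r=|x|$, one has $\partial_{x_j}\varphi=\phi'(r)\frac{x_j}{r}$ and
\[
\frac{\partial^2\varphi}{\partial x_m\partial x_j}=\phi''(r)\frac{x_mx_j}{r^2}+\frac{\phi'(r)}{r}\left(\delta_{mj}-\frac{x_mx_j}{r^2}\right),
\]
so that the Hessian of $\varphi$ is, at each point, a symmetric matrix with eigenvalue $\phi''(r)$ in the radial direction $x/r$ and eigenvalue $\phi'(r)/r$ on the orthogonal complement. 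Consequently $\Delta\varphi=\phi''(r)+\frac{n-1}{r}\phi'(r)$, and the term $-\mathrm{Re}\int\Delta\varphi\,F(\mathbf{u})\,dx$ already has the desired form. Likewise $-\frac12\int\Delta^2\varphi\big(\sum_k\gamma_k|u_k|^2\big)dx$ is unchanged, so the only term requiring work is the first one in \eqref{secondervgeralcase}.

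For that term, I would note that since $\mathbf{u}_0$ is radially symmetric and system \eqref{system1} preserves radial symmetry (the linear group $U_k(t)$ commutes with rotations and, by \ref{H4}/Lemma \ref{H34impGC}, so does the Duhamel term), the solution $\mathbf{u}(\cdot,t)$ is radially symmetric for every $t\in I$. Hence each $u_k(x,t)=v_k(r,t)$ and $\nabla u_k=\partial_r v_k\,\frac{x}{r}$, i.e. the gradient of each component is purely radial. Therefore, for fixed $m,j$,
\[
\sum_{k=1}^{l}\gamma_k\,\partial_{x_j}\overline{u}_k\,\partial_{x_m}u_k=\Big(\sum_{k=1}^l\gamma_k|\partial_r v_k|^2\Big)\frac{x_mx_j}{r^2}=\Big(\sum_{k=1}^l\gamma_k|\nabla u_k|^2\Big)\frac{x_mx_j}{r^2}.
\]
Plugging this and the Hessian formula into $\displaystyle 2\sum_{1\le m,j\le n}\mathrm{Re}\int\frac{\partial^2\varphi}{\partial x_m\partial x_j}\big(\sum_k\gamma_k\partial_{x_j}\overline{u}_k\partial_{x_m}u_k\big)dx$, the off-diagonal (tangential) part of the Hessian contracts against $x_mx_j/r^2$ and, after summing over $m$ and $j$, contributes a factor $\sum_{m,j}\big(\delta_{mj}-\frac{x_mx_j}{r^2}\big)\frac{x_mx_j}{r^2}=\sum_j\frac{x_j^2}{r^2}-\frac{r^2}{r^2}\cdot\frac{r^2}{r^2}=1-1=0$. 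The surviving radial part yields $\sum_{m,j}\phi''(r)\frac{x_mx_j}{r^2}\cdot\frac{x_mx_j}{r^2}=\phi''(r)$, so the whole term collapses to $2\int\varphi''\big(\sum_k\gamma_k|\nabla u_k|^2\big)dx$, which is exactly the first term of \eqref{secondervradialcase}.

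Assembling the three pieces gives \eqref{secondervradialcase}, and the formula for $V'(t)$ is immediate from Theorem \ref{Viarialidenityradialcase} since $\nabla\varphi=\phi'(r)\frac{x}{r}$ already displays the structure there. The main obstacle, and the point deserving the most care, is justifying rigorously that the solution stays radially symmetric (so that $\nabla u_k$ is purely radial) and that all the integrations by parts producing \eqref{secondervgeralcase} — and hence its specialization — are licit at the $H^1\cap\Sigma$ regularity level; both are handled exactly as in the cited references \cite[Lemma 2.9]{Kavian} and \cite[Theorem 2.1]{Pastor}, invoking a density/regularization argument as in the proof of Theorem \ref{persistL2wheit}. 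Beyond that, the derivation is a direct tensor-contraction computation of the type we have suppressed above.
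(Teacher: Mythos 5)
Your proof is correct and follows the same route as the paper, whose own argument simply states that the corollary is immediate from Theorem \ref{Viarialidenityradialcase} once one observes that radial initial data produce radial solutions; you have merely written out the contraction of the radial Hessian against the purely radial gradients explicitly, and that computation (radial part contributing $\phi''(r)$, tangential part vanishing) checks out. One small remark: the persistence of radial symmetry follows from uniqueness of the local solution together with the rotation invariance of the system (the nonlinearities $f_k$ act pointwise on the values of $\mathbf{u}$, so $\mathbf{u}(R\cdot)$ is again a solution for any rotation $R$), not from the gauge condition \ref{H4} or Lemma \ref{H34impGC}, which concern phase rotations rather than spatial ones.
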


\begin{proof}
The proof follows immediately from Theorem  \ref{Viarialidenityradialcase}, taking into account that if $\mathbf{u}_0$ is radially symmetric so is $\mathbf{u}$.
\end{proof}

We finish this subsection with the following  result.

\begin{lem}\label{lemafunctionchi} Let  $r=|x|$, $x\in \R^{n}$. Take $\chi$ to be a smooth function with
\begin{equation*}
 \chi(r)=\left\{\begin{array}{cc}
r^{2},&0\leq r\leq 1\\
0,& r\geq 3
\end{array}\right.
\end{equation*}
and $\chi''(r)\leq 2$, for any $ r\geq 0$. Let $\chi_{R}(r)=R^{2}\chi\left(r/R\right)$.
 \begin{enumerate}
 \item If $r\leq R$, then $\Delta\chi_{R}(r)=2n$ and $\Delta^{2}\chi_{R}(r)=0$.
 \item If $r\geq R$, then
 \begin{equation*}
 \Delta\chi_{R}(r)\leq C_{1},\;\;\;\; \;\;\;\;\;\;\Delta^{2}\chi_{R}(r)\leq \frac{C_{2}}{R^{2}},
 \end{equation*}
 where $C_{1},C_{2}$ are constant depending on $n$.
 \end{enumerate}
\end{lem}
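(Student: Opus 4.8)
The plan is to reduce everything to a one–dimensional computation via the radial form of the Laplacian. Recall that for a radial function $g=g(r)$ on $\R^n$ one has $\Delta g = g''+\frac{n-1}{r}g'$, and, iterating this identity,
\begin{equation*}
\Delta^{2}g = g^{(4)}+\frac{2(n-1)}{r}g^{(3)}+\frac{(n-1)(n-3)}{r^{2}}g''-\frac{(n-1)(n-3)}{r^{3}}g'.
\end{equation*}
Since $\chi\in C^\infty$ vanishes for $r\ge 3$, every derivative $\chi^{(k)}$, $k\ge 1$, is continuous with support contained in $[0,3]$, hence bounded on $[0,\infty)$; set $M_k=\|\chi^{(k)}\|_{L^\infty}$, and by hypothesis $\chi''\le 2$. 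From $\chi_R(r)=R^{2}\chi(r/R)$ one computes the scaling identities $\chi_R'(r)=R\,\chi'(r/R)$, $\chi_R''(r)=\chi''(r/R)$, $\chi_R'''(r)=R^{-1}\chi'''(r/R)$ and $\chi_R^{(4)}(r)=R^{-2}\chi^{(4)}(r/R)$.

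For part (1), if $r\le R$ then $r/R\le1$, so $\chi(r/R)=(r/R)^{2}$ and hence $\chi_R(r)=r^{2}=|x|^{2}$; consequently $\Delta\chi_R(r)=2n$ and $\Delta^{2}\chi_R(r)=\Delta(2n)=0$, which is exactly the first assertion.

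For part (2) I would substitute $s=r/R$, noting $s\ge1$, into the radial formulas above. The first-derivative scaling identities give
\begin{equation*}
\Delta\chi_R(r)=\chi''(s)+\frac{n-1}{s}\chi'(s)\le 2+(n-1)M_{1}=:C_{1},
\end{equation*}
since $0<1/s\le1$. For the biharmonic expression, the scaling identities turn each negative power $r^{-j}$ into $s^{-j}R^{-j}$, and collecting the powers of $R$ yields
\begin{equation*}
\Delta^{2}\chi_R(r)=\frac{1}{R^{2}}\left[\chi^{(4)}(s)+\frac{2(n-1)}{s}\chi'''(s)+\frac{(n-1)(n-3)}{s^{2}}\chi''(s)-\frac{(n-1)(n-3)}{s^{3}}\chi'(s)\right].
\end{equation*}
Because $s^{-1},s^{-2},s^{-3}\le1$ for $s\ge1$, the bracket is bounded in absolute value by $C_{2}:=M_{4}+2(n-1)M_{3}+(n-1)|n-3|(M_{2}+M_{1})$, whence $\Delta^{2}\chi_R(r)\le C_{2}/R^{2}$; both $C_{1}$ and $C_{2}$ depend only on $n$ (and on the fixed cut-off $\chi$).

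The computation is entirely elementary; the only points that require a little care are deriving the radial biharmonic formula correctly and keeping track of the powers of $R$ when passing from $r$ to $s=r/R$. I do not anticipate any genuine obstacle here.
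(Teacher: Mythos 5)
Your proof is correct and is exactly the "straightforward calculation" the paper invokes without detail: reduce to the radial form of $\Delta$ and $\Delta^{2}$, use the scaling identities for $\chi_R$, and bound the resulting expressions using $s=r/R\ge 1$ and the boundedness of the derivatives of the fixed cut-off $\chi$. Both the radial biharmonic formula and the bookkeeping of the powers of $R$ check out, so there is nothing to add.
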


\begin{proof}
The lemma follows by a straightforward calculation.
\end{proof}

\subsection{Global existence in $H^{1}$}  

 In Theorem \ref{thm:globalwellposH1} we have proved that solutions of  system \eqref{system1} are global   in $H^1(\R^n)$, for $n=4$ and $n=5$, provided that the initial data is sufficiently small. Here we will see how small the initial data must be. To do so, we will use a particular set of ground states  to give sharp sufficient  conditions for the existence of global solutions. The ground states of interest are those with $b_k=\dfrac{\alpha_k^2}{\gamma_k}$;  that is, the ones satisfying the system

 \begin{equation}\label{OB1}
 \displaystyle -\gamma_{k}\Delta \psi_{k}+ \dfrac{\alpha_k^2}{\gamma_k} \psi_{k}=f_{k}(\psib),\quad k=1\ldots,l.
 \end{equation}


  \begin{obs}\label{rembetomega}
 In view of Theorem \ref{thm:existenceGSJgeral}, ground states for \eqref{OB1} do exists. In addition, they can be seen as  elements is the set $\mathcal{G}(1,\boldsymbol{0})$.
 \end{obs}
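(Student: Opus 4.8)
The plan is to identify system \eqref{OB1} with the elliptic system \eqref{systemelip} for a specific choice of the parameters and then invoke Theorem \ref{thm:existenceGSJgeral} directly. First I would set $\omega=1$ and $\boldsymbol{\beta}=\boldsymbol{0}$; with this choice the coefficient $b_k=\frac{\alpha_k^2}{\gamma_k}\omega+\beta_k$ becomes exactly $b_k=\frac{\alpha_k^2}{\gamma_k}$, so \eqref{systemelip} reduces verbatim to \eqref{OB1}.

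Next I would check that this choice of $(\omega,\boldsymbol{\beta})$ is admissible in the sense of Remark \ref{fkrealremk}, namely that $\omega>-\frac{\beta_k\gamma_k}{\alpha_k^2}$ for every $k$. Since $\beta_k=0$, this condition reads $\omega>0$, which clearly holds for $\omega=1$; in particular $b_k>0$ for all $k$, so all the hypotheses under which Theorem \ref{thm:existenceGSJgeral} was established are in force (recall \ref{H1}--\ref{H8} are assumed throughout and here $1\leq n\leq 5$).

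Finally, Theorem \ref{thm:existenceGSJgeral} then produces a positive, radially symmetric ground state solution $\psib$ of \eqref{OB1}. By Definition \ref{defgroundstate}, the collection of all ground states of \eqref{systemelip} with these parameters is precisely $\mathcal{G}(\omega,\boldsymbol{\beta})=\mathcal{G}(1,\boldsymbol{0})$, which is exactly the identification claimed. The argument is purely a matter of specialization, so there is no genuine obstacle; the only point deserving an explicit word is the verification of the admissibility inequality for $\omega$, which is immediate because $\beta_k=0$.
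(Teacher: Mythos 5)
Your proposal is correct and matches the paper's (implicit) justification: the remark is just the specialization $\omega=1$, $\boldsymbol{\beta}=\boldsymbol{0}$ of \eqref{systemelip}, giving $b_k=\alpha_k^2/\gamma_k>0$, so Theorem \ref{thm:existenceGSJgeral} applies and the resulting ground states form $\mathcal{G}(1,\boldsymbol{0})$ by Definition \ref{defgroundstate}. Your explicit check of the admissibility condition $\omega>-\beta_k\gamma_k/\alpha_k^2$ is the one point the paper leaves tacit, and it is verified exactly as you say.
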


Our sharp criterion for global well-posedness will be given in terms of such ground states. More precisely,  Theorem \ref{thm:globalwellposH1} (ii)-(iii) can be reformulated as follows.

\begin{teore}[Sufficient condition for global solutions]\label{thm:globalexistencecondn=5}
 Assume $\mathbf{u}_0\in \mathbf{H}^{1}$ and let  $\mathbf{u}$ be the  solution of \eqref{system1} defined in the maximal  existence  interval $I$. Let  $\psib \in \mathcal{G}(1,\boldsymbol{0}) $.
\begin{enumerate}
    \item[(i)] Assume $n=4$. If
		\begin{equation}\label{L2GSCond}
	Q(\mathbf{u}_0)<Q(\psib),	
	\end{equation}
	then the initial value problem \eqref{system1} is globally well-posed in $\mathbf{H}^{1}$.	
\item[(ii)] Assume $n=5$ and in addition that  
\begin{equation}\label{conditionsharp1}
Q(\mathbf{u}_0)E(\mathbf{u}_0)<Q(\psib)\mathcal{E}(\psib),
\end{equation}
where $\mathcal{E}$ is the energy defined in \eqref{conservationenergy} with $\boldsymbol{\beta}=\boldsymbol{0}$.\\
If 
\begin{equation}\label{conditionsharp2}
Q(\mathbf{u}_0)K(\mathbf{u}_0)<Q(\psib)K(\psib),
\end{equation}
 then
\begin{equation*}
Q(\mathbf{u}_0)K(\mathbf{u}(t))<Q(\psib)K(\psib),\qquad t\in I.
\end{equation*}
 In particular the initial value problem \eqref{system1} is globally well-posed in $\mathbf{H}^{1}$.
\end{enumerate}
\end{teore}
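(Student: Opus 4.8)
The plan is to obtain the theorem as a reformulation of the a priori bounds already established in Theorem~\ref{thm:globalwellposH1}(ii)--(iii): it suffices to rewrite the abstract smallness thresholds appearing there, which are phrased through the Gagliardo--Nirenberg constant $\xi_0$ of \eqref{xi0def}, in terms of the charge, the kinetic energy and the energy of a ground state $\psib\in\mathcal G(1,\boldsymbol 0)$. The first step, and essentially the only conceptual one, is to identify $\xi_0$ with the minimal value $\xi_1$ of Theorem~\ref{thm:existenceGSJgeral} associated with the parameters $\omega=1$, $\boldsymbol\beta=\boldsymbol 0$. For those parameters $b_k=\alpha_k^2/\gamma_k$, so the functional $\mathcal Q$ of Section~\ref{sec.gs} coincides with the charge $Q$ of \eqref{conservationcharge}; hence the Weinstein functional $J$ restricted to the cone $\mathcal P=\{P>0\}$ agrees with the functional appearing in \eqref{xi0def}, which already gives $\xi_0\le\xi_1$. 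For the reverse inequality, given $\mathbf u\in\mathbf H^1$ with $P(\mathbf u)\neq0$ I would pass to $|\mathbf u|=(|u_1|,\dots,|u_l|)$: assumption~\textnormal{\ref{H5}} yields $P(|\mathbf u|)\ge|P(\mathbf u)|>0$, while $Q(|\mathbf u|)=Q(\mathbf u)$ and $K(|\mathbf u|)\le K(\mathbf u)$ by the diamagnetic inequality, exactly as in Lemma~\ref{lemma2}(ii); therefore $J(|\mathbf u|)$ does not exceed the value of the functional in \eqref{xi0def} at $\mathbf u$, and taking infima gives $\xi_1\le\xi_0$. Consequently $\xi_0=\xi_1$, and \eqref{inffunctionalJ} together with $\mathcal Q(\psib)=Q(\psib)$ yields
\[
\xi_0=\frac{n^{n/4}}{2}\,(6-n)^{1-n/4}\,Q(\psib)^{1/2},\qquad \psib\in\mathcal G(1,\boldsymbol 0).
\]

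With this identity in hand, the rest is elementary algebra with the Pohozaev-type relations of Lemma~\ref{identitiesfunctionals}. For $n=4$ the displayed formula gives $\xi_0=2\,Q(\psib)^{1/2}$, so condition \eqref{smaldatL2norm}, that is $2\,Q(\mathbf u_0)^{1/2}<\xi_0$, is exactly \eqref{L2GSCond}, and part~(i) follows from Theorem~\ref{thm:globalwellposH1}(ii). For $n=5$ the same formula gives $\xi_0=\tfrac12\,5^{5/4}\,Q(\psib)^{1/2}$, hence $\left(\tfrac25\xi_0\right)^4=5\,Q(\psib)^2$. Using Lemma~\ref{identitiesfunctionals} for a ground state of \eqref{OB1}, so that $P(\psib)=2I(\psib)$, $K(\psib)=5I(\psib)$ and $\mathcal Q(\psib)=I(\psib)$, and recalling \eqref{conserenerfunc} with $\boldsymbol\beta=\boldsymbol 0$, one gets $Q(\psib)K(\psib)=\mathcal Q(\psib)K(\psib)=5\,I(\psib)^2=\left(\tfrac25\xi_0\right)^4$ and $\mathcal E(\psib)=K(\psib)-2P(\psib)=I(\psib)=Q(\psib)$, whence $Q(\psib)\mathcal E(\psib)=Q(\psib)^2=\tfrac15\left(\tfrac25\xi_0\right)^4$. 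Thus \eqref{conditionsharp2} is precisely \eqref{smalldataH1} and \eqref{conditionsharp1} is precisely \eqref{smallenergyH1}, so Theorem~\ref{thm:globalwellposH1}(iii) furnishes the global $\mathbf H^1$-solution. The final quantitative assertion is then read off directly from the proof of Theorem~\ref{thm:globalwellposH1}(iii): Lemma~\ref{supercritcalcase}(i) gives $K(\mathbf u(t))<\gamma=\left(\tfrac25\xi_0\right)^4/Q(\mathbf u_0)$ for all $t\in I$, which upon multiplication by $Q(\mathbf u_0)$ and the identity $\left(\tfrac25\xi_0\right)^4=Q(\psib)K(\psib)$ is the desired inequality.

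Only a couple of points require care, and none is a genuine obstacle. One must remember that the threshold for the data involves the \emph{full} energy $E(\mathbf u_0)$, including the nonnegative term $L$: this is consistent because the a priori estimate only uses $K(\mathbf u)\le E_0+2|P(\mathbf u)|$, discarding $L(\mathbf u)\ge0$, while the ground state is naturally compared through its $\boldsymbol\beta=\boldsymbol 0$ energy $\mathcal E(\psib)$. One must also keep the powers of $\xi_0$ straight in the passage from $\xi_0$ to $\left(\tfrac25\xi_0\right)^4$. The single step that is not pure bookkeeping is the identity $\xi_0=\xi_1$: the functional defining $\xi_0$ ranges over all $\mathbf u$ with $P(\mathbf u)\neq0$, whereas the variational characterization of the ground state only sees the positivity cone $\mathcal P$, and it is the symmetrization argument above — which relies on assumption~\textnormal{\ref{H5}} — that bridges this gap.
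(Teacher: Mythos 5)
Your proposal is correct and follows essentially the same route as the paper: both reduce the statement to Theorem~\ref{thm:globalwellposH1}(ii)--(iii) by identifying $\xi_0$ with $\xi_1$ (so that $C_{op}=\xi_0^{-1}$ and $\mathcal Q=Q$ for $b_k=\alpha_k^2/\gamma_k$) and then translating the thresholds \eqref{smaldatL2norm}, \eqref{smalldataH1}, \eqref{smallenergyH1} via the Pohozaev relations $K(\psib)=5Q(\psib)$, $\mathcal E(\psib)=Q(\psib)$ and the formula \eqref{inffunctionalJ}. The only addition is that you spell out the symmetrization/\textnormal{\ref{H5}} argument behind $\xi_0=\xi_1$, which the paper merely asserts as a consequence of Theorem~\ref{thm:existenceGSJgeral}; that detail is correct and consistent with Lemma~\ref{lemma2}(ii).
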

\begin{proof}
 Recall that, from Theorem \ref{thm:existenceGSJgeral}, the numbers $\xi_0$ in \eqref{xi0def} and $\xi_1$ in \eqref{xi1} are the same. Moreover, in view of \eqref{inffunctionalJ} and the fact that $Q(\psib)=\mathcal{Q}(\psib)$ (under the assumption $b_k=\dfrac{\alpha_k^2}{\gamma_k}$), \eqref{smaldatL2norm} and \eqref{L2GSCond} are equivalent. So, part (i) follows from Theorem \ref{thm:globalwellposH1}.

For (ii), recall that
 \begin{equation*}
E(\mathbf{u}(t))=K(\mathbf{u}(t)) +L(\mathbf{u}(t))   -2P(\mathbf{u}(t)).
\end{equation*}
Hence, from Lemmas \ref{l2cons} and \ref{lemconservenerg} and Corollary  \ref{corollarybestconstant}, we deduce
\begin{equation}\label{inequalitysupercritical}
\begin{split}
K(\mathbf{u}(t))&=E_0-L(\mathbf{u}(t))+2P(\mathbf{u}(t))\\ 
&\leq E_0+2C_{op}Q_0^{\frac{1}{4}}K(\mathbf{u}(t))^{\frac{5}{4}}.
\end{split}
\end{equation}

Now, as in the proof of Theorem \ref{thm:globalwellposH1}, we apply Lemma \ref{supercritcalcase} with $a=E_0$, $b=2C_{op}Q_0^{\frac{1}{4}}$, $q=\frac{5}{4}$, and $G(t)=K(\mathbf{u}(t))$. It is easily seen that
$$
\gamma:=(bq)^{-\frac{1}{q-1}}=\frac{5Q(\psib)^2}{Q_0}.
$$
In addition, from Lemma \ref{identitiesfunctionals},  with $n=5$, we see that $K(\psib)=5Q(\psib)$ and $\mathcal{E}(\psib)=Q(\psib)$. As a consequence,  $a<\left(1-\frac{1}{q}\right)\gamma$  and $G(0)<\gamma$ are equivalent to (\ref{conditionsharp1})   and (\ref{conditionsharp2}), respectively.
 Lemma \ref{supercritcalcase} then yields the desired and the proof of the theorem is completed.
\end{proof}

\subsection{Blow-up results}
Now we will use the Virial identities stayed in section \ref{virialindet} to construct blow-up solutions.  In particular we will show that, in some cases, the assumptions in Theorem \ref{thm:globalexistencecondn=5} are sharp.

Let us start with the following.

\begin{pro}
Let $\mathbf{u}_0\in \mathbf{H}^1$ satisfy \eqref{L2GSCond} if $n=4$ or \eqref{conditionsharp2} if $n=5$. Then,
$$
E_0:=E(\mathbf{u}_0)>0.
$$
\end{pro}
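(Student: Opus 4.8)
The plan is to read off the sign of $E_0$ directly from the decomposition $E_0=K(\mathbf{u}_0)+L(\mathbf{u}_0)-2P(\mathbf{u}_0)$ (see \eqref{conserenerfunc}), combined with the fact that $L(\mathbf{u}_0)\ge 0$ (because $\beta_k\ge 0$) and the sharp Gagliardo--Nirenberg-type inequality \eqref{GNE2}. One may of course assume $\mathbf{u}_0\ne\mathbf{0}$ (otherwise there is nothing to prove), so that $K(\mathbf{u}_0)>0$. Dropping the nonnegative term $L(\mathbf{u}_0)$ and using \eqref{GNE2},
\[
E_0\ \ge\ K(\mathbf{u}_0)-2|P(\mathbf{u}_0)|\ \ge\ K(\mathbf{u}_0)\left(1-\frac{2}{\xi_0}\,Q(\mathbf{u}_0)^{\frac{3}{2}-\frac{n}{4}}K(\mathbf{u}_0)^{\frac{n}{4}-1}\right),
\]
so it suffices to show that the bracket is strictly positive. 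At this point I would use the identification $\xi_0=\xi_1$ (as recorded in the proof of Theorem \ref{thm:globalexistencecondn=5}), the explicit value of $\xi_1$ from \eqref{inffunctionalJ} in Theorem \ref{thm:existenceGSJgeral}, and the fact that for $\psib\in\mathcal{G}(1,\boldsymbol{0})$ one has $b_k=\alpha_k^2/\gamma_k$, hence $\mathcal{Q}(\psib)=Q(\psib)$; thus $\xi_0=\tfrac{1}{2}\,n^{n/4}(6-n)^{1-n/4}Q(\psib)^{1/2}$.

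For $n=4$ the exponent $\tfrac{n}{4}-1$ vanishes, so the bracket equals $1-\tfrac{2}{\xi_0}Q(\mathbf{u}_0)^{1/2}$; since $\xi_0=2Q(\psib)^{1/2}$, this is $1-\big(Q(\mathbf{u}_0)/Q(\psib)\big)^{1/2}$, which is strictly positive precisely under hypothesis \eqref{L2GSCond}. For $n=5$ the bracket is $1-\tfrac{2}{\xi_0}\big(Q(\mathbf{u}_0)K(\mathbf{u}_0)\big)^{1/4}$, which is positive if and only if $16\,Q(\mathbf{u}_0)K(\mathbf{u}_0)<\xi_0^{4}$; with $\xi_0=\tfrac{1}{2}\,5^{5/4}Q(\psib)^{1/2}$ this reads $Q(\mathbf{u}_0)K(\mathbf{u}_0)<\tfrac{3125}{256}\,Q(\psib)^{2}$. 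On the other hand, Lemma \ref{identitiesfunctionals} with $n=5$ gives $K(\psib)=5\mathcal{Q}(\psib)=5Q(\psib)$, so $Q(\psib)K(\psib)=5Q(\psib)^{2}$; since $5<\tfrac{3125}{256}$, hypothesis \eqref{conditionsharp2} implies the required bound, and therefore $E_0>0$ in both cases.

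The argument is essentially bookkeeping of constants; the only point that requires care is to use the normalization $b_k=\alpha_k^2/\gamma_k$ consistently (so that $\mathcal{Q}(\psib)=Q(\psib)$) and to keep track that the optimal constant in \eqref{GNE2} is exactly $1/\xi_0=C_{op}$ (Corollary \ref{corollarybestconstant}). A slightly different but equivalent route would be to bound $|P(\mathbf{u}_0)|$ by applying Corollary \ref{corollarybestconstant} to $|\mathbf{u}_0|$, after invoking assumption (H6) and the diamagnetic inequality $\|\nabla|u_k|\|_{L^2}\le\|\nabla u_k\|_{L^2}$; working with \eqref{GNE2} directly, which already holds for every $\mathbf{u}\in\mathbf{H}^1$, avoids this detour.
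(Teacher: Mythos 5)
Your argument is correct and is essentially the paper's own proof: the paper simply points back to \eqref{positenerg1} (for $n=4$) and \eqref{inequalitysupercritical} (for $n=5$) together with the identifications $\xi_0^{-1}=C_{op}=\frac{1}{2Q(\psib)^{1/2}}$ and $C_{op}=\frac{2}{5^{5/4}Q(\psib)^{1/2}}$, which is exactly the bookkeeping you carry out explicitly, and your constants (including $K(\psib)=5Q(\psib)$ and $5<\tfrac{3125}{256}$) check out.
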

\begin{proof}
If $n=4$ this follows as in \eqref{positenerg1} taking into account that $\xi_0^{-1}=C_{op}=\frac{1}{2Q(\psib)^{1/2}}$. In a similar fashion, if $n=5$ this follows as in \eqref{inequalitysupercritical} taking into account that $C_{op}=\frac{2}{5^{5/4}Q(\psib)^{1/2}}$ and using \eqref{conditionsharp2}.
\end{proof}

The next theorem  shows that $E(\mathbf{u}_0)>0$ is indeed a necessary condition in order to have global solution.

\begin{teore}\label{blowupZm}
	Let $4\leq n\leq 6$. Assume $\mathbf{u}_0\in \Sigma$ and let  $\mathbf{u}$ be the  solution of \eqref{system1} defined in the maximal  existence  interval, say, $I$. Then $I$ is finite if either
	\begin{enumerate}
		\item[(i)] $E_{0}<0$; or
		\item[(ii)] $E_{0}=0$, $P_{0}<0$,
	\end{enumerate}
	where $E_{0}$ and $P_{0}$ are as in Corollary \ref{conservationlawweightedspace}.
	\end{teore}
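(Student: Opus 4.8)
The plan is to use the Virial identity from Theorem \ref{persistL2wheit}, in the form \eqref{secderV},
$$
V''(t)=2nE_0-2nL(\mathbf{u})+2(4-n)K(\mathbf{u}),
$$
and to exploit the sign of the coefficient $4-n\leq 0$ (since $4\leq n\leq 6$) together with $L(\mathbf{u})\geq 0$ and $K(\mathbf{u})\geq 0$. First I would observe that under both hypotheses the right-hand side is bounded above by $2nE_0$: indeed $-2nL(\mathbf{u})\leq 0$ always, and $2(4-n)K(\mathbf{u})\leq 0$ because $n\geq 4$. Hence $V''(t)\leq 2nE_0$ for all $t\in I$. In case (i), where $E_0<0$, this gives $V''(t)\leq 2nE_0<0$, so $V$ is a non-negative function whose second derivative is bounded above by a strictly negative constant; integrating twice, $V(t)\leq V(0)+V'(0)t+nE_0 t^2$, and the right-hand side becomes negative in finite forward and backward time. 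Since $V(t)\geq 0$ by definition (it is a weighted sum of $L^2$-norms of $xu_k$), this forces $I$ to be finite.

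In case (ii), where $E_0=0$ and $P_0<0$, the same inequality gives $V''(t)\leq 0$, so $V$ is concave on $I$. From Theorem \ref{persistL2wheit} we have $V'(0)=4\sum_k\alpha_k\mathrm{Im}\int\nabla u_{k0}\cdot x\overline{u}_{k0}\,dx=P_0<0$ (this is exactly the constant $P_0$ appearing in Corollary \ref{conservationlawweightedspace}). A concave function with strictly negative derivative at $t=0$ satisfies $V'(t)\leq V'(0)=P_0<0$ for all $t\geq 0$ in $I$, hence $V(t)\leq V(0)+P_0 t$, which again becomes negative in finite time, contradicting $V(t)\geq 0$. Therefore the maximal forward existence time is finite; one argues symmetrically (or simply notes that finiteness of $I$ is what is claimed) to conclude.

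The remaining point to make rigorous is the standard blow-up alternative: one must argue that if the solution existed on all of $[0,\infty)$ (or a half-line), then $V$ would be defined and $C^2$ there by Theorem \ref{persistL2wheit}, and the above would produce $V(t)<0$, a contradiction; hence the maximal interval $I$ cannot contain a half-line, i.e. $I$ is finite. I expect the main (minor) obstacle to be bookkeeping rather than substance: one needs $\mathbf{u}_0\in\Sigma$ precisely so that Theorem \ref{persistL2wheit} applies and $V\in C^2(I)$ with the stated formulas for $V'$ and $V''$; and one should be slightly careful that the conclusion ``$I$ is finite'' means finite on at least one side, which is all that the convexity/concavity argument delivers directly (for $E_0<0$ it gives both sides). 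No delicate estimate is needed — the whole proof is the elementary lemma that a non-negative $C^2$ function cannot have $V''\leq -c<0$ globally, nor be concave with negative initial slope globally.
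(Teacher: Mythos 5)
Your proposal is correct and is exactly the classical convexity (Glassey) argument that the paper invokes without writing out: the paper's proof simply says the result follows by the classical convexity method as for NLS and omits the details, while you supply them, correctly using $-2nL(\mathbf{u})\leq 0$ and $2(4-n)K(\mathbf{u})\leq 0$ for $n\geq 4$ to get $V''\leq 2nE_0$, then the elementary contradiction with $V\geq 0$. Your caveat that case (ii) directly yields finiteness of the forward (or one-sided) existence time is also the right reading of the statement.
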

\begin{proof}
This result can be proved by using the classical convexity method in a similar fashion as that for  the nonlinear Schr\"odinger equation (see, for instance, \cite{Cazenave} or \cite{Linares}). So we omit the details.
\end{proof}

Next we will prove that, under some assumptions on the coefficients of system \eqref{system1}, conditions \eqref{L2GSCond} and \eqref{conditionsharp2} in Theorem  \ref{thm:globalexistencecondn=5} are sharp. More precisely, we will construct suitable initial data, which does not meet such a conditions and the corresponding solution blows-up in finite time (see also \cite{Hayashi}).

\subsubsection{$L^2$ critical case}
First we study the $L^{2}$ critical case; $n=4$. We start with the invariance of the system \eqref{system1} under the pseudo-conformal transformation. In what follows $SL(2,\R)$ denotes the special linear group of degree 2.
\begin{lem}\label{pseudoconf}
Assume $n=4$  and  let $A=\left(\begin{array}{cc}
      a&b  \\
      c&d 
 \end{array}\right)\in SL(2,\R)$. Define $\mathbf{v}^A=(v_1^A, \ldots,v_l^A)$ by
 \begin{equation*}
     v_{k}^{A}(x,t)=(a+bt)^{-2}e^{i\frac{\alpha_{k}}{\gamma_{k}}\frac{b|x|^{2}}{4(a+bt)}}u_{k}\left(\frac{x}{a+bt},\frac{c+dt}{a+bt}\right),\quad k=1,\ldots,l.
 \end{equation*}
If $\mathbf{u}$ is a solution of system \eqref{system1} with $\beta_{k}=0$, $k=1,\ldots,l$, so is $\mathbf{v}^A$.
\end{lem}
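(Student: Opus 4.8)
The plan is to verify directly that each $v_k^A$ solves the $k$-th equation of \eqref{system1} with $\beta_k=0$, by unwinding the change of variables built into the definition of $v_k^A$. Set $\rho=\rho(t)=a+bt$ and, for $x\in\R^n$ and $t$ in the relevant interval, let $y=y(x,t)=x/\rho$, $s=s(t)=(c+dt)/\rho$ and $\Theta_k=\Theta_k(x,t)=\frac{\alpha_k}{\gamma_k}\frac{b|x|^2}{4\rho}$, so that $v_k^A(x,t)=\rho^{-2}e^{i\Theta_k}u_k(y,s)$. First I would record the elementary identities $\partial_t\rho=b$, $\partial_t s=(ad-bc)/\rho^2$, $\partial_{x_i}y_j=\delta_{ij}/\rho$, $\partial_t y_j=-bx_j/\rho^2$, $\nabla\Theta_k=\frac{\alpha_k}{\gamma_k}\frac{bx}{2\rho}$, $\Delta\Theta_k=\frac{\alpha_k}{\gamma_k}\frac{nb}{2\rho}$ and $\partial_t\Theta_k=-\frac{\alpha_k}{\gamma_k}\frac{b^2|x|^2}{4\rho^2}$; and note at the outset that the determinant condition $ad-bc=1$ (because $A\in SL(2,\R)$) and the dimension $n=4$ will each be used in exactly one place below.

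Next I would expand $i\alpha_k\partial_t v_k^A+\gamma_k\Delta v_k^A$ using the product and chain rules on $\rho^{-2}e^{i\Theta_k}u_k(y,s)$, together with $\nabla_x[u_k(y,s)]=\rho^{-1}\nabla_y u_k$ and $\Delta_x[u_k(y,s)]=\rho^{-2}\Delta_y u_k$, and then group the resulting terms by type. The term proportional to $e^{i\Theta_k}u_k$ carrying a factor $\rho^{-3}$ vanishes precisely because $\frac n2-2=0$, i.e. $n=4$; the term proportional to $e^{i\Theta_k}|x|^2u_k$ vanishes because $\alpha_k\partial_t\Theta_k+\gamma_k|\nabla\Theta_k|^2=0$; the first-order term proportional to $e^{i\Theta_k}(x\cdot\nabla_y u_k)$ vanishes because $\alpha_k\partial_t y+2\gamma_k\rho^{-1}\nabla\Theta_k=0$ as vectors; and the coefficient of $e^{i\Theta_k}\partial_s u_k$ comes out to be exactly $\rho^{-4}$ precisely because $\partial_t s=(ad-bc)/\rho^2=\rho^{-2}$. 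What survives is
\[
i\alpha_k\partial_t v_k^A+\gamma_k\Delta v_k^A=\rho^{-4}e^{i\Theta_k}\bigl(i\alpha_k\partial_s u_k+\gamma_k\Delta_y u_k\bigr)(y,s).
\]

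Finally I would invoke the equation for $\mathbf{u}$ and the structural hypotheses. Since $\mathbf{u}$ solves \eqref{system1} with $\beta_k=0$, we have $(i\alpha_k\partial_s u_k+\gamma_k\Delta_y u_k)(y,s)=-f_k(\mathbf{u}(y,s))$, so the left-hand side above equals $-\rho^{-4}e^{i\Theta_k}f_k(\mathbf{u}(y,s))$. On the other hand, writing $\Theta_k=\frac{\alpha_k}{\gamma_k}\Theta$ with the common phase $\Theta=\frac{b|x|^2}{4\rho}$, the homogeneity of degree $2$ of $f_k$ (Lemma \ref{fkhomog2}, applied with the positive scalar $\rho^{-2}$) followed by the Gauge condition (Lemma \ref{H34impGC}, with $\theta=\Theta$, valid since \ref{H3} and \ref{H4} hold) yields
\[
f_k(\mathbf{v}^A(x,t))=\rho^{-4}f_k\bigl(e^{i\frac{\alpha_1}{\gamma_1}\Theta}u_1(y,s),\ldots,e^{i\frac{\alpha_l}{\gamma_l}\Theta}u_l(y,s)\bigr)=\rho^{-4}e^{i\Theta_k}f_k(\mathbf{u}(y,s)).
\]
Comparing the two displays gives $i\alpha_k\partial_t v_k^A+\gamma_k\Delta v_k^A=-f_k(\mathbf{v}^A)$, which is exactly the $k$-th equation of \eqref{system1} with $\beta_k=0$; this last step also makes transparent why the hypothesis $\beta_k=0$ cannot be dropped, since a term $-\beta_k v_k^A$ would carry the factor $\rho^{-2}$ rather than $\rho^{-4}$ and could not be absorbed. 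The main obstacle is purely the bookkeeping in the second step: organizing the many terms produced by the two chain-rule expansions and checking that each non-covariant group cancels. No genuinely new idea beyond the identities listed above and the two homogeneity/gauge lemmas is needed; one should also note in passing that $\mathbf{v}^A$ inherits the regularity of $\mathbf{u}$, so it is a solution in the same sense.
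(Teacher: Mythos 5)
Your proof is correct and follows the same route as the paper: verify the covariance identity $i\alpha_k\partial_t v_k^A+\gamma_k\Delta v_k^A=\rho^{-4}e^{i\Theta_k}(i\alpha_k\partial_s u_k+\gamma_k\Delta_y u_k)$ by direct computation, then handle the nonlinearity via the degree-$2$ homogeneity (Lemma \ref{fkhomog2}) and the Gauge condition (Lemma \ref{H34impGC}). The only difference is that you spell out the "straightforward but tedious calculation" that the paper merely asserts, and your cancellation bookkeeping (including the roles of $n=4$ and $ad-bc=1$) checks out.
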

\begin{proof}
First observe that a straightforward but tedious calculation gives 
\begin{equation*}
     \begin{split}
         \left[i\partial_{t}v_{k}^{A}+\frac{\gamma_{k}}{\alpha_{k}}\Delta v_{k}^{A}\right](x,t)&=(a+bt)^{-4}e^{i\frac{\alpha_{k}}{\gamma_{k}}\frac{b|x|^{2}}{4(a+bt)}}\left[i\partial_{t}u_{k}+\frac{\gamma_{k}}{\alpha_{k}}\Delta u_{k}\right]\left(\frac{x}{a+bt},\frac{c+dt}{a+bt}\right),
     \end{split}
 \end{equation*}
 for $ k=1,\ldots,l.$ Moreover, Lemmas \ref{fkhomog2} and \ref{H34impGC} yield
 \begin{equation*}
     \begin{split}
         f_{k}(\mathbf{v}^A(x,t))&=f_{k}\left((a+bt)^{-2}e^{i\frac{\alpha_{1}}{\gamma_{1}}\frac{b|x|^{2}}{4(a+bt)}}u_{1},\ldots,(a+bt)^{-2}e^{i\frac{\alpha_{l}}{\gamma_{l}}\frac{b|x|^{2}}{4(a+bt)}}u_{l}\right)\\
         &=(a+bt)^{-4}e^{i\frac{\alpha_{k}}{\gamma_{k}}\frac{b|x|^{2}}{4(a+bt)}}f_{k}\left(\mathbf{u}\right),
     \end{split}
 \end{equation*}
 where we have omitted the argument $\left(\frac{x}{a+bt},\frac{c+dt}{a+bt}\right)$ in the right-hand side. The result then follows as a combination of the last two identities.
\end{proof}

\begin{obs}\label{remsoluin}
Note that $\mathbf{u}$ is a solution of \eqref{system1} with $\beta_{k}=\frac{\alpha_k^2}{\gamma_k}$, $k=1,\ldots,l$, if and only if $\tilde{\mathbf{u}}$ given by
$$
\tilde{u}_k(x,t)=e^{i\frac{\alpha_k}{\gamma_k}t}u_k(x,t),  \qquad k=1,\ldots,l,
$$
is also  solution of \eqref{system1} but with $\beta_{k}=0$, $k=1,\ldots,l.$
\end{obs}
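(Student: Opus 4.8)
The plan is to verify the equivalence by a direct substitution, the only non-trivial ingredient being the Gauge condition \eqref{gaugeCon}, which by Lemma \ref{H34impGC} is available under the standing assumptions \ref{H3} and \ref{H4}.

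First I would assume $\mathbf{u}$ solves \eqref{system1} with $\beta_k=\alpha_k^2/\gamma_k$ and write $u_k=e^{-i(\alpha_k/\gamma_k)t}\tilde{u}_k$. Differentiating in time produces an extra term $-i(\alpha_k/\gamma_k)e^{-i(\alpha_k/\gamma_k)t}\tilde{u}_k$; multiplied by $i\alpha_k$ this contributes exactly $+(\alpha_k^2/\gamma_k)e^{-i(\alpha_k/\gamma_k)t}\tilde{u}_k$, which cancels the linear potential term $-\beta_k u_k=-(\alpha_k^2/\gamma_k)e^{-i(\alpha_k/\gamma_k)t}\tilde{u}_k$. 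Since the Laplacian commutes with multiplication by the (space-independent) phase, multiplying the whole equation by $e^{i(\alpha_k/\gamma_k)t}$ leaves $i\alpha_k\partial_t\tilde{u}_k+\gamma_k\Delta\tilde{u}_k=-e^{i(\alpha_k/\gamma_k)t}f_k(\mathbf{u})$.

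It then remains to rewrite the right-hand side in terms of $\tilde{\mathbf{u}}$. Writing $f_k(\mathbf{u})=f_k(e^{-i(\alpha_1/\gamma_1)t}\tilde{u}_1,\ldots,e^{-i(\alpha_l/\gamma_l)t}\tilde{u}_l)$ and applying \eqref{gaugeCon} with $\theta=-t$ gives $f_k(\mathbf{u})=e^{-i(\alpha_k/\gamma_k)t}f_k(\tilde{\mathbf{u}})$, so the phases cancel and $\tilde{\mathbf{u}}$ solves \eqref{system1} with $\beta_k=0$. The converse implication is obtained in the same way — multiplying instead by $e^{-i(\alpha_k/\gamma_k)t}$ and using \eqref{gaugeCon} with $\theta=t$ — or, equivalently, by observing that the inverse substitution is of the same form and the correspondence $\beta_k\leftrightarrow0$ is thereby symmetric.

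There is no genuine obstacle here; the computation is elementary. The only points requiring care are the bookkeeping of the sign in the phase and checking that the Gauge condition is applied with the correct value of $\theta$ (and is indeed at our disposal, i.e. that \ref{H3}--\ref{H4} are in force, which they are via Lemma \ref{H34impGC}). If one wishes to be scrupulous about regularity, the identity should first be established for smooth solutions and then extended to $\mathbf{H}^1$- (or $\mathbf{L}^2$-) solutions using the continuity of $f_k$ from Lemma \ref{lemfkcontH} together with a density argument, exactly as is done for the conservation laws.
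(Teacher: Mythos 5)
Your computation is correct and is exactly the direct verification the paper intends (the remark is stated without proof, and the same gauge-condition manipulation appears explicitly in the proof of Lemma \ref{pseudoconf}): the phase shift converts the $-\beta_k u_k$ term into the cancellation $+\frac{\alpha_k^2}{\gamma_k}\tilde{u}_k-\frac{\alpha_k^2}{\gamma_k}\tilde{u}_k$, and \eqref{gaugeCon} with $\theta=-t$ handles the nonlinearity. Nothing further is needed.
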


Now let $\psib \in \mathcal{G}(1,\boldsymbol{0}) $. In particular $\psib$  is a solution of \eqref{system1} with $\beta_{k}=\frac{\alpha_k^2}{\gamma_k}$. Hence, from Remark \ref{remsoluin}, 
$$
\tilde{u}_k(x,t)=e^{i\frac{\alpha_k}{\gamma_k}t}\psib(x),  \qquad k=1,\ldots,l,
$$
is a solution of \eqref{system1} with $\beta_k=0$. Moreover, by Lemma \ref{pseudoconf}, for any $A\in SL(2,\R)$, $\mathbf{v}^A$ defined by
$$
 v_{k}^{A}(x,t)=(a+bt)^{-2}e^{i\frac{\alpha_{k}}{\gamma_{k}}\frac{b|x|^{2}}{4(a+bt)}} e^{i\frac{\alpha_k}{\gamma_k}\frac{c+dt}{a+bt}}\psi_k\left(\frac{x}{a+bt}\right), \qquad k=1,\ldots,l,
$$
is also a solution. With this in hand we are able to establish the following.

\begin{teore}\label{sharpn=4}
Assume $n=4$ and let $\psib \in \mathcal{G}(1,\boldsymbol{0}) $. For any $T>0$ let  $A=\left(\begin{array}{cc}
T&- 1 \\
0&\frac{1}{T}
\end{array}\right)$
in such a way that
\begin{equation*}
     v_{k}^{A}(x,t)=(T-t)^{-2}e^{-i\frac{\alpha_{k}}{\gamma_{k}}\frac{|x|^{2}}{4(T-t)}+i\frac{\alpha_{k}}{\gamma_{k}}\frac{t}{T(T-t)}}\psi_{k}\left(\frac{x}{T-t}\right),\quad k=1,\ldots,l.
 \end{equation*}
Then
 \begin{enumerate}
     \item[(i)] $\mathbf{v}^A$  is a solution of\eqref{system1} with $\beta_{k}=0$ for $k=1,\ldots,l$.
     \item[(ii)] 
     \begin{equation*}
         v_{k}^{A}(x,0)=T^{-2}e^{-i\frac{\alpha_{k}}{\gamma_{k}}\frac{|x|^{2}}{4T}}\psi_{k}\left(\frac{x}{T}\right),\quad k=1,\ldots,l.
     \end{equation*}
     \item[(iii)] $Q(\mathbf{v}^A(0))=Q(\psib)$.
     \item[(iv)] $K(\mathbf{v}^A(t))=O((T-t)^{-2})$ as $t\to T^{-}$.
 \end{enumerate}
\end{teore}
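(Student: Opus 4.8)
The plan is to read off all four assertions from the pseudo-conformal symmetry of the system together with the elementary scaling behaviour of the charge and kinetic functionals; no genuinely new estimate is needed beyond the regularity and decay of $\psib$ already recorded in Lemma \ref{regellpsys}.

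First I would dispose of (i) and (ii). Since $\det A = T\cdot\frac1T-(-1)\cdot 0=1$, the matrix $A$ lies in $SL(2,\R)$ with $a=T$, $b=-1$, $c=0$, $d=\frac1T$; hence $a+bt=T-t$, $\frac{b|x|^{2}}{4(a+bt)}=-\frac{|x|^{2}}{4(T-t)}$ and $\frac{c+dt}{a+bt}=\frac{t}{T(T-t)}$. Because $\psib\in\mathcal G(1,\boldsymbol 0)$ solves \eqref{OB1}, i.e.\ \eqref{system1} with $\beta_k=\alpha_k^2/\gamma_k$, Remark \ref{remsoluin} shows that $\tilde u_k(x,t)=e^{i\frac{\alpha_k}{\gamma_k}t}\psi_k(x)$ solves \eqref{system1} with $\beta_k=0$; applying Lemma \ref{pseudoconf} to $\tilde{\mathbf u}$ with the above matrix $A$ produces precisely the function $\mathbf v^A$ written in the statement and shows it is again a solution of \eqref{system1} with $\beta_k=0$, which is (i). Evaluating the explicit formula at $t=0$ (so that $a+bt=T$ and $\frac{c+dt}{a+bt}=0$) gives (ii) by inspection.

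For (iii) I would simply change variables. From (ii), $|v_k^A(x,0)|=T^{-2}|\psi_k(x/T)|$, so with $y=x/T$ and $n=4$,
\[
\|v_k^A(0)\|_{L^2}^2=T^{-4}\int|\psi_k(x/T)|^2\,dx=T^{-4}\cdot T^{4}\int|\psi_k(y)|^2\,dy=\|\psi_k\|_{L^2}^2 ,
\]
whence $Q(\mathbf v^A(0))=\sum_{k=1}^l\frac{\alpha_k^2}{\gamma_k}\|v_k^A(0)\|_{L^2}^2=Q(\psib)$. Note moreover that $\mathbf v^A(0)\in\Sigma$, since $\psib\in\mathbf H^1$ and $|\cdot|\psib\in\mathbf L^2$ by Lemma \ref{regellpsys}; this is not needed for (iii) but makes $\mathbf v^A$ a legitimate $\Sigma$-solution for comparison with \eqref{L2GSCond} and \eqref{conditionsharp2}.

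Finally, for (iv) I would differentiate the explicit formula. Writing $\phi_k(x,t)=-\frac{\alpha_k}{\gamma_k}\frac{|x|^2}{4(T-t)}+\frac{\alpha_k}{\gamma_k}\frac{t}{T(T-t)}$ one gets $\nabla_x\phi_k=-\frac{\alpha_k}{\gamma_k}\frac{x}{2(T-t)}$ and therefore
\[
\nabla v_k^A(x,t)=(T-t)^{-3}e^{i\phi_k}\Big[-i\tfrac{\alpha_k}{\gamma_k}\tfrac{x}{2}\,\psi_k\big(\tfrac{x}{T-t}\big)+\nabla\psi_k\big(\tfrac{x}{T-t}\big)\Big].
\]
Taking moduli, integrating, and substituting $y=x/(T-t)$ (which brings a Jacobian $(T-t)^{4}$ and turns $x/2$ into $(T-t)y/2$) yields
\[
\|\nabla v_k^A(t)\|_{L^2}^2=\int\Big|-i\tfrac{\alpha_k}{\gamma_k}\tfrac{y}{2}\psi_k(y)+\tfrac{1}{T-t}\nabla\psi_k(y)\Big|^2dy=\frac{\alpha_k^2}{4\gamma_k^2}\big\|\,|\cdot|\psi_k\,\big\|_{L^2}^2+\frac{1}{(T-t)^2}\|\nabla\psi_k\|_{L^2}^2 ,
\]
the cross term vanishing pointwise because $\psi_k$ is real, so that $\mathrm{Re}\big(-i\,y_j\,\psi_k\,\partial_j\psi_k\big)=0$ for each $j$. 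Multiplying by $\gamma_k$ and summing over $k$ gives $K(\mathbf v^A(t))=\frac{1}{(T-t)^2}K(\psib)+C$ with $C=\sum_k\frac{\alpha_k^2}{4\gamma_k}\||\cdot|\psi_k\|_{L^2}^2<\infty$ (finiteness by Lemma \ref{regellpsys}(ii)), which is (iv); in particular $K(\mathbf v^A(t))\to\infty$ as $t\to T^-$, so $\mathbf v^A$ blows up at $t=T$, which is the point of the construction. I expect the only mildly delicate bookkeeping to be in (iv): correctly tracking the powers of $(T-t)$ produced by the dilation and checking that the cross term drops out; everything else is a direct substitution into Lemma \ref{pseudoconf} and a change of variables.
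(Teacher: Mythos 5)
Your proposal is correct and follows exactly the route the paper takes: (i) is obtained by applying Lemma \ref{pseudoconf} (via Remark \ref{remsoluin}) to the standing wave, and (ii)--(iv) are the same direct calculations the paper leaves to the reader, with the change of variables and the vanishing of the cross term in (iv) carried out correctly.
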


\begin{proof}
Statement (i) is a consequence of the Lemma \ref{pseudoconf}. Statements (ii), (iii) and (iv) follows from a direct calculation.
\end{proof}

\begin{coro}\label{corosharp}
Under the assumptions of Theorem \ref{sharpn=4}, if  $\mathbf{u}^A$ is defined by
$$
u_k^A(x,t)=e^{-i\frac{\alpha_k}{\gamma_k}t}v_k^A(x,t),
$$
then $\mathbf{u}^A$ is a solution of \eqref{system1} with $\beta_{k}=\frac{\alpha_k^2}{\gamma_k}$, $k=1,\ldots,l$,  such that $Q(\mathbf{u}^A(0))=Q(\psib)$  and $\mathbf{u}^A$ blows-up in finite time.
\end{coro}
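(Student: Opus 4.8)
The plan is to read off all three assertions from Theorem~\ref{sharpn=4}, Remark~\ref{remsoluin}, and the invariance of pointwise moduli under a space-independent phase; no new analysis is needed beyond one explicit gradient computation.

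First I would check that $\mathbf{u}^A$ solves the correct system. By Theorem~\ref{sharpn=4}(i), $\mathbf{v}^A$ is a solution of \eqref{system1} with $\beta_k=0$. Since the relation $u^A_k=e^{-i\frac{\alpha_k}{\gamma_k}t}v^A_k$ is equivalent to $v^A_k=e^{i\frac{\alpha_k}{\gamma_k}t}u^A_k$, Remark~\ref{remsoluin} (used in the direction: ``$\tilde{\mathbf{u}}$ solves with $\beta_k=0$ $\Rightarrow$ $\mathbf{u}$ solves with $\beta_k=\frac{\alpha_k^2}{\gamma_k}$'') gives at once that $\mathbf{u}^A$ solves \eqref{system1} with $\beta_k=\frac{\alpha_k^2}{\gamma_k}$, $k=1,\ldots,l$. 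Its initial datum is $\mathbf{u}^A(0)=\mathbf{v}^A(0)=\big(T^{-2}e^{-i\frac{\alpha_k}{\gamma_k}|x|^2/(4T)}\psi_k(x/T)\big)_k$ by Theorem~\ref{sharpn=4}(ii); by the regularity and decay of $\psib$ from Lemma~\ref{regellpsys} (in particular $|\cdot|\psib\in\mathbf{L}^2$ and $\psib\in\mathbf{H}^1$) this lies in $\mathbf{H}^1$, so by the uniqueness part of Theorem~\ref{localexistenceH1} the function $\mathbf{u}^A$ is the solution of \eqref{system1} issued from $\mathbf{u}^A(0)$ on its maximal interval $I$.

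Next come the conserved quantities. The phase $e^{-i\frac{\alpha_k}{\gamma_k}t}$ does not depend on $x$, so $|u^A_k(x,t)|=|v^A_k(x,t)|$ and $\nabla u^A_k=e^{-i\frac{\alpha_k}{\gamma_k}t}\nabla v^A_k$, whence $|\nabla u^A_k(x,t)|=|\nabla v^A_k(x,t)|$ for all $x,t$. Therefore $Q(\mathbf{u}^A(t))=Q(\mathbf{v}^A(t))$ and $K(\mathbf{u}^A(t))=K(\mathbf{v}^A(t))$ for every $t$; evaluating the first at $t=0$ and invoking Theorem~\ref{sharpn=4}(iii) yields $Q(\mathbf{u}^A(0))=Q(\psib)$.

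Finally, for the blow-up, I would compute $K(\mathbf{v}^A(t))$ directly. Writing $v^A_k(x,t)=(T-t)^{-2}e^{i(\mu_k(t)+\nu_k(x,t))}\psi_k\!\big(x/(T-t)\big)$ with $\mu_k$ space-independent and $\nu_k(x,t)=-\frac{\alpha_k}{\gamma_k}\frac{|x|^2}{4(T-t)}$, one differentiates in $x$; because $\psi_k$ is real, the cross term between $\nabla\psi_k$ and $(\nabla\nu_k)\psi_k$ is purely imaginary and drops out of $|\nabla v^A_k|^2$. After the change of variables $y=x/(T-t)$ this gives
\begin{equation*}
K(\mathbf{v}^A(t))=(T-t)^{-2}K(\psib)+\tfrac14\sum_{k=1}^l\frac{\alpha_k^2}{\gamma_k}\int|y|^2|\psi_k(y)|^2\,dy .
\end{equation*}
Since $\psib\neq\mathbf{0}$ we have $K(\psib)>0$, so $K(\mathbf{u}^A(t))=K(\mathbf{v}^A(t))\to\infty$ as $t\to T^-$; hence $\|\mathbf{u}^A(t)\|_{\mathbf{H}^1}\to\infty$ and the maximal interval $I$ satisfies $\sup I\le T<\infty$, i.e. $\mathbf{u}^A$ blows up in finite time. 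The only genuinely computational point is this last gradient identity (checking the cross term vanishes and that the leading coefficient is exactly $K(\psib)$, which is what also makes Theorem~\ref{sharpn=4}(iv) an equality up to lower-order terms); everything else is immediate from the results already proved.
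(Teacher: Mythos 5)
Your proof is correct and follows essentially the same route the paper intends: Theorem \ref{sharpn=4}(i)--(iii) together with Remark \ref{remsoluin} give the first two assertions, and the growth of $K(\mathbf{v}^A(t))$ gives the blow-up. Your explicit gradient computation is a worthwhile addition, since Theorem \ref{sharpn=4}(iv) as literally stated only provides an upper bound $O((T-t)^{-2})$, whereas your identity $K(\mathbf{v}^A(t))=(T-t)^{-2}K(\psib)+\tfrac14\sum_k\frac{\alpha_k^2}{\gamma_k}\||\cdot|\psi_k\|_{L^2}^2$ supplies the divergence actually needed to conclude.
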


 Corollary \ref{corosharp} shows that  part (i) in Theorem \ref{thm:globalexistencecondn=5} is sharp under the assumption  $\beta_{k}=\frac{\alpha_k^2}{\gamma_k}$, $k=1,\ldots,l$.

\subsubsection{$L^2$ supercritical case}

Next we analyze the $L^{2}$ supercritical and $H^{1}$ subcritical case; $n=5$.  We will follow the ideas presented in   \cite{Holmer2}, \cite{Pastor} and \cite{Ogawa}.

\begin{teore}[Existence of blow-up solutions]\label{thm:sharpglobalexistencecondn=5} Let $n=5$. Assume $\mathbf{u}_0\in \mathbf{H}^{1}$ and let  $\mathbf{u}$ be the corresponding  solution of \eqref{system1} defined in the maximal  existence  interval, say, $I$. Let   $\psib \in \mathcal{G}(1,\boldsymbol{0})$.  Assume, also that 
\begin{equation}\label{energycondblowup}
Q(\mathbf{u}_0)E(\mathbf{u}_0)<Q(\psib)\mathcal{E}(\psib),
\end{equation}
and
\begin{equation}\label{gradientcondblowup}
Q(\mathbf{u}_0)K(\mathbf{u}_0)>Q(\psib)K(\psib).
\end{equation}
If $x\mathbf{u}_0\in \mathbf{L}^{2}$ or $\mathbf{u}_0$ is radially symmetric, then $I$ is finite.
\end{teore}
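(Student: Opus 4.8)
The plan is to run the classical convexity (virial) argument: I will produce a nonnegative function $t\mapsto V(t)$ of class $C^2$ on the maximal interval $I$ for which $V''(t)\le -c<0$ for all $t\in I$; then $V(t)\le V(0)+V'(0)t-\tfrac{c}{2}t^2$ eventually becomes negative, contradicting $V\ge0$, so $I$ must be finite. Unlike in Theorem~\ref{blowupZm}, here I do not assume $E_0:=E(\mathbf{u}_0)>0$, so the scheme has to accommodate an energy of arbitrary sign.

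The first and decisive step is a trapping estimate that keeps $K(\mathbf{u}(t))$ away from the ground-state level. Write $Q_0=Q(\mathbf{u}_0)$. Since $\psib\in\mathcal{G}(1,\boldsymbol0)$ one has $b_k=\alpha_k^2/\gamma_k$, hence $\mathcal{Q}(\psib)=Q(\psib)$, and Lemma~\ref{identitiesfunctionals} with $n=5$ gives $K(\psib)=5Q(\psib)$ and $\mathcal{E}(\psib)=Q(\psib)$. Combining charge and energy conservation (Lemmas~\ref{l2cons} and \ref{lemconservenerg}), the splitting \eqref{conserenerfunc}, the bound $L(\mathbf{u})\ge0$, and the sharp Gagliardo--Nirenberg inequality of Corollary~\ref{corollarybestconstant} with $n=5$, I obtain for every $t\in I$
\[
E_0-K(\mathbf{u}(t))+2C_{op}Q_0^{1/4}K(\mathbf{u}(t))^{5/4}\ge0 .
\]
This lets me invoke Lemma~\ref{supercritcalcase} with $a=E_0$, $b=2C_{op}Q_0^{1/4}$, $q=\tfrac54$, and $G(t)=K(\mathbf{u}(t))$; using the explicit value of $C_{op}$ and $K(\psib)=5Q(\psib)$ one checks that $\gamma:=(bq)^{-1/(q-1)}=Q(\psib)K(\psib)/Q_0$, that \eqref{energycondblowup} is precisely $a<(1-\tfrac1q)\gamma$, and that \eqref{gradientcondblowup} is precisely $G(0)>\gamma$. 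Hence Lemma~\ref{supercritcalcase}(ii) yields $K(\mathbf{u}(t))>\gamma$ on $I$. Since $\gamma$ is the point of minimum of $f(r)=a-r+br^{q}$ and $f(\gamma)=E_0-(1-\tfrac1q)\gamma<0$ by \eqref{energycondblowup}, on $(\gamma,\infty)$ the inequality $f\ge0$ forces $K(\mathbf{u}(t))\ge r_2$, where $r_2>\gamma$ is the unique zero of $f$ there; in particular
\[
K(\mathbf{u}(t))\ge r_2>\gamma>5E_0\qquad\text{for all }t\in I ,
\]
and the gap $r_2-5E_0>0$ is the coercivity I will use.

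If $x\mathbf{u}_0\in\mathbf{L}^2$, Theorem~\ref{persistL2wheit} applies: $V(t)=\sum_{k}\frac{\alpha_k^2}{\gamma_k}\|xu_k(t)\|_{L^2}^2$ is nonnegative and $C^2$ on $I$, and \eqref{secderV} with $n=5$ together with $L(\mathbf{u})\ge0$ gives $V''(t)=10E_0-10L(\mathbf{u}(t))-2K(\mathbf{u}(t))\le 10E_0-2r_2=:-c<0$, with $c=2r_2-10E_0>2(\gamma-5E_0)>0$ by the trapping step; the convexity argument then forces $I$ to be finite. When instead $\mathbf{u}_0$ is radially symmetric the variance may be infinite, so I localize: with $\chi_R$ as in Lemma~\ref{lemafunctionchi} set $V_R(t)=\tfrac12\int\chi_R\sum_k\frac{\alpha_k^2}{\gamma_k}|u_k(t)|^2\,dx\ge0$, which is $C^2$ by Theorem~\ref{Viarialidenityradialcase}. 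Feeding $\chi_R$ into the corollary identity \eqref{secondervradialcase}, using $\chi_R\equiv|x|^2$ on $\{|x|\le R\}$, $\chi_R''\le2$ everywhere, $|\Delta\chi_R|\le C$ and $|\Delta^2\chi_R|\le CR^{-2}$ on $\{|x|>R\}$, and the pointwise bound $|F(\mathbf{u})|\le C\sum_k|u_k|^3$ from \eqref{estdifFeq1}, I will arrive at
\[
V_R''(t)\le \bigl(5E_0-K(\mathbf{u}(t))\bigr)+C\sum_{k=1}^{l}\|u_k(t)\|_{L^3(|x|>R)}^{3}+\frac{C}{R^2}Q_0 .
\]
The radial estimate \eqref{StraussLemmaineq} of Lemma~\ref{StraussLemaconsequence} bounds each tail by $CR^{-2}Q_0^{5/4}K(\mathbf{u}(t))^{1/4}$; since the power of $K$ is $1/4<1$, Young's inequality turns this into $\varepsilon K(\mathbf{u}(t))$ plus a remainder $C_\varepsilon R^{-8/3}Q_0^{5/3}$ which, like $CR^{-2}Q_0$, tends to $0$ as $R\to\infty$ uniformly in $t$. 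Using $K(\mathbf{u}(t))\ge r_2$ from the trapping step, I choose $\varepsilon$ small enough that $5E_0-(1-\varepsilon)K(\mathbf{u}(t))\le 5E_0-(1-\varepsilon)r_2<0$ uniformly, and then $R$ large enough that the remainder is below $\tfrac12\bigl((1-\varepsilon)r_2-5E_0\bigr)$; this gives $V_R''(t)\le-c<0$ on $I$, and the convexity argument again forces $I$ to be finite.

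The routine part is the trapping step, which is exactly Lemma~\ref{supercritcalcase}(ii), the mirror image of the use of Lemma~\ref{supercritcalcase}(i) in the proof of Theorem~\ref{thm:globalexistencecondn=5}, once the identifications $K(\psib)=5Q(\psib)$, $\mathcal{E}(\psib)=Q(\psib)$ and the explicit $C_{op}$ are recorded. The real obstacle is the radial case: one must keep careful track of \emph{every} error term produced by the spatial cut-off and show that the potential-energy tail over $\{|x|>R\}$ is genuinely of lower order --- this is precisely where the $R^{-2}$ gain and the sublinear exponent $1/4$ in the radial inequality \eqref{StraussLemmaineq} are indispensable --- so that the whole remainder can be made, uniformly in $t\in I$, smaller than the coercive gap $r_2-5E_0$ extracted in the trapping step.
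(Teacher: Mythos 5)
Your proposal is correct and follows essentially the same strategy as the paper: the trapping of $K(\mathbf{u}(t))$ via Lemma \ref{supercritcalcase}, the standard virial argument when $x\mathbf{u}_0\in\mathbf{L}^2$, and the localized virial with $\chi_R$ combined with the radial Strauss estimate \eqref{StraussLemmaineq} and Young's inequality in the radially symmetric case. The only cosmetic difference is that you extract the quantitative gap directly from the observation that $f(\gamma)<0$ forces $K(\mathbf{u}(t))\ge r_2>\gamma$, whereas the paper packages the same separation into Lemma \ref{corosupercritcalcase} via parameters $\delta_1,\delta_2$; both yield the uniform negativity of $V''$ needed to conclude.
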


Before proving Theorem we recall a slightly modification of part (ii) in Lemma \ref{supercritcalcase}.

\begin{lem}\label{corosupercritcalcase}
	Let $I$ an open interval with $0\in I$. Let $a\in \R$, $b>0$ and $q>1$. Define $\gamma=(bq)^{-\frac{1}{q-1}}$ and $f(r)=a-r+br^{q}$, for $r\geq 0$. Let $G(t)$ a non-negative continuous  function such that $f\circ G\geq 0$ on $I$. Assume that $a<(1-\delta_{1})\left(1-\frac{1}{q}\right)\gamma$, for some small $\delta_{1}>0$.
	
	If $G(0)>\gamma$, then there exist $\delta_{2}=\delta_{2}(\delta_{1})>0$ such that $G(t)>(1+\delta_{2})\gamma$, $\forall t\in I$.
\end{lem}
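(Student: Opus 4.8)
The plan is to reduce everything to the scalar function $f$ and quantify the location of its largest root. First I would record the shape of $f$: since $f'(r)=-1+bqr^{q-1}$ vanishes only at $r=(bq)^{-1/(q-1)}=\gamma$, and $f''(r)=bq(q-1)r^{q-2}>0$ on $(0,\infty)$, the function $f$ is strictly convex there and attains its global minimum over $[0,\infty)$ at $r=\gamma$. Using $\gamma^{q-1}=(bq)^{-1}$, one has $b\gamma^{q}=\gamma/q$, hence $f(\gamma)=a-(1-\tfrac1q)\gamma$. The hypothesis $a<(1-\delta_1)(1-\tfrac1q)\gamma$ then gives $f(\gamma)<-\delta_1(1-\tfrac1q)\gamma<0$. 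Combined with $f(r)\to+\infty$ as $r\to\infty$ and strict convexity, $f$ has exactly one zero $r_2$ in $(\gamma,\infty)$, with $f<0$ on $(\gamma,r_2)$ and $f>0$ on $(r_2,\infty)$; in particular $\{\,r>\gamma:\ f(r)\ge 0\,\}=[r_2,\infty)$.

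Next, since $\delta_1>0$ and $(1-\tfrac1q)\gamma>0$, the bound $a<(1-\delta_1)(1-\tfrac1q)\gamma$ implies $a<(1-\tfrac1q)\gamma$, so the hypotheses of Lemma \ref{supercritcalcase}(ii) hold and $G(t)>\gamma$ for every $t\in I$. (Alternatively this step can be reproved directly: continuity of $G$, connectedness of $I$, and $f\circ G\ge0$ force $G$ to avoid the open interval on which $f<0$.) Feeding $G(t)>\gamma$ back into $f(G(t))\ge0$ together with $\{\,r>\gamma:\ f(r)\ge0\,\}=[r_2,\infty)$ yields $G(t)\ge r_2$ for all $t\in I$.

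It then remains to produce $\delta_2>0$, depending only on $\delta_1$ (and the fixed exponent $q$), with $r_2>(1+\delta_2)\gamma$. Writing $r=s\gamma$ and using $b\gamma^q=\gamma/q$, one computes $f(s\gamma)=\gamma\bigl(\tfrac{a}{\gamma}-g(s)\bigr)$, where $g(s):=s-s^{q}/q$. The function $g$ is strictly increasing on $[0,1]$ and strictly decreasing on $[1,\infty)$, with $g(1)=1-\tfrac1q$ and $g(s)\to-\infty$; hence $r_2=s_2\gamma$ with $s_2>1$ and $g(s_2)=a/\gamma$. Since $a/\gamma<(1-\delta_1)(1-\tfrac1q)<1-\tfrac1q=g(1)$, the intermediate value theorem gives a unique $\delta_2>0$ with $g(1+\delta_2)=(1-\delta_1)(1-\tfrac1q)$, and strict monotonicity of $g$ on $[1,\infty)$ together with $g(s_2)<g(1+\delta_2)$ forces $s_2>1+\delta_2$. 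Therefore $G(t)\ge r_2=s_2\gamma>(1+\delta_2)\gamma$ on $I$, and $\delta_2$ is determined by $\delta_1$ and $q$ alone.

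There is no deep obstacle here — the statement is a quantitative refinement of Lemma \ref{supercritcalcase}(ii), and the barrier argument is already available. The only points needing care are phrasing the estimate through the normalized profile $g$ rather than through $f$ itself, so that the gain $\delta_2$ depends only on $\delta_1$ (and $q$) and not on $a$, $b$, or $G$, and checking that the description $\{\,r>\gamma:\ f(r)\ge0\,\}=[r_2,\infty)$ is valid irrespective of the sign of $a$ (it is, since only the behaviour of $f$ on $(\gamma,\infty)$ is used).
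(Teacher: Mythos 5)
Your argument is correct and complete. For the record, the paper does not actually prove Lemma \ref{corosupercritcalcase}: it simply cites \cite[Corollary 3.2]{Pastor}, so your write-up supplies a self-contained proof where the paper delegates to a reference. Your route is the natural one and is sound at every step: the identity $b\gamma^{q}=\gamma/q$ gives $f(\gamma)=a-(1-\tfrac1q)\gamma<-\delta_1(1-\tfrac1q)\gamma<0$; strict monotonicity of $f$ on $(\gamma,\infty)$ (from $f'>0$ there) pins down the unique zero $r_2>\gamma$ and the description $\{r>\gamma:\ f(r)\ge 0\}=[r_2,\infty)$; Lemma \ref{supercritcalcase}(ii) (or your direct connectedness argument) yields $G>\gamma$ on $I$, hence $G\ge r_2$; and the rescaling $r=s\gamma$ with $g(s)=s-s^q/q$ converts the quantitative gap $a/\gamma<(1-\delta_1)(1-\tfrac1q)$ into $s_2>1+\delta_2$ with $\delta_2$ determined solely by $\delta_1$ and $q$, which is exactly the uniformity the lemma asserts. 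The one subtlety — that the set $\{f\ge0\}$ may or may not contain a component near the origin depending on the sign of $a$ — is irrelevant once you restrict to $r>\gamma$, and you address this explicitly. Nothing needs to be added.
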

\begin{proof}
See \cite[Corollary 3.2]{Pastor}
\end{proof}

\begin{proof}[Proof of Theorem~\ref{thm:sharpglobalexistencecondn=5}]
From (\ref{energycondblowup}) it is clear we may obtain  $\delta_{1}>0$ small such that
\begin{equation}\label{conddelta1}
Q(\mathbf{u}_0)E(\mathbf{u}_0)<(1-\delta_{1})Q(\psib)\mathcal{E}(\psib).
\end{equation}
With the same notation of the proof of part (ii) of Theorem \ref{thm:globalexistencecondn=5}, it is easily checked that $G(0)>\gamma$ is equivalent to \eqref{gradientcondblowup} and $a<(1-\delta_{1})\left(1-\frac{1}{q}\right)\gamma$ is equivalent to \eqref{conddelta1}. Hence, by Lemma 
 \ref{corosupercritcalcase} there exist $\delta_{2}>0$ such that
\begin{equation}\label{conddelta2}
Q(\mathbf{u}_0)K(\mathbf{u}(t))>(1+\delta_{2})Q(\psib)K(\psib).
\end{equation}

Let us first assume  $x\mathbf{u}_0\in \mathbf{L}^{2}$. From \eqref{secderV} with  $n=5$, we have
\begin{equation}\label{virialidentity2}
V''(t)=10 E(\mathbf{u}_0)-10L(\mathbf{u}(t))-2K(\mathbf{u}(t)),\qquad t\in I.
\end{equation}

By multiplying both sides of (\ref{virialidentity2})  by $Q(\mathbf{u}_0)$, using \eqref{conddelta1}-\eqref{conddelta2} and the fact that $\mathcal{E}(\psib)=(1/5)K(\psib)$, we obtain, for any $t\in I$,
\begin{equation*}
\begin{split}
V''(t)Q(\mathbf{u}_0)&=10 E(\mathbf{u}_0)Q(\mathbf{u}_0)-10L(\mathbf{u}(t))Q(\mathbf{u}_0)
   -2 K(\mathbf{u}(t))Q(\mathbf{u}_0)\\
&<10(1-\delta_{1})\mathcal{E}(\psib)Q(\psib) -2 (1+\delta_{2})Q(\psib)K(\psib)\\
&=2 (1-\delta_{1})K(\psib)Q(\psib)-2 (1+\delta_{2})Q(\psib)K(\psib)\\
&=-2\left(\delta_{1}+\delta_{2}\right)Q(\psib)K(\psib)\\
&=:-B,
\end{split}
\end{equation*}
where $B$ is a positive constant.
Thus, if we assume that  $I$ is infinite must exist $t^{*}\in I$ such that $V(t^{*})<0$, which is a contradiction,  because $V>0$. Therefore $I$ must be finite.

Now, we assume that $\mathbf{u}_0$ is radially symmetric.  Thus, by taking $\varphi$ as $\chi_{R}$ in (\ref{secondervradialcase}), where $\chi_{R}$ is given in Lemma \ref{lemafunctionchi}, we get
 \begin{equation}\label{secodnderivativeVwithchi}
 \begin{split}
V''(t)&=2\int \chi_{R}''\left(\sum_{k=1}^{l}\gamma_{k}|\nabla u_{k}|^{2}\right)\;dx-\frac{1}{2}\int\Delta^{2}\chi_{R}\left(\sum_{k=1}^{l}\gamma_{k}|u_{k}|^{2}\right)\;dx\\&\quad-\mathrm{Re}\int\Delta\chi_{R} F\left(\mathbf{u}\right)\;dx.
\end{split}
\end{equation}

We will estimate each one of the terms in $V''$. For the first one, using the fact that $\chi_{R}''(r)\leq 2$,  we have
\begin{equation}\label{estim1}
2\int \chi_{R}''\left(\sum_{k=1}^{l}\gamma_{k}|\nabla u_{k}|^{2}\right)\;dx\leq 4\sum_{k=1}^{l}\gamma_{k}\|\nabla u_{k}\|^{2}_{L^{2}}= 4K(\mathbf{u}).
\end{equation}

For the second one, using Lemma \ref{lemafunctionchi} and the conservation of charge we get
\begin{equation}\label{estim2}
\begin{split}
-\frac{1}{2}\int\Delta^{2}\chi_{R}\left(\sum_{k=1}^{l}\gamma_{k}|u_{k}|^{2}\right)dx&=-
\frac{1}{2}\int_{\{|x|\geq R\}}\Delta^{2}\chi_{R}\left(\sum_{k=1}^{l}\gamma_{k}|u_{k}|^{2}\right)dx\\
&\leq\frac{C_{2}}{R^{2}}\int_{\{|x|\geq R\}}\left(\sum_{k=1}^{l}\gamma_{k}|u_{k}|^{2}\right)dx\\
&\leq\frac{C_{2}}{R^{2}}\max_{1\leq j\leq l}\left\{\frac{\gamma_{j}^{2}}{\alpha_{j}^{2}}\right\}\sum_{k=1}^{l}\frac{\alpha_{k}^{2}}{\gamma_{k}}\|u_{k}\|^{2}_{L^{2}}\\
&=\frac{{C}_{2}'}{R^{2}}Q(\mathbf{u}_0),
\end{split}
\end{equation}
for some positive constant $C_2'$.
Finally, the last term in  (\ref{secodnderivativeVwithchi}) is estimated as follows
\begin{equation*}
\begin{split}
-\mbox{Re}\int\Delta\chi_{R} F\left(\mathbf{u}\right)\;dx&=-\mathrm{Re}\int_{\{|x|\leq R\}}\Delta\chi_{R} F\left(\mathbf{u}\right)\;dx-\mathrm{Re}\int_{\{|x|\geq R\}}\Delta\chi_{R} F\left(\mathbf{u}\right)\;dx\\
&\leq-10\;\mathrm{Re}\int_{\{|x|\leq R\}} F\left(\mathbf{u}\right)\;dx+C_{1}\int_{\{|x|\geq R\}}\left|\mathrm{Re}\,F\left(\mathbf{u}\right)\right|\;dx\\
&=-10\;\mathrm{Re}\,\int_{\R^{5}} F\left(\mathbf{u}\right)\;dx+C_{1}'\int_{\{|x|\geq R\}}\left|\mathrm{Re}\,F\left(\mathbf{u}\right)\right|\;dx\\
&= -10 P(\mathbf{u})+C_{1}'\int_{\{|x|\geq R\}}\left|\mathrm{Re}\,F\left(\mathbf{u}\right)\right|\;dx,
\end{split}
\end{equation*}
where we have used   Lemma  \ref{lemafunctionchi} with $n=5$. Here $C_1'$ is also a positive constant.
Now the conservation of the energy  and \eqref{conserenerfunc} imply
$-10 P(\mathbf{u})=5E(\mathbf{u}_0)-5K(\mathbf{u})-5L(\mathbf{u}).$
Thus,
\begin{equation}\label{estim3}
\begin{split}
-\mathrm{Re}\int\Delta\chi_{R} F\left(\mathbf{u}\right)\;dx&\leq 5E(\mathbf{u}_0)-5K(\mathbf{u})-5L(\mathbf{u})+C_{1}'\int_{\{|x|\geq R\}}\left|\mathrm{Re}\, F\left(\mathbf{u}\right)\right|\;dx\\
&\leq 5E(\mathbf{u}_0)-5K(\mathbf{u})+C_{1}'\int_{\{|x|\geq R\}}\left|\mathrm{Re}\,F\left(\mathbf{u}\right)\right|\;dx.
\end{split}
\end{equation}
Gathering together \eqref{secodnderivativeVwithchi}-(\ref{estim3}), we have
\begin{equation}\label{1b}
\begin{split}
V''(t)&\leq 5E(\mathbf{u}_0)-K(\mathbf{u})+\frac{{C'}_{2}}{R^{2}}Q(\mathbf{u}_0)+C_{1}'\int_{\{|x|\geq R\}}\left|\mbox{Re}\,F\left(\mathbf{u}\right)\right|\;dx\\
&\leq 5E(\mathbf{u}_0)-K(\mathbf{u})+\frac{{C'}_{2}}{R^{2}}Q(\mathbf{u}_0)+C_{1}'C\sum_{k=1}^{l}\|u_{k}\|^{3}_{L^{3}(|x|\geq R)},
\end{split}
\end{equation}
where we used Lemma \ref{estdifF}.

Next, by using  (\ref{StraussLemmaineq}) and Young's inequality with $\epsilon$ (small) we conclude that
 \begin{equation*}
 \begin{split}
\sum_{k=1}^{l}\|u_{k}\|^{3}_{L^{3}(|x|\geq R)}
&\leq \frac{\tilde{C}}{R^{2}}\sum_{k=1}^{l}\|u_{k}\|^{5/2}_{L^{2}(|x|\geq R)}\|\nabla u_{k}\|^{1/2}_{L^{2}(|x|\geq R)}\\
&\leq C_{\epsilon}\sum_{k=1}^{l}\left\{R^{-2}\left[\left(\frac{\alpha_{k}^{2}}{\gamma_{k}}\right)^{1/2}\|u_{k}\|_{L^{2}(|x|\geq R)}\right]^{5/2}\right\}^{4/3} \\
&\quad+\epsilon\sum_{k=1}^{l}\left\{\left[\gamma_{k}^{1/2}\|\nabla u_{k}\|_{L^{2}(|x|\geq R)}\right]^{1/2}\right\}^{4}\\
&\leq \frac{\tilde{C}_{\epsilon}}{R^{8/3}}\left(\sum_{k=1}^{l}\frac{\alpha_{k}^{2}}{\gamma_{k}}\|u_{k}\|^{2}_{L^{2}(|x|\geq R)}\right)^{5/3}+\epsilon\sum_{k=1}^{l}\gamma_{k}\|\nabla u_{k}\|^{2}_{L^{2}(|x|\geq R)}\\
&\leq\frac{\tilde{C}_{\epsilon}}{R^{8/3}}Q(\mathbf{u}_0)^{5/3}+\epsilon K(\mathbf{u}).
\end{split}
 \end{equation*}
 Therefore, from \eqref{1b},
\begin{equation}
\begin{split}\label{estimaV2da}
V''(t)&\leq 5E(\mathbf{u}_0)-\left(1-\epsilon\right)K(\mathbf{u})+\frac{C_2'}{R^{2}}Q(\mathbf{u}_0)
+\frac{\tilde{C}_{\epsilon}}{R^{8/3}}Q(\mathbf{u}_0)^{5/3}.
\end{split}
\end{equation}
Multiplying (\ref{estimaV2da}) by $Q(\mathbf{u}_0)$, we obtain
\begin{equation*}
\begin{split}
Q(\mathbf{u}_0)V''(t)&\leq 5E(\mathbf{u}_0)Q(\mathbf{u}_0)-\left(1-\epsilon\right)K(\mathbf{u})Q(\mathbf{u}_0)
+\frac{C_2'}{R^{2}}Q(\mathbf{u}_0)^{2}+\frac{\tilde{C}_{\epsilon}}{R^{8/3}}Q(\mathbf{u}_0)^{8/3}.
\end{split}
\end{equation*}
 
 Using (\ref{conddelta1}), (\ref{conddelta2}) we can write
\begin{equation*}
\begin{split}
Q(\mathbf{u}_0)V''(t)&\leq 5(1-\delta_{1}) Q(\psib)\mathcal{E}(\psib)-\left(1-\epsilon\right)(1+\delta_{2})Q(\psib)K(\psib)
+\frac{C_2'}{R^{2}}Q(\mathbf{u}_0)^{2}+\frac{\tilde{C}_{\epsilon}}{R^{8/3}}Q(\mathbf{u}_0)^{8/3}\\
&=\left[-(\delta_{1}+\delta_{2})+\epsilon(1+\delta_{2})\right]Q(\psib)K(\psib)+
\frac{C_2'}{R^{2}}Q(\mathbf{u}_0)^{2}+\frac{\tilde{C}_{\epsilon}}{R^{8/3}}Q(\mathbf{u}_0)^{8/3},
\end{split}
\end{equation*}
where we used that $\mathcal{E}(\psib)=(1/5)K(\psib)$. Choosing $\epsilon>0$ small enough and  $R>0$ sufficiently large , we can conclude that $V''(t)<-B$, for some constant $B>0$. As above, we then conclude that $I$ must be finite. 
\end{proof}

\section{Stability and instability of standing waves}\label{sec.stinst}

In this section we will establish some stability and instability results for the ground states obtained in Theorem  \ref{thm:existenceGSJgeral}. As we saw in section \ref{sec.gsbu} the ground states solutions of system \eqref{OB1} play a crucial role in the dynamics of \eqref{system1}. So, here we will be interested in studying their stability/instability.

In the  $L^{2}$-subcritical case, $1\leq n\leq 3$, by using the \textit{concentration-compactness method} we will prove stability results. On the other hand, for the $L^{2}$-critical, $n=4$, and $L^{2}$- supercritical, $n=5$, we use the  blowing up solutions to prove the instability of the standing waves.

\subsection{Stability} 

This subsection is devoted to prove our stability results. Throughout the subsection we assume  $1\leq n \leq 3$ and
\begin{equation*}
\omega=1 \quad \mbox{and} \quad \beta_{k}=0,\; k=1,\ldots,l.
\end{equation*}
This means, as we observed in Remark \ref{rembetomega}, we are interested in the stability of the set $\mathcal{G}(1,\boldsymbol{0})$. Our main theorem here reads as follows.

\begin{teore} \label{thm:stabG} Let $1\leq n\leq 3$. 
	Let  $\mathcal{G}(1,\boldsymbol{0})$ be the set of ground states of \eqref{OB1}. Then  $\mathcal{G}(1,\boldsymbol{0})$ is stable in   $\mathbf{H}^{1}$ in the following sense: for every $\epsilon>0$, there exist $\delta>0$ such that if 
	\begin{equation*}
	\inf_{\phib\in \mathcal{G}(1,\boldsymbol{0})}\| \mathbf{u}_0-\phib\|_{\mathbf{H}^{1}}<\delta,
	\end{equation*}
	then the global solution  of  \eqref{system1} given by Theorem \ref{thm:globalwellposH1}, with $\mathbf{u}(0)=\mathbf{u}_0$ satisfies
	\begin{equation*}
	\inf_{\phib\in \mathcal{G}(1,\boldsymbol{0})}\|\mathbf{u}(t) -\phib\|_{\mathbf{H}^{1}}<\epsilon,
	\end{equation*}
	for all $t\in \R$.
\end{teore}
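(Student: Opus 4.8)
The plan is to use the classical concentration-compactness argument of Cazenave–Lions adapted to the system. The key is to characterize the set of ground states $\mathcal{G}(1,\boldsymbol{0})$ as the set of minimizers of the energy $E$ (with $\boldsymbol{\beta}=\boldsymbol{0}$) subject to the constraint that the charge $Q$ is a fixed constant $q$, and then to show that every minimizing sequence is, up to translations and extraction of a subsequence, relatively compact in $\mathbf{H}^1$. Stability is then a standard consequence: if stability failed, one would produce a sequence of initial data converging to $\mathcal{G}(1,\boldsymbol{0})$ whose corresponding solutions leave an $\epsilon$-neighborhood at times $t_j$; by conservation of $Q$ and $E$ the sequence $\mathbf{u}(t_j)$ would be a minimizing sequence for the constrained problem, hence (by compactness) would converge along a subsequence to an element of $\mathcal{G}(1,\boldsymbol{0})$, contradicting that it stays outside the neighborhood.

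\textbf{Step 1: the variational characterization.} Define, for $q>0$,
\begin{equation*}
d(q)=\inf\{E(\mathbf{u})\ :\ \mathbf{u}\in\mathbf{H}^1,\ Q(\mathbf{u})=q\},
\end{equation*}
where $E$ and $Q$ are taken with $\boldsymbol{\beta}=\boldsymbol{0}$, i.e. $E(\mathbf{u})=K(\mathbf{u})-2P(\mathbf{u})$. I would first show $d(q)>-\infty$ and $d(q)<0$, using the Gagliardo--Nirenberg inequality \eqref{GNE2}: for $1\le n\le 3$ the exponent on $K(\mathbf{u})$ is $n/4<1$, so $2P(\mathbf{u})\le \tfrac{2}{\xi_0}q^{(3/2-n/4)}K(\mathbf{u})^{n/4}$ is absorbed by $K(\mathbf{u})$ via Young's inequality, giving a lower bound; the scaling $\mathbf{u}\mapsto a\delta_\lambda\mathbf{u}$ (Lemma \ref{functrans}) shows the energy can be made negative while keeping $Q$ fixed. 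Next, using Lemma \ref{identitiesfunctionals}, I would verify that $\mathcal{G}(1,\boldsymbol{0})$ coincides with the set of minimizers of $d(q)$ for the appropriate value $q=Q(\psib)$ (which, by \eqref{inffunctionalJ}, is the same for all ground states): a minimizer of the constrained problem is, by the Lagrange multiplier theorem, a solution of \eqref{OB1} after a scaling to normalize the multiplier to $1$, and conversely a ground state has minimal $\mathcal{Q}$ (Remark \ref{remkharge}) hence minimal energy on its charge level.

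\textbf{Step 2: compactness of minimizing sequences.} This is the heart of the matter and the main obstacle. Given a minimizing sequence $(\mathbf{u}_j)$ for $d(q)$, it is bounded in $\mathbf{H}^1$ (from the coercivity estimate in Step 1), so I apply Lions' concentration-compactness lemma to the densities $\rho_j=\sum_k \tfrac{\alpha_k^2}{\gamma_k}|u_{k,j}|^2$ (with total mass $q$). I must rule out \emph{vanishing} and \emph{dichotomy}. Vanishing is excluded because it would force $P(\mathbf{u}_j)\to 0$ (by a lemma of Lions: vanishing of the $L^2$-mass on all balls forces $\|u_{k,j}\|_{L^3}\to 0$) and hence $E(\mathbf{u}_j)\ge o(1)\ge d(q)$ would contradict $d(q)<0$. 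Dichotomy is excluded by the strict subadditivity inequality $d(q)<d(q_1)+d(q-q_1)$ for $0<q_1<q$, which follows from the strict scaling behavior $d(\theta q)=\theta^{(6-n)/(4-n)}d(q)$ --- one checks this scaling using Lemma \ref{functrans} and the fact that $d(q)<0$, and since $(6-n)/(4-n)>1$ for $1\le n\le 3$, strict subadditivity holds. Therefore compactness prevails: there is a sequence of translations $y_j$ such that $\mathbf{u}_j(\cdot+y_j)$ converges strongly in $\mathbf{L}^2$ and, by interpolation with the $\mathbf{H}^1$-bound and the $\mathbf{L}^3$-continuity of $P$, the limit $\mathbf{v}$ satisfies $Q(\mathbf{v})=q$ and $E(\mathbf{v})\le \liminf E(\mathbf{u}_j)=d(q)$, so $\mathbf{v}$ is a minimizer and the convergence is strong in $\mathbf{H}^1$ (weak convergence plus convergence of norms). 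Modulo translation, $\mathbf{v}\in\mathcal{G}(1,\boldsymbol{0})$.

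\textbf{Step 3: conclusion.} With Steps 1--2 in hand, the stability statement follows by the standard contradiction argument sketched above, using that $Q$ and $E$ are conserved along the flow (Lemmas \ref{l2cons} and \ref{lemconservenerg}) and that global existence holds for $1\le n\le 3$ by Theorem \ref{thm:globalwellposH1}(i). One small point to handle carefully: if $\mathbf{u}_0$ is close to $\mathcal{G}(1,\boldsymbol{0})$ but $Q(\mathbf{u}_0)\neq q$, one either works with the rescaled charge level or notes that $d$ is continuous and $\mathcal{G}$ varies continuously in $q$ near $q=Q(\psib)$, so the compactness of minimizing sequences transfers to nearby charge levels; I would phrase the contradiction directly with sequences $\mathbf{u}_{0,j}\to\phib_j\in\mathcal{G}(1,\boldsymbol{0})$ so that $Q(\mathbf{u}_{0,j})\to q$ and $E(\mathbf{u}_{0,j})\to d(q)$, which makes $(\mathbf{u}_j(t_j))$ a \emph{generalized} minimizing sequence to which the same concentration-compactness analysis applies. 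The main obstacle, as noted, is establishing the strict subadditivity of $d(q)$; everything else is routine once the scaling identity for $d$ is in place.
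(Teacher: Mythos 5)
Your proposal follows essentially the same route as the paper: the authors characterize $\mathcal{G}(1,\boldsymbol{0})$ as the set of minimizers of $E$ (with $\boldsymbol{\beta}=\boldsymbol{0}$) at the fixed charge level $\mu=Q(\psib)$, prove $-\infty<I_\nu<0$, derive strict subadditivity from the same scaling identity $I_{\theta\nu}=\theta^{(6-n)/(4-n)}I_\nu$, rule out vanishing and dichotomy by concentration-compactness, and conclude stability by the same contradiction argument with conserved $Q$ and $E$ (rescaling $\mathbf{u}_m(t_m)$ to the exact charge level, which is your "generalized minimizing sequence" device). The only substantive step you treat lightly is the identification of the constrained minimizers with $\mathcal{G}(1,\boldsymbol{0})$, where the paper pins the Lagrange multiplier down to exactly $-1$ via the Pohozaev-type identities rather than rescaling it away, but your plan is otherwise the paper's proof.
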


Our goal throughout this subsection is to prove Theorem \ref{thm:stabG}.  To begin with, recall the energy functional in \eqref{conservationenergy} becomes
\begin{equation*}
    E(\phib)=K(\phib)-2P(\phib).
\end{equation*}
where, as before, the functionals $K$ and $P$ are given by
\begin{equation*}
    K(\phib)=\sum_{k=1}^{l}\gamma_{k}\|\nabla \phi_{k}\|^{2}_{L^{2}}\quad \mbox{and}\quad P(\phib)=\int F(\phib)\;dx.
\end{equation*}

For any $\nu>0$, let us consider the subset of $\mathcal{P}$ defined by
\begin{equation*}
\Gamma_{\nu}=\left\{\phib\in \mathcal{P};\,Q(\phib)=\nu\right\},
\end{equation*}
where
\begin{equation} \label{newQ}
    Q(\phib)=\sum_{k=1}^{l}\frac{\alpha_{k}^{2}}{\gamma_{k}}\| \phi_{k}\|^{2}_{L^{2}}.
\end{equation}
Let $A_{\nu}$ be the set  of all solutions of the minimization problem
\begin{equation}\label{problE}
I_{\nu}=\inf\{E(\phib):\phib\in \Gamma_{\nu}\},
\end{equation}
that is,
\begin{equation*}
A_\nu=\{ \phib\in\mathbf{H}^1; \,E(\phib)=I_\nu\;\; \mbox{and}\;\;Q(\phib)=\nu \}.
\end{equation*}

In what follows we will show that such a set is nonempty. As usual, we say that $(\phib_m)$ is a \textit{minimizing sequence} of \eqref{problE} if $\phib_m\in\Gamma_{\nu}$ and $E(\phib_m)$ converges to $I_\nu$.

\begin{obs}
	It is easily seen that if $(\phib_m)$ is a minimizing sequence so is $(|\phib_{m}|)$. In particular, without loss of generality, we can always (and will) assume that minimizing sequences are non-negative. 
\end{obs}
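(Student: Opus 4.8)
The plan is to verify that the modified sequence $(|\phib_m|)$ satisfies both defining properties of a minimizing sequence for \eqref{problE}, namely $|\phib_m|\in\Gamma_\nu$ for every $m$ and $E(|\phib_m|)\to I_\nu$. Writing $\phib_m=(\phi_{1m},\ldots,\phi_{lm})$ and $|\phib_m|=(|\phi_{1m}|,\ldots,|\phi_{lm}|)$, I would rely, component by component, on three elementary facts: (a) $|\phi_{km}|\in H^1(\R^n)$ with $\||\phi_{km}|\|_{L^2}=\|\phi_{km}\|_{L^2}$; (b) the diamagnetic inequality $|\nabla|\phi_{km}||\le|\nabla\phi_{km}|$ a.e., whence $\|\nabla|\phi_{km}|\|_{L^2}\le\|\nabla\phi_{km}\|_{L^2}$; and (c) assumption \ref{H5}.

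First I would use (a) and the definition \eqref{newQ} to get $Q(|\phib_m|)=Q(\phib_m)=\nu$. Next, (b) gives $K(|\phib_m|)\le K(\phib_m)$. Then, writing $P(\phib_m)=\mathrm{Re}\int F(\phib_m)\,dx$ (so that $E$ is real-valued), \ref{H5} yields
\[
P(\phib_m)\le\Bigl|\mathrm{Re}\int F(\phib_m)\,dx\Bigr|\le\int F(|\phib_m|)\,dx=P(|\phib_m|);
\]
since $\phib_m\in\Gamma_\nu\subset\mathcal{P}$ we have $P(\phib_m)>0$, hence $P(|\phib_m|)>0$ as well, so $|\phib_m|\in\mathcal{P}$ and therefore $|\phib_m|\in\Gamma_\nu$. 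Combining the last two bounds,
\[
I_\nu\le E(|\phib_m|)=K(|\phib_m|)-2P(|\phib_m|)\le K(\phib_m)-2P(\phib_m)=E(\phib_m),
\]
and letting $m\to\infty$ the hypothesis $E(\phib_m)\to I_\nu$ forces $E(|\phib_m|)\to I_\nu$ by squeezing. Thus $(|\phib_m|)$ is a minimizing sequence, and the ``without loss of generality'' clause is immediate: any minimizing sequence may be replaced by its pointwise modulus without changing the limiting energy, and the resulting sequence is non-negative.

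I do not expect a genuine obstacle here. The only points that deserve a word of justification are the membership $|\phi_{km}|\in H^1$ together with the diamagnetic inequality (both standard; cf. the rearrangement references already cited in the paper) and the remark that \ref{H5} is precisely the inequality needed to ensure that passing to the modulus cannot decrease the potential term $P$. Everything else follows at once from the definitions of $Q$, $K$, $P$, $E$ and $I_\nu$.
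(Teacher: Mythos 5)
Your proof is correct and fills in exactly the argument the paper leaves implicit: $Q$ is preserved under taking moduli, $K$ does not increase by the diamagnetic inequality, and assumption \ref{H5} guarantees $P$ does not decrease, so the energy of $(|\phib_m|)$ is squeezed between $I_\nu$ and $E(\phib_m)$. The one point worth making explicit is that $\int F(|\phib_m|)\,dx$ coincides with $P(|\phib_m|)$ because $F$ is real-valued on the positive cone by \ref{H7}, but this is immediate.
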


Next, define the sequence of non-decreasing  functions $M_{m}:[0,\infty)\to [0,\nu]$  by
 \begin{equation}\label{defMm}
     M_{m}(r)=\sup_{y\in \R^{n}}\int_{\{|x-y|<r\}}\sum_{k=1}^{l}\frac{\alpha_{k}^{2}}{\gamma_{k}}|\phi_{km}|^{2}\;dx.
 \end{equation}
Being uniformly bounded, this sequences converges (up to a subsequence) to a non-decreasing function $M:[0,\infty)\to [0,\nu]$. By  defining
\begin{equation}\label{defalpha}
    \alpha:=\lim_{r\to \infty}M(r),
\end{equation}
we have three possibilities: $\alpha=0$ (\textit{vanishing}), $0<\alpha<\nu$ (\textit{dichotomy}), and $\alpha=\nu$ (\textit{compactness}). The idea of the  concentration-compactness method is to show that vanishing and dichotomy cannot occur. To do so, we follow closely the arguments in  \cite{Albert} (see also \cite{Lions1} and \cite{Lions2}).

Let us start with some properties of $I_{\nu}$ and the minimizing sequences of \eqref{problE}. The first result states that $I_{\nu}$ is finite and negative.

\begin{lem}\label{Inunegat}
 For any $\nu>0$, we have $-\infty<I_{\nu}<0$. 
\end{lem}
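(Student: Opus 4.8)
The plan is to establish the two inequalities separately, working with non-negative minimizing sequences and exploiting the cubic homogeneity of $F$ together with the scaling laws recorded in Lemma~\ref{functrans}.

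\emph{Finiteness ($I_\nu>-\infty$).} First I would bound the energy from below on $\Gamma_\nu$. By the Gagliardo--Nirenberg-type inequality \eqref{GNIbest} (valid on $\mathcal{P}$, hence on $\Gamma_\nu$), we have
\begin{equation*}
P(\phib)\leq B\,\mathcal{Q}(\phib)^{\frac{3}{2}-\frac{n}{4}}K(\phib)^{\frac{n}{4}}.
\end{equation*}
Since $\omega=1$ and $\boldsymbol{\beta}=\boldsymbol{0}$ the weight $b_k=\alpha_k^2/\gamma_k$, so $\mathcal{Q}(\phib)=Q(\phib)=\nu$ is fixed along $\Gamma_\nu$. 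Hence $E(\phib)=K(\phib)-2P(\phib)\geq K(\phib)-2B\nu^{\frac32-\frac n4}K(\phib)^{n/4}$. For $1\leq n\leq 3$ we have $n/4<1$, so the map $t\mapsto t-2B\nu^{\frac32-\frac n4}t^{n/4}$ is bounded below on $[0,\infty)$ (it tends to $+\infty$ as $t\to\infty$ and is continuous). This gives a uniform lower bound on $E$ over $\Gamma_\nu$, hence $I_\nu>-\infty$.

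\emph{Negativity ($I_\nu<0$).} Here I would exhibit a single competitor with negative energy. Pick any $\phib\in\mathcal{P}$, so $P(\phib)>0$. Using the two-parameter family $a\delta_\lambda\phib$ and the scaling identities (i)--(iii) of Lemma~\ref{functrans}, namely $Q(a\delta_\lambda\phib)=a^2\lambda^n Q(\phib)$, $K(a\delta_\lambda\phib)=a^2\lambda^{n-2}K(\phib)$, and $P(a\delta_\lambda\phib)=a^3\lambda^n P(\phib)$, first choose $a,\lambda$ so that $a^2\lambda^n Q(\phib)=\nu$, which leaves one free parameter (say $\lambda$, with $a=a(\lambda)$). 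Along this constrained curve $K$ scales like $\lambda^{-2}$ (times a constant) and $P$ scales like $a\lambda^n=$ const$/a\cdot$ const, which one checks scales like $\lambda^{-n/2}$; more directly, since $a^2=\nu/(\lambda^n Q(\phib))$, one gets $K\propto\lambda^{-2}$ and $P\propto a^3\lambda^n=a\cdot a^2\lambda^n=a\nu\propto\lambda^{-n/2}$. Thus $E=c_1\lambda^{-2}-2c_2\lambda^{-n/2}$ with $c_1,c_2>0$. Since $n/2<2$ for $n\leq 3$, letting $\lambda\to\infty$ makes the $\lambda^{-n/2}$ term dominate and $E\to 0^-$ through negative values; in particular there is a competitor in $\Gamma_\nu$ with $E<0$, so $I_\nu<0$.

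\emph{Main obstacle.} The computation itself is routine; the only point requiring a little care is bookkeeping the exponents in the constrained scaling so that one correctly sees $E$ behaving like $c_1\lambda^{-2}-2c_2\lambda^{-n/2}$ with the exponent $2$ strictly larger than $n/2$ precisely when $1\leq n\leq 3$ — this is exactly the subcriticality condition, and it is what makes $I_\nu<0$ (and, combined with the GN bound, $I_\nu>-\infty$). No deeper difficulty is expected here; the substantive use of concentration-compactness comes in the subsequent lemmas, not in this one.
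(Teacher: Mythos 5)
Your proposal is correct and follows essentially the same route as the paper: the lower bound comes from the Gagliardo--Nirenberg inequality with $Q=\nu$ fixed plus the subcriticality $n/4<1$, and the negativity comes from the mass-preserving scaling family (your constrained curve $a(\lambda)\delta_\lambda\phib$ is exactly the paper's $\phib^\lambda=\lambda^{n/2}\phib(\lambda\cdot)$ after reparametrization). The only cosmetic difference is that the paper evaluates $E$ at the explicit minimizer $\lambda_*$ of $\lambda\mapsto\lambda^2K-2\lambda^{n/2}P$, whereas you take a limit along the curve; both exploit $n/2<2$ in the same way.
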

\begin{proof} We divide the proof in three steps.\\ 

\noindent Step 1. $\Gamma_{\nu}\neq \emptyset$. 

In fact, given  $\mathbf{v}\in \mathcal{P}$, define 
\begin{equation*}
    \phib(x)=\sqrt{\frac{\nu}{Q(\mathbf{v})}}\mathbf{v}(x).
\end{equation*}
An application of Lemma \ref{functrans} gives
$$Q(\phib)=\nu\quad\mbox{and}\quad P(\phib)=\left(\frac{\nu}{Q(\mathbf{v})}\right)^{3/2}P(\mathbf{v})>0.$$
Hence, $\phib\in \Gamma_{\nu}$.\\

\noindent Step 2. $I_{\nu}<0$.  

Fix any $\phib\in \Gamma_{\nu}$. For $\lambda>0$ define
\begin{equation*}
    \phib^{\lambda}(x)=\lambda^{n/2}\left(\delta_{\frac{1}{\lambda}}\phib\right)(x).
\end{equation*}
 Lemma  \ref{functrans} again implies
\begin{equation*}
Q(\phib^{\lambda})=Q(\phib)=\nu\qquad\mbox{and}\qquad P(\phib^{\lambda})=\lambda^{n/2}P(\phib)>0,
\end{equation*}
which means that  $\phib^{\lambda}\in\Gamma_{\nu}$, for any $\lambda>0$.

Now it is easy to see that the function 
$$ 
\R^{+}\ni \lambda\mapsto f(\lambda):=E(\phib^{\lambda})=\lambda^{2}K(\phib)-2\lambda^{n/2}P(\phib)
$$ 
attains its unique  minimum  at the point $\displaystyle \lambda_{*}=\left[\frac{2K(\phib)}{nP(\phib)}\right]^{\frac{2}{n-4}}>0$. In particular, $f(\lambda_{*})=\lambda_{*}^{2}\left(\frac{n-4}{n}\right)K(\phib)<0$ and
\begin{equation*}
    I_{\nu}\leq E(\phib^{\lambda_{*}})=f(\lambda_{*})<0.
\end{equation*}
which concludes Step 2.\\

\noindent Step 3. $I_{\nu}>-\infty$. 

Fix any $\phib\in \Gamma_{\nu}$. 
From Gagliardo-Nirenberg inequality (Corollary \ref{corollarybestconstant}) and Young's inequality with $\epsilon$,
\begin{equation*}
    \begin{split}
       P(\phib) \leq CQ(\phib)^{\frac{3}{2}-\frac{n}{4}}K(\phib)^{\frac{n}{4}}
       =C\nu^{\frac{3}{2}-\frac{n}{4}}K(\phib)^{\frac{n}{4}}
       \leq \epsilon K(\phib)+C,\qquad\epsilon>0,
    \end{split}
\end{equation*}
where $C=C(\epsilon,\nu)$. Thus, 
\begin{equation}\label{6.6}
    \begin{split}
       E(\phib)=K(\phib)-2P(\phib)
       \geq (1-2\epsilon)K(\phib)-C
       \geq  -C,
    \end{split}
\end{equation}
provided that $0<\epsilon<1/2$.
Since $\phib$ is arbitrary the claim follows and the proof is completed.  
\end{proof}

Next lemma establishes that, every minimizing sequence of \eqref{problE} is bounded in $\mathbf{H}^{1}$ and the real sequence $(P(\phib_m))$ is bounded from below for $m$ sufficiently large.

\begin{lem}\label{minseqbon}
 If $(\phib_m)$ is a minimizing sequence of \eqref{problE}, then there exist constants $B>0$ and $\delta>0$ such that
 \begin{enumerate}
     \item[(i)] $\|\phib_m\|_{\mathbf{H}^{1}}\leq B$, for all $ m\in \N$, and 
     \item[(ii)] $P(\phi_m)\geq \delta$ for all sufficiently large $m$.
 \end{enumerate}
\end{lem}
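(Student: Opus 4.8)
\textbf{Proof plan for Lemma \ref{minseqbon}.}

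The plan is to deduce both statements from the two facts established in Lemma \ref{Inunegat}, namely that $I_\nu$ is finite and strictly negative, together with the Gagliardo--Nirenberg inequality from Corollary \ref{corollarybestconstant}. First I would record that, by definition of a minimizing sequence, $Q(\phib_m)=\nu$ for all $m$ and $E(\phib_m)\to I_\nu$; in particular there is a constant $\Lambda$ with $E(\phib_m)\leq \Lambda$ for all $m$. The $L^2$-part of the $\mathbf{H}^1$-norm is immediately controlled: since $Q(\phib_m)=\sum_k (\alpha_k^2/\gamma_k)\|\phi_{km}\|_{L^2}^2=\nu$ and the coefficients $\alpha_k^2/\gamma_k$ are positive constants, we get $\sum_k\|\phi_{km}\|_{L^2}^2\leq C\nu$. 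So the whole issue is to bound $K(\phib_m)$, equivalently $\sum_k\|\nabla\phi_{km}\|_{L^2}^2$.

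For statement (i), I would reuse exactly the estimate \eqref{6.6} in the proof of Lemma \ref{Inunegat}: applying Corollary \ref{corollarybestconstant} and Young's inequality with a small $\epsilon$ (say $\epsilon=1/4$) gives $P(\phib_m)\leq \epsilon K(\phib_m)+C(\epsilon,\nu)$, and hence
\begin{equation*}
E(\phib_m)=K(\phib_m)-2P(\phib_m)\geq (1-2\epsilon)K(\phib_m)-C(\epsilon,\nu)=\tfrac12 K(\phib_m)-C.
\end{equation*}
Since $E(\phib_m)\leq \Lambda$, this yields $K(\phib_m)\leq 2(\Lambda+C)$, and because each $\gamma_k>0$ we conclude $\sum_k\|\nabla\phi_{km}\|_{L^2}^2\leq B_0$. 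Combining with the $L^2$ bound above gives $\|\phib_m\|_{\mathbf{H}^1}\leq B$.

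For statement (ii), I would argue from the strict negativity of $I_\nu$. Since $E(\phib_m)\to I_\nu<0$, for all sufficiently large $m$ we have $E(\phib_m)\leq I_\nu/2<0$; that is, $K(\phib_m)-2P(\phib_m)\leq I_\nu/2$, hence
\begin{equation*}
P(\phib_m)\geq \tfrac12 K(\phib_m)-\tfrac{I_\nu}{4}\geq -\tfrac{I_\nu}{4}=:\delta>0,
\end{equation*}
using $K(\phib_m)\geq 0$ and $I_\nu<0$. This establishes (ii) with $\delta=-I_\nu/4$. I do not anticipate a genuine obstacle here; the only point requiring a little care is to make sure one invokes the sharp Gagliardo--Nirenberg inequality in the subcritical range $1\leq n\leq 3$ (so that the exponent $n/4$ on $K(\phib_m)$ is strictly less than $1$ and Young's inequality applies with an absorbable $\epsilon K$ term), which is precisely the regime assumed throughout this subsection.
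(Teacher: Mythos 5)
Your proposal is correct and follows essentially the same route as the paper: part (i) is obtained by absorbing $P(\phib_m)$ into $K(\phib_m)$ via the Gagliardo--Nirenberg estimate \eqref{6.6} together with the boundedness of $E(\phib_m)$ and the constraint $Q(\phib_m)=\nu$, and part (ii) uses $E(\phib_m)\leq I_\nu/2<0$ for large $m$ to get $P(\phib_m)\geq -I_\nu/4=\delta$. No gaps.
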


\begin{proof}
 Since $(\phib_m)$ is a minimizing sequence  we have
 \begin{equation*}\label{EQmin}
    \lim_{m\to \infty}E(\phib_m)=I_{\nu} \quad \mbox{and} \quad  Q(\phib_m)=\nu,\quad \forall m\in \N.
 \end{equation*}
In particular, $(\phib_m)$  is bounded in  $\mathbf{L}^{2}$. In addition, from \eqref{6.6} there exist positive constants $\tau$ and $C$ such that
 \begin{equation*}
  \tau K(\phib_m) \leq   E(\phib_m) +C,
 \end{equation*}
 This, combined with the fact  that $ (E(\phib_m))$ is a bounded sequence yield (i).
 
Now we prove (ii). Since $I_{\nu}<0$, we have $E(\phib_m)\leq {I_{\nu}}/{2}$, for $m$ large enough.
 Thus, 
 \begin{equation*}
     \begin{split}
         P(\phib_m)=-\frac{1}{2} E(\phib_m)+\frac{1}{2}K(\phib_m)
         \geq -\frac{I_{\nu}}{4}+\frac{1}{2}K(\phib_m)
         \geq  -\frac{I_{\nu}}{4},
     \end{split}
 \end{equation*}
for $m$ large enough. By taking $\delta=-{I_{\nu}}/{4}>0$ we conclude the proof. 
\end{proof}

Next we prove the subadditivity of $I_\nu$. More precisely,

\begin{lem}\label{subadit}
 For all $\nu_{1}, \nu_{2}>0$ we have
 \begin{equation*}
     I_{\nu_{1}+ \nu_{2}}<I_{\nu_{1}}+I_{ \nu_{2}}. 
 \end{equation*}
\end{lem}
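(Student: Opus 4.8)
The plan is to exploit the scaling behavior of the functionals established in Lemma \ref{functrans}, which tells us that under $\phib \mapsto \lambda^{n/2}\delta_{1/\lambda}\phib$ the charge $Q$ is preserved while $P$ scales like $\lambda^{n/2}$ and $K$ scales like $\lambda^2$. First I would fix $\nu_1,\nu_2>0$ and take an arbitrary $\phib\in\Gamma_{\nu_1+\nu_2}$ with $P(\phib)>0$; by the analysis carried out in Step 2 of the proof of Lemma \ref{Inunegat}, the curve $\lambda\mapsto E(\lambda^{n/2}\delta_{1/\lambda}\phib)=\lambda^2 K(\phib)-2\lambda^{n/2}P(\phib)$ stays inside $\Gamma_{\nu_1+\nu_2}$ and achieves a strictly negative minimum; hence $I_{\nu_1+\nu_2}<0$, a fact we will use repeatedly.

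The key step is a homogeneity estimate: I claim that for every $\theta>1$ we have $I_{\theta\nu}<\theta I_\nu$. To see this, given $\phib\in\Gamma_\nu$ with $P(\phib)>0$, set $\mathbf{w}=\sqrt{\theta}\,\phib$, so that by Lemma \ref{functrans}(i),(iii) we get $Q(\mathbf{w})=\theta\nu$, $P(\mathbf{w})=\theta^{3/2}P(\phib)$, while $K(\mathbf{w})=\theta K(\phib)$. Therefore
\begin{equation*}
E(\mathbf{w})=\theta K(\phib)-2\theta^{3/2}P(\phib)=\theta\big(K(\phib)-2P(\phib)\big)-2(\theta^{3/2}-\theta)P(\phib)=\theta E(\phib)-2(\theta^{3/2}-\theta)P(\phib).
\end{equation*}
Since $\theta>1$ forces $\theta^{3/2}-\theta>0$ and $P(\phib)>0$, we obtain $E(\mathbf{w})<\theta E(\phib)$, and because $\mathbf{w}\in\Gamma_{\theta\nu}$ it follows that $I_{\theta\nu}\le E(\mathbf{w})<\theta E(\phib)$. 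Taking the infimum over all admissible $\phib\in\Gamma_\nu$ yields $I_{\theta\nu}\le\theta I_\nu$; to get the strict inequality one notes that a minimizing sequence $(\phib_m)$ for $I_\nu$ has $P(\phib_m)\ge\delta>0$ for large $m$ by Lemma \ref{minseqbon}(ii), so $E(\sqrt{\theta}\,\phib_m)\le\theta E(\phib_m)-2(\theta^{3/2}-\theta)\delta$, and passing to the limit gives $I_{\theta\nu}\le\theta I_\nu-2(\theta^{3/2}-\theta)\delta<\theta I_\nu$.

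Finally I would deduce the claimed strict subadditivity. Assume without loss of generality $\nu_1\le\nu_2$ and write $\nu_1+\nu_2=\theta\nu_2$ with $\theta=1+\nu_1/\nu_2\in(1,2]$; also $\nu_1+\nu_2=\theta'\nu_1$ with $\theta'=1+\nu_2/\nu_1\ge\theta$. Using the homogeneity estimate twice,
\begin{equation*}
I_{\nu_1+\nu_2}=\tfrac{\nu_1}{\nu_1+\nu_2}I_{\nu_1+\nu_2}+\tfrac{\nu_2}{\nu_1+\nu_2}I_{\nu_1+\nu_2}<\tfrac{\nu_1}{\nu_1+\nu_2}\cdot\tfrac{\nu_1+\nu_2}{\nu_1}I_{\nu_1}+\tfrac{\nu_2}{\nu_1+\nu_2}\cdot\tfrac{\nu_1+\nu_2}{\nu_2}I_{\nu_2}=I_{\nu_1}+I_{\nu_2},
\end{equation*}
where in the middle step I invoked $I_{\theta'\nu_1}<\theta' I_{\nu_1}$ and $I_{\theta\nu_2}<\theta I_{\nu_2}$ (both valid since $\theta,\theta'>1$), multiplied through by the appropriate positive weights, and used that $I_{\nu_1+\nu_2}<0$ so the weighted combination behaves correctly. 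The main obstacle I anticipate is bookkeeping the direction of the strict inequalities correctly — the negativity of $I_\nu$ is essential, since it is what makes the scaling argument push $E$ strictly downward rather than upward — but no delicate analysis is required beyond Lemmas \ref{functrans}, \ref{Inunegat} and \ref{minseqbon} already in hand.
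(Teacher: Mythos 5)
Your proof is correct, but it takes a genuinely different route from the paper. The paper's argument computes the \emph{exact} scaling law $I_{\theta\nu}=\theta^{\frac{6-n}{4-n}}I_\nu$ via the dilation $\phib\mapsto\theta^{\frac{2}{4-n}}\delta_{\theta^{-1/(4-n)}}\phib$ (which rescales $Q$ by $\theta$ and both $K$ and $P$ by the same factor $\theta^{\frac{6-n}{4-n}}$), then reduces strict subadditivity to the elementary inequality $\nu_1^{\frac{6-n}{4-n}}+\nu_2^{\frac{6-n}{4-n}}<(\nu_1+\nu_2)^{\frac{6-n}{4-n}}$ together with $I_1<0$. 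You instead establish only the strict superlinearity $I_{\theta\nu}<\theta I_\nu$ for $\theta>1$, using the simpler rescaling $\phib\mapsto\sqrt{\theta}\,\phib$ and the mismatch between the quadratic homogeneity of $K,Q$ and the cubic homogeneity of $P$; the strictness is correctly secured through the uniform lower bound $P(\phib_m)\geq\delta>0$ on minimizing sequences from Lemma \ref{minseqbon}(ii), and the weighted-average step $I_{\nu_1+\nu_2}=\frac{\nu_1}{\nu_1+\nu_2}I_{\nu_1+\nu_2}+\frac{\nu_2}{\nu_1+\nu_2}I_{\nu_1+\nu_2}<I_{\nu_1}+I_{\nu_2}$ is the standard concentration-compactness device and goes through exactly as you write it. Your approach buys robustness: it never needs the explicit exponent $\frac{6-n}{4-n}$ or the case analysis in the paper's Step 2, only that $P$ has higher homogeneity than $K$ and $Q$ and that $I_\nu<0$. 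The paper's approach buys the sharper exact identity $I_{\theta\nu}=\theta^{\frac{6-n}{4-n}}I_\nu$, which is of independent interest. One minor imprecision: in your last step the negativity of $I_{\nu_1+\nu_2}$ plays no role, since multiplying a strict inequality by a positive weight preserves it regardless of sign; negativity enters your argument only through Lemma \ref{minseqbon}(ii), which supplies the $\delta>0$.
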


\begin{proof} We proceed in three steps.\\
Step 1. If $\theta>0$ and $\nu>0$ then $I_{\theta \nu}=\theta ^{\frac{6-n}{4-n}}I_{\nu}$. \\

In fact, given any $\phib\in\mathbf{H}^1$, define
\begin{equation*}
    \phib^{\theta}(x)=\theta^{\frac{2}{4-n}} \left( \delta_\lambda\phib\right)\left(x\right),\qquad \lambda=\theta^{-\frac{1}{4-n}}.
\end{equation*}
From Lemma \ref{functrans} we then infer
\begin{eqnarray*}
Q(\phib^{\theta})=\theta Q(\phib), \quad
  K(\phib^{\theta})=\theta^{\frac{6-n}{4-n}} K(\phib),\quad P(\phib^{\theta})=\theta^{\frac{6-n}{4-n}} P(\phib),
\end{eqnarray*}
from which we deduce that the sets $\{ E(\phib); \, \phib\in \Gamma_{\theta\nu} \}$ and $\{\theta^{\frac{6-n}{4-n}} E(\phib); \, \phib\in \Gamma_{\nu} \}$ are the same.
Hence,
\begin{equation*}
    \begin{split}
       I_{\theta \nu} =\inf\{ E(\phib); \, \phib\in \Gamma_{\theta\nu} \}
       =\inf \{\theta^{\frac{6-n}{4-n}} E(\phib); \, \phib\in \Gamma_{\nu} \}
       =  \theta^{\frac{6-n}{4-n}}I_{ \nu}.
    \end{split}
\end{equation*}

\noindent Step 2. For $1\leq n\leq3$, we have $\nu_{1}^{\frac{6-n}{4-n}}+\nu_{2}^{\frac{6-n}{4-n}}<(\nu_{1}+\nu_{2})^{\frac{6-n}{4-n}}$. 

Observe that the cases $n=2$ and $n=3$ are immediate. For $n=1$ we have to prove that
\begin{equation} \label{desn=1}
    \nu_{1}^{\frac{5}{3}}+\nu_{2}^{\frac{5}{3}}<(\nu_{1}+\nu_{2})^{\frac{5}{3}}.
\end{equation}
 Without loss of generality we may assume $\nu_{1}< \nu_{2}$. By dividing both sides of \eqref{desn=1} by $\nu_2^{\frac{5}{3}}$ we see that it suffices to prove that 
$$f(x)=\frac{(1+x)^{\frac{5}{3}}}{1+x^\frac{5}{3}}>1, \qquad 0<x<1.$$
Since  $f'(x)>0$ if $0<x<1$, $f$ is an increasing function on $0<x<1$. In particular, $1=f(0)<f(x)$, which is the desired conclusion.\\

\noindent Step 3. $I_{\nu_{1}+\nu_{2}}<I_{\nu_{1}}+I_{\nu_{2}}$.  

Lemma \ref{Inunegat} yields  $I_{1}<0$. Thus, using Steps 1 and 2 above
\begin{equation*}
    I_{\nu_{1}+\nu_{2}}=(\nu_{1}+\nu_{2})^{\frac{6-n}{4-n}}I_{1}<\nu_{1}^{\frac{6-n}{4-n}}I_1+\nu_{2}^{\frac{6-n}{4-n}}I_{1}=I_{\nu_{1}}+I_{\nu_{2}},
\end{equation*}
which completes the proof. 
\end{proof}

\subsubsection{Ruling out vanishing} Here we prove that the case $\alpha =0$ cannot occur. We start with the following property.

\begin{lem}\label{supintF}
Let $B>0$ and $\delta>0$ be given. There exists a constant $\eta=\eta(B,\delta)>0$ such that if $\phib\in \mathbf{H}^{1}$  satisfies $\phib\geq0$, $\|\phib\|_{\mathbf{H}^{1}}\leq B$ and $P(\phib)\geq \delta$, then 
\begin{equation*}
    \sup_{y\in \R^{n}}\int_{\{|x-y|<1/2\}}F(\phib)\;dx\geq \eta.
\end{equation*}
\end{lem}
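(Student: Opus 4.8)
The plan is to argue by a covering argument combined with the Gagliardo–Nirenberg-type inequality from Corollary \ref{corollarybestconstant}, exploiting that $F$ has the cubic growth bound \eqref{estdifFeq1}. First I would cover $\R^n$ by a countable family of balls $\{B(y_j,1)\}_{j}$, with centers $y_j$ on a lattice, in such a way that every point of $\R^n$ lies in at most $N=N(n)$ of the balls $B(y_j,1/2)$ and the doubled balls $B(y_j,1)$ still have bounded overlap. On each such ball one applies the local Gagliardo–Nirenberg inequality: for $\phib$ restricted to $B(y_j,1/2)$,
\begin{equation*}
\int_{B(y_j,1/2)}\sum_{k=1}^{l}|\phi_k|^3\,dx\leq C\Big(\sum_{k=1}^l\|\phi_k\|_{H^1(B(y_j,1/2))}^2\Big)^{1/2}\sup_j\int_{B(y_j,1/2)}\sum_{k=1}^l|\phi_k|^3\,dx^{(\text{remainder})},
\end{equation*}
more precisely the standard consequence that $\|\phi_k\|_{L^3(B)}^3\leq C\|\phi_k\|_{L^3(B)}^{?}\cdots$; the clean way is to use that on a ball of fixed radius, $\|g\|_{L^3}^3 \leq C\,\|g\|_{L^3}\,\|g\|_{H^1}^2$ is false as stated, so instead I would use $\|g\|_{L^3(B)}^2\leq C\|g\|_{L^3(B)}^{2-?}$—let me instead use the correct interpolation: $\|g\|_{L^3(B)}^3\leq C\big(\sup_B\int|g|^3\big)^{\!\theta}\|g\|_{H^1(B)}^{2}$ is the shape we want, obtained from $\|g\|_{L^3}\leq \|g\|_{L^2}^{1/2}\|g\|_{L^6}^{1/2}$ and Sobolev embedding $H^1(B)\hookrightarrow L^6(B)$ for $n\le 3$.

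Concretely: by \eqref{estdifFeq1}, $\int F(\phib)\,dx\le C\sum_k\int|\phi_k|^3\,dx$, so it suffices to bound $\sum_k\int_{\R^n}|\phi_k|^3\,dx$ from above by (something) $\times \big(\sup_y\int_{|x-y|<1/2}F(\phib)\,dx\big)^{\kappa}$ for some $\kappa>0$. Decompose $\R^n=\bigcup_j B(y_j,1/2)$ with bounded overlap; on each ball, by Hölder and Sobolev ($n\le 3$ so $H^1\hookrightarrow L^6$ on a ball, with embedding constant uniform in $j$ by translation invariance),
\begin{equation*}
\int_{B(y_j,1/2)}|\phi_k|^3\,dx\leq \Big(\int_{B(y_j,1/2)}|\phi_k|^2\Big)^{\!3/4}\Big(\int_{B(y_j,1/2)}|\phi_k|^6\Big)^{\!1/4}\leq C\,\Big(\int_{B(y_j,1/2)}|\phi_k|^2\Big)^{\!3/4}\|\phi_k\|_{H^1(B(y_j,1/2))}^{3/2}.
\end{equation*}
Summing over $j$ and $k$, using $\big(\int_{B(y_j,1/2)}|\phi_k|^2\big)\le \big(\max_i\frac{\gamma_i}{\alpha_i^2}\big)\cdot \big(\text{a term bounded by }M_m\text{-type supremum}\big)\le C\sup_{y}\int_{|x-y|<1/2}\sum_i\frac{\alpha_i^2}{\gamma_i}|\phi_i|^2\,dx$, pulling out the supremum to the power $3/4$, and using Hölder in $j$ together with the bounded-overlap property to control $\sum_j\|\phi_k\|_{H^1(B(y_j,1/2))}^{3/2}\le C\|\phi_k\|_{H^1}^{3/2}\le CB^{3/2}$, I obtain
\begin{equation*}
\delta\leq C\int F(\phib)\,dx\leq C\sum_{k}\int|\phi_k|^3\,dx\leq C\,B^{3/2}\Big(\sup_{y}\int_{|x-y|<1/2}\sum_i\tfrac{\alpha_i^2}{\gamma_i}|\phi_i|^2\,dx\Big)^{3/4}.
\end{equation*}
Hmm—this controls the $L^2$-mass supremum, not $\sup_y\int_{|x-y|<1/2}F(\phib)$ directly. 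To close the loop I would instead localize $F$: on each ball $B(y_j,1/2)$ I estimate $\int_{B(y_j,1/2)}|\phi_k|^3\le \big(\sup_y\int_{|x-y|<1/2}|\phi_k|^3\big)^{1/?}\cdots$; cleaner still, use $\int_{B}|\phi_k|^3\le \big(\int_B|\phi_k|^3\big)^{1/3}\cdot\big(\int_B|\phi_k|^3\big)^{2/3}$ and, on the first factor, note $\big(\int_B|\phi_k|^3\big)^{1/3}\le C\|\phi_k\|_{H^1(B)}$ by Sobolev, while the second factor is $\le\big(\sup_y\int_{|x-y|<1/2}|\phi_k|^3\big)^{2/3}$; then since $|\phi_k|^3\le C F(|\phi_k|e_k+\cdots)$—wait, that needs the components. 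The honest fix: estimate $\int_B\sum_k|\phi_k|^3$ against $\big(\sup_y\int_{B(y,1/2)}\sum_k|\phi_k|^3\big)^{2/3}\big(\sum_k\|\phi_k\|_{H^1(B)}^2\big)$ by summing over $j$ with bounded overlap, getting $\sum_k\int|\phi_k|^3\le C\big(\sup_y\int_{B(y,1/2)}\sum_k|\phi_k|^3\big)^{2/3}B^2$; finally bound $\sup_y\int_{B(y,1/2)}\sum_k|\phi_k|^3$ above by $C\sup_y\int_{B(y,1/2)}F(\phib)\,dx$—this last inequality is the one that needs justification and is where assumption \ref{H7} (so that $F\ge 0$ on the positive cone and $F(|\phi_1|,\dots,|\phi_l|)$ controls $\sum|\phi_k|^3$ from below near where the mass concentrates) would be invoked, likely via the homogeneity of $F$ and the fact that $F(\phib)>0$ whenever $\phib>0$, combined with a compactness/continuity argument on the unit sphere of $\R^l_+$ giving $F(\mathbf{y})\ge c|\mathbf{y}|^3$ — but this is false in general since $F$ can vanish on hyperplanes (\ref{H8}).

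So the main obstacle — and the step I would think hardest about — is precisely this last reduction: one cannot bound $\sum_k|\phi_k|^3$ pointwise by $CF(|\phi_1|,\dots,|\phi_l|)$ because $F$ may vanish when some components vanish. The correct route, which I would pursue, avoids pointwise comparison entirely: since $\delta\le P(\phib)=\int F(\phib)\,dx$ and $(\phib_m)$-type bounds give $\|\phib\|_{H^1}\le B$, apply Lemma \ref{estdifF} and the local Gagliardo–Nirenberg inequality directly to $\int F(\phib)$ — but better, use that $\mathbf{H}^1\ni\phib\mapsto\int_{B(y,1/2)}F(\phib)\,dx$ and the global $\int F(\phib)$ differ by the overlap-summation: writing $1=\sum_j\chi_j^3$ for a smooth partition subordinate to $\{B(y_j,1/2)\}$ (or just using bounded overlap of the balls themselves), $\int_{\R^n}F(\phib)\,dx\le C\sum_j\int_{B(y_j,1/2)}F(\phib_j)$-type terms where $\phib_j=\phib\cdot(\text{cutoff})$, and on each ball Lemma \ref{estdifF} with $\mathbf{z}'=0$ gives $\int_{B(y_j,1/2)}|F(\phib)|\le C\sum_k\int_{B(y_j,1/2)}|\phi_k|^3\le C\,\big(\sup_i\|\phi_i\|_{H^1(B(y_j,1/2))}^2\big)\cdot\big(\sup_i\|\phi_i\|_{L^3(B(y_j,1/2))}\big)$, and since $\|\phi_i\|_{L^3(B(y_j,1/2))}\le C\,\big(\sup_y\int_{B(y,1/2)}F(|\phi_1|,\dots)\big)^{1/3}$ fails, the real endgame is: bound $\sup_y\int_{B(y,1/2)}F(\phib)$ below by contradiction — if it were $<\eta$ for every $y$, then by the covering/summation argument $\int_{\R^n}F(\phib)\le C\eta^{?}B^{?}<\delta$, contradicting $P(\phib)\ge\delta$, provided $\eta$ is chosen small enough depending on $B,\delta$. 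This contradiction framing sidesteps the pointwise comparison: one only needs $\int_{\R^n}F(\phib)\,dx\le \Lambda(B)\cdot\big(\sup_y\int_{B(y,1/2)}F(\phib)\,dx\big)^{\kappa}$ for some $\kappa\in(0,1)$ and some $\Lambda$ depending on $B$, which follows from the local $L^3$ estimates and $\|\phib\|_{H^1}\le B$ together with \eqref{estdifFeq1} and \eqref{estdifFeq}. I would write the proof in that contradiction form, and the technical heart is establishing that inequality with the right power $\kappa$ via Hölder interpolation on each ball and bounded-overlap summation.
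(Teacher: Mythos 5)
Your final paragraph lands on a viable strategy, but the decisive inequality
\begin{equation*}
\int_{\R^n}F(\phib)\,dx\le \Lambda(B)\Big(\sup_{y}\int_{\{|x-y|<1/2\}}F(\phib)\,dx\Big)^{\kappa}
\end{equation*}
is exactly where all the content sits, and you only assert that it ``follows from the local $L^3$ estimates'' together with \eqref{estdifFeq1} and \eqref{estdifFeq}. It does not: those give only the upper bound $\int F\le C\sum_k\|\phi_k\|_{L^3}^3$, which returns you to the local $L^2$-mass supremum you had already identified as the wrong quantity. The step that actually closes it is a splitting on each cell of a disjoint decomposition $\R^n=\bigcup_j Q_j$: write $\int_{Q_j}F=\big(\int_{Q_j}F\big)^{1/3}\big(\int_{Q_j}F\big)^{2/3}$, apply the local Gagliardo--Nirenberg bound $\int_{Q_j}F\le C\|\phib\|_{\mathbf{H}^1(Q_j)}^3$ (from \eqref{estdifFeq1} and $H^1(Q_j)\hookrightarrow L^3(Q_j)$) only to the $2/3$-power, and sum, obtaining $\int_{\R^n}F\le C\big(\sup_j\int_{Q_j}F\big)^{1/3}\sum_j\|\phib\|^2_{\mathbf{H}^1(Q_j)}\le CB^2\big(\sup_j\int_{Q_j}F\big)^{1/3}$; with $P(\phib)\ge\delta$ this yields $\eta=\delta^3/(C^3B^6)$. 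Until that splitting is written down the proposal has a genuine gap. On the other hand, your long detour about $F$ vanishing on hyperplanes is, as you suspected, a red herring: no pointwise lower bound of $F$ by $\sum_k|\phi_k|^3$ is needed anywhere, only $F\ge0$ on the positive cone (Lemma \ref{fkreal}), which makes the global integral the sum of the local ones.

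The paper assembles the same two ingredients without any power-$\kappa$ inequality: it tiles $\R^n$ by disjoint cubes $Q_j$ of side $1/\sqrt{2}$, notes $\sum_j\|\phib\|^2_{\mathbf{H}^1(Q_j)}=\|\phib\|^2_{\mathbf{H}^1}\le\frac{B^2}{\delta}P(\phib)=\frac{B^2}{\delta}\sum_j\int_{Q_j}F(\phib)$, and by pigeonhole extracts a single cube $Q_{j_0}$ with $\|\phib\|^2_{\mathbf{H}^1(Q_{j_0})}\le\frac{B^2}{\delta}\int_{Q_{j_0}}F(\phib)$; the local Gagliardo--Nirenberg bound on that one cube then gives $\int_{Q_{j_0}}F\le \frac{CB^3}{\delta^{3/2}}\big(\int_{Q_{j_0}}F\big)^{3/2}$, hence $\int_{Q_{j_0}}F\ge\delta^3/(C^2B^6)$, and $F\ge0$ passes from the cube to the ball. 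This is the contrapositive of your contradiction argument and produces the same $\eta$, but it dispenses with the H\"older splitting and the bounded-overlap bookkeeping; once you complete your version it is essentially the paper's proof in disguise.
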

\begin{proof}
Without loss of generality we assume that $\mathbf{H}^{1}$ is endowed with the equivalent norm
$\|\phib\|_{\mathbf{H}^{1}}^2=K(\phib)+Q(\phib)$.
Let $(Q_{j})_{j\geq 0}$ be a sequence of open cubes in $\R^{n}$,  with side length $\displaystyle \frac{1}{\sqrt{2}}$, such that $Q_{j}\cap Q_{k}=\emptyset$ if $j\neq k$ and $\overline{\bigcup_{j\geq 0}Q_{j}}=\R^{n}$. Denote by $x_{j}$ the center of each cube. It follows that
\begin{equation*}
\begin{split}
\sum_{j=0}^{\infty}\int_{Q_{j}}\sum_{k=1}^{l}\left(\gamma_{k}|\nabla \phi_{k}|^{2}+\frac{\alpha_{k}^{2}}{\gamma_{k}}|\phi_{k}|^{2}\right)\;dx=\|\phib\|_{\mathbf{H}^{1}}^{2}
\leq\frac{B^{2}}{\delta}P(\phib)
=\frac{B^{2}}{\delta}\sum_{j=0}^{\infty}\int_{Q_{j}}F(\phib)\;dx.
\end{split}
\end{equation*}
The last inequality implies that there exist $j_{0}\geq0$ such that
\begin{equation}\label{normH1}
\begin{split}
   \|\phib\|_{\mathbf{H}^{1}(Q_{j_{0}})}^{2}=\int_{Q_{j_{0}}}\sum_{k=1}^{l}\left(\gamma_{k}|\nabla \phi_{k}|^{2}+\frac{\alpha_{k}^{2}}{\gamma_{k}}|\phi_{k}|^{2}\right)\;dx
   \leq \frac{B^{2}}{\delta}\int_{Q_{0}}F(\phib)\;dx.
   \end{split}
\end{equation}
On the other hand, the Gagliardo-Nirenberg inequality on bounded domains (see, for instance, \cite[Theorem 5.8]{Adams}) gives
\begin{equation}\label{normH2}
    \begin{split}
        \int_{Q_{j_{0}}}F(\phib)\;dx\leq C\|\phib\|_{\mathbf{L}^{2}(Q_{j_{0}})}^{3-\frac{n}{2}}\|\phib\|_{\mathbf{H}^{1}(Q_{j_{0}})}^{\frac{n}{2}}
        \leq C \|\phib\|_{\mathbf{H}^{1}(Q_{j_{0}})}^{3}.
    \end{split}
\end{equation}
 Inequalities \eqref{normH1} and \eqref{normH2} show that
\begin{equation*}
 \int_{Q_{j_{0}}}F(\phib)\;dx\leq    \frac{CB^{3}}{\delta^{\frac{3}{2}}} \left[\int_{Q_{j_{0}}}F(\phib)\;dx\right]^{\frac{3}{2}},
\end{equation*}
which leads to
\begin{equation*}
   \int_{Q_{j_{0}}}F(\phib)\;dx\geq \frac{\delta^{3}}{C^{2}B^{6}}.
\end{equation*}
Let $B_{1/2}(x_{j_{0}})$ be the ball centered in $x_{j_{0}}$ and radius $\displaystyle \frac{1}{2}$. Since $Q_{j_{0}} \subset B_{1/2}(x_{j_{0}})$ and $F(\phib)\geq0$ (see Lemma \ref{fkreal}),
\begin{equation*}
    \sup_{y\in \R^{n}}\int_{\{|x-y|<1/2\}}F(\phib)\;dx \geq \int_{B_{1/2}(x_{j_{0}})} F(\phib)\;dx\geq  \int_{Q_{j_{0}}}F(\phib)\;dx \geq \eta,
\end{equation*}
where $\eta= \displaystyle \frac{\delta^{3}}{C^{2}B^{6}} $, which proves the lemma.
\end{proof}

\begin{lem}\label{outvanishi}
 For every minimizing sequence of \eqref{problE} we have $\alpha>0$. In particular,  \textit{vanishing} cannot occur.
\end{lem}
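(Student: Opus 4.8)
The goal is to rule out vanishing, i.e. to show $\alpha = \lim_{r\to\infty} M(r) > 0$ for any minimizing sequence $(\phib_m)$ of \eqref{problE}. The natural strategy is to argue by contradiction: assume $\alpha = 0$, which by the definition of $M_m$ in \eqref{defMm} means
$$
\lim_{m\to\infty}\sup_{y\in\R^n}\int_{\{|x-y|<r\}}\sum_{k=1}^l\frac{\alpha_k^2}{\gamma_k}|\phi_{km}|^2\,dx = 0
$$
for every fixed $r>0$ (after passing to the subsequence along which $M_m\to M$). First I would recall from Lemma \ref{minseqbon} that $(\phib_m)$ is bounded in $\mathbf{H}^1$ (say $\|\phib_m\|_{\mathbf{H}^1}\le B$) and that $P(\phib_m)\ge\delta$ for all large $m$. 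Then I would invoke Lemma \ref{supintF} with these constants $B$ and $\delta$ (each $\phib_m$ is non-negative by the remark preceding, so the hypotheses apply), obtaining a constant $\eta=\eta(B,\delta)>0$ such that
$$
\sup_{y\in\R^n}\int_{\{|x-y|<1/2\}}F(\phib_m)\,dx \ge \eta
$$
for all large $m$.

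The contradiction will come from showing that the vanishing hypothesis forces $\sup_y\int_{\{|x-y|<1/2\}}F(\phib_m)\,dx\to 0$, which is incompatible with the lower bound $\eta$. To see this, I would use the pointwise bound on $F$ from Lemma \ref{estdifF}, namely $|\mathrm{Re}\,F(\phib_m)|\le C\sum_k|\phi_{km}|^3$ (here $F$ is real on the positive cone so $F=\mathrm{Re}\,F$), together with the Gagliardo–Nirenberg inequality on a ball of fixed radius $1/2$ (as in \cite[Theorem 5.8]{Adams}), which gives, for each ball $B_{1/2}(y)$,
$$
\int_{B_{1/2}(y)}F(\phib_m)\,dx \le C\|\phib_m\|_{\mathbf{L}^2(B_{1/2}(y))}^{3-\frac n2}\|\phib_m\|_{\mathbf{H}^1(B_{1/2}(y))}^{\frac n2}.
$$
Taking the supremum over $y$ and using that the $\mathbf{H}^1$-norms over the balls are uniformly bounded by $B$, while $\sup_y\|\phib_m\|_{\mathbf{L}^2(B_{1/2}(y))}^2\to 0$ by the vanishing assumption (this is exactly $M_m(1/2)\to 0$ up to the fixed constants $\alpha_k^2/\gamma_k$), we conclude $\sup_y\int_{B_{1/2}(y)}F(\phib_m)\,dx\to 0$ since $3-n/2>0$ for $1\le n\le 3$. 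This contradicts $\eta>0$, so $\alpha>0$.

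The main obstacle is essentially bookkeeping: one must be careful that the localized quantity controlled by the vanishing hypothesis ($M_m(r)$, defined with the weights $\alpha_k^2/\gamma_k$) is comparable to $\sup_y\|\phib_m\|_{\mathbf{L}^2(B_r(y))}^2$, which is immediate since $\alpha_k,\gamma_k$ are positive constants; and that the Gagliardo–Nirenberg estimate on $B_{1/2}(y)$ has a constant independent of the center $y$, which holds by translation invariance of the estimate. The exponent condition $3-n/2>0$ is what makes the argument work precisely in the subcritical range $1\le n\le 3$. No genuinely hard analysis is needed here — the real content was already packaged into Lemma \ref{supintF}.
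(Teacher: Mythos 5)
Your proposal is correct and follows essentially the same route as the paper: both rest on Lemma \ref{minseqbon} and Lemma \ref{supintF} to get the uniform lower bound $\eta$ on the localized integral of $F$, and then the pointwise bound $F(\phib_m)\le C\sum_k|\phi_{km}|^3$ together with the Gagliardo--Nirenberg inequality on a ball of radius $1/2$ to control that integral by $[M_m(1/2)]^{(6-n)/4}$ times the uniform $\mathbf{H}^1$ bound. The only difference is presentational: the paper argues directly that $M(1/2)\ge(\eta/C)^{4/(6-n)}>0$, while you phrase it as a contradiction with $\alpha=0$; the estimates and exponents are the same.
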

\begin{proof}
From Lemmas \ref{supintF} and \ref{minseqbon} we can find $\eta>0$ and a sequence $(y_{m})\subset \R^{n}$ such that 
\begin{equation*}
   \int_{\{|x-y_{m}|<1/2\}}F(\phib_m)\;dx\geq \eta,\qquad \forall m\in \N.
\end{equation*}
Thus, Lemma \ref{estdifF},  a  change of variables and  the Gagliardo-Nirenberg inequality on bounded domains   give
\begin{equation*}
    \begin{split}
        \eta &\leq \int_{\{|x-y_{m}|<1/2\}}F(\phib_m)\;dx
        \leq C\int_{\{|x-y_{m}|<1/2\}}\sum_{j=1}^{l}|\phi_{jm}(x)|^{3}\;dx\\
         &= C\int_{\{|z|<1/2\}}\sum_{j=1}^{l}|\phi_{jm}(z+y_{m})|^{3}\;dz\\
        &\leq C\left(\int_{\{|z|<1/2\}}\sum_{k=1}^{l}\frac{\alpha_{k}^{2}}{\gamma_{k}}| \phi_{km}(z+y_{m})|^{2}\;dz\right)^{\frac{3}{2}-\frac{n}{4}}\|\phib_m(\cdot+y_m)\|_{\mathbf{H}^1(B_{1/2}(0))}^\frac{n}{2}
    \end{split}
\end{equation*}
where ${C}$ is a constant  depending on the ball $B_{1/2}(0)$ but independent of $m$. Now, by  using Lemma \ref{minseqbon} and the definition of $M_{m}$ in \eqref{defMm} we conclude that
\begin{equation*}
    \begin{split}
      \eta &\leq {C}B^{\frac{n}{2}}\left(\int_{\{|x-y_{m}|<1/2\}}\sum_{k=1}^{l}\frac{\alpha_{k}^{2}}{\gamma_{k}}| \phi_{km}|^{2}\;dx\right)^{\frac{6-n}{4}}
    \leq C\left[M_{m}\left(1/2\right)\right]^{\frac{6-n}{4}},
    \end{split}
\end{equation*}
where $C$ is an  universal constant. 
Taking the limit as $m\to\infty$ in this last inequality we deduce $\eta \leq C\left[M\left(1/2\right)\right]^{\frac{6-n}{4}}$, that is, $ M\left(1/2\right)\geq \left(\frac{\eta}{C}\right)^{\frac{4}{6-n}}$.  Consequently, since $M(r)$ is an increasing function,
\begin{equation*}
    \alpha=\lim_{r\to \infty}M(r)\geq M\left(1/2\right)>0,
\end{equation*}
which is the desired conclusion.
\end{proof}

\subsubsection{Ruling out dichotomy}
Here we show that the case $0<\alpha<\nu$ does not occur.  The main tool to obtain this is the following result.
 
 \begin{lem}\label{decompseq}
Let $(\phib_m)$ be a minimizing sequence of \eqref{problE}. Then, for every $\epsilon>0$, there exist $m_{0}\in \N$ and sequences of functions $(\mathbf{v}_m)_{m\geq m_{0}}$ and $(\mathbf{w}_m)_{m\geq m_{0}}$ in $\mathcal{P}$    such that  for every $m\geq m_{0}$,
\begin{enumerate}
    \item[(i)] $|Q(\mathbf{v}_m)-\alpha|<\epsilon $.
    \item[(ii)] $|Q(\mathbf{w}_m)-(\nu-\alpha)|<\epsilon $.
    \item[(iii)] $E(\phib_m)\geq E(\mathbf{v}_m)+E(\mathbf{w}_m)-\epsilon$. 
\end{enumerate}
\end{lem}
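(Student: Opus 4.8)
The plan is to apply Lions' concentration-compactness lemma to the sequence of densities $\rho_m = \sum_{k=1}^l \frac{\alpha_k^2}{\gamma_k}|\phi_{km}|^2$, which have constant integral $\nu$. In the dichotomy case $0<\alpha<\nu$, the concentration functions $M_m$ (defined in \eqref{defMm}) produce, after passing to a subsequence, radii $R_m\to\infty$ and a sequence $y_m\in\R^n$ such that the mass of $\rho_m$ in $\{|x-y_m|<R_m\}$ is close to $\alpha$ while the mass in $\{|x-y_m|>2R_m\}$ is close to $\nu-\alpha$, and the mass in the annulus $\{R_m<|x-y_m|<2R_m\}$ is arbitrarily small for $m$ large. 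The idea is then to localize $\phib_m$ using two smooth cut-off functions: fix $\chi,\zeta\in C^\infty(\R^n)$ with $\chi\equiv1$ on $B_1$, $\mathrm{supp}\,\chi\subset B_2$, $\zeta\equiv1$ on $\R^n\setminus B_2$, $\mathrm{supp}\,\zeta\subset\R^n\setminus B_1$, and set $\mathbf{v}_m(x)=\chi((x-y_m)/R_m)\phib_m(x)$, $\mathbf{w}_m(x)=\zeta((x-y_m)/R_m)\phib_m(x)$.

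First I would record the elementary mass estimates: $|Q(\mathbf{v}_m)-\alpha|<\epsilon$ and $|Q(\mathbf{w}_m)-(\nu-\alpha)|<\epsilon$ for $m\geq m_0$, which are immediate from the concentration-function bounds since $\mathbf{v}_m$ and $\mathbf{w}_m$ agree with $\phib_m$ on the respective cores and their supports overlap only in the thin annulus where $\rho_m$ has mass $<\epsilon$. Second, I would verify that (shrinking if necessary) $\mathbf{v}_m,\mathbf{w}_m\in\mathcal{P}$: since $\phib_m$ is a minimizing sequence, by Lemma \ref{minseqbon}(ii) we have $P(\phib_m)\geq\delta>0$; the gradient estimate $\|\nabla[\chi_{R_m}\phi]\|_{L^2}^2$ introduces an error term $\|\phi\nabla\chi_{R_m}\|_{L^2}^2\lesssim R_m^{-2}\|\phi\|_{L^2}^2\to0$, and because $\phib_m$ is bounded in $\mathbf{H}^1$ the $\mathbf{L}^3$-mass (hence $P$, via the growth bound \eqref{estdifFeq1} and Gagliardo-Nirenberg) escaping into the annulus tends to $0$, so $P(\mathbf{v}_m)$ stays close to $P(\mathbf{v}_m)$-on-the-core and in particular is positive for $m$ large; similarly for $\mathbf{w}_m$. (If $\alpha$ or $\nu-\alpha$ is so small that the split-off piece fails to have positive $P$, one adjusts by the scaling of Lemma \ref{functrans}; but the natural formulation is to pick the split so that both pieces carry a definite fraction of the $\mathbf{L}^3$-mass, which is possible precisely because vanishing has been ruled out.)

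The main point, and the step I expect to be the real obstacle, is the energy superadditivity (iii): $E(\phib_m)\geq E(\mathbf{v}_m)+E(\mathbf{w}_m)-\epsilon$. For the kinetic part $K$, expanding $|\nabla(\chi_{R_m}\phi)|^2+|\nabla(\zeta_{R_m}\phi)|^2$ and using $\chi^2+\zeta^2\leq 1+C\mathbf{1}_{\text{annulus}}$ together with the cross terms controlled by $R_m^{-1}$ and by the annular $L^2$-mass of $\nabla\phib_m$, one gets $K(\mathbf{v}_m)+K(\mathbf{w}_m)\leq K(\phib_m)+o(1)$. For the nonlinear part one must show $P(\mathbf{v}_m)+P(\mathbf{w}_m)\geq P(\phib_m)-o(1)$, equivalently that $P(\phib_m)-P(\mathbf{v}_m)-P(\mathbf{w}_m)$ is controlled by the $\mathbf{L}^3$-norm of $\phib_m$ over the annulus, which goes to $0$; here one uses the pointwise bound on $F$ from Lemma \ref{estdifF} together with the fact that $\chi_{R_m},\zeta_{R_m}$ partition space up to the annulus, so $F(\mathbf{v}_m)$ and $F(\mathbf{w}_m)$ reconstruct $F(\phib_m)$ outside the annulus and the annular contribution is $\lesssim\sum_k\|\phi_{km}\|_{L^3(\text{annulus})}^3\to0$. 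Combining, $E(\mathbf{v}_m)+E(\mathbf{w}_m)=K(\mathbf{v}_m)+K(\mathbf{w}_m)-2P(\mathbf{v}_m)-2P(\mathbf{w}_m)\leq K(\phib_m)-2P(\phib_m)+o(1)=E(\phib_m)+o(1)$, and choosing $m_0$ large enough absorbs the $o(1)$ into $\epsilon$. The delicate bookkeeping is ensuring all error terms are genuinely uniform and that the annulus can be chosen (via $R_m\to\infty$ slowly) to simultaneously trap little $L^2$-mass, little $\|\nabla\phib_m\|_{L^2}^2$-mass, and little $\mathbf{L}^3$-mass — this follows from the convergence $M_m\to M$ and a standard diagonal/Helly argument, exactly as in \cite{Albert}, to which I would refer for the routine parts.
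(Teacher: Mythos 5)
Your overall route is the same as the paper's: center cutoffs at the concentration point $y_m$, use the bounds $\alpha-\epsilon<M_m(r)\le M_m(2r)<\alpha+\epsilon$ to get (i)--(ii), control the nonlinear term on the annulus by its $\mathbf{L}^3$ mass via Gagliardo--Nirenberg and the uniform $\mathbf{H}^1$ bound, and absorb everything into $\epsilon$. (The paper fixes one large radius $r$ rather than sending $R_m\to\infty$; that difference is cosmetic.) However, there is one genuine gap in your treatment of the kinetic energy. You take cutoffs with $\chi^2+\zeta^2\le 1+C\mathbf{1}_{\text{annulus}}$ and propose to control the resulting excess $\int_{\text{annulus}}(\chi^2+\zeta^2-1)\,|\nabla\phib_m|^2$ by the ``annular $L^2$-mass of $\nabla\phib_m$,'' asserting that this is small because $M_m\to M$. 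That does not follow: the concentration functions $M_m$ in \eqref{defMm} are built from the density $\sum_k\frac{\alpha_k^2}{\gamma_k}|\phi_{km}|^2$ and carry no information whatsoever about where the gradient mass sits. A minimizing sequence could in principle park an order-one fraction of $\|\nabla\phib_m\|_{L^2}^2$ inside the annulus, and then $K(\mathbf{v}_m)+K(\mathbf{w}_m)\le K(\phib_m)+o(1)$ fails for your cutoffs. The statement is salvageable either by a pigeonhole over many disjoint dyadic annuli (using the uniform bound $K(\phib_m)\le B^2$, with an $m$-dependent choice of annulus), or, much more simply, by doing what the paper does: choose $\vartheta=\sqrt{1-\varphi^2}$, so that $\varphi^2+\vartheta^2\equiv 1$ exactly. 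Then
\begin{equation*}
K(\mathbf{v}_m)+K(\mathbf{w}_m)\le \sum_{k=1}^l\gamma_k\int\bigl(\varphi_r^2+\vartheta_r^2\bigr)|\nabla\phi_{km}|^2\,dx+O\!\left(\tfrac{1}{r}\right)=K(\phib_m)+O\!\left(\tfrac{1}{r}\right),
\end{equation*}
and the only error is the commutator term $O(1/r)$ coming from $\nabla\varphi_r$, with no need to localize the gradient at all.

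Two smaller remarks. First, your worry about membership in $\mathcal{P}$ is legitimate but your proposed fix (rescaling via Lemma \ref{functrans}) is not the natural one; the paper uses homogeneity of $F$ to write $P(\mathbf{v}_m)=\int\varphi_r^3F(\phib_m)\,dx$ and invokes nonnegativity of $F$ on nonnegative minimizing sequences (Lemma \ref{fkreal}). Second, your estimate $P(\mathbf{v}_m)+P(\mathbf{w}_m)\ge P(\phib_m)-o(1)$ via the annular $\mathbf{L}^3$ mass is correct and is exactly the paper's \eqref{F-Pv}--\eqref{F-Pw}; that part of your argument is sound because the annular $\mathbf{L}^3$ smallness genuinely does follow from the annular $\mathbf{L}^2$ smallness plus the uniform $\mathbf{H}^1$ bound.
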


\begin{proof}
Let $\epsilon>0$ be given. Since $\lim_{r\to \infty}M(r)=\alpha$, there exist $r_{1}>1$ such that if $r\geq r_{1}$ then $|M(r)-\alpha|<\frac{\epsilon}{2}$. 
Thus, from the fact that $M$ is non-decreasing we conclude that, for $r\geq r_{1}$,
\begin{equation}\label{desMalp}
    \alpha-\frac{\epsilon}{2}<M(r)\leq M(2r)\leq \alpha.
\end{equation}
Fix some  $r$ satisfying $r\geq r_1$. From the pointwise convergence of $M_{m}$ to $M$ we can find $m_{0}(r)\in \N$ such that if $m\geq m_{0}(r)$ then
\begin{equation}\label{desMm}
    M(r)-\frac{\epsilon}{2}<M_{m}(r)\qquad\mbox{and}\qquad M_{m}(2r)<M(2r)+ \epsilon.
\end{equation}
By combining \eqref{desMalp} and \eqref{desMm} we infer
 \begin{equation*}
     \alpha-\epsilon <M_{m}(r)\leq M_{m}(2r)<\alpha+\epsilon,\qquad \forall m\geq m_{0}(r).
 \end{equation*}
This means that, for each $m\geq m_{0}(r)$, there exists $y_{m}\in\R^n$ such that
  \begin{equation}\label{alp-eps}
        \int_{\{|x-y_{m}|<r\}}\sum_{k=1}^{l}\frac{\alpha_{k}^{2}}{\gamma_{k}}|\phi_{km}|^{2}\;dx >\alpha -\epsilon,
     \end{equation}
and
 \begin{equation}\label{alp+eps}
        \int_{\{|x-y_{m}|<2r\}}\sum_{k=1}^{l}\frac{\alpha_{k}^{2}}{\gamma_{k}}|\phi_{km}|^{2}\;dx <\alpha +\epsilon.
     \end{equation}
     
Now, choose $\varphi\in C_{0}^{\infty}(B_{2}(0))$ such that $\varphi\geq0$ and $\varphi\equiv 1$ on $B_{1}(0)$ and let $\vartheta \in C^{\infty}(\R^{n})$ be such that
\begin{equation}\label{ballphitheta}
    \varphi^{2}+\vartheta^{2}=1.
\end{equation}
 Define
\begin{equation*}
    \mathbf{v}_m(x)=\varphi_{r}\left(x-y_{m}\right)\phib_m(x)\qquad\mbox{and} \qquad \mathbf{w}_m(x)=\vartheta_{r}\left(x-y_{m}\right)\phib_m(x),
\end{equation*}
where  $\varphi_{r}(x)=(\delta_{r}\varphi)(x)$ and  $\vartheta_{r}(x)=(\delta_{r}\vartheta)(x)$.

We are going to prove that $\mathbf{v}_m$ and $\mathbf{w}_m$ satisfy the desired conclusions.  Indeed, by \eqref{alp+eps},
\begin{equation}\label{Qveps+}
 Q(\mathbf{v}_m)=  \int_{\{|x-y_{m}|<2r\}}\sum_{k=1}^{l}\frac{\alpha_{k}^{2}}{\gamma_{k}}|v_{km}|^{2}\;dx
       \leq \int_{\{|x-y_{m}|<2r\}}\sum_{k=1}^{l}\frac{\alpha_{k}^{2}}{\gamma_{k}}|\phi_{km}|^{2}\;dx
       <\alpha+\epsilon.
\end{equation}
On the other hand, by \eqref{alp-eps},
\begin{equation}\label{Qvesp-}
\begin{split}
     Q(\mathbf{v}_m)& \geq \int_{\{|x-y_{m}|<r\}}\sum_{k=1}^{l}\frac{\alpha_{k}^{2}}{\gamma_{k}}|v_{km}|^{2}\;dx
       =\int_{\{|x-y_{m}|<r\}}\sum_{k=1}^{l}\frac{\alpha_{k}^{2}}{\gamma_{k}}|\phi_{km}|^{2}\;dx >\alpha-\epsilon.
\end{split}
\end{equation}
Hence, from \eqref{Qveps+} and \eqref{Qvesp-} we obtain (i). 

To prove (ii) we first note that, by \eqref{ballphitheta}, 
\begin{equation}\label{Qweps-}
\begin{split}
     Q(\mathbf{w}_m)&=\int\sum_{k=1}^{l}\frac{\alpha_{k}^{2}}{\gamma_{k}}|\phi_{km}|^{2}\;dx-\int\sum_{k=1}^{l}\frac{\alpha_{k}^{2}}{\gamma_{k}}|\varphi_{r}|^{2}|\phi_{km}|^{2}\;dx\\
     &=\nu-  \int_{\{|x-y_{m}|<2r\}}\sum_{k=1}^{l}\frac{\alpha_{k}^{2}}{\gamma_{k}}|\varphi_{r}|^{2}|\phi_{km}|^{2}\;dx\\
     &\geq\nu-  \int_{\{|x-y_{m}|<2r\}}\sum_{k=1}^{l}\frac{\alpha_{k}^{2}}{\gamma_{k}}|\phi_{km}|^{2}\;dx\\
     & >\nu-(\alpha+\epsilon),
\end{split}
\end{equation}

where we have used \eqref{alp+eps} in the last inequality. Also, in view of \eqref{alp-eps},
\begin{equation}\label{Qweps+}
\begin{split}
     Q(\mathbf{w}_m)&=\nu-  \int_{\{|x-y_{m}|<2r\}}\sum_{k=1}^{l}\frac{\alpha_{k}^{2}}{\gamma_{k}}|\varphi_{r}|^{2}|\phi_{km}|^{2}\;dx\\
     &\leq\nu-  \int_{\{|x-y_{m}|<r\}}\sum_{k=1}^{l}\frac{\alpha_{k}^{2}}{\gamma_{k}}|\phi_{km}|^{2}\;dx\\
      & <\nu-(\alpha-\epsilon).
\end{split}
\end{equation}
A combination of   \eqref{Qweps-} and \eqref{Qweps+} yields (ii).

It remains to establish (iii). Note that
\begin{equation*}\label{estgradvarp}
\left\|\nabla[\varphi_{r}\left(x-y_{m}\right)]\right\|_{L^{\infty}}= \frac{1}{r}\left\|\nabla\varphi\left(\frac{x-y_{m}}{r}\right)\right\|_{L^{\infty}}=\frac{1}{r}\left\|\nabla\varphi\right\|_{L^{\infty}}
\end{equation*}
and for each component of $\mathbf{v}_m$,
\begin{equation*}
   \nabla v_{km}(x) =\frac{1}{r}\nabla\varphi\left(\frac{x-y_m}{r}\right)\phi_{km}(x)+\varphi_{r}(x-y_m)\nabla \phi_{km}(x).
\end{equation*}
Hence,   by Young's inequality, and the fact that $\frac{1}{r^2}\leq\frac{1}{r}$,
\begin{equation*}
    \begin{split}
         |\nabla v_{km}|^{2}
         &\leq \frac{1}{r^{2}}\left\|\nabla\varphi\right\|^{2}_{L^{\infty}}|\phi_{km}|^{2}+\frac{2}{r}\left\|\nabla\varphi\right\|_{L^{\infty}}\left\|\varphi\right\|_{L^{\infty}}|\phi_{km}||\nabla \phi_{km}|+|\varphi_{r}|^{2}|\nabla\phi_{km}|^{2}\\
         &\leq  \frac{C}{r} \left(|\phi_{km}|^{2}+|\nabla \phi_{km}|^2\right)+|\varphi_{r}|^{2}|\nabla\phi_{km}|^{2},
    \end{split}
\end{equation*}
which implies that
\begin{equation}\label{kkvm}
    \begin{split}
       K(\mathbf{v}_m) =\sum_{k=1}^{l}\gamma_{k}\|\nabla v_{km}\|^{2}_{L^{2}}
       \leq  \frac{C}{r}\|\phib_m\|^2_{\mathbf{H}^1}
+\sum_{k=1}^{l}\gamma_{k}\int|\varphi_{r}|^{2}|\nabla\phi_{km}|^{2}\;dx.
    \end{split}
\end{equation}
By recalling  that any minimizing sequence is bounded in $\mathbf{H}^{1}$ (see Lemma \ref{minseqbon}), \eqref{kkvm}  then  yields
\begin{equation}\label{Kv}
     K(\mathbf{v}_m)\leq \sum_{k=1}^{l}\gamma_{k}\int|\varphi_{r}|^{2}|\nabla\phi_{km}|^{2}\;dx+O\left(\frac{1}{r}\right).
\end{equation}
In a similar fashion
\begin{equation}\label{Kw}
     K(\mathbf{w}_m)\leq \sum_{k=1}^{l}\gamma_{k}\int|\vartheta_{r}|^{2}|\nabla\phi_{km}|^{2}\;dx+O\left(\frac{1}{r}\right). 
\end{equation}
Combining \eqref{Kv}, \eqref{Kw} and \eqref{ballphitheta} we deduce that
\begin{equation}\label{Kv+KW}
    \begin{split}
      K(\mathbf{v}_m)+   K(\mathbf{w}_m)&\leq \sum_{k=1}^{l}\gamma_{k}\int(|\varphi_{r}|^{2}+|\vartheta_{r}|^{2})|\nabla\phi_{km}|^{2}\;dx+O\left(\frac{1}{r}\right)\\
      &\leq K(\phib_m)+O\left(\frac{1}{r}\right).
    \end{split}
\end{equation}

Now, since $F$ is  homogeneous of 3 three, we see that
\begin{equation}\label{Pv}
    P(\mathbf{v}_m)=\int\varphi_{r}^{3}F(\phib_m)\;dx \quad \mbox{and}\quad  P(\mathbf{w}_m)=\int\vartheta_{r}^{3}F(\phib_m)\;dx.
\end{equation}
In particular, Lemma \ref{fkreal} implies that $\mathbf{v}_m,\mathbf{w}_m\in \mathcal{P}$ (recall we are assuming that minimizing sequences are non-negative).

Let $\Omega_{r}=\{r<|x-y_{m}|<2r\}$. The Gagliardo-Nirenberg inequality  gives
\begin{equation}\label{PVV}
\begin{split}
\|\phi_{jm}\|_{L^{3}(\Omega_{r})}^3&\leq C\| \phi_{jm}\|_{H^{1}(\Omega_{r})}^{\frac{n}{2}}\| \phi_{jm}\|_{L^{2}(\Omega_{r})}^{3-\frac{n}{2}}\\
&\leq C\| \phib_{m}\|_{H^{1}(\Omega_{r})}^{\frac{n}{2}} \left[\int_{\Omega_{r}}\sum_{k=1}^{l}\frac{\alpha_{k}^{2}}{\gamma_{k}}| \phi_{km}|^{2}\;dx\right]^{\frac{3}{2}-\frac{n}{4}},
\end{split}
\end{equation}
where $C$ is independent of $m$. Now, taking the sum over $j$ on the left-hand side of \eqref{PVV}, using Lemma \ref{minseqbon} and inequalities \eqref{alp-eps}-\eqref{alp+eps} we get
\begin{equation}\label{GNIBdom}
\begin{split}
    \sum_{j=1}^{l}\|\phi_{jm}\|_{L^{3}(\Omega_{r})}^{3}
    \leq CB^{\frac{n}{2}}(2\epsilon)^{\frac{3}{2}-\frac{n}{4}}\leq C\epsilon^{\frac{3}{2}-\frac{n}{4}}.
\end{split}
\end{equation}
Hence, from Lemma \ref{estdifF}, \eqref{GNIBdom} and \eqref{Pv},
\begin{equation}\label{F-Pv}
    \begin{split}
        \left|\int\varphi_{r}^{2}F(\phib_m)\;dx-P(\mathbf{v}_m)\right|
        \leq \int|\varphi_{r}^{2}-\varphi_{r}^{3}||F(\phib_m)|\;dx
        \leq C\sum_{j=1}^{l}\int_{\Omega_{r}}|\phi_{jm}|^{3}\;dx
        \leq C\epsilon^{\frac{3}{2}-\frac{n}{4}}.
    \end{split}
\end{equation}

A similar argument also shows that
\begin{equation}\label{F-Pw}
   \left|\int\vartheta_{r}^{2}F(\phib_m)\;dx-P(\mathbf{w}_m)\right|\leq    C\epsilon^{\frac{3}{2}-\frac{n}{4}}.
\end{equation}
Therefore, using \eqref{Kv+KW}, \eqref{GNIBdom}, \eqref{F-Pv} and \eqref{F-Pw} we see that 
\begin{equation*}
    \begin{split}
        E(\mathbf{v}_m)+E(\mathbf{w}_m)&=K(\mathbf{v}_m)+K(\mathbf{w}_m)
        -2P(\mathbf{v}_m)-2P(\mathbf{w}_m)\\
        &\leq K(\phib_m)+O\left(\frac{1}{r}\right)
        -2\int\left(\varphi_{r}^{2}+\vartheta_{r}^{2}\right)F(\phib_m)\;dx\\
        &\quad+2\int\varphi_{r}^{2}F(\phib_m)\;dx-2P(\mathbf{v}_m)
        +2\int\vartheta_{r}^{2}F(\phib_m)\;dx-2P(\mathbf{w}_m)\\
        &\leq E(\phib_m)+O\left(\frac{1}{r}\right)\\
        &\quad+ 2\left|\int\varphi_{r}^{2}F(\phib_m)\;dx-P(\mathbf{v}_m)\right|
       +2\left|\int\vartheta_{r}^{2}F(\phib_m)\;dx-P(\mathbf{w}_m)\right|\\
        &\leq E(\phib_m)+O\left(\frac{1}{r}\right)+C\epsilon^{\frac{6-n}{4}}.
    \end{split}
\end{equation*}

We now can take $r$ sufficiently large such that $O\left(\frac{1}{r}\right)<\epsilon^{\frac{6-n}{4}}$. As a consequence,  for $m\geq m_{0}(r)$,
\begin{equation*}
    E(\phib_m)\geq E(\mathbf{v}_m)+E(\mathbf{w}_m) -(C+1)\epsilon^{\frac{6-n}{4}}.
\end{equation*}
By noting  we can take $\tilde{\epsilon}=\min\left\{\epsilon,\left(\frac{\epsilon}{C+1}\right)^{\frac{4}{6-n}}\right\}$ instead of $\epsilon$ at the beginning of the proof we may repeat the same arguments as above and establish (iii).
\end{proof}

 Finally, the fact that dichotomy cannot occur is a consequence of 
 Lemma \ref{subadit} and the following result.
 
\begin{lem}\label{outdichot}
If $0<\alpha<\nu$ then $I_{\nu}\geq I_{\alpha}+I_{\nu-\alpha}$.  In particular,  \textit{dichotomy} cannot occur.
\end{lem}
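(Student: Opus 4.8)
The plan is to use the decomposition from Lemma~\ref{decompseq} together with the strict subadditivity established in Lemma~\ref{subadit}. Suppose, for contradiction, that \emph{dichotomy} occurs for some minimizing sequence $(\phib_m)$, i.e. $0<\alpha<\nu$ where $\alpha$ is defined in \eqref{defalpha}. I will show that this forces $I_\nu\geq I_\alpha+I_{\nu-\alpha}$, which contradicts Lemma~\ref{subadit}.

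First I would fix an arbitrary $\epsilon>0$ and invoke Lemma~\ref{decompseq} to obtain $m_0\in\N$ and sequences $(\mathbf{v}_m)_{m\geq m_0}$, $(\mathbf{w}_m)_{m\geq m_0}$ in $\mathcal{P}$ satisfying (i)--(iii) of that lemma. The next step is to renormalize: since $Q(\mathbf{v}_m)$ is close to $\alpha>0$ and $Q(\mathbf{w}_m)$ is close to $\nu-\alpha>0$, for $m$ large both quantities are bounded away from $0$, so I may set
$$
\tilde{\mathbf{v}}_m=\sqrt{\frac{\alpha}{Q(\mathbf{v}_m)}}\,\mathbf{v}_m,\qquad
\tilde{\mathbf{w}}_m=\sqrt{\frac{\nu-\alpha}{Q(\mathbf{w}_m)}}\,\mathbf{w}_m,
$$
so that $Q(\tilde{\mathbf{v}}_m)=\alpha$ and $Q(\tilde{\mathbf{w}}_m)=\nu-\alpha$ exactly, hence $\tilde{\mathbf{v}}_m\in\Gamma_\alpha$ and $\tilde{\mathbf{w}}_m\in\Gamma_{\nu-\alpha}$ (positivity of $P$ is preserved by positive scaling). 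Then I would estimate $|E(\tilde{\mathbf{v}}_m)-E(\mathbf{v}_m)|$ and $|E(\tilde{\mathbf{w}}_m)-E(\mathbf{w}_m)|$: using the scaling behaviour of $K$ and $P$ from Lemma~\ref{functrans} (here the scaling is only in the amplitude $a$, with $\lambda=1$, so $K(a\mathbf{v})=a^2K(\mathbf{v})$ and $P(a\mathbf{v})=a^3P(\mathbf{v})$), together with the uniform $\mathbf{H}^1$-bound on the minimizing sequence (Lemma~\ref{minseqbon}) and the Gagliardo--Nirenberg bound on $P$ (Corollary~\ref{corollarybestconstant}), the factors $\sqrt{\alpha/Q(\mathbf{v}_m)}$ and $\sqrt{(\nu-\alpha)/Q(\mathbf{w}_m)}$ differ from $1$ by $O(\epsilon)$, so these energy differences are $O(\epsilon)$ uniformly in $m$.

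Combining, for $m\geq m_0$ large,
$$
I_\alpha+I_{\nu-\alpha}\leq E(\tilde{\mathbf{v}}_m)+E(\tilde{\mathbf{w}}_m)\leq E(\mathbf{v}_m)+E(\mathbf{w}_m)+C\epsilon\leq E(\phib_m)+C\epsilon,
$$
where the last inequality is (iii) of Lemma~\ref{decompseq}. Letting $m\to\infty$ gives $I_\alpha+I_{\nu-\alpha}\leq I_\nu+C\epsilon$, and since $\epsilon>0$ was arbitrary, $I_\alpha+I_{\nu-\alpha}\leq I_\nu$. But Lemma~\ref{subadit} asserts $I_\nu=I_{\alpha+(\nu-\alpha)}<I_\alpha+I_{\nu-\alpha}$, a contradiction. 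Hence $0<\alpha<\nu$ is impossible, so dichotomy cannot occur.

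The main obstacle I anticipate is the renormalization bookkeeping: one has to be careful that $\tilde{\mathbf v}_m$ and $\tilde{\mathbf w}_m$ genuinely land in $\mathcal{P}$ (which needs $P(\mathbf{v}_m),P(\mathbf{w}_m)>0$ — guaranteed because scaling by a positive constant preserves the sign of $P$, but worth noting that Lemma~\ref{decompseq} already places $\mathbf{v}_m,\mathbf{w}_m\in\mathcal{P}$), and that the energy perturbation from renormalization is controlled \emph{uniformly} in $m$ rather than just for each fixed $m$. The uniform $\mathbf{H}^1$-bound from Lemma~\ref{minseqbon}, which transfers to $\mathbf{v}_m,\mathbf{w}_m$ since they are obtained by multiplying $\phib_m$ by smooth cutoffs with bounded $C^1$ norms, is exactly what makes this uniform control possible.
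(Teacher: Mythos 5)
Your proposal is correct and follows essentially the same route as the paper: renormalize $\mathbf{v}_m$ and $\mathbf{w}_m$ by the factors $\sqrt{\alpha/Q(\mathbf{v}_m)}$ and $\sqrt{(\nu-\alpha)/Q(\mathbf{w}_m)}$ so they land exactly in $\Gamma_\alpha$ and $\Gamma_{\nu-\alpha}$, control the resulting $O(\epsilon)$ energy perturbation uniformly via the $\mathbf{H}^1$ bound, combine with part (iii) of Lemma \ref{decompseq}, and contradict the strict subadditivity of Lemma \ref{subadit}. The only cosmetic difference is that the paper sends $\epsilon=1/s$ along a diagonal subsequence rather than fixing $\epsilon$ and letting it tend to zero afterwards; both orderings are valid.
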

\begin{proof}
Let us start by fixing some $\epsilon<\frac{\alpha}{2}$. We claim if
 $\phib\in \mathcal{P}$ satisfies $|Q(\phib)-\alpha|<\epsilon$
 then the number  $\displaystyle\beta=\sqrt{\frac{\alpha}{Q(\phib)}}$ satisfies
\begin{equation}\label{Qbetaphi}
  |\beta-1|<C\epsilon,
\end{equation}
where $C$ is a constant independent of $\epsilon$ and $\phib$. In fact, since $\frac{\alpha}{2}< Q(\phib)$, we have
\begin{equation*}
        |\beta-1|=|\beta^{2}-1|\frac{1}{\beta+1}
        <\left|\frac{\alpha}{Q(\phib)}-1\right|
        =|\alpha-Q(\phib)|Q(\phib)^{-1}
        <\frac{2}{\alpha}\epsilon.
\end{equation*}
So, we can take $C=\frac{2}{\alpha}$ and the claim is proved. 

Now since $Q(\beta\phib)=\alpha$ and $P(\beta\phib)=\beta^{3}P(\phib)>0$, we conclude that $I_{\alpha}\leq E(\beta\phib)$. But,
\begin{equation*}\label{Ebeta}
        E(\beta\phib)=\beta^{2}K(\phib)-2\beta^{3}P(\phib)
        =E(\phib)+(\beta^{2}-1)K(\phib)
        +2(1-\beta^{3})P(\phib).
\end{equation*}
By using \eqref{Qbetaphi} and the facts that $K(\phib)\leq C \|\boldsymbol{\phi}\|^{2}_{\mathbf{H}^{1}}$ and  $P(\phib)\leq C \|\boldsymbol{\phi}\|^{3}_{\mathbf{H}^{1}},
$
we infer
\begin{equation*}
\begin{split}
      E(\beta\phib)&
        \leq E(\phib)+C\epsilon(\beta+1) \|\boldsymbol{\phi}\|^{2}_{\mathbf{H}^{1}}+2C\epsilon (1+\beta+\beta^{2}) \|\boldsymbol{\phi}\|^{3}_{\mathbf{H}^{1}}.\\
        &\leq  E(\phib)+C\epsilon,
\end{split}
\end{equation*}
where we used that $\beta<\sqrt{2}$ and $C$ depends only on $\alpha$ and $\|\boldsymbol{\phi}\|_{\mathbf{H}^{1}}$. Therefore,
\begin{equation*}
    E(\phib)\geq  I_{\alpha}-C\epsilon.
\end{equation*}
If we replace $\epsilon$ by $\tilde{\epsilon}=\min\left\{\epsilon,\frac{\epsilon}{C_{2}}\right\}$ in the previous computations we can conclude that
\begin{equation}\label{Ealpaeps}
    E(\phib)\geq  I_{\alpha}-\epsilon.
\end{equation}

By using similar arguments, if we replace the number $\beta$ by $\displaystyle \tilde{\beta}=\sqrt{\frac{\nu-\alpha}{Q(\phib)}}$ we can prove if $\phib\in\mathcal{P}$ satisfies $ |Q(\phib)-(\nu-\alpha)|<\epsilon$ (for $\epsilon$ small) then
\begin{equation}\label{Enualpaeps}
    E(\phib)\geq  I_{\nu-\alpha}-\epsilon.
\end{equation}

Now, let $s\in \N$ and assume $(\phib_m)$ is a minimizing sequence of \eqref{problE}. From Lemma \ref{decompseq} we can find a subsequence, say, $(\phib_{m_s})$  and corresponding sequences $(\mathbf{v}_{m_s})$ and $(\mathbf{w}_{m_s})$ in $\mathcal{P}$ such that
\begin{equation*}
  |Q(\mathbf{v}_{m_s})-\alpha|<\frac{1}{s},\quad |Q(\mathbf{w}_{m_s})-(\nu-\alpha)|<\frac{1}{s} 
\end{equation*}
and
\begin{equation*}\label{Ephivw}
    E(\phib_{m_{s}})\geq E(\mathbf{v}_{m_s})+E(\mathbf{w}_{m_s})-\frac{1}{s}.
\end{equation*}

Thus, \eqref{Ealpaeps} and \eqref{Enualpaeps} implies that, for $s$ large enough,
\begin{equation*}
     E(\phib_{m_{s}})\geq I_{\alpha}+I_{\nu-\alpha}-\frac{3}{s}
\end{equation*}
Letting $s\to \infty$ we obtain the conclusion of the lemma.  
\end{proof}

\subsubsection{Compactness} Taking into account that vanishing and dichotomy cannot occur, the only possibility is that $\alpha=\nu$. In this case we have the followings results.

\begin{lem}\label{cocentcomp}
 Suppose $\alpha=\nu$. Then  there exists a sequence $(y_{m}) \subset \R^{n}$ such that
 \begin{enumerate}
     \item[(i)] For every $z<\alpha$ there exist $r=r(z)$ such that
     \begin{equation*}
        \int_{\{|x-y_{m}|<r\}}\sum_{k=1}^{l}\frac{\alpha_{k}^{2}}{\gamma_{k}}|\phi_{km}|^{2}\;dx >z
     \end{equation*}
     for all sufficiently large $m$.
     \item[(ii)] The sequence $(\tilde{\phib}_{m})$ defined by $ \tilde{\phib}_{m}(x)=\phib_{m}(x+y_{m})$
     has a subsequence which converges strongly in $\mathbf{H}^{1}$ to a function $\phib\in A_{\nu}$. In particular, $A_{\nu}$ is nonempty. 
 \end{enumerate}
\end{lem}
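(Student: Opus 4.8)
The statement is the standard ``compactness'' conclusion of the concentration–compactness method, so the plan is to run the classical Lions argument using the estimates already collected in Lemmas~\ref{minseqbon}, \ref{outvanishi}, \ref{outdichot} and \ref{subadit}. First I would record that, since vanishing and dichotomy have been ruled out, the only surviving alternative is $\alpha=\nu$, and in that case the functions $M_m$ defined in \eqref{defMm} concentrate: for each $z<\alpha$ there is $r=r(z)$ with $M(r)>z$, hence (by pointwise convergence $M_m\to M$) $M_m(r)>z$ for $m$ large, which is exactly the statement of part~(i) once we pick, for each such $m$, a center $y_m\in\R^n$ realizing the supremum defining $M_m(r)$ up to a small error. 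A mild technical point is that one wants a \emph{single} sequence $(y_m)$ that works simultaneously for all $z<\alpha$; this is obtained by a diagonal argument over an increasing sequence $z_j\uparrow\alpha$, exploiting that if a ball of radius $r$ around $y_m$ captures mass close to $\nu$ and another ball of radius $r'$ around $y_m'$ does as well, then (because the total mass is $\nu$) the two balls must overlap once $m$ is large, so the centers may be taken within a bounded distance and then merged into one sequence.

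\textbf{Part (ii).} Set $\tilde\phib_m(x)=\phib_m(x+y_m)$. By Lemma~\ref{minseqbon}(i) the sequence $(\tilde\phib_m)$ is bounded in $\mathbf{H}^1$ (translation is an isometry), so up to a subsequence $\tilde\phib_m\rightharpoonup\phib$ weakly in $\mathbf{H}^1$, strongly in $\mathbf{L}^p_{loc}$ for the relevant exponents, and a.e. From part~(i), for every $\epsilon>0$ there is $r$ such that $\int_{\{|x|<r\}}\sum_k\frac{\alpha_k^2}{\gamma_k}|\tilde\phi_{km}|^2\,dx>\nu-\epsilon$ for $m$ large; letting $m\to\infty$ and using local strong $\mathbf L^2$ convergence on the fixed ball $\{|x|<r\}$ gives $\int_{\{|x|<r\}}\sum_k\frac{\alpha_k^2}{\gamma_k}|\phi_{k}|^2\,dx\geq\nu-\epsilon$, whence $Q(\phib)\geq\nu$. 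On the other hand weak lower semicontinuity gives $Q(\phib)\leq\liminf Q(\tilde\phib_m)=\nu$, so $Q(\phib)=\nu$ and moreover $\tilde\phib_m\to\phib$ strongly in $\mathbf{L}^2$ (equal norms plus weak convergence in Hilbert space). Combining strong $\mathbf{L}^2$ convergence with the $\mathbf{H}^1$ bound and interpolation/Gagliardo–Nirenberg yields $\tilde\phib_m\to\phib$ strongly in $\mathbf{L}^3$, hence $P(\tilde\phib_m)\to P(\phib)$ by the continuity estimate \eqref{estdifFeq} of Lemma~\ref{estdifF}; in particular $P(\phib)\geq\delta>0$ by Lemma~\ref{minseqbon}(ii), so $\phib\in\mathcal P$ and $\phib\in\Gamma_\nu$. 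Finally, weak lower semicontinuity of $K$ gives $E(\phib)=K(\phib)-2P(\phib)\leq\liminf\big(K(\tilde\phib_m)-2P(\tilde\phib_m)\big)=\liminf E(\phib_m)=I_\nu$; since $\phib\in\Gamma_\nu$ forces $E(\phib)\geq I_\nu$, we get $E(\phib)=I_\nu$, i.e. $\phib\in A_\nu$, and in addition $K(\tilde\phib_m)\to K(\phib)$, which upgrades the convergence to strong convergence in $\mathbf{H}^1$.

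\textbf{Main obstacle.} The routine parts (boundedness, weak convergence, lower semicontinuity, upgrading $\mathbf L^2$-convergence to $\mathbf H^1$-convergence once norms converge) are standard. The only genuinely delicate point is extracting the \emph{single} translation sequence $(y_m)$ valid uniformly in $z<\alpha$ and showing the associated tightness in part~(i); this is where one must argue that near-maximizing balls of comparable radii cannot be far apart when $\alpha=\nu$, and then run a diagonal extraction. Everything else reduces to invoking the already-proven lemmas, so after setting up the tightness the proof is essentially a transcription of the classical concentration-compactness closure; I would simply remark that the details follow \cite{Albert}, \cite{Lions1}, \cite{Lions2}.
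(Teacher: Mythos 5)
Your proposal is correct and follows essentially the same route as the paper: the single translation sequence is obtained exactly by the paper's overlap argument (two balls whose captured masses sum to more than $\nu$ must intersect, so near-maximizing centers stay at bounded distance and one enlarges the radius), and part (ii) proceeds by the same chain of weak $\mathbf{H}^1$ convergence, tightness forcing $Q(\phib)=\nu$ and hence strong $\mathbf{L}^2$ convergence, Gagliardo--Nirenberg to upgrade to $\mathbf{L}^3$, continuity of $P$, and lower semicontinuity of $E$ to conclude $\phib\in A_\nu$ with strong $\mathbf{H}^1$ convergence. No gaps.
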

\begin{proof}
 Since $\displaystyle  \lim_{r\to \infty}M(r)=\nu$  and $\displaystyle  \lim_{m\to \infty}M_{m}(r)=M(r) $ we can find $r_0$ and $m_0$ large enough such that
$M_{m}(r_{0})>{\nu}/{2},$ for $m\geq m_0$. Therefore, for each $m\geq m_{0}$ there exist $y_{m}\in\R^{n}$ such that
\begin{equation}\label{nuboun}
       \int_{\left\{\left|x-y_{m}\right|<r_{0}\right\}}\sum_{k=1}^{l}\frac{\alpha_{k}^{2}}{\gamma_{k}}|\phi_{km}|^{2}\;dx >\frac{\nu}{2}.
     \end{equation}
     
Let be $0<z<\nu$. In view of \eqref{nuboun}, without loss of generality, we may assume $\frac{\nu}{2}<z<\nu$. By using a similar argument as above, we can find $r_{0}(z)$ and $m_{0}(z)$ such that if $m\geq m_{0}(z)$ then
\begin{equation}\label{nuboun1}
       \int_{\left\{\left|x-y_{m}(z)\right|<r_{0}(z)\right\}}\sum_{k=1}^{l}\frac{\alpha_{k}^{2}}{\gamma_{k}}|\phi_{km}|^{2}\;dx >z,
     \end{equation}
for some $y_{m}(z)\in \R^{n}$. We claim  that for $m$ large $\left\{\left|x-y_{m}\right|<r_{0}\right\}\cap \left\{\left|x-y_{m}(z)\right|<r_{0}(z)\right\}\neq \emptyset $. In fact, otherwise,
\begin{equation*}
   \begin{split}
        \nu&=\int\sum_{k=1}^{l}\frac{\alpha_{k}^{2}}{\gamma_{k}}|\phi_{km}|^{2}\;dx\\
        &\geq \int_{\left\{\left|x-y_{m}\right|<r_{0}\right\}}\sum_{k=1}^{l}\frac{\alpha_{k}^{2}}{\gamma_{k}}|\phi_{km}|^{2}\;dx+\int_{\left\{\left|x-y_{m}(z)\right|<r_{0}(z)\right\}}\sum_{k=1}^{l}\frac{\alpha_{k}^{2}}{\gamma_{k}}|\phi_{km}|^{2}\;dx \\
        &>\frac{\nu}{2}+z>\nu,
   \end{split}
\end{equation*}
 which is a contradiction. Hence, for all $\frac{\nu}{2}<z<\nu$ and $m$ large, 
$\left|y_{m}-y_{m}(z)\right|\leq r_{0}+r_{0}(z)$. By defining $r(z)=r_{0}+2r_{0}(z)$ we then see that, for $m$ large enough,
$ \left\{\left|x-y_{m}(z)\right|<r_{0}(z)\right\} \subset \left\{\left|x-y_{m}\right|<r(z)\right\}$.
Therefore, for all $\frac{\nu}{2}<z<\nu$ and $m$ large enough, we have from \eqref{nuboun1},
\begin{equation*}
     \int_{\left\{\left|x-y_{m}\right|<r(z)\right\}}\sum_{k=1}^{l}\frac{\alpha_{k}^{2}}{\gamma_{k}}|\phi_{km}|^{2}\;dx \geq  \int_{\left\{\left|x-y_{m}(z)\right|<r_{0}(z)\right\}}\sum_{k=1}^{l}\frac{\alpha_{k}^{2}}{\gamma_{k}}|\phi_{km}|^{2}\;dx >z,
\end{equation*}
which proves (i).

For  (ii), we observe that from (i), for  every $j\in \N$ there exist $r_{j}\in \R$ such that
\begin{equation}\label{boundblw}
      \int_{\left\{|x|<r_{j}\right\}}\sum_{k=1}^{l}\frac{\alpha_{k}^{2}}{\gamma_{k}}|\tilde{\phi}_{km}|^{2}\;dx
      =  \int_{\left\{\left|x-y_{m}\right|<r_{j}\right\}}\sum_{k=1}^{l}\frac{\alpha_{k}^{2}}{\gamma_{k}}|\phi_{km}|^{2}\;dx
     >\nu-\frac{1}{j}.
\end{equation}

By Lemma \ref{minseqbon} we have that $(\tilde{\phib}_{m})$ is a bounded sequence in $\mathbf{H}^{1}$. Then there exist a subsequence, still denoted by $(\tilde{\phib}_{m})$, and a function  $\phib\in \mathbf{H}^{1}$ such that
\begin{equation}\label{wcH1}
  \tilde{\phib}_{m} \rightharpoonup \phib,\quad \mbox{as}\quad m\to \infty,\quad\mbox{in}\quad \mathbf{H}^{1}.  
\end{equation}

On the other hand, for each $j\in \N$, since $(\tilde{\phib}_{m})$ is also bounded  in $\mathbf{H}^{1}(B_{r_j}(0))$, the compact embedding $H^{1}(B_{r_j}(0))\hookrightarrow L^{2}(B_{r_j}(0))$ combined with a standard Cantor diagonalization process yield that, up to a subsequence,  
\begin{equation*}
  \tilde{\phib}_m \to \phib,\quad \mbox{as}\quad m\to \infty,\quad\mbox{in}\quad \mathbf{L}^{2}(B_{r_j}(0)).  
\end{equation*}

Next we claim that this convergence indeed holds in $\mathbf{L}^{2}(\R^{n})$. In fact, from \eqref{wcH1}  we obtain
$Q(\phib)\leq \liminf Q (\tilde{\phib}_{m})=\nu$.
Thus, \eqref{boundblw} gives 
\begin{equation*}
        \nu\geq Q(\phib)\geq\int_{\left\{|x|<r_{j}\right\}}\sum_{k=1}^{l}\frac{\alpha_{k}^{2}}{\gamma_{k}}|{\phi}_{k}|^{2}\;dx
        =\lim_{m\to \infty}\int_{\left\{|x|<r_{j}\right\}}\sum_{k=1}^{l}\frac{\alpha_{k}^{2}}{\gamma_{k}}|\tilde{\phi}_{km}|^{2}\;dx
        >\nu-\frac{1}{j}
\end{equation*}
Therefore, by taking the limit as $j\to\infty$ in the last inequality,
\begin{equation*}
   Q(\phib)=\nu=\lim_{m\to \infty}Q (\tilde{\phib}_{m}),
\end{equation*}
which combined with \eqref{wcH1} establishes the claim.

Now, from the  Gagliardo-Nirenberg inequality, Lemma \ref{minseqbon} and the $\mathbf{L}^2$ convergence we see that $\tilde{\phib}_m \to \phib$ also in $\mathbf{L}^3$. Combining this with 
 Lemma \ref{estdifF}  we have
\begin{equation}\label{limP}
    \lim_{m\to \infty}P(\tilde{\phib}_{m})=P(\phib).
\end{equation}
From the weak convergence in $\mathbf{H}^{1}$ and \eqref{limP} we have
$E(\phib) \leq\liminf_{m}E(\tilde{\phib}_{m})=I_{\nu}$, which shows that
\begin{equation}\label{limE}
  E(\psib)=I_{\nu}=\lim_{m\to \infty} E(\tilde{\phib}_{m}). 
\end{equation}
In particular, this proves that $\phib\in A_{\nu}$ and $\tilde{\phib}_{m} \to \phib$ in $\mathbf{H}^{1}$. The proof of the lemma is thus completed.  
\end{proof}

\begin{teore}\label{precomseq}
If $(\phib_{m})$ is any minimizing sequence for \eqref{problE}, then
\begin{enumerate}
    \item[(i)] there exist a sequence $(y_{m})\subset\R^{n}$ and $\phib\in A_{\nu}$ such that
 $ (\phib_{m}(\cdot+y_{m})),$
    has a subsequence converging strongly in $\mathbf{H}^{1}$ to $\phib$. 
     \item[(ii)]
    \begin{equation}\label{liminf1}
         \lim_{m\to \infty} \inf_{\substack{\phi\in A_{\nu}\\ y\in \R^{n}}}\| \phib_{m}(\cdot+y)-\phib\|_{\mathbf{H}^{1}}=0.
     \end{equation}
      \item[(iii)] 
       \begin{equation*}
         \lim_{m\to \infty} \inf_{\phib\in A_{\nu}}\| \phib_{m}-\phib\|_{\mathbf{H}^{1}}=0.
     \end{equation*}
\end{enumerate}
\end{teore}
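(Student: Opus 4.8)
The plan is to package the concentration--compactness analysis already carried out. For part (i), let $(\phib_{m})$ be a minimizing sequence for \eqref{problE}. Passing to a subsequence we may assume that the functions $M_{m}$ in \eqref{defMm} converge pointwise to a non-decreasing function $M$, and we set $\alpha$ as in \eqref{defalpha}. By Lemma \ref{outvanishi} we have $\alpha>0$, so \emph{vanishing} is excluded; and if $0<\alpha<\nu$, then Lemma \ref{outdichot} gives $I_{\nu}\geq I_{\alpha}+I_{\nu-\alpha}$, contradicting the strict subadditivity in Lemma \ref{subadit}. Hence $\alpha=\nu$, and Lemma \ref{cocentcomp}(ii) produces a sequence $(y_{m})\subset\R^{n}$, a further subsequence, and a function $\phib\in A_{\nu}$ with $\phib_{m}(\cdot+y_{m})\to\phib$ strongly in $\mathbf{H}^{1}$. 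This is precisely (i), and in particular $A_{\nu}\neq\emptyset$.

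For part (iii), I would first observe that $A_{\nu}$ is invariant under spatial translations: the functional $Q$ in \eqref{newQ} and the energy $E=K-2P$ are translation invariant, so if $\phib\in A_{\nu}$ then $\phib(\cdot-y)\in A_{\nu}$ for every $y\in\R^{n}$. Consequently, for each fixed $y$, the change of variables $x\mapsto x+y$ gives
\[
\inf_{\phib\in A_{\nu}}\|\phib_{m}(\cdot+y)-\phib\|_{\mathbf{H}^{1}}=\inf_{\phib\in A_{\nu}}\|\phib_{m}-\phib(\cdot-y)\|_{\mathbf{H}^{1}}=\inf_{\phib\in A_{\nu}}\|\phib_{m}-\phib\|_{\mathbf{H}^{1}},
\]
so this quantity does not depend on $y$. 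Taking the infimum over $y$ as well, we obtain
\[
\inf_{\substack{\phib\in A_{\nu}\\ y\in\R^{n}}}\|\phib_{m}(\cdot+y)-\phib\|_{\mathbf{H}^{1}}=\inf_{\phib\in A_{\nu}}\|\phib_{m}-\phib\|_{\mathbf{H}^{1}},
\]
so statements (ii) and (iii) coincide, and it suffices to prove (ii).

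To prove (ii), I argue by contradiction. If \eqref{liminf1} fails, there exist $\epsilon_{0}>0$ and a subsequence $(\phib_{m_{k}})$ with
\[
\inf_{\substack{\phib\in A_{\nu}\\ y\in\R^{n}}}\|\phib_{m_{k}}(\cdot+y)-\phib\|_{\mathbf{H}^{1}}\geq\epsilon_{0}\qquad\text{for all }k.
\]
But $(\phib_{m_{k}})$ is again a minimizing sequence for \eqref{problE} (it lies in $\Gamma_{\nu}$ and $E(\phib_{m_{k}})\to I_{\nu}$), so applying part (i) to it yields translations $(z_{j})$, a further subsequence $(\phib_{m_{k_{j}}})$, and $\phib_{*}\in A_{\nu}$ with $\phib_{m_{k_{j}}}(\cdot+z_{j})\to\phib_{*}$ in $\mathbf{H}^{1}$. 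Then
\[
\inf_{\substack{\phib\in A_{\nu}\\ y\in\R^{n}}}\|\phib_{m_{k_{j}}}(\cdot+y)-\phib\|_{\mathbf{H}^{1}}\leq\|\phib_{m_{k_{j}}}(\cdot+z_{j})-\phib_{*}\|_{\mathbf{H}^{1}}\longrightarrow0,
\]
contradicting the lower bound $\epsilon_{0}$. This establishes (ii), and hence (iii).

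At this stage there is no genuine analytic obstacle: the substance lies in Lemmas \ref{subadit}, \ref{outvanishi}, \ref{outdichot} and \ref{cocentcomp}, and the present argument is bookkeeping. The only points requiring care are the correct handling of the successive subsequence extractions and the translation-invariance observation that collapses (iii) to (ii).
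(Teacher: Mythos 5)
Your proposal is correct and follows essentially the same route as the paper: part (i) by combining the exclusion of vanishing and dichotomy with Lemma \ref{cocentcomp}, part (ii) by contradiction via applying (i) to the offending subsequence, and part (iii) from the translation invariance of $E$, $Q$ and the $\mathbf{H}^1$ norm. The only cosmetic difference is that you establish the equivalence of (ii) and (iii) up front rather than deducing (iii) from (ii) at the end.
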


\begin{proof}
From Lemmas \ref{outvanishi}  and  \ref{outdichot}  we have that $\alpha=\nu$. Thus, Lemma  \ref{cocentcomp} implies that (i) holds.

For (ii) we proceed by contradiction. If  \eqref{liminf1} does not hold, then there exist a subsequence $(\phib_{m_{s}})$,
  and $\epsilon>0$ such that
 \begin{equation}\label{infsumposi}
   \inf_{\substack{\phi\in A_{\nu}\\ y\in \R^{n}}}\| \phib_{m_{s}}(\cdot+y)-\phib\|_{\mathbf{H}^{1}}\geq \epsilon,\qquad\forall s\in \N.
 \end{equation}
 Note that $(\phib_{m_{s}})$ is also a minimizing sequence for \eqref{problE}. Then, from (i) it follows that there exist   $(y_{s})\subset\R^{n}$ and $\phib_0\in A_{\nu}$ such that 
 \begin{equation*}
    \begin{split}
         \liminf_{s\to \infty}\| \phib_{m_s}(\cdot+y_s)-\phib_0\|_{\mathbf{H}^{1}}=0,
    \end{split}
 \end{equation*}
 which  obviously contradicts \eqref{infsumposi}.

Finally, (iii) follows immediately from (ii) taking into account that $E$ and $Q$ are invariant under translations.
\end{proof}

\begin{coro} \label{stabAnu} 
The set $A_{\nu}$ is stable in   $\mathbf{H}^{1}$ with respect to the flow of \eqref{system1} in the following sense: for every $\epsilon>0$, there exist $\delta>0$ such that if $\mathbf{u}_0\in\mathbf{H}^{1}$ satisfies
\begin{equation*}
    \inf_{\phib\in A_{\nu}}\| \mathbf{u}_0-\phib\|_{\mathbf{H}^{1}}<\delta,
\end{equation*}
then the solution $\mathbf{u}(t)$ of system \eqref{system1}, given by Theorem \ref{thm:globalwellposH1}, with $\mathbf{u}(0)=\mathbf{u}_0$, satisfies
\begin{equation*}
     \inf_{\phib\in A_{\nu}}\| \mathbf{u}(t)-\phib\|_{\mathbf{H}^{1}}<\epsilon,
\end{equation*}
for all $t\in \R$.
\end{coro}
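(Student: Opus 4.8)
The plan is to deduce the orbital stability of $A_\nu$ from the precompactness of minimizing sequences established in Theorem~\ref{precomseq}, arguing by contradiction. First I would suppose the statement fails: then there are $\epsilon_0>0$, initial data $\mathbf{u}_0^m\in\mathbf{H}^1$ with $\inf_{\phib\in A_\nu}\|\mathbf{u}_0^m-\phib\|_{\mathbf{H}^1}\to0$, and times $t_m$ for which the corresponding solutions $\mathbf{u}^m$ of \eqref{system1} — global in $\mathbf{H}^1$ by Theorem~\ref{thm:globalwellposH1}(i), since here $1\le n\le 3$ — satisfy $\inf_{\phib\in A_\nu}\|\mathbf{u}^m(t_m)-\phib\|_{\mathbf{H}^1}\ge\epsilon_0$ for all $m$.

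Next I would propagate the information from $t=0$ to the times $t_m$ using the conservation laws. Since $\beta_k=0$, Lemmas~\ref{l2cons} and~\ref{lemconservenerg} give $Q(\mathbf{u}^m(t_m))=Q(\mathbf{u}_0^m)$ and $E(\mathbf{u}^m(t_m))=E(\mathbf{u}_0^m)$; combined with the continuity of $Q$ and $E$ on $\mathbf{H}^1$ (continuity of the nonlinear part of $E$ being Lemma~\ref{estdifF}) and with $A_\nu\subset\{\mathbf{v}:Q(\mathbf{v})=\nu,\ E(\mathbf{v})=I_\nu\}$, this yields $Q(\mathbf{u}^m(t_m))\to\nu$ and $E(\mathbf{u}^m(t_m))\to I_\nu$. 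I would then record two uniform bounds: because $I_\nu<0$ by Lemma~\ref{Inunegat}, for $m$ large one has $E(\mathbf{u}^m(t_m))<0$, so from $E=K-2P$ and $K\ge0$ we get $P(\mathbf{u}^m(t_m))>0$; and feeding the Gagliardo--Nirenberg inequality (Corollary~\ref{corollarybestconstant}) and Young's inequality into $E=K-2P$ exactly as in \eqref{6.6}, now using only that $Q(\mathbf{u}^m(t_m))$ is bounded, gives $\sup_m K(\mathbf{u}^m(t_m))<\infty$, hence $\sup_m\|\mathbf{u}^m(t_m)\|_{\mathbf{H}^1}<\infty$ and $\sup_m P(\mathbf{u}^m(t_m))<\infty$.

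Then I would project onto the constraint set: put $\beta_m=\bigl(\nu/Q(\mathbf{u}^m(t_m))\bigr)^{1/2}\to1$ and $\phib_m=\beta_m\,\mathbf{u}^m(t_m)$. By Lemma~\ref{functrans}, $Q(\phib_m)=\nu$ and $P(\phib_m)=\beta_m^3P(\mathbf{u}^m(t_m))>0$, so $\phib_m\in\Gamma_\nu$; moreover $E(\phib_m)-E(\mathbf{u}^m(t_m))=(\beta_m^2-1)K(\mathbf{u}^m(t_m))-2(\beta_m^3-1)P(\mathbf{u}^m(t_m))\to0$ by the uniform bounds, so $E(\phib_m)\to I_\nu$ and $(\phib_m)$ is a minimizing sequence for \eqref{problE}. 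Theorem~\ref{precomseq}(iii) then gives $\inf_{\phib\in A_\nu}\|\phib_m-\phib\|_{\mathbf{H}^1}\to0$, while $\|\phib_m-\mathbf{u}^m(t_m)\|_{\mathbf{H}^1}=|\beta_m-1|\,\|\mathbf{u}^m(t_m)\|_{\mathbf{H}^1}\to0$; together these force $\inf_{\phib\in A_\nu}\|\mathbf{u}^m(t_m)-\phib\|_{\mathbf{H}^1}\to0$, contradicting the choice of $t_m$. The main obstacle I anticipate is precisely the uniform control of $K(\mathbf{u}^m(t_m))$ and the positivity of $P(\mathbf{u}^m(t_m))$ at the \emph{a priori} unknown times $t_m$: these are what make $(\phib_m)$ a genuine minimizing sequence and thus let the concentration-compactness conclusion of Theorem~\ref{precomseq} apply. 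Both follow from $I_\nu<0$ together with the coercivity estimate behind \eqref{6.6}, after which the argument is routine.
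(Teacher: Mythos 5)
Your proposal is correct and follows essentially the same route as the paper: a contradiction argument that rescales $\mathbf{u}^m(t_m)$ by $\sqrt{\nu/Q(\mathbf{u}^m(t_m))}$ to produce a minimizing sequence for \eqref{problE}, invokes Theorem \ref{precomseq}(iii), and closes with the triangle inequality, using the conservation laws and the a priori bound on $K$ coming from \eqref{6.6}. If anything, your explicit verification that $P(\mathbf{u}^m(t_m))>0$ (via $I_\nu<0$), so that the rescaled functions genuinely lie in $\Gamma_\nu\subset\mathcal{P}$, is a small point the paper's proof leaves implicit.
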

\begin{proof}
Assume by contradiction the result is false. Then there exist $\epsilon>0$ and  sequences $(\phib_m)\subset\mathbf{H}^{1}$ and  $(t_{m})\subset \R$ such that
\begin{equation}\label{infsumeps0}
    \inf_{\phib\in A_{\nu}}\| \phib_m-\phib\|_{\mathbf{H}^{1}}<\frac{1}{m}
\end{equation}
and
\begin{equation}\label{infsumeps}
     \inf_{\phib\in A_{\nu}}\| \mathbf{u}_m(t_{m})-\phib\|_{\mathbf{H}^{1}}\geq\epsilon,
\end{equation}
where $\mathbf{u}_m(t)$ are the solutions of \eqref{system1} with $\mathbf{u}_m(0)=\phib_m$.
Note that \eqref{infsumeps0} means that $(\phib_m)$ converges to the set $A_\nu$, as $m\to\infty$. Consequently, since $E$ and $Q$ are continuous functions on $\mathbf{H}^1$, and $E\equiv I_\nu$ and $Q\equiv \nu$ on $A_\nu$, we deduce that $E(\phib_m)\to I_\nu$ and $Q(\phib_m)\to\nu$.

Now define $\displaystyle a_{m}=\sqrt{\frac{\nu}{Q(\phib_m)}}$ and $\mathbf{v}_m=a_m\mathbf{u}_m(t_m)$.  It is clear that $a_{m}\to 1$, as $m\to \infty$. Moreover, by  Lemma \ref{functrans} and the conservation of $Q$ and $E$,
\begin{equation}\label{Qphim1}
    Q(\mathbf{v}_m)=a_{m}^{2}Q(\mathbf{u}_m(t_m))=a_{m}^{2}Q(\phib_m)=\nu
\end{equation}
and
\begin{equation}\label{Qphim}
    \begin{split}
  E(\mathbf{v}_m)&=a_m^2E(\mathbf{u}_m(t_m))-2(a_m^3-a_m^2)P(\mathbf{u}_m(t_m))\\
  &= a_m^2 E(\phib_m)-2(a_m^3-a_m^2)P(\mathbf{u}_m(t_m)).
    \end{split}
\end{equation}

As in \eqref{GNE2} and \eqref{GNE2.1} we see that $P(\mathbf{u}_m(t_m))$ can be bounded by a quantity depending on $E(\phib_m)$ and $Q(\phib_m)$, which in turn are uniformly bounded with respect to $m$, because these are convergent sequences. So taking the limit, as $m\to\infty$, in \eqref{Qphim}, we obtain
\begin{equation*}
    \begin{split}
        \lim_{m\to \infty}E(\mathbf{v}_m)
        =\lim_{m\to \infty}E(\phib_m)
        =I_{\nu}
    \end{split}
\end{equation*}
which combined with \eqref{Qphim1} gives that $(\mathbf{v}_m)$ is a minimizing sequence of \eqref{problE}.

Part (iii) of Theorem \ref{precomseq} guarantees, for each $m\in\N$, the existence of 
 $\tilde{\phib}_{m}\in A_{\nu}$ such that $\|\mathbf{v}_m-\tilde{\phib}_{m}\|_{\mathbf{H}^{1}}<\frac{\epsilon}{2}$. Hence
 from \eqref{infsumeps},
\begin{equation*}
    \begin{split}
        \epsilon\leq \| \mathbf{u}_m(t_m)-\tilde{\phib}_{m}\|_{\mathbf{H}^{1}}
        &\leq |1-a_m|\|\mathbf{u}_m(t_m)\|_{\mathbf{H}^{1}}+\|\mathbf{v}_m-\tilde{\phib}_{m}\|_{\mathbf{H}^{1}}\\
        &\leq C|1-a_{m}|+\frac{\epsilon}{2},
    \end{split}
\end{equation*}
where we have used  that the $\mathbf{H}^{1}$ norm of the global solutions is uniformly bounded. By taking the limit, as $m\to \infty $, we arrive to a contradiction and the corollary is proved. 
\end{proof}

\subsubsection{Passing from $A_{\nu}$ to $\mathcal{G}(1,\boldsymbol{0})$}
Let us start by recalling that along  $\mathcal{G}(1,\boldsymbol{0})$ the charge $Q$ is constant (see Remark \ref{remkharge}). This means there exists a constant $\mu>0$ such that 
\begin{equation*}
Q(\psib)=\mu, \quad \mbox{for any}\quad \psib\in\mathcal{G}(1,\boldsymbol{0}).
\end{equation*}
We will show that for this constant, the sets $A_\mu$ and $\mathcal{G}(1,\boldsymbol{0})$ are the same. The proof follows the ideas presented in \cite[Lemma 4.2 ]{Corcho}.

\begin{lem} \label{AnuequG} Assume $1\leq n \leq3$.
Then $A_\mu=\mathcal{G}(1,\boldsymbol{0})$.
\end{lem}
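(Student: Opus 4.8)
\textbf{Proof plan for Lemma \ref{AnuequG}.}

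The plan is to show the two inclusions $\mathcal{G}(1,\boldsymbol{0})\subseteq A_\mu$ and $A_\mu\subseteq\mathcal{G}(1,\boldsymbol{0})$ by relating the minimization problem \eqref{problE} to the minimization of the Weinstein functional $J$ from Section \ref{sec.gs}. The key computational device is the scaling from Step 2 of Lemma \ref{subadit}: for $\phib\in\mathcal{P}$ and $\lambda>0$, setting $\phib^\lambda=\lambda^{n/2}\delta_{1/\lambda}\phib$ preserves the charge $Q(\phib^\lambda)=Q(\phib)$ while $E(\phib^\lambda)=\lambda^2 K(\phib)-2\lambda^{n/2}P(\phib)$; minimizing over $\lambda>0$ gives the explicit minimizer $\lambda_*=[2K(\phib)/(nP(\phib))]^{2/(n-4)}$ and the value $f(\lambda_*)=\lambda_*^2\frac{n-4}{n}K(\phib)<0$. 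This expresses $\inf_{\lambda}E(\phib^\lambda)$ purely in terms of $Q(\phib)=\mu$ and $J(\phib)=\mu^{3/2-n/4}K(\phib)^{n/4}/P(\phib)$. Substituting $\lambda_*$ and simplifying, one obtains $f(\lambda_*)=-c_n\,\mu^{\frac{6-n}{4-n}}\,\xi_1^{-\frac{4}{4-n}}$ for an explicit positive constant $c_n$ depending only on $n$, where $\xi_1=\inf_{\psib\in\mathcal{P}}J(\psib)$ from \eqref{xi1}. Hence
$$
I_\mu=\inf_{\phib\in\Gamma_\mu}E(\phib)=\inf_{\phib\in\mathcal{P},\,Q(\phib)=\mu}\inf_{\lambda>0}E(\phib^\lambda)=-c_n\,\mu^{\frac{6-n}{4-n}}\,\xi_1^{-\frac{4}{4-n}},
$$
and moreover $\phib$ realizes $I_\mu$ among $\Gamma_\mu$ if and only if $\phib=\psi^{\lambda_*}$ for some $J$-minimizer $\psi$ (after the charge normalization).

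First I would make the inclusion $\mathcal{G}(1,\boldsymbol{0})\subseteq A_\mu$ precise. Given $\tilde\psib\in\mathcal{G}(1,\boldsymbol{0})$, by Remark \ref{remkharge} its charge equals $\mu$, and by Theorem \ref{thm:existenceGSJgeral}(ii) and Lemma \ref{lemma3} it is a $J$-minimizer, so $J(\tilde\psib)=\xi_1$. Using Pohozaev identities from Lemma \ref{identitiesfunctionals} (with $b_k=\alpha_k^2/\gamma_k$, so $\mathcal{Q}=Q$), namely $K(\tilde\psib)=nI(\tilde\psib)$, $\mathcal{Q}(\tilde\psib)=(6-n)I(\tilde\psib)$, $P(\tilde\psib)=2I(\tilde\psib)$, one computes $E(\tilde\psib)=K(\tilde\psib)+\mathcal{Q}(\tilde\psib)-2P(\tilde\psib)$ — wait, more carefully $E(\tilde\psib)=K(\tilde\psib)-2P(\tilde\psib)=nI(\tilde\psib)-4I(\tilde\psib)=(n-4)I(\tilde\psib)<0$ since $n\leq 3$. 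One then checks that the scaling parameter $\lambda_*$ associated with $\tilde\psib$ equals $1$ (equivalently, $\tilde\psib$ is a critical point of $\lambda\mapsto E(\tilde\psib^\lambda)$ because it solves the Euler--Lagrange equation), so $E(\tilde\psib)=f(\lambda_*)=I_\mu$, giving $\tilde\psib\in A_\mu$. Actually cleaner: every $J$-minimizer with charge $\mu$ produces, via the scaling of Theorem \ref{thm:existenceGSJgeral}(ii), a ground state, and conversely; combining the value of $I_\mu$ computed above with $E(\tilde\psib)=(n-4)I(\tilde\psib)$ and the formula \eqref{inffunctionalJ} for $\xi_1$ in terms of $\mathcal{Q}(\tilde\psib)=\mu$ forces $E(\tilde\psib)=I_\mu$.

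For the reverse inclusion $A_\mu\subseteq\mathcal{G}(1,\boldsymbol{0})$, take $\phib\in A_\mu$, i.e. $Q(\phib)=\mu$ and $E(\phib)=I_\mu<0$; in particular $K(\phib)-2P(\phib)<0$, so $P(\phib)>0$ and $\phib\in\mathcal{P}$. Since $E(\phib)=I_\mu=\inf_{\lambda}E(\phib^\lambda)\leq E(\phib^1)=E(\phib)$, the infimum over $\lambda$ is attained at $\lambda=1$, hence $\phib$ is a critical point of $\lambda\mapsto E(\phib^\lambda)$, which together with the constraint $Q(\phib)=\mu$ gives (via Lagrange multipliers on the constrained minimization, or directly) that $\phib$ satisfies an equation of the form $-\gamma_k\Delta\phi_k+\theta\frac{\alpha_k^2}{\gamma_k}\phi_k=f_k(\phib)$ for a multiplier $\theta$; evaluating the Pohozaev/Nehari relations shows $\theta=1$, so $\phib$ solves \eqref{OB1}. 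Finally, minimality of $E$ on $\Gamma_\mu$ plus the computed value of $I_\mu$ forces $J(\phib)=\xi_1$, so by Lemma \ref{lemma3} $\phib$ is a ground state, i.e. $\phib\in\mathcal{G}(1,\boldsymbol{0})$.

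The main obstacle I anticipate is the Lagrange-multiplier step in the reverse inclusion: showing that a minimizer of $E$ on $\Gamma_\mu$ is genuinely a solution of the elliptic system \eqref{OB1} with the \emph{correct} coefficient (multiplier equal to $1$), rather than merely a solution of a rescaled system. This requires carefully combining the stationarity in the scaling direction $\lambda$ with the constrained Euler--Lagrange equation and the Pohozaev identities of Lemma \ref{identitiesfunctionals} to pin down the multiplier; the cited argument of \cite[Lemma 4.2]{Corcho} handles exactly this point, and I would follow it, adapting the bookkeeping to the present system with the weights $\alpha_k^2/\gamma_k$ and $\gamma_k$.
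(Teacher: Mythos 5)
Your proposal is correct and follows essentially the same route as the paper's proof: both reduce the constrained minimization of $E$ on $\Gamma_\mu$ to the minimization of $J$ via the charge-preserving scaling $\phib^\lambda$ with its unique optimal parameter $\lambda_*$, use the Pohozaev identities of Lemma \ref{identitiesfunctionals} to see that ground states sit at $\lambda_*=1$, and pin down the Lagrange multiplier in the reverse inclusion by testing against $\phib$ itself and comparing $K(\phib)$, $P(\phib)$ with the ground-state values (the paper indeed follows \cite[Lemma 4.2]{Corcho}, exactly as you anticipated, and obtains the multiplier $\theta=-1$ in its sign convention). The only cosmetic difference is that you package the comparison as an explicit formula $I_\mu=-c_n\,\mu^{\frac{6-n}{4-n}}\xi_1^{-\frac{4}{4-n}}$, whereas the paper argues with the monotone relation between $J$ and $K$ at fixed charge; note also that in your reverse inclusion the identification $J(\phib)=\xi_1$ (hence the values of $K(\phib)$ and $P(\phib)$) must logically precede the determination of the multiplier, since the Nehari relation $K=\tfrac{n}{2}P$ alone leaves $\theta$ undetermined.
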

\begin{proof} Suppose $\psib\in\mathcal{G}(1,\boldsymbol{0})$ and let us prove that $\psib\in A_{\mu}$. We already know that $\psib\in \mathcal{P}$ and $Q(\psib)=\mu$. So we only need to prove that $E(\psib)=I_\mu$. To do so, take any $\phib\in\Gamma_{\mu}$ and as in Step 2 of the proof of Lemma \ref{Inunegat} define the function
$f(\lambda)=E(\phib^{\lambda})$, $\lambda>0$. As we saw, such a function attains 
its unique  minimum  value at the point $\displaystyle \lambda_{*}=\left[\frac{2K(\phib)}{nP(\phib)}\right]^{\frac{2}{n-4}}>0$. In particular,
\begin{equation*}
\begin{split}
0=f'(\lambda_{*})=2\lambda_{*}K(\phib)-n\lambda_{*}^{n/2-1}P(\phib)
=\frac{2}{\lambda_{*}}K(\phib^{\lambda_{*}})-\frac{n}{\lambda_{*}}P(\phib^{\lambda_{*}}).
\end{split}
\end{equation*}
Thus, 
\begin{equation}\label{Klamb}
K(\phib^{\lambda_{*}})=\frac{n}{2}P(\phib^{\lambda_{*}}) \quad \mbox{and}\quad E(\phib^{\lambda_{*}})=\frac{n-4}{n}K(\phib^{\lambda_{*}}).
\end{equation}

On the other hand, from Lemma \ref{identitiesfunctionals}, we have 
\begin{equation}\label{Kgrond}
K(\psib)=\frac{n}{2}P(\psib) \quad \mbox{and} \quad E(\psib)=\frac{n-4}{n}K(\psib).
\end{equation}
Thus, since $Q(\phib)=\mu=Q(\psib)$, from \eqref{Klamb} and \eqref{Kgrond}  we obtain
 \begin{equation}\label{Jgrond}
 J(\psib)=\frac{Q(\psib)^{\frac{3}{2}-\frac{n}{4}}K(\psib)^{\frac{n}{4}}}{P(\psib)}=\frac{n}{2}\mu^{\frac{3}{2}-\frac{n}{4}}K(\psib)^{\frac{n}{4}-1}
 \end{equation}
 and
 \begin{equation}\label{Jlamb}
 J(\phib^{\lambda_{*}})=\frac{Q(\phib^{\lambda_{*}})^{\frac{3}{2}-\frac{n}{4}}K(\phib^{\lambda_{*}})^{\frac{n}{4}}}{P(\phib^{\lambda_{*}})}=\frac{n}{2}\mu^{\frac{3}{2}-\frac{n}{4}}K(\phib^{\lambda_{*}})^{\frac{n}{4}-1}.
 \end{equation}

Since $\phib^{\lambda_{*}}\in\Gamma_{\mu}\subset\mathcal{P}$ (see Step 2 in Lemma \ref{Inunegat}) and $\psib$ is a minimizer of $J$ on $\mathcal{P}$ we have $J(\psib)\leq J(\phib^{\lambda_{*}})$, which from \eqref{Jgrond} and \eqref{Jlamb} gives $K(\psib)\geq K(\phib^{\lambda_{*}})$. Hence,
from  \eqref{Klamb} and \eqref{Kgrond}
\begin{equation*}
\begin{split}
E(\phib)=f(1)
\geq f(\lambda_{*})
=E(\phib^{\lambda_{*}})
=\frac{n-4}{n}K(\phib^{\lambda_{*}})
\geq \frac{n-4}{n}K(\psib)
= E(\psib),
\end{split}
\end{equation*}
which implies $E(\psib)\leq I_{\mu}$ and shows that $\psib\in A_{\mu}$.
 
Now assume $\phib\in A_{\mu}$ and let us prove that $\phib\in \mathcal{G}(1,\boldsymbol{0})$. For that, we fix $\psib \in \mathcal{G}(1,\boldsymbol{0})$. Following the above notation, we observe that by construction
\begin{equation*}
    f(1)=E(\phib)=I_{\mu}\leq E (\phib^{\lambda_{*}})=f(\lambda_{*}).
\end{equation*}
Thus, from the definition of $\lambda_{*}$ we have $f(\lambda_{*})=f(1)$. Since $\lambda_{*}$ is the unique positive value where $f$ attains its minimum, we must  have $\lambda_{*}=1$, that is, $\phib^{\lambda_{*}}=\phib$ and
\begin{equation*}
    E(\phib)=E(\phib^{\lambda_{*}})\leq E(\psib).
\end{equation*}
This last inequality combined with \eqref{Klamb} and \eqref{Kgrond} leads to $ K(\phib)\geq K(\psib)$. But, as we proved above we always have $ K(\psib)\geq K(\phib^{\lambda_{*}})=K(\phib)$, which means that
\begin{equation}\label{Kphi}
 K(\phib)=K(\psib).
 \end{equation}
Therefore,  \eqref{Klamb} and \eqref{Kgrond} imply that
\begin{equation}\label{Pphi}
    P(\phib)=\frac{2}{n}K(\phib)=\frac{2}{n}K(\psib)=P(\psib).
\end{equation}

Together \eqref{Kphi}, \eqref{Pphi} and the fact that $\phi\in \Gamma_{\mu}$ imply that
 \begin{equation*}
 \begin{split}
 I(\phib)=\frac{1}{2}\left[K(\phib)+Q(\phib)\right]-P(\phib)
 =\frac{1}{2}\left[K(\psib)+Q(\psib)\right]-P(\psib)
 =I(\psib),
 \end{split}
 \end{equation*}
 which means that $\phib$ is also a minimizer of $I$. To complete the proof, it remains to show that $\phib$  is indeed a solution of \eqref{OB1}. But from Lagrange's multiplier theorem there exists some constant $\theta$ such that
 $$
 \gamma_k\int\nabla\phi_k\nabla g_k\;dx-\int f_k(\phib)g_k\;dx=\theta  \frac{\alpha_k^2}{\gamma_k}\int\phi_kg_k\;dx,
 $$
 for any $\mathbf{g}\in \mathbf{H}^1$. By taking $g_k=\phi_k$, summing over $k$ and using Lemma \ref{propertiesF} we infer
 \begin{equation}\label{Pphi1}
 K(\phib)-3P(\phib)=\theta Q(\phib)=\theta\mu.
 \end{equation}
 Note that from \eqref{Kphi}, \eqref{Pphi} and Lemma \ref{identitiesfunctionals} we have
 $$
  K(\phib)-3P(\phib)= K(\psib)-3P(\psib)=\frac{n}{6-n}\mu-\frac{6}{6-n}\mu=-\mu,
 $$
 which compared to \eqref{Pphi1} yields $\theta=-1$, completing the proof of the lemma.
 \end{proof}

\begin{proof}[Proof of Theorem \ref{thm:stabG}]
	It is a direct consequence of Corollary \ref{stabAnu}  and Lemma \ref{AnuequG}
\end{proof}

\begin{obs}
Corollary \ref{stabAnu} is a little bit stronger than Theorem \ref{thm:stabG}. It says that not only $A_\mu$ but all $A_\nu$, $\nu>0$ are stable by the flow of \eqref{system1}.
\end{obs}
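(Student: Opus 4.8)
The plan is to observe that the statement is not a new assertion requiring independent computation, but rather a reading of the \emph{scope} of the concentration-compactness argument already carried out: every ingredient leading to Corollary \ref{stabAnu} was established for an arbitrary fixed charge level $\nu>0$, so the stability conclusion holds verbatim for each such $\nu$, and Theorem \ref{thm:stabG} is merely the specialization to the charge $\mu$ shared by all ground states. Thus the task is to verify that no step in the chain silently fixed $\nu=\mu$.

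First I would revisit the constrained minimization problem $I_\nu=\inf\{E(\phib):\phib\in\Gamma_\nu\}$ and recall that $\Gamma_\nu=\{\phib\in\mathcal{P}:Q(\phib)=\nu\}$ is defined for any $\nu>0$. Tracing the auxiliary results, Lemma \ref{Inunegat} (that $-\infty<I_\nu<0$), Lemma \ref{minseqbon} (boundedness of minimizing sequences together with the lower bound on $P$), and Lemma \ref{subadit} (strict subadditivity $I_{\nu_1+\nu_2}<I_{\nu_1}+I_{\nu_2}$) are all proven for generic positive charge; in fact Step 1 of Lemma \ref{subadit} produces the explicit scaling $I_{\theta\nu}=\theta^{(6-n)/(4-n)}I_\nu$, which already encodes uniform behavior in $\nu$. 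The exclusion of vanishing (Lemma \ref{outvanishi}) and of dichotomy (Lemma \ref{outdichot}) likewise invoke no distinguished value of $\nu$, so the concentration alternative $\alpha=\nu$ is forced for every $\nu>0$.

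Next I would confirm that the compactness and stability steps inherit this uniformity. Lemma \ref{cocentcomp} and Theorem \ref{precomseq} show that any minimizing sequence for $I_\nu$ is, up to translation and passage to a subsequence, strongly convergent in $\mathbf{H}^1$ to an element of $A_\nu$, whence $A_\nu\neq\emptyset$. Finally, the proof of Corollary \ref{stabAnu} itself introduces the rescaling $a_m=\sqrt{\nu/Q(\phib_m)}$ and the auxiliary minimizing sequence $\mathbf{v}_m=a_m\mathbf{u}_m(t_m)$; this construction is valid for any $\nu>0$ and nowhere appeals to $\nu=\mu$. Hence each $A_\nu$ is orbitally stable under the flow of \eqref{system1} in precisely the sense stated in Corollary \ref{stabAnu}.

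The only place where the specific charge $\mu$ of the ground states enters is the identification $A_\mu=\mathcal{G}(1,\boldsymbol{0})$ of Lemma \ref{AnuequG}, which converts the general-$\nu$ statement into the stability of the ground state set asserted in Theorem \ref{thm:stabG}. The main (and essentially only) obstacle is therefore to certify that each lemma in the chain was genuinely phrased and proven for arbitrary $\nu>0$ rather than for the distinguished value $\mu$; a direct inspection confirms this, which establishes the remark.
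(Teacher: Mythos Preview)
Your reading is correct: the paper gives no proof of this remark because it is purely an observation about the scope of the preceding argument, and your verification---that every lemma from \ref{Inunegat} through Corollary \ref{stabAnu} was stated and proved for an arbitrary $\nu>0$, with the specific value $\mu$ entering only in Lemma \ref{AnuequG}---is exactly the justification the remark is pointing to.
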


\begin{obs}
By replacing the definition of $Q$ in \eqref{newQ} by $$Q(\phib)=\sum_{k=1}^l\frac{\alpha_k^2\omega}{\gamma_k}\|\phi_k\|_{L^2}, \qquad \omega>0,
$$
and repeating similar arguments as the ones presented in this section, actually we can prove the stability of the set $\mathcal{G}(\omega,\boldsymbol{0})$, for any $\omega>0$. Also, the fact that $\boldsymbol{\beta}=\boldsymbol{0}$ was crucial in the proof of Lemma \ref{Inunegat}. Indeed, if $\boldsymbol{\beta}\neq\boldsymbol{0}$ then the term $L(\phib)$, which is invariant under the transformation $\phib\mapsto \phib^\lambda$, also appear in the definition of the energy. In such a  case we do not know if the energy assumes a negative value.
\end{obs}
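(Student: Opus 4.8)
The plan is to observe that the entire concentration-compactness scheme of this subsection depends on the charge only through the positivity of its coefficients, while the energy itself is left untouched by the new parameter. Writing $Q_\omega(\phib)=\sum_{k=1}^{l}\frac{\alpha_k^2\omega}{\gamma_k}\|\phi_k\|_{L^2}^2=\omega\,Q(\phib)$, I first record that $Q_\omega$ is again a positive-definite quadratic form equivalent to the $\mathbf{L}^2$-norm (since $\alpha_k,\gamma_k,\omega>0$) and that, because $\boldsymbol{\beta}=\boldsymbol{0}$, the energy $E=K-2P$ does not involve $\omega$ at all; the parameter enters solely through the constraint. For the modified problem $I_\nu^\omega:=\inf\{E(\phib);\ \phib\in\mathcal{P},\ Q_\omega(\phib)=\nu\}$ every ingredient of Lemmas \ref{Inunegat}--\ref{outdichot}, Lemma \ref{cocentcomp}, Theorem \ref{precomseq} and Corollary \ref{stabAnu} carries over verbatim with $Q$ replaced by $Q_\omega$: the scaling identities of Lemma \ref{functrans} give $Q_\omega(\phib^\theta)=\theta Q_\omega(\phib)$ exactly as for $Q$, so the homogeneity $I_{\theta\nu}^\omega=\theta^{\frac{6-n}{4-n}}I_\nu^\omega$ and the strict subadditivity of Lemma \ref{subadit} are unaffected, and the exclusion of vanishing and dichotomy uses only the positivity of the weights. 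Equivalently, one may simply note the rescaling $I_\nu^\omega=I_{\nu/\omega}$ and deduce everything from the already-proven case $\omega=1$.

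Next I would repeat the passage from the constrained minimizers to the ground states. The elliptic system defining $\mathcal{G}(\omega,\boldsymbol{0})$ has $b_k=\frac{\alpha_k^2}{\gamma_k}\omega$, so its associated charge is precisely $\mathcal{Q}(\psib)=Q_\omega(\psib)$, which by Remark \ref{remkharge} is constant along $\mathcal{G}(\omega,\boldsymbol{0})$; denote this value by $\mu>0$. Running the argument of Lemma \ref{AnuequG} with $Q_\omega$, $\mathcal{Q}$ and the corresponding Weinstein functional in place of their $\omega=1$ versions yields the identification $A_\mu^\omega=\mathcal{G}(\omega,\boldsymbol{0})$, and combining this with the (now $\omega$-adapted) Corollary \ref{stabAnu} gives the stability of $\mathcal{G}(\omega,\boldsymbol{0})$ for every $\omega>0$. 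The main point requiring care here is purely bookkeeping: one must pair $\mathcal{G}(\omega,\boldsymbol{0})$ with the correct level $\mu=Q_\omega(\psib)$ and check that the minimality arguments in Lemma \ref{AnuequG} (which rest on Lemma \ref{identitiesfunctionals} and the uniqueness of the minimizing dilation $\lambda_*$) are insensitive to the constant $\omega$ in front of the $L^2$-terms.

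Finally, to justify the second assertion of the remark I would inspect Step 2 of Lemma \ref{Inunegat}, the sole place where the sign of $I_\nu$ is produced. There the test function is the $\mathbf{L}^2$-preserving dilation $\phib^\lambda(x)=\lambda^{n/2}(\delta_{1/\lambda}\phib)(x)$, under which $Q$ and the additional term $L(\phib)=\sum_k\beta_k\|\phi_k\|_{L^2}^2$ are both invariant, while $K(\phib^\lambda)=\lambda^2K(\phib)$ and $P(\phib^\lambda)=\lambda^{n/2}P(\phib)$. With $\boldsymbol{\beta}=\boldsymbol{0}$ one has $E(\phib^\lambda)=\lambda^2K(\phib)-2\lambda^{n/2}P(\phib)$, and since $1\le n\le 3$ forces $n/2<2$, letting $\lambda\to 0^+$ drives $E(\phib^\lambda)<0$, whence $I_\nu<0$. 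If instead $\boldsymbol{\beta}\neq\boldsymbol{0}$, then $E(\phib^\lambda)=\lambda^2K(\phib)+L(\phib)-2\lambda^{n/2}P(\phib)\to L(\phib)>0$ as $\lambda\to 0^+$, so this scaling no longer yields negative energy and the strict negativity $I_\nu<0$ can fail. Since $I_\nu<0$ is used both in Lemma \ref{minseqbon}(ii), to bound $P(\phib_m)$ away from zero, and in the subadditivity of Lemma \ref{subadit}, its possible loss is exactly the obstruction the remark identifies, and is the genuine reason the hypothesis $\boldsymbol{\beta}=\boldsymbol{0}$ cannot be dropped by this method.
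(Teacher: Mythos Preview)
Your proposal is correct and is precisely the elaboration the paper has in mind; the remark itself carries no proof, so your sketch is the natural fleshing-out of the two sentences. One small point of phrasing: in the $\boldsymbol{\beta}=\boldsymbol{0}$ case you write that ``letting $\lambda\to 0^+$ drives $E(\phib^\lambda)<0$,'' but the limit of $E(\phib^\lambda)$ as $\lambda\to 0^+$ is $0$; what you mean (and what the paper actually does in Step~2 of Lemma~\ref{Inunegat}) is that $f(\lambda)=E(\phib^\lambda)$ attains a strictly negative minimum at the explicit point $\lambda_*=\bigl(\tfrac{2K(\phib)}{nP(\phib)}\bigr)^{2/(n-4)}$, since $n/2<2$ makes the negative term dominant for small $\lambda$.
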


\subsection{Instability}

This subsection is devoted to prove the instability results. In the $L^2$-critical case, that is, $n=4$, we prove an instability result in the spirit of \cite{Weinstein} (see also \cite[Theorem 8.2.1]{Cazenave}).

\begin{teore}\label{thm:instabn=4}
Assume  $n=4$. Let  $\mathcal{C}$ be the set of non-trivial solutions of \eqref{OB1}. If $\psib\in \mathcal{C}$ then the standing-wave solution
	\begin{equation*}
	u_{k}(x,t)=e^{i\frac{\alpha_{k}}{\gamma_{k}}\omega t}\psi_{k}(x), \quad k=1,\ldots,l,
	\end{equation*}
	is  unstable in $\mathbf{H}^{1}$ in the following sense: for every $\epsilon>0$ there exists $\psib_0^\epsilon\in \mathbf{H}^{1}$ such that
	\begin{equation*}
	\|\psib_0^\epsilon-\psib\|_{\mathbf{H}^{1}}\leq \epsilon,
	\end{equation*}
	and the corresponding solution $\mathbf{u}^\epsilon(t)$ of \eqref{system1} (with $\beta_k=0$), satisfying $\mathbf{u}^\epsilon(0)=\psib_0^\epsilon$,
	blows up  in finite time. 
\end{teore}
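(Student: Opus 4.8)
The plan is to adapt Weinstein's classical instability argument for the $L^2$-critical NLS, exploiting the (infinitesimal) pseudo-conformal invariance of \eqref{system1}. The crucial preliminary observation is that every $\psib\in\mathcal{C}$ has \emph{zero energy}: since $\psib$ solves \eqref{OB1}, i.e. \eqref{systemelip} with $b_k=\alpha_k^2/\gamma_k$ and $\boldsymbol\beta=\boldsymbol 0$, Lemma \ref{identitiesfunctionals} with $n=4$ gives $K(\psib)=4I(\psib)$ and $P(\psib)=2I(\psib)$, hence $E(\psib)=K(\psib)-2P(\psib)=0$. Moreover, by Lemma \ref{fkreal} the components $\psi_k$ are real-valued (as in the standing-wave ansatz \eqref{standing}), and by Lemma \ref{regellpsys} we have $\psib\in\Sigma$ with $\psi_k,\nabla\psi_k,|\cdot|\psi_k\in L^2$; put $V_0:=\sum_{k=1}^l\frac{\alpha_k^2}{\gamma_k}\bigl\||x|\psi_k\bigr\|_{L^2}^2\in(0,\infty)$.

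For a small, negative parameter $b$ I would take the perturbed datum $\psib_0^b=(\psi_{0,1}^b,\ldots,\psi_{0,l}^b)$ given by $\psi_{0,k}^b(x)=e^{i\frac{\alpha_k}{\gamma_k}b|x|^2}\psi_k(x)$, $k=1,\ldots,l$. One first checks $\psib_0^b\to\psib$ in $\mathbf H^1$ as $b\to 0$: indeed $\psi_{0,k}^b-\psi_k=(e^{i\frac{\alpha_k}{\gamma_k}b|x|^2}-1)\psi_k$ and $\nabla\psi_{0,k}^b-\nabla\psi_k=(e^{i\frac{\alpha_k}{\gamma_k}b|x|^2}-1)\nabla\psi_k+2i\frac{\alpha_k}{\gamma_k}b\,e^{i\frac{\alpha_k}{\gamma_k}b|x|^2}x\psi_k$, and both tend to $0$ in $\mathbf L^2$ by dominated convergence, using $|x|\psi_k\in L^2$. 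Next, assumption \ref{H4} applied with $\theta=b|x|^2$ yields $\mathrm{Re}\,F(\psib_0^b)=\mathrm{Re}\,F(\psib)$ pointwise, so $P(\psib_0^b)=P(\psib)$; and since $\psi_k$ is real the cross term $\mathrm{Re}\bigl(i\,x\psi_k\cdot\overline{\nabla\psi_k}\bigr)$ vanishes, so that
\[
K(\psib_0^b)=K(\psib)+4b^2V_0,\qquad E(\psib_0^b)=E(\psib)+4b^2V_0=4b^2V_0>0,
\]
while the quantity $P_0:=4\sum_{k=1}^l\alpha_k\,\mathrm{Im}\int\nabla\psi_{0,k}^b\cdot x\,\overline{\psi_{0,k}^b}\,dx$ appearing in the virial identity equals $8bV_0<0$.

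Now let $\mathbf u^b$ be the solution of \eqref{system1} (with $\boldsymbol\beta=\boldsymbol0$) with $\mathbf u^b(0)=\psib_0^b$, on its maximal interval $I$; since $\psib_0^b\in\Sigma$, Theorem \ref{persistL2wheit} (equivalently Corollary \ref{conservationlawweightedspace}) applies. Writing $V(t)=\sum_{k}\frac{\alpha_k^2}{\gamma_k}\bigl\||x|u_k^b(t)\bigr\|_{L^2}^2$, identity \eqref{secderV} for $n=4$ (with $L\equiv0$) gives $V''(t)=8E(\psib_0^b)$, hence $V(t)=V(0)+P_0t+4E(\psib_0^b)t^2=V_0(1+4bt)^2$, which vanishes at the finite time $T_0=-1/(4b)>0$. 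If $\mathbf u^b$ were defined on $[0,T_0]$, then $V(T_0)=0$ would force $u_k^b(T_0)\equiv0$ for all $k$, contradicting conservation of charge (Lemma \ref{l2cons}), since $Q(\mathbf u^b(T_0))=Q(\psib_0^b)=Q(\psib)>0$. Therefore the maximal existence time is finite, and by the blow-up alternative for $H^1$-solutions (valid here because $n=4$ is $H^1$-subcritical, Theorem \ref{localexistenceH1}) one has $\|\mathbf u^b(t)\|_{\mathbf H^1}\to\infty$ in finite time. Given $\epsilon>0$, choosing $b<0$ with $|b|$ small enough that $\|\psib_0^b-\psib\|_{\mathbf H^1}\le\epsilon$ and setting $\psib_0^\epsilon:=\psib_0^b$ completes the argument.

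The dominated-convergence estimates and the bookkeeping for $K$, $E$ and $P_0$ are routine. The one conceptual subtlety — the place to be careful — is that $E(\psib_0^b)$ is \emph{strictly positive}, so neither case of the general blow-up criterion Theorem \ref{blowupZm} applies directly; instead one must use the exact virial identity to recognize $V(t)$ as the perfect square $V_0(1+4bt)^2$, which forces $V$ to hit zero at a finite time. This is precisely the pseudo-conformal mechanism (compatible with Corollary \ref{corosharp}), and it is what makes the $L^2$-critical instability go through irrespective of the sign of the energy.
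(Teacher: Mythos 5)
Your argument is correct, but it follows a genuinely different route from the paper's. The paper's proof is a two-line scaling argument: since $E(\psib)=\frac{n-4}{n}K(\psib)=0$ for $n=4$ (by Lemma \ref{identitiesfunctionals}), the amplitude-perturbed datum $(1+\tilde\epsilon)\psib$ has energy $-2(1+\tilde\epsilon)^2\tilde\epsilon P(\psib)<0$, and Theorem \ref{blowupZm}(i) immediately gives finite-time blow-up. You instead perturb by a pseudo-conformal quadratic phase, which produces strictly \emph{positive} energy $4b^2V_0$, so Theorem \ref{blowupZm} is unavailable; you compensate by computing $V'(0)=P_0=8bV_0<0$ and observing that in the $L^2$-critical, $\boldsymbol\beta=\boldsymbol 0$ case the virial identity degenerates to $V''\equiv 8E_0$, so that $V(t)=V_0(1+4bt)^2$ is an exact perfect square vanishing at $T_0=-1/(4b)$, which is incompatible with charge conservation; your bookkeeping for $K$, $P$, $E$ and $P_0$ (using \ref{H4} with $\theta=b|x|^2$ and the reality of $\psi_k$ to kill the cross term) checks out. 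What your approach buys: the perturbation preserves $Q$ and $P$ exactly, so it exhibits instability even along the constraint surface $\{Q=Q(\psib)\}$ (a stronger conclusion, since $Q$ is conserved), it gives an explicit upper bound $T^*\le -1/(4b)$ on the blow-up time, and it makes transparent the link with the explicit pseudo-conformal blow-up solutions of Theorem \ref{sharpn=4} and Corollary \ref{corosharp}. What it costs is that it is tied to $n=4$ and $\boldsymbol\beta=\boldsymbol 0$ (so that $L\equiv 0$ and $V''$ is constant), whereas the paper's negative-energy mechanism is the one that generalizes. Both proofs apply to arbitrary $\psib\in\mathcal{C}$, not only ground states, since only the Pohozaev identities are used.
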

\begin{proof}
Since $\psib\in \mathcal{C}$,  Lemmas  \ref{identitiesfunctionals} and \ref{lemma4.4} imply that $\psib\in\mathcal{P}$ and $E(\psib)=\dfrac{n-4}{n}K(\psib)=0$. Let $\epsilon>0$ be given and define
\begin{equation*}
\psib_0^\epsilon(x)=(1+\tilde{\epsilon})\psib(x).
\end{equation*}
where $\tilde{\epsilon}=\min\left\{\epsilon,\frac{\epsilon}{\|\boldsymbol{\psi}\|_{\mathbf{H}^{1}}}\right\}$.	We first note that
	\begin{equation*}
	\|\psib_0^\epsilon-\psib\|_{\mathbf{H}^{1}}
	=\tilde{\epsilon}\|\boldsymbol{\psi}\|_{\mathbf{H}^{1}}\leq \epsilon.
	\end{equation*}
Therefore, since $\psib_0^\epsilon\in\Sigma$ (see Lemma \ref{regellpsys}), where $\Sigma$ is the Hilbert space defined in Remark \ref{weightspc}, in order to prove the theorem, it suffices to show that $E(\psib_0^\epsilon)<0$ (see Theorem \ref{blowupZm}). But
	\begin{equation*}\label{Enegat}
	\begin{split}
	E(\psib^{\epsilon}_{0})&
	=(1+\tilde{\epsilon})^{2}K(\psib)-2(1+\tilde{\epsilon})^{3}P(\psib)\\
	&=(1+\tilde{\epsilon})^{2}\left[E(\psib)-2\tilde{\epsilon}P(\psib)\right]\\
	&=-2(1+\tilde{\epsilon})^{2}\tilde{\epsilon}P(\psib)<0,
	\end{split}
	\end{equation*}
	which is the desired.
\end{proof}

\begin{teore} \label{thm:instabn=5}
	Assume  $n=5$ and let $\mathcal{G}(1,\boldsymbol{0})$ be the set of ground states solutions of \eqref{OB1}. If $\psib\in\mathcal{G}(1,\boldsymbol{0})$, then the standing wave  
	\begin{equation*}
	u_{k}(x,t)=e^{i\frac{\alpha_{k}}{\gamma_{k}}\omega t}\psi_{k}(x), \quad k=1,\ldots,l,
	\end{equation*}
	is  unstable in $\mathbf{H}^{1}$ in the following sense: for every $\epsilon>0$ there exists $\psib_0^\epsilon\in \mathbf{H}^{1}$ such that
\begin{equation*}
\|\psib_0^\epsilon-\psib\|_{\mathbf{H}^{1}}\leq \epsilon,
\end{equation*}
and the corresponding solution $\mathbf{u}^\epsilon(t)$ of \eqref{system1} (with $\beta_k=0$), satisfying $\mathbf{u}^\epsilon(0)=\psib_0^\epsilon$,
blows up  in finite time. 
\end{teore}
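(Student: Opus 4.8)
## Proof strategy for Theorem~\ref{thm:instabn=5}

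\textbf{The plan is to} adapt the classical blow-up instability argument for the $L^2$-supercritical NLS (as in \cite{Holmer2}, \cite{Pastor}, \cite{Ogawa}) to our system, exploiting the sharp characterization of ground states obtained in Theorem~\ref{thm:existenceGSJgeral} together with the blow-up criterion of Theorem~\ref{thm:sharpglobalexistencecondn=5}. Fix $\psib\in\mathcal{G}(1,\boldsymbol 0)$. Recall from Lemma~\ref{identitiesfunctionals} (with $n=5$) that $K(\psib)=5I(\psib)$, $\mathcal{Q}(\psib)=I(\psib)$, $P(\psib)=2I(\psib)$, and that, under the normalization $b_k=\alpha_k^2/\gamma_k$, one has $Q(\psib)=\mathcal{Q}(\psib)$ and $\mathcal{E}(\psib)=K(\psib)-2P(\psib)=I(\psib)=\tfrac15 K(\psib)=\tfrac15 Q(\psib)\cdot 5=Q(\psib)$; equivalently $Q(\psib)\mathcal{E}(\psib)=Q(\psib)^2$ and $Q(\psib)K(\psib)=5Q(\psib)^2$. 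The idea is to perturb $\psib$ by a one-parameter dilation that \emph{preserves the charge} but \emph{lowers the energy} and \emph{raises the gradient norm}, so that the perturbed datum satisfies hypotheses \eqref{energycondblowup}--\eqref{gradientcondblowup} of Theorem~\ref{thm:sharpglobalexistencecondn=5} and hence yields a solution with finite maximal existence interval; the perturbation being arbitrarily small in $\mathbf{H}^1$ gives instability.

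\textbf{First I would} introduce, for $\lambda>0$ close to $1$, the charge-preserving scaling
\begin{equation*}
\psib_0^{\lambda}(x) := \lambda^{n/2}\bigl(\delta_{1/\lambda}\psib\bigr)(x) = \lambda^{5/2}\psib(\lambda x),
\end{equation*}
so that, by Lemma~\ref{functrans}, $Q(\psib_0^{\lambda})=Q(\psib)$, $K(\psib_0^{\lambda})=\lambda^{2}K(\psib)$, and $P(\psib_0^{\lambda})=\lambda^{n/2}P(\psib)=\lambda^{5/2}P(\psib)$. Then with $\boldsymbol\beta=\boldsymbol 0$,
\begin{equation*}
\mathcal{E}(\psib_0^{\lambda}) = \lambda^{2}K(\psib) - 2\lambda^{5/2}P(\psib) = \lambda^2 K(\psib)\Bigl(1 - \tfrac{4}{5}\lambda^{1/2}\Bigr),
\end{equation*}
using $2P(\psib)=\tfrac45 K(\psib)$. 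Define $g(\lambda):=Q(\psib)\,\mathcal{E}(\psib_0^{\lambda}) = Q(\psib)K(\psib)\,\lambda^2(1-\tfrac45\lambda^{1/2})$. One checks $g(1)=Q(\psib)K(\psib)\cdot\tfrac15 = Q(\psib)\mathcal{E}(\psib)$ and $g'(1)=Q(\psib)K(\psib)(2\lambda - \lambda^{3/2})\big|_{\lambda=1}=Q(\psib)K(\psib)>0$, so for $\lambda$ slightly \emph{larger} than $1$ we get $g(\lambda)<g(1)=Q(\psib)\mathcal{E}(\psib)$, i.e.\ \eqref{energycondblowup} holds with $\mathbf{u}_0=\psib_0^{\lambda}$. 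Simultaneously $Q(\psib)K(\psib_0^{\lambda})=\lambda^2 Q(\psib)K(\psib) > Q(\psib)K(\psib)$ for $\lambda>1$, which is exactly \eqref{gradientcondblowup}. Finally, since $\psib_0^{\lambda}\to\psib$ in $\mathbf{H}^1$ as $\lambda\to 1$ (routine from the explicit scaling and $\psib\in\mathbf{H}^1$), given $\epsilon>0$ we may pick $\lambda=\lambda(\epsilon)>1$ with $\|\psib_0^{\lambda}-\psib\|_{\mathbf{H}^1}\le\epsilon$; set $\psib_0^\epsilon:=\psib_0^{\lambda(\epsilon)}$.

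\textbf{Then} I would invoke Theorem~\ref{thm:sharpglobalexistencecondn=5}: since $\psib\in\mathcal{G}(1,\boldsymbol 0)$ and $\psib_0^\epsilon$ satisfies \eqref{energycondblowup}--\eqref{gradientcondblowup}, and since by Lemma~\ref{regellpsys} applied to $\psib$ (hence to its dilate $\psib_0^\epsilon$) we have $x\psib_0^\epsilon\in\mathbf{L}^2$ — so the weighted hypothesis of that theorem is met — the maximal existence interval $I$ of the corresponding solution $\mathbf{u}^\epsilon$ of \eqref{system1} with $\beta_k=0$ is finite, i.e.\ $\mathbf{u}^\epsilon$ blows up in finite time. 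This proves the theorem.

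\textbf{The point requiring care} (the ``main obstacle'') is verifying that the dilation direction $\lambda>1$ simultaneously produces \emph{both} a strict decrease of $Q\cdot\mathcal{E}$ and a strict increase of $K$; this is where the precise Pohozaev-type relations of Lemma~\ref{identitiesfunctionals} and the normalization $b_k=\alpha_k^2/\gamma_k$ (via Remark~\ref{rembetomega}) enter, giving $2P(\psib)=\tfrac45 K(\psib)$ and $g'(1)>0$. One must also confirm $\psib_0^\epsilon\in\Sigma$ (immediate from Lemma~\ref{regellpsys}(ii) and the scaling), and that $P(\psib_0^\epsilon)>0$ so that $\mathbf{u}^\epsilon$ indeed starts in the relevant region — but $P(\psib_0^\epsilon)=\lambda^{5/2}P(\psib)>0$ since $\psib\in\mathcal{C}\subset\mathcal{P}$ by Lemma~\ref{lemma4.4}(i). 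Everything else is a direct application of previously established results.
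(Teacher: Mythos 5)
Your overall strategy is sound and genuinely different from the paper's: you reduce the instability to a direct verification of the hypotheses of Theorem \ref{thm:sharpglobalexistencecondn=5} for the dilated data $\psib_0^{\lambda}=\lambda^{5/2}\psib(\lambda\,\cdot)$, $\lambda>1$, whereas the paper follows Cazenave's scheme: it introduces the functional $\mathcal{T}(\phib)=K(\phib)-\tfrac52 P(\phib)$ and the constraint set $M=\{\mathcal{T}=0\}$, proves that $\mathcal{G}(1,\boldsymbol{0})$ coincides with the set of minimizers of $I$ on $M$ (Lemmas \ref{philambpro}--\ref{equivIMC}), shows by an invariance argument that $\mathcal{T}(\mathbf{u}^{\lambda}(t))\le-\eta<0$ on the whole existence interval, and only then concludes via the virial identity \eqref{dersegQn=5}. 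Your route is shorter because Theorem \ref{thm:sharpglobalexistencecondn=5} already packages the invariance step (through Lemma \ref{corosupercritcalcase}); the paper's longer detour buys the variational characterization of $\mathcal{G}(1,\boldsymbol{0})$ as $\arg\min_M I$, which is of independent interest but not logically needed for the instability statement. Your verifications of \eqref{gradientcondblowup}, of $\psib_0^{\lambda}\in\Sigma$ via Lemma \ref{regellpsys}, and of $\psib_0^{\lambda}\to\psib$ in $\mathbf{H}^1$ are correct, and there is no circularity, since Theorem \ref{thm:sharpglobalexistencecondn=5} is established independently in Section \ref{sec.gsbu}.

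There is, however, a computational slip in precisely the step you flagged as delicate. With $g(\lambda)=Q(\psib)K(\psib)\bigl(\lambda^{2}-\tfrac45\lambda^{5/2}\bigr)$ one has
\begin{equation*}
g'(\lambda)=Q(\psib)K(\psib)\bigl(2\lambda-2\lambda^{3/2}\bigr)=2\,Q(\psib)K(\psib)\,\lambda\bigl(1-\lambda^{1/2}\bigr),
\end{equation*}
so $g'(1)=0$, not $Q(\psib)K(\psib)>0$; this vanishing is forced by the Pohozaev relation $2K(\psib)=5P(\psib)$ from Lemma \ref{identitiesfunctionals}, which says exactly that $\lambda=1$ is a critical point of $\lambda\mapsto\mathcal{E}(\psib_0^{\lambda})$. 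Moreover, your stated inference is internally inconsistent: if $g'(1)$ were positive, then $g(\lambda)>g(1)$ for $\lambda$ slightly larger than $1$, the opposite of what you need. The step is nevertheless easily repaired: since $g'>0$ on $(0,1)$ and $g'<0$ on $(1,\infty)$, the point $\lambda=1$ is a strict global maximum of $g$, hence $g(\lambda)<g(1)=Q(\psib)\mathcal{E}(\psib)$ for every $\lambda>1$, which is \eqref{energycondblowup}. With this correction your argument is complete.
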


To prove Theorem \ref{thm:instabn=5} we use similar arguments as those in the proof of Theorem 8.2.2 in \cite{Cazenave}. In the rest of this section we always assume $n=5$. Let us start by recalling the following virial identity (see \eqref{viriuse})
 \begin{equation}\label{dersegQn=5}
     \frac{1}{8}\frac{d^{2}}{dt^2}Q(x\mathbf{u})=K(\mathbf{u})-\frac{5}{2} P(\mathbf{u}). 
 \end{equation}
This motivates the definition of  the functional
 \begin{equation*}
   \mathcal{T}(\phib)=K(\phib)-\frac{5}{2} P(\phib).
 \end{equation*}
Also, consider the set 
 \begin{equation*}
 M=\{\phib\in\mathcal{P};\;  \mathcal{T}(\phib)=0 \}.
 \end{equation*}

In what follows we give some properties of $\mathcal{T}$ and $M$. 

\begin{lem}\label{philambpro}
 Given $\boldsymbol{\phi}\in \mathcal{P}$ and $\lambda>0$ we set $\phib^\lambda(x)=\lambda^{5/2}(\delta_{\frac{1}{\lambda}}\phib)(x)=\lambda^{5/2}\phib(\lambda x)$. Then the following properties hold.
 \begin{enumerate}
     \item[(i)] There exist a unique $\lambda_{*}(\boldsymbol{\phi})>0$ such that $\phib^{\lambda_{*}(\boldsymbol{\phi})}\in M$.
     \item[(ii)] The function $f:(0,\infty)\to \R$, $f(\lambda)=I(\phib^{\lambda})$, is concave on $(\lambda_{*}(\boldsymbol{\phi}), \infty)$. 
     \item[(iii)] $\lambda_{*}(\boldsymbol{\phi})<1$ if and only if $\mathcal{T}(\phib)<0$.
     \item[(iv)] $\lambda_{*}(\boldsymbol{\phi})=1$ if and only if $\phib\in M$.
     \item[(v)]  $I(\phib^{\lambda})<I(\phib^{\lambda_{*}(\boldsymbol{\phi})})$, $\forall \lambda>0$, $\lambda\neq \lambda_{*}(\boldsymbol{\phi}) $. 
     \item[(vi)] $\frac{d}{d\lambda}I(\phib^{\lambda})=\frac{1}{\lambda}\mathcal{T}(\phib^{\lambda})$, $\forall \lambda>0$. 
     \item[(vii)]  $|\phib^{\lambda}|^{*}=(|\phib|^{*})^{\lambda}$, where, as before $|\phib|=(|\phi_1|,\ldots,|\phi_l|)$ and $^*$ denotes the symmetric-decreasing rearrangement.
     \item[(viii)]   If $\phib_m\rightharpoonup \phib $ in $ \mathbf{H}^{1}$ and $\phib_m\to \phib$ in $ \mathbf{L}^{3}$, then  $\phib_m^{\lambda}\rightharpoonup \phib^{\lambda} $ in $ \mathbf{H}^{1}$ and $\phib_m^{\lambda}\to \phib^{\lambda}$ in $ \mathbf{L}^{3}$. 
 \end{enumerate}
\end{lem}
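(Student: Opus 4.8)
The plan is to reduce the whole statement to the explicit scaling behaviour of the functionals along $\lambda\mapsto\phib^\lambda$ and to a one-variable calculus analysis of $f(\lambda)=I(\phib^\lambda)$. Since $\phib^\lambda=\lambda^{5/2}\delta_{1/\lambda}\phib$, Lemma~\ref{functrans} (applied with $n=5$, $a=\lambda^{5/2}$ and dilation parameter $\lambda^{-1}$) yields
\begin{equation*}
\mathcal{Q}(\phib^\lambda)=\mathcal{Q}(\phib),\qquad K(\phib^\lambda)=\lambda^{2}K(\phib),\qquad P(\phib^\lambda)=\lambda^{5/2}P(\phib).
\end{equation*}
In particular $P(\phib^\lambda)=\lambda^{5/2}P(\phib)>0$ for every $\lambda>0$ whenever $\phib\in\mathcal{P}$, so $\phib^\lambda\in\mathcal{P}$; moreover $\phib\in\mathcal{P}$ forces $\phib\neq\mathbf{0}$ and hence $K(\phib)>0$. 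Using \eqref{FunctionalI2} and the definition of $\mathcal{T}$ we obtain, for $\phib\in\mathcal{P}$,
\begin{equation*}
f(\lambda)=I(\phib^\lambda)=\tfrac12\lambda^{2}K(\phib)+\tfrac12\mathcal{Q}(\phib)-\lambda^{5/2}P(\phib),\qquad \mathcal{T}(\phib^\lambda)=\lambda^{2}K(\phib)-\tfrac52\lambda^{5/2}P(\phib).
\end{equation*}

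From these formulas most assertions are immediate. For (vi) one differentiates: $f'(\lambda)=\lambda K(\phib)-\tfrac52\lambda^{3/2}P(\phib)=\lambda^{-1}\mathcal{T}(\phib^\lambda)$. For (i), the equation $\mathcal{T}(\phib^\lambda)=0$ is equivalent to $\lambda^{1/2}=2K(\phib)/(5P(\phib))$, which (since $K(\phib),P(\phib)>0$) has the unique positive solution $\lambda_{*}(\phib)=(2K(\phib)/(5P(\phib)))^{2}$; together with $\phib^{\lambda}\in\mathcal{P}$ this shows $\phib^{\lambda_{*}(\phib)}\in M$ and that $\lambda_{*}(\phib)$ is the only such value. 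Items (iii) and (iv) follow by comparing $\lambda_{*}(\phib)$ with $1$: as $K(\phib),P(\phib)>0$, we have $\lambda_{*}(\phib)<1\iff 2K(\phib)<5P(\phib)\iff\mathcal{T}(\phib)<0$, and the same chain with equalities gives (iv). For (v), rewriting $f'(\lambda)=\lambda K(\phib)\,\bigl(1-(\lambda/\lambda_{*}(\phib))^{1/2}\bigr)$ shows $f'>0$ on $(0,\lambda_{*}(\phib))$ and $f'<0$ on $(\lambda_{*}(\phib),\infty)$, so $f$ has a strict global maximum at $\lambda_{*}(\phib)$, which is exactly the inequality in (v).

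For (ii) we differentiate once more, $f''(\lambda)=K(\phib)-\tfrac{15}{4}\lambda^{1/2}P(\phib)$, which is strictly decreasing in $\lambda$ because $P(\phib)>0$; since $f''(\lambda_{*}(\phib))=-\tfrac12K(\phib)<0$, we get $f''<0$ on $(\lambda_{*}(\phib),\infty)$, i.e. $f$ is concave there. For (vii) we use that the symmetric-decreasing rearrangement commutes with dilations and with multiplication by positive constants, $(c\,g(\lambda\,\cdot))^{*}=c\,g^{*}(\lambda\,\cdot)$ for $c,\lambda>0$ (see, e.g., \cite{Leoni} or \cite{Lieb}); applying this to each component $|\phi_k^\lambda|(x)=\lambda^{5/2}|\phi_k|(\lambda x)$ gives $|\phi_k^\lambda|^{*}(x)=\lambda^{5/2}|\phi_k|^{*}(\lambda x)=(|\phi_k|^{*})^{\lambda}(x)$, which is (vii). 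Finally, for (viii), for each fixed $\lambda>0$ the map $T_\lambda\colon\phib\mapsto\phib^\lambda$ is a bounded linear isomorphism of $\mathbf{H}^{1}$ (with bounded inverse $T_{1/\lambda}$), hence weak--weak continuous, so $\phib_m\rightharpoonup\phib$ in $\mathbf{H}^1$ implies $\phib_m^\lambda\rightharpoonup\phib^\lambda$ in $\mathbf{H}^1$; and the change of variables $y=\lambda x$ gives $\|\phib_m^\lambda-\phib^\lambda\|_{\mathbf{L}^{3}}=\lambda^{5/6}\|\phib_m-\phib\|_{\mathbf{L}^{3}}$, so the strong $\mathbf{L}^3$ convergence is preserved as well.

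I do not expect a genuine obstacle here: once the three scaling identities above are established, the lemma is essentially a bookkeeping exercise in one-variable calculus. The only points that require a little attention are keeping track of the dimension-specific exponents (everything is tuned to $n=5$, the case in which $\lambda\mapsto\phib^\lambda$ preserves the charge $\mathcal{Q}$) and invoking the correct commutation relation between the symmetric rearrangement and dilations in part (vii).
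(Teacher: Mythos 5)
Your proof is correct and is exactly the argument the paper has in mind: the authors omit the details by referring to Lemma 8.2.5 in \cite{Cazenave}, and your computation (the scaling identities $\mathcal{Q}(\phib^\lambda)=\mathcal{Q}(\phib)$, $K(\phib^\lambda)=\lambda^2K(\phib)$, $P(\phib^\lambda)=\lambda^{5/2}P(\phib)$ followed by the one-variable analysis of $f$, plus the commutation of rearrangement with dilations and the weak-weak continuity of the bounded isomorphism $T_\lambda$) is the standard proof adapted to the system with $n=5$. All exponents and sign checks (e.g.\ $\lambda_*^{1/2}=2K(\phib)/(5P(\phib))$, $f''(\lambda_*)=-\tfrac12K(\phib)$) are accurate.
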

\begin{proof}
The proof is similar to that of Lemma 8.2.5 in \cite{Cazenave}. So we omit the details.
\end{proof}

\begin{obs}
The notation $\lambda_{*}(\boldsymbol{\phi})$ shows the dependence of $\lambda_{*}$ with respect to $\boldsymbol{\phi}$; for simplicity and as long as there is no confusion we will write $\lambda_{*}$ instead of $\lambda_{*}(\boldsymbol{\phi})$.  
\end{obs}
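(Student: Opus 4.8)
The final statement is not a mathematical assertion but a notational convention, so there is nothing to prove in the usual sense. The remark simply announces that, having just established in Lemma~\ref{philambpro}(i) the existence and uniqueness of a scaling parameter $\lambda_*$ attached to each $\boldsymbol{\phi}\in\mathcal{P}$, the author will henceforth suppress the explicit dependence on $\boldsymbol{\phi}$ and write $\lambda_*$ in place of $\lambda_*(\boldsymbol{\phi})$. No argument, estimate, or construction is called for; the content is entirely a matter of typographical economy.

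If one insists on articulating what underwrites the remark, the only substantive point is well-definedness of the symbol $\lambda_*(\boldsymbol{\phi})$, and that has already been secured: Lemma~\ref{philambpro}(i) asserts that for each $\boldsymbol{\phi}\in\mathcal{P}$ there is a \emph{unique} $\lambda_*(\boldsymbol{\phi})>0$ with $\boldsymbol{\phi}^{\lambda_*(\boldsymbol{\phi})}\in M$. Uniqueness is precisely what licenses treating $\lambda_*(\boldsymbol{\phi})$ as an unambiguous quantity, and hence what makes the abbreviated notation $\lambda_*$ meaningful whenever the underlying function $\boldsymbol{\phi}$ is fixed and clear from context. Thus the plan, such as it is, reduces to invoking Lemma~\ref{philambpro}(i) for the well-definedness and then simply adopting the shorthand.

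The only genuine \emph{risk} — not an obstacle to a proof, but a matter of careful exposition downstream — is notational collision: later passages occasionally compare $\lambda_*$ evaluated at two different functions, for instance at a minimizing sequence $\boldsymbol{\phi}_m$ versus a limit $\boldsymbol{\phi}$, or at $\boldsymbol{\phi}$ versus its rearrangement $|\boldsymbol{\phi}|^*$ in light of Lemma~\ref{philambpro}(vii). In those situations the abbreviated symbol would be ambiguous, which is exactly why the remark hedges with the phrase \emph{as long as there is no confusion}. The practical takeaway is therefore to default to $\lambda_*$ but to restore the full notation $\lambda_*(\boldsymbol{\phi})$ at any point where more than one function is simultaneously in play. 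Since this is a convention rather than a claim, I would leave the remark as stated and simply respect it consistently throughout Section~\ref{sec.stinst}.
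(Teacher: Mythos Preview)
Your assessment is correct: this remark is a notational convention, not a mathematical claim, and the paper likewise offers no proof for it. Your observation that Lemma~\ref{philambpro}(i) is what makes the abbreviation well-defined, and your caution about reverting to the full notation when several functions are in play, are both appropriate and match how the paper subsequently uses the symbol.
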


 \begin{coro}\label{Mneqempty}
  The set $M$ is nonempty. Moreover, if 
  \begin{equation}\label{mdef}
m=\min\limits_{\varphib\in M}I(\varphib),
\end{equation}
then for every $\phib\in \mathcal{P}$ such that $\mathcal{T}(\phib)<0$ we have       $I(\phib)\geq\mathcal{T}(\phib)+m$.
\end{coro}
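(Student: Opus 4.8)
The plan is to deduce the corollary directly from Lemma~\ref{philambpro}. The fact that $M$ is nonempty is immediate from part (i) of that lemma: pick any $\boldsymbol{\phi}\in\mathcal{P}$ (which is nonempty, e.g.\ by the constructions in Lemma~\ref{Inunegat}), and then $\boldsymbol{\phi}^{\lambda_{*}(\boldsymbol{\phi})}\in M$. Hence the minimization problem defining $m$ makes sense; that $m$ is actually attained (or at least that the infimum is a well-defined real number) is what the notation $\min$ presupposes, and I would either invoke it as part of the standing setup or, if needed, note that $m>-\infty$ follows because on $M$ one has $\mathcal{T}(\boldsymbol{\phi})=0$, i.e.\ $P(\boldsymbol{\phi})=\tfrac{2}{5}K(\boldsymbol{\phi})$, so $I(\boldsymbol{\phi})=\tfrac12 K(\boldsymbol{\phi})+\tfrac12\mathcal{Q}(\boldsymbol{\phi})-P(\boldsymbol{\phi})=\tfrac{1}{10}K(\boldsymbol{\phi})+\tfrac12\mathcal{Q}(\boldsymbol{\phi})>0$, so in fact $m\ge 0$.

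The substantive inequality is the second assertion. Fix $\boldsymbol{\phi}\in\mathcal{P}$ with $\mathcal{T}(\boldsymbol{\phi})<0$. By Lemma~\ref{philambpro}(iii), $\lambda_{*}:=\lambda_{*}(\boldsymbol{\phi})<1$, and by part (i), $\boldsymbol{\phi}^{\lambda_{*}}\in M$, so $I(\boldsymbol{\phi}^{\lambda_{*}})\ge m$ by definition of $m$. It therefore suffices to show
\[
I(\boldsymbol{\phi})-I(\boldsymbol{\phi}^{\lambda_{*}})\ge \mathcal{T}(\boldsymbol{\phi}).
\]
Write $g(\lambda)=I(\boldsymbol{\phi}^{\lambda})$ for $\lambda>0$. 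By Lemma~\ref{philambpro}(vi), $g'(\lambda)=\tfrac{1}{\lambda}\mathcal{T}(\boldsymbol{\phi}^{\lambda})$; in particular $g'(1)=\mathcal{T}(\boldsymbol{\phi})$ (note $1>\lambda_{*}$, so the point $\lambda=1$ lies in the range where part (ii) applies). Then
\[
I(\boldsymbol{\phi})-I(\boldsymbol{\phi}^{\lambda_{*}})=g(1)-g(\lambda_{*})=\int_{\lambda_{*}}^{1}g'(\lambda)\,d\lambda.
\]
Since $g$ is concave on $(\lambda_{*},\infty)$ by Lemma~\ref{philambpro}(ii), $g'$ is non-increasing there, so for $\lambda\in[\lambda_{*},1]$ we have $g'(\lambda)\ge g'(1)=\mathcal{T}(\boldsymbol{\phi})$. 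Integrating over an interval of length $1-\lambda_{*}$ gives
\[
g(1)-g(\lambda_{*})\ge (1-\lambda_{*})\,\mathcal{T}(\boldsymbol{\phi}).
\]
Finally, because $\mathcal{T}(\boldsymbol{\phi})<0$ and $0<1-\lambda_{*}<1$, we have $(1-\lambda_{*})\mathcal{T}(\boldsymbol{\phi})\ge \mathcal{T}(\boldsymbol{\phi})$, and combining,
\[
I(\boldsymbol{\phi})\ge I(\boldsymbol{\phi}^{\lambda_{*}})+\mathcal{T}(\boldsymbol{\phi})\ge m+\mathcal{T}(\boldsymbol{\phi}),
\]
which is the claim.

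I expect no serious obstacle here: the whole argument is a clean consequence of the properties packaged in Lemma~\ref{philambpro}, and the only point requiring a little care is the direction of the inequality $(1-\lambda_{*})\mathcal{T}(\boldsymbol{\phi})\ge\mathcal{T}(\boldsymbol{\phi})$, which uses crucially that $\mathcal{T}(\boldsymbol{\phi})$ is negative (the hypothesis) and that $\lambda_{*}<1$ (which in turn is equivalent to $\mathcal{T}(\boldsymbol{\phi})<0$ by part (iii)). One should also double-check the regularity needed to write $g(1)-g(\lambda_{*})=\int_{\lambda_{*}}^1 g'$, i.e.\ that $\lambda\mapsto I(\boldsymbol{\phi}^{\lambda})$ is $C^1$ on $(0,\infty)$; this is standard given the explicit scaling behavior of $K$, $\mathcal{Q}$, $P$ recorded in Lemma~\ref{functrans} (each is a monomial in $\lambda$), and could alternatively be avoided by just differentiating the explicit expression $g(\lambda)=\tfrac12\lambda^{3}K(\boldsymbol{\phi})+\tfrac12\lambda^{5}\mathcal{Q}(\boldsymbol{\phi})-\lambda^{5}P(\boldsymbol{\phi})$ directly. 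Wait—more precisely, with $n=5$ the scalings from Lemma~\ref{functrans} give $K(\boldsymbol{\phi}^\lambda)=\lambda^{3}K(\boldsymbol{\phi})$, $\mathcal{Q}(\boldsymbol{\phi}^\lambda)=\lambda^{5}\mathcal{Q}(\boldsymbol{\phi})$, $P(\boldsymbol{\phi}^\lambda)=\lambda^{5}P(\boldsymbol{\phi})$ after accounting for the extra $\lambda^{5/2}$ prefactor, so $g$ is manifestly smooth and the integral representation is legitimate.
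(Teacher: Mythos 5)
Your argument is correct and essentially identical to the paper's: both obtain $M\neq\emptyset$ from Lemma \ref{philambpro}\,(i) applied to any $\phib\in\mathcal{P}$, and both prove the inequality by combining concavity of $f(\lambda)=I(\phib^{\lambda})$ on $(\lambda_{*},1)$ with $f'(1)=\mathcal{T}(\phib)<0$ and $0<1-\lambda_{*}<1$; the paper simply quotes the tangent-line inequality $f(1)\geq f(\lambda_{*})+f'(1)(1-\lambda_{*})$ where you re-derive it by integrating the non-increasing derivative. The only slip is in your closing aside: with $\phib^{\lambda}=\lambda^{5/2}\phib(\lambda\,\cdot)$ and $n=5$, Lemma \ref{functrans} gives $K(\phib^{\lambda})=\lambda^{2}K(\phib)$, $\mathcal{Q}(\phib^{\lambda})=\mathcal{Q}(\phib)$ and $P(\phib^{\lambda})=\lambda^{5/2}P(\phib)$ (so $g(\lambda)=\tfrac12\lambda^{2}K(\phib)+\tfrac12\mathcal{Q}(\phib)-\lambda^{5/2}P(\phib)$), not the exponents $3$, $5$, $5$ you wrote --- this does not affect the main argument, which uses only parts (i)--(vi) of Lemma \ref{philambpro}.
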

 \begin{proof}
 By Lemma \ref{philambpro} (i) for any $\phib\in \mathcal{P}$ we have $\phib^{\lambda_{*}}\in M$; so $M\neq \emptyset$. For the second part, from Lemma \ref{philambpro} we have that $\lambda_{*}<1$ and $f$ is concave on $(\lambda_{*},1)$ implying the relation
\begin{equation}\label{mlem}
 f(1)\geq f(\lambda_{*})+f'(1)(1-\lambda_{*}).
\end{equation}
 Since $f(1)=I(\phib)$ and $f'(1)=\mathcal{T}(\phib)<0$ (see Lemma \ref{philambpro} (vi)), from \eqref{mlem}, we obtain
 $$
 I(\phib)\geq f(\lambda_{*})+\mathcal{T}(\phib)\geq m+\mathcal{T}(\phib),
 $$
 which proves the desired.
 \end{proof}

 \begin{lem}\label{minlemma}
 	The minimum in \eqref{mdef} is attained, that is, there exists $\varphib\in M$ such that $m=I(\varphib)$. In this case, we say that $\varphib$ is a minimizer of \eqref{mdef}.
 \end{lem}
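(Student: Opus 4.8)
The plan is to show $M$ has a minimizer by a standard direct method, exploiting the coercivity built into the constraint $\mathcal{T}(\phib)=0$ and the compactness of radial embeddings. First I would take a minimizing sequence $(\varphib_m)\subset M$, so that $I(\varphib_m)\to m$. By Lemma \ref{philambpro} (vii) together with Lemma \ref{functrans} and Lemma \ref{lemma2}, replacing each $\varphib_m$ by the symmetric-decreasing rearrangement of its modulus and then rescaling to land back on $M$ (using part (i) of Lemma \ref{philambpro}), I may assume without loss of generality that the $\varphib_m$ are non-negative, radially symmetric and non-increasing, with $I(\varphib_m)$ still converging to $m$; one must check the rearrangement does not increase $I$ and that $\mathcal{T}$ stays $\leq 0$ (or $=0$ after the rescaling), which follows from $K(\phib^*)\leq K(\phib)$ and $P(\phib^*)\geq P(\phib)$.

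Next I would establish boundedness of $(\varphib_m)$ in $\mathbf{H}^1$. On $M$ we have $K(\varphib_m)=\tfrac52 P(\varphib_m)$, hence $I(\varphib_m)=\tfrac12[K(\varphib_m)+\mathcal{Q}(\varphib_m)]-P(\varphib_m)=\tfrac12\mathcal{Q}(\varphib_m)+\tfrac12 K(\varphib_m)-\tfrac25 K(\varphib_m)=\tfrac12\mathcal{Q}(\varphib_m)+\tfrac{1}{10}K(\varphib_m)$, so both $K(\varphib_m)$ and $\mathcal{Q}(\varphib_m)$ are bounded by $10\,I(\varphib_m)\to 10m$; in particular $m\geq0$, and since $P(\varphib_m)=\tfrac{2}{5}K(\varphib_m)>0$ and by the sharp Gagliardo–Nirenberg inequality (Corollary \ref{corollarybestconstant}) $K(\varphib_m)=\tfrac52 P(\varphib_m)\leq C\mathcal{Q}(\varphib_m)^{3/2-n/4}K(\varphib_m)^{n/4}$ with $n=5$, i.e. $K(\varphib_m)^{1/4}\geq c\,\mathcal{Q}(\varphib_m)^{-1/8}>0$, the sequence does not collapse to zero, so $m>0$. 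Then, along a subsequence, $\varphib_m\rightharpoonup\varphib$ in $\mathbf{H}^1_{rd}$; by the compact embedding $H^1_{rd}(\R^n)\hookrightarrow L^3(\R^n)$ for $1\leq n\leq 5$ (Proposition 1.7.1 in \cite{Cazenave}) we get $\varphib_m\to\varphib$ in $\mathbf{L}^3$, hence $P(\varphib_m)\to P(\varphib)$ by Lemma \ref{estdifF}, so $P(\varphib)=\lim\tfrac25 K(\varphib_m)>0$ and $\varphib\in\mathcal{P}$, $\varphib\neq\mathbf{0}$.

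It remains to show $\varphib\in M$ and $I(\varphib)\leq m$. By weak lower semicontinuity, $K(\varphib)\leq\liminf K(\varphib_m)$ and $\mathcal{Q}(\varphib)\leq\liminf\mathcal{Q}(\varphib_m)$, while $P(\varphib)=\lim P(\varphib_m)$. Hence $\mathcal{T}(\varphib)=K(\varphib)-\tfrac52 P(\varphib)\leq\liminf[K(\varphib_m)-\tfrac52 P(\varphib_m)]=0$. If $\mathcal{T}(\varphib)=0$ then $\varphib\in M$ and, using the formula $I=\tfrac12\mathcal{Q}+\tfrac1{10}K$ valid on $M$ together with lower semicontinuity, $I(\varphib)\leq\liminf I(\varphib_m)=m$, forcing $I(\varphib)=m$; done. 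If instead $\mathcal{T}(\varphib)<0$, then by Lemma \ref{philambpro} (i) and (iii) there is $\lambda_*=\lambda_*(\varphib)<1$ with $\varphib^{\lambda_*}\in M$, so $m\leq I(\varphib^{\lambda_*})$; on the other hand, using $\tfrac{d}{d\lambda}I(\varphib^\lambda)=\tfrac1\lambda\mathcal{T}(\varphib^\lambda)$ (Lemma \ref{philambpro} (vi)), the scaling identities of Lemma \ref{functrans} give $I(\varphib^\lambda)=\tfrac12\lambda^{5}\mathcal{Q}(\varphib)+\tfrac12\lambda^{3}K(\varphib)-\lambda^{5}P(\varphib)$ (with $n=5$, recalling $\mathcal{Q}(a\delta_\lambda\psib)=a^2\lambda^n\mathcal{Q}(\psib)$, $K(a\delta_\lambda\psib)=a^2\lambda^{n-2}K(\psib)$, $P(a\delta_\lambda\psib)=a^3\lambda^nP(\psib)$ and $a=\lambda^{5/2}$), from which $I(\varphib^{\lambda_*})<I(\varphib^1)=I(\varphib)$ because $\lambda_*<1$ and $I(\varphib^\lambda)$ is strictly increasing on $(0,\lambda_*]$ by Lemma \ref{philambpro} (v); combining, $m\leq I(\varphib^{\lambda_*})<I(\varphib)\leq\liminf I(\varphib_m)=m$, a contradiction. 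Hence the case $\mathcal{T}(\varphib)<0$ is impossible, and the minimum is attained. The main obstacle is the usual one for constrained minimization without a sign-definite energy: ruling out that the weak limit has strictly negative $\mathcal{T}$ (equivalently, that mass escapes and the constraint degenerates in the limit), which here is handled cleanly because the radial compact embedding upgrades weak to strong $L^3$ convergence and the scaling structure of $I$ pins down the contradiction.
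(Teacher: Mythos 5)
Your overall strategy (rearrange and rescale the minimizing sequence onto $M$, get $\mathbf{H}^1$-boundedness from the identity $I=\tfrac12\mathcal{Q}+\tfrac1{10}K$ on $M$, a positive lower bound on $P$ from the sharp Gagliardo--Nirenberg inequality, and then pass to a radial weak limit with strong $\mathbf{L}^3$ convergence) is exactly the paper's, and that part is sound (modulo the harmless exponent slip $\mathcal{Q}^{-1/8}$ versus $\mathcal{Q}^{-1/4}$). The problem is in your final step, where you try to rule out $\mathcal{T}(\varphib)<0$. By Lemma \ref{philambpro} (v), $\lambda_*(\varphib)$ is the unique \emph{maximizer} of $\lambda\mapsto I(\varphib^{\lambda})$ (indeed $f'(\lambda)=\tfrac1\lambda\mathcal{T}(\varphib^\lambda)$ is positive before $\lambda_*$ and negative after), so when $\lambda_*(\varphib)<1$ one has $I(\varphib^{\lambda_*})>I(\varphib)$, not $I(\varphib^{\lambda_*})<I(\varphib)$ as you assert; the monotonicity of $f$ on $(0,\lambda_*]$ says nothing about comparing $f(\lambda_*)$ with $f(1)$ when $1>\lambda_*$. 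Consequently your chain $m\le I(\varphib^{\lambda_*})<I(\varphib)\le m$ collapses: what you actually get is $I(\varphib)\le m\le I(\varphib^{\lambda_*})$, which is no contradiction at all.

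The paper sidesteps this entirely: it never tries to show the weak limit $\phib$ itself lies in $M$. It sets $\varphib=\phib^{\lambda_*(\phib)}\in M$ and estimates $I(\varphib)\le\liminf_j I(\phib_j^{\lambda_*(\phib)})\le\liminf_j I(\phib_j^{\lambda_*(\phib_j)})=\liminf_j I(\phib_j)=m$, where the first inequality is lower semicontinuity along the rescaled sequence (Lemma \ref{philambpro} (viii)) and the second is precisely the maximality of $\lambda_*(\phib_j)=1$ from parts (iv)--(v). Since $\varphib\in M$ forces $I(\varphib)\ge m$, equality follows. If you insist on showing $\mathcal{T}(\phib)=0$ directly, it can be done, but not by your inequality: you must use that on $M$ one has $K(\varphib_m)=\tfrac52P(\varphib_m)\to\tfrac52P(\phib)$ and $\mathcal{Q}(\varphib_m)\to 2m-\tfrac12P(\phib)$ (so these are genuine limits, not just $\liminf$s), then compute $I(\phib^{\lambda_*})=\tfrac12\mathcal{Q}(\phib)+\tfrac1{10}\lambda_*^2K(\phib)$ explicitly and compare with $m\le I(\phib^{\lambda_*})$ to deduce $K(\phib)\ge\tfrac52P(\phib)$, which together with the lsc inequality $K(\phib)\le\tfrac52P(\phib)$ gives $\mathcal{T}(\phib)=0$. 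As written, your proof has a genuine gap at this point.
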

 \begin{proof}
 Let $(\mathbf{v}_j)$ be a minimizing sequence for \eqref{mdef}, that is, a sequence in $M$ satisfying $I(\mathbf{v}_j)\to m$. Set $\mathbf{w}_j=|\mathbf{v}_j|^*$ and define 
 $$
 \phib_j:=\mathbf{w}_j^{\lambda_{*}(\mathbf{w}_j)}=(|\mathbf{v}_j|^{*})^{\lambda_{*}(\mathbf{w}_j)} =|\mathbf{v}_j^{\lambda_{*}(\mathbf{w}_j)}|^*.
 $$
 The last equality follows from Lemma \ref{philambpro} (vii).  Also, from
   Lemma \ref{philambpro} (i), 
\begin{equation}\label{phikjinM}
  \phib_j\in M\quad\mbox{i.e.}\quad \phib_j\in\mathcal{P}\quad \mbox{and} \quad K(\phib_j)=\frac{5}{2}P(\phib_j), \quad  \forall j\in \N.
\end{equation} 
 Hence, from Lemma \ref{functrans} and Lemma \ref{philambpro} (iv)-(v) we obtain
 \begin{equation*}
   I(\phib_j)=I(|\mathbf{v}_j^{\lambda_{*}(\mathbf{w}_j)}|^*)\leq I(|\mathbf{v}_j^{\lambda_{*}(\mathbf{w}_j)}|)\leq I(\mathbf{v}_j^{\lambda_{*}(\mathbf{w}_j)}) \leq I(\mathbf{v}_j^{\lambda_{*}(\mathbf{v}_j)}) = I(\mathbf{v}_j).
 \end{equation*}
Taking the limit, as $j\to\infty$, in this last inequality we see that $(\phib_j)$ is also 
 a minimizing sequence of \eqref{mdef} consisting  of non-negatives functions in  $\mathbf{H}_{rd}^{1}$.

 From the definition of functional $I$ and \eqref{phikjinM}  we have
 \begin{equation*}
 I(\phib_j)-\frac{1}{2}Q(\phib_j)=\frac{1}{2}K(\phib_j)-P(\phib_j)=\frac{2}{5}\left(K(\phib_j)-\frac{5}{2} P(\phib_j)\right)+\frac{1}{10}K(\phib_j)=\frac{1}{10}K(\phib_j).
 \end{equation*}
Since $(I(\phib_j))$ is a bounded sequence, the last equality shows that $(\phib_j)$ is  bounded   in $\mathbf{H}^{1}$. In particular there exists $A>0$ such that $Q(\phib_j)\leq A$. Thus, using \eqref{phikjinM} and the Gagliardo-Nirenberg inequality we get
 \begin{equation*}
         K(\phib_j)=\frac{2}{5}P(\phib_j)\leq 
      CQ(\phib_j)^{\frac{1}{4}}K(\phib_j)^{\frac{5}{4}}
         \leq CA^{\frac{1}{4}}K(\phib_j)^{\frac{5}{4}},
 \end{equation*}
which implies that $(K(\phib_j))$ is bonded from below. Combining this with \eqref{phikjinM} we get that there exists $\eta>0$ such that $P(\phib_j)\geq \eta$.

On the other hand, 
since the embedding $H_{rd}^{1}(\R^{5})\hookrightarrow L^{3}(\R^{5})$ is compact, we can find  $\phib\in \mathbf{H}^{1} $ such that, up to a subsequence,
 \begin{equation*}
 \phib_j\rightharpoonup \phib \;\;\mbox{in  $\mathbf{H}^{1}$} \qquad \mbox{and} \qquad  \phib_j\to \phib_j\;\;\mbox{in $\mathbf{L}^3$}.
 \end{equation*}
As in the proof of Theorem \ref{thm:existenceGSJgeral} we conclude that
 \begin{equation*}
     P(\phib)=\lim_{j\to \infty}P(\phib_j)\geq \eta>0.
 \end{equation*}
 In particular, $\phib\in \mathcal{P}$.  
 
 Next, define $\varphib=\phib^{\lambda_{*}(\phib)}$. By Lemma  \ref{philambpro}  (i) and (viii) we see that $\varphib\in M$ and
 \begin{equation*}
\phib_j^{\lambda_{*}(\phib)}\rightharpoonup \varphib \;\;\mbox{in  $\mathbf{H}^{1}$} \qquad \mbox{and} \qquad  \phib_j^{\lambda_{*}(\phib)}\to \varphib\;\;\mbox{in $\mathbf{L}^3$}.
\end{equation*}
We can use these convergences to conclude that
 \begin{equation*}
     \begin{split}
        I(\varphib)
        \leq \liminf_{j\to\infty}I(\phib_j^{\lambda_{*}(\phib)})
        \leq \liminf_{j\to\infty}I( \phib_j^{\lambda_{*}(\phib_j)})
       =\liminf_{j\to\infty}I(\phib_j)
        =m,
     \end{split}
 \end{equation*}
 where we have used Lemma \ref{philambpro} (v) and (iv). This shows that $I(\varphib)=m$ and the proof of the lemma is complete.
 \end{proof}
  
 \begin{lem}\label{phiinC}
 If $\phib$ is a minimizer of \eqref{mdef} then it is a solution of \eqref{OB1}. 
 \end{lem}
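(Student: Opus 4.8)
The plan is to use the Lagrange multiplier theorem for the constrained minimization problem \eqref{mdef} and then identify the multiplier. First I would observe that $\phib$ minimizes $I$ on the set $M=\{\varphib\in\mathcal{P}:\mathcal{T}(\varphib)=0\}$, so by the Lagrange multiplier theorem there is a constant $\theta\in\R$ such that $I'(\phib)=\theta\,\mathcal{T}'(\phib)$ in $\mathbf{H}^{-1}$, i.e. for every $\mathbf{g}\in\mathbf{H}^1$,
\begin{equation*}
I'(\phib)(\mathbf{g})=\theta\,\mathcal{T}'(\phib)(\mathbf{g}).
\end{equation*}
Using Lemma \ref{frede} and the definition of $\mathcal{T}$, this reads
\begin{equation*}
\sum_{k=1}^{l}\Big[\gamma_k\!\int\!\nabla\phi_k\nabla g_k\,dx+\tfrac{\alpha_k^2}{\gamma_k}\!\int\!\phi_kg_k\,dx\Big]-\sum_{k=1}^{l}\!\int\! f_k(\phib)g_k\,dx=\theta\Big(\sum_{k=1}^{l}2\gamma_k\!\int\!\nabla\phi_k\nabla g_k\,dx-\tfrac{5}{2}\sum_{k=1}^{l}\!\int\! f_k(\phib)g_k\,dx\Big).
\end{equation*}
One must first check that $\mathcal{T}'(\phib)\neq 0$ so the multiplier rule applies with no degeneracy; this follows because $\langle\mathcal{T}'(\phib),\phib\rangle=2K(\phib)-\tfrac{15}{2}P(\phib)$, and since $\phib\in M$ gives $K(\phib)=\tfrac{5}{2}P(\phib)$, this equals $2K(\phib)-3K(\phib)=-K(\phib)<0$ (note $K(\phib)>0$ because $P(\phib)>0$ on $\mathcal{P}$).

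Next I would extract two scalar identities by choosing $g_k=\phi_k$ and then using the Pohozaev/scaling relations. Taking $g_k=\phi_k$, summing over $k$, and invoking Lemma \ref{propertiesF}(ii) (which gives $\sum_k f_k(\phib)\phi_k=3F(\phib)$, hence $\sum_k\int f_k(\phib)\phi_k\,dx=3P(\phib)$), yields
\begin{equation*}
K(\phib)+Q(\phib)-3P(\phib)=\theta\big(2K(\phib)-\tfrac{15}{2}P(\phib)\big)=\theta\big(2K(\phib)-3K(\phib)\big)=-\theta K(\phib),
\end{equation*}
where I again used $K(\phib)=\tfrac52 P(\phib)$. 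A second relation comes from the scaling $\phib\mapsto\phib^\lambda$: since $\lambda=1$ is a critical point of $\lambda\mapsto I(\phib^\lambda)$ restricted to... actually more directly, I would use $\tfrac{d}{d\lambda}\big|_{\lambda=1}I(\phib^\lambda)=\mathcal{T}(\phib)=0$ from Lemma \ref{philambpro}(vi), which combined with the explicit form $I(\phib^\lambda)=\tfrac12\lambda^3K(\phib)+\tfrac12\lambda^{-5}Q(\phib)-\lambda^{5/2}P(\phib)$ (using $n=5$ and Lemma \ref{functrans}) gives $\tfrac32 K(\phib)-\tfrac52 Q(\phib)-\tfrac52 P(\phib)=0$. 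Together with $K(\phib)=\tfrac52 P(\phib)$ this pins down the ratios $K(\phib):Q(\phib):P(\phib)$, and substituting back into the displayed identity above forces $\theta=-1$ (this is the analogue of the computation at the end of Lemma \ref{AnuequG}).

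Finally, once $\theta=-1$ is established, the Lagrange identity becomes
\begin{equation*}
\sum_{k=1}^{l}\Big[\gamma_k\!\int\!\nabla\phi_k\nabla g_k\,dx+\tfrac{\alpha_k^2}{\gamma_k}\!\int\!\phi_kg_k\,dx-\!\int\! f_k(\phib)g_k\,dx\Big]=-\sum_{k=1}^{l}\Big[2\gamma_k\!\int\!\nabla\phi_k\nabla g_k\,dx-\tfrac52\!\int\! f_k(\phib)g_k\,dx\Big],
\end{equation*}
which rearranges to $\sum_k\big[3\gamma_k\int\nabla\phi_k\nabla g_k\,dx+\tfrac{\alpha_k^2}{\gamma_k}\int\phi_kg_k\,dx-\tfrac32\int f_k(\phib)g_k\,dx\big]=0$ for all $\mathbf{g}$; testing with $\mathbf{g}$ supported in a single component and scaling out constants shows each $\phi_k$ satisfies $-\gamma_k\Delta\phi_k+b_k\phi_k=f_k(\phib)$ weakly, i.e. \eqref{systemelip} with $b_k=\tfrac{\alpha_k^2}{\gamma_k}$, which is precisely \eqref{OB1}. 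I expect the main obstacle to be the bookkeeping of the scaling identities to correctly determine $\theta$ — one must be careful that the $Q$-term scales as $\lambda^{-5}$ (since $\delta_{1/\lambda}$ contracts by $1/\lambda$ in each of the $n=5$ variables after the $\lambda^{5/2}$ prefactor), so that the derivative relations are consistent — but this is routine given Lemmas \ref{functrans}, \ref{philambpro} and \ref{identitiesfunctionals}.
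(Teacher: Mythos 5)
Your skeleton (Lagrange multiplier, nondegeneracy of the constraint, then identification of the multiplier) is the same as the paper's, and your nondegeneracy check $\mathcal{T}'(\phib)(\phib)=2K(\phib)-\tfrac{15}{2}P(\phib)=-K(\phib)<0$ and the test identity $K(\phib)+Q(\phib)-3P(\phib)=-\theta K(\phib)$ are both correct. But the step where you determine $\theta$ is broken, in two places. First, the scaling is miscomputed: for $\phib^{\lambda}(x)=\lambda^{5/2}\phib(\lambda x)$ Lemma \ref{functrans} (with $a=\lambda^{5/2}$, dilation parameter $1/\lambda$) gives $Q(\phib^{\lambda})=Q(\phib)$, $K(\phib^{\lambda})=\lambda^{2}K(\phib)$, $P(\phib^{\lambda})=\lambda^{5/2}P(\phib)$, not the exponents $\lambda^{3}$, $\lambda^{-5}$ you wrote. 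With the correct exponents, $\tfrac{d}{d\lambda}\big|_{\lambda=1}I(\phib^{\lambda})=K(\phib)-\tfrac52P(\phib)=\mathcal{T}(\phib)=0$ is just the constraint again and yields \emph{no} new relation among $K,Q,P$; so this curve cannot determine $\theta$. The relation you need comes from a scaling that stays \emph{inside} $M$: for $\phib^{\sigma}(x)=\sigma^{-2}\phib(x/\sigma)$ one has $\mathcal{T}(\phib^{\sigma})=\sigma^{-1}\mathcal{T}(\phib)=0$, so $\sigma\mapsto I(\phib^{\sigma})=\tfrac12\sigma^{-1}K+\tfrac12\sigma Q-\sigma^{-1}P$ is minimized at $\sigma=1$, giving $-\tfrac12K+\tfrac12Q+P=0$, i.e.\ $Q(\phib)=\tfrac15K(\phib)$ and hence $I'(\phib)(\phib)=K+Q-3P=0$.

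Second, the multiplier is $\theta=0$, not $\theta=-1$: from $0=I'(\phib)(\phib)=\theta\,\mathcal{T}'(\phib)(\phib)=-\theta K(\phib)$ and $K(\phib)>0$ you get $\theta=0$, whence $I'(\phib)=0$ and $\phib$ solves \eqref{OB1} directly (this is what the paper does). Your value $\theta=-1$ is carried over from the different constrained problem in Lemma \ref{AnuequG} (minimizing $E$ under $Q=\mathrm{const}$), where the constraint functional $Q$ actually contributes the zeroth-order term of the equation; here the functional $I$ already contains all terms of \eqref{OB1}, so any nonzero multiplier would distort the equation. Indeed, with $\theta=-1$ your final identity reads $-3\gamma_{k}\Delta\phi_{k}+\tfrac{\alpha_{k}^{2}}{\gamma_{k}}\phi_{k}=\tfrac32 f_{k}(\phib)$, which has different \emph{relative} coefficients from \eqref{OB1} and cannot be repaired by ``scaling out constants''.
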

 \begin{proof}
For $\sigma>0$ define 
 $\phib^{\sigma}(x)=\sigma^{-2}(\delta_\sigma\phib)(x).$ Since $\phib\in M$ we have 
 \begin{equation*}
         \mathcal{T}(\phib^{\sigma})=K(\phib^{\sigma})-\frac{5}{2}P(\phib^{\sigma})
         =\sigma^{-1}\left[K(\phib)-\frac{5}{2}P(\phib)\right]
         =\sigma^{-1} \mathcal{T}(\phib)
         =0.
 \end{equation*}
 Thus, $\phib^{\sigma}\in M$, for any $\sigma>0$. Using this and  the function $f(\sigma)=I(\phib^{\sigma})$, we conclude
 \begin{equation*}
     f(1)=I(\phib)\leq I(\phib^{\sigma})=f(\sigma), \qquad \sigma>0.
 \end{equation*}
This means that $f$ attains a minimum at $\sigma=1$. In particular, $f'(1)=0$. 

Now, by using the definition of $I$ we see that 
$f'(\sigma)=-\frac{\sigma^{-2}}{2}K(\phib)+\frac{1}{2}Q(\phib)+\sigma^{-2}P(\phib)$. 
Lemma \ref{frede}, \eqref{b1} and the fact  that $\phib\in M$ imply
 \begin{equation*}
     \begin{split}
        0= f'(1)=-\frac{1}{2}K(\phib)+\frac{1}{2}Q(\phib)+P(\phib)
         =\frac{1}{2}\left[K(\phib)+Q(\phib)-3P(\phib)\right]
         =\frac{1}{2}I'(\phib)(\phib).
     \end{split}
 \end{equation*}
 Therefore,
 \begin{equation}\label{Iprime0}
     I'(\phib)(\phib)=0.
 \end{equation}
 
On the other hand, using Lemma \ref{frede} again,
 \begin{equation}\label{Tneg}
         \mathcal{T}'(\phib)(\phib)=K'(\phib)(\phib)
         -\frac{5}{2}P'(\phib)(\phib)
         =2K(\phib)-\frac{15}{2}P(\phib)
         =2K(\phib)-3K(\phib)
        <0.
 \end{equation}
 Since  $\phib$ is a minimizer of \eqref{mdef},  there is a Lagrange multiplier, say, $\Lambda$, such that
 $I'(\phib)=\Lambda \mathcal{T}'(\phib)$.
 Putting this together with \eqref{Iprime0}  we obtain
 \begin{equation*}
         0=I'(\phib)(\phib)
         =\Lambda  \mathcal{T}'(\phib)(\phib).
 \end{equation*}
 Thus,  \eqref{Tneg} implies that $\Lambda=0$ which yields $I'(\phib)=0$. 
 \end{proof}

 \begin{lem}\label{equivIMC}
  A function $\psib\in \mathcal{P} $ belongs to $\mathcal{G}(1,\boldsymbol{0})$ if and only if  it is a minimizer of \eqref{mdef}.
 \end{lem}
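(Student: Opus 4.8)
The plan is to prove the equivalence $\psib\in\mathcal{G}(1,\boldsymbol{0})\iff\psib$ is a minimizer of \eqref{mdef} by showing both inclusions and, crucially, that the two minimal values coincide in the sense that every member of one set realizes the constrained minimum of the other. First I would establish the forward direction. Suppose $\psib\in\mathcal{G}(1,\boldsymbol{0})$. By Lemma \ref{identitiesfunctionals} (with $n=5$) we have $K(\psib)=5I(\psib)$, $P(\psib)=2I(\psib)$, hence $\mathcal{T}(\psib)=K(\psib)-\frac52 P(\psib)=5I(\psib)-5I(\psib)=0$, so $\psib\in M$. It remains to show $I(\psib)\le I(\varphib)$ for all $\varphib\in M$. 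Given $\varphib\in M$, apply Lemma \ref{phiinC} to a minimizer $\boldsymbol{\phi}$ of \eqref{mdef}: $\boldsymbol{\phi}$ solves \eqref{OB1}, so $I(\boldsymbol{\phi})\ge \inf\{I(\boldsymbol{\zeta});\boldsymbol{\zeta}\in\mathcal{C}\}=I(\psib)$ by the definition of ground state (Definition \ref{defgroundstate}); combined with $I(\boldsymbol{\phi})=m\le I(\varphib)$ for all $\varphib\in M$, we get $I(\psib)\le I(\boldsymbol{\phi})\le I(\varphib)$. Hence $I(\psib)=m$ and $\psib$ is a minimizer of \eqref{mdef}. (Along the way this also shows $I(\psib)=m=I(\boldsymbol{\phi})$ for any minimizer $\boldsymbol{\phi}$.)

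For the reverse direction, suppose $\psib\in\mathcal{P}$ is a minimizer of \eqref{mdef}. By Lemma \ref{phiinC}, $\psib$ solves \eqref{OB1}, so $\psib\in\mathcal{C}$ and thus $I(\psib)\ge \inf_{\mathcal{C}}I$. To conclude $\psib\in\mathcal{G}(1,\boldsymbol{0})$ I must show the reverse inequality $I(\psib)\le I(\boldsymbol{\zeta})$ for every $\boldsymbol{\zeta}\in\mathcal{C}$. Here I would use that any $\boldsymbol{\zeta}\in\mathcal{C}$ satisfies the Pohozaev-type relations of Lemma \ref{identitiesfunctionals}, in particular $\mathcal{T}(\boldsymbol{\zeta})=0$, i.e. $\boldsymbol{\zeta}\in M$ (after noting $\boldsymbol{\zeta}\in\mathcal{P}$ via Lemma \ref{lemma4.4}(i)). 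Since $\psib$ is a minimizer of $I$ over $M$, $I(\psib)=m\le I(\boldsymbol{\zeta})$. Therefore $I(\psib)=\inf_{\mathcal{C}}I$, which is exactly the ground-state condition, so $\psib\in\mathcal{G}(1,\boldsymbol{0})$.

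The key structural fact that makes everything click is the double inclusion $\mathcal{C}\subset M$ and "minimizers of \eqref{mdef} belong to $\mathcal{C}$" (Lemma \ref{phiinC}): the first says the ground-state infimum over $\mathcal{C}$ is bounded below by $m$, the second says $m$ is attained at a point of $\mathcal{C}$, and together they force the two infima to agree and their minimizing sets to coincide. The main obstacle I anticipate is the careful bookkeeping that $\mathcal{C}\subset M$ — this requires that \emph{every} nontrivial solution of \eqref{OB1}, not just ground states, satisfies $K=\frac52 P$ in dimension $n=5$, which follows from Lemma \ref{identitiesfunctionals} but relies on the solution lying in $\mathcal{P}$ so that the identities there are applicable; I would cite Lemma \ref{lemma4.4}(i) for that. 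A secondary point to handle with care is that Lemma \ref{phiinC} only gives that a minimizer solves \eqref{OB1} as a critical point of $I$ (Lagrange multiplier $\Lambda=0$), so one should confirm it is nontrivial — which is immediate since $P(\psib)>0$ forces $\psib\ne\mathbf 0$.

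\begin{proof}
Suppose first that $\psib\in\mathcal{G}(1,\boldsymbol{0})$. Then $\psib$ solves \eqref{OB1}, so by Lemma \ref{lemma4.4}(i) we have $\psib\in\mathcal{P}$, and by Lemma \ref{identitiesfunctionals} with $n=5$, $K(\psib)=5I(\psib)$ and $P(\psib)=2I(\psib)$; hence
\begin{equation*}
\mathcal{T}(\psib)=K(\psib)-\tfrac52P(\psib)=5I(\psib)-5I(\psib)=0,
\end{equation*}
so $\psib\in M$. Let $\boldsymbol{\phi}$ be a minimizer of \eqref{mdef}, which exists by Lemma \ref{minlemma}. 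By Lemma \ref{phiinC}, $\boldsymbol{\phi}$ solves \eqref{OB1}; since $P(\boldsymbol{\phi})>0$ (because $\boldsymbol{\phi}\in M\subset\mathcal{P}$) it is nontrivial, so $\boldsymbol{\phi}\in\mathcal{C}$ and therefore $I(\boldsymbol{\phi})\geq\inf\{I(\varphib);\varphib\in\mathcal{C}\}=I(\psib)$. Combined with $I(\boldsymbol{\phi})=m\leq I(\psib)$ (as $\psib\in M$), we obtain $I(\psib)=m$, i.e. $\psib$ is a minimizer of \eqref{mdef}.

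Conversely, suppose $\psib\in\mathcal{P}$ is a minimizer of \eqref{mdef}. By Lemma \ref{phiinC}, $\psib$ solves \eqref{OB1}, and since $P(\psib)>0$ it is nontrivial, so $\psib\in\mathcal{C}$. Let $\boldsymbol{\zeta}\in\mathcal{C}$ be arbitrary. By Lemma \ref{lemma4.4}(i), $\boldsymbol{\zeta}\in\mathcal{P}$, and by Lemma \ref{identitiesfunctionals} with $n=5$ we have, as above, $\mathcal{T}(\boldsymbol{\zeta})=0$, so $\boldsymbol{\zeta}\in M$. Since $\psib$ minimizes $I$ over $M$,
\begin{equation*}
I(\psib)=m\leq I(\boldsymbol{\zeta}).
\end{equation*}
As $\boldsymbol{\zeta}\in\mathcal{C}$ was arbitrary, $I(\psib)=\inf\{I(\boldsymbol{\zeta});\boldsymbol{\zeta}\in\mathcal{C}\}$, which means $\psib\in\mathcal{G}(1,\boldsymbol{0})$. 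This completes the proof.
\end{proof}
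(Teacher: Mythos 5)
Your proof is correct and follows essentially the same route as the paper, which also reduces the statement to showing that the two minimal values coincide, using Lemma \ref{identitiesfunctionals} to place ground states (indeed all of $\mathcal{C}$) in $M$ and Lemma \ref{phiinC} to place minimizers of \eqref{mdef} in $\mathcal{C}$. The only cosmetic difference is that the paper obtains $\tau\leq m$ directly from the minimizer lying in $\mathcal{C}$, whereas you also invoke the (true, but not strictly needed) inclusion $\mathcal{C}\subset M$; both arguments are sound.
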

 \begin{proof}
 Set 
 \begin{equation*}
\tau=\min\limits_{\varphib\in \mathcal{C}}I(\varphib),
\end{equation*}
where $\mathcal{C}$ is the set of all solution of \eqref{OB1}. In order to prove the lemma, it suffices to show that $m=\tau$.
Take any $\phib\in \mathcal{G}(1,\boldsymbol{0})$ (from Theorem \ref{thm:existenceGSJgeral} we already know that this set in nonempty). Then $I(\phib)=\tau$ and from Lemma \ref{identitiesfunctionals} we have $\phib\in M$.
 Thus, 
 \begin{equation}\label{equivtau}
     m\leq I(\phib)=\tau. 
 \end{equation}
 
On the other hand, let $\phib$ be a minimizer of \eqref{mdef} (from Lemma \ref{minlemma} such a element always exist). By Lemma \ref{phiinC} we have that $\phib\in \mathcal{C}$. Then,
 \begin{equation}\label{equivtau1}
     \tau\leq I(\phib)=m.
 \end{equation}
Inequalities \eqref{equivtau} and \eqref{equivtau1} yield the desired.
 \end{proof}

 With the above constructions in hand we are able to prove Theorem \ref{thm:instabn=5}.
 
 \begin{proof}[Proof of Theorem \ref{thm:instabn=5}]
Take  $\psib\in \mathcal{G}(1,\boldsymbol{0})$. For $\lambda>0$  define $  \psib^{\lambda}(x)=\lambda^{5/2}\psib(\lambda x)$.
 By Lemma \ref{equivIMC}, $\psib\in M$ and it is a minimizer of \eqref{mdef}. In particular,
 \begin{equation}\label{T=0}
     \mathcal{T}(\psib)=0 \quad \mbox{and} \quad I(\psib)=m.
 \end{equation}
 From Lemma \ref{philambpro} (vi) and \eqref{T=0} we have
 \begin{equation*}
    \begin{split}
         \mathcal{T}(\psib^{\lambda})=\lambda \frac{d}{d\lambda}I(\psib^{\lambda})
         =\lambda\left[\lambda K(\psib)-\frac{5}{2}\lambda^{3/2} P(\psib)\right]
         =\lambda^{2}(1-\lambda^{1/2})K(\psib). 
    \end{split}
 \end{equation*}
 Hence,
 \begin{equation}\label{Tlambneg}
     \mathcal{T}(\psib^{\lambda})<0,\quad \mbox{for any}\; \lambda>1.
 \end{equation}
Moreover, from \eqref{T=0} and Lemma \ref{philambpro} (iv) and (v),
 \begin{equation}\label{Iphimenm}
     I(\psib^{\lambda})<I(\psib^{\lambda_{*}(\boldsymbol{\psi})})=I(\psib)=m.
 \end{equation}

From now on, we assume $\lambda>1$. Let $\mathbf{u}^{\lambda}(t)$ be the maximal solution of  \eqref{system1} (with $\beta_k=0$), given by Theorem \ref{localexistenceH1}, corresponding to the initial data $\psib^{\lambda}$. Let $\tilde{I}$ be the maximal existence interval. By the conservation of the energy and the charge we get, for all $t\in \tilde{I}$,
 \begin{equation}\label{Iu=Iphi}
     \begin{split}
         I(\mathbf{u}^{\lambda}(t))=\frac{1}{2}E(\mathbf{u}^{\lambda}(t))+\frac{1}{2}Q(\mathbf{u}^{\lambda}(t))
         =\frac{1}{2}E(\psib^{\lambda})+\frac{1}{2}Q(\psib^{\lambda})
         =I(\psib^{\lambda}),
     \end{split}
 \end{equation}
 
Since the function $g(t)=\mathcal{T}(\mathbf{u}^{\lambda}(t))$, $t\in \tilde{I}$, is  continuous  and, by \eqref{Tlambneg}, $g(0)=\mathcal{T}(\psib^{\lambda})<0$,
 there exist $\delta>0$ such that $(-\delta,\delta)\subset \tilde{I}$ and $g(t)<0$, for all $t\in  (-\delta,\delta)$.
 In particular, from Corollary \ref{Mneqempty}, \eqref{Iphimenm} and \eqref{Iu=Iphi} we obtain, for each $t\in  (-\delta,\delta)$,
 \begin{equation}\label{Tuneg}
    g(t)\leq I(\mathbf{u}^{\lambda}(t))-m=I(\psib^\lambda)-m=:-\eta, \quad \eta>0. 
 \end{equation}
 
 We claim that $g(t)<0$ for all $t\in \tilde{I}$ and then, \eqref{Tuneg} holds for all $t\in \tilde{I}$.  Indeed, if not, there exist $\overline{t}\in \tilde{I}\setminus (-\delta,\delta)$ such that $g(\overline{t})\geq 0$. We assume first $g(\bar{t})>0$. By the intermediate value theorem must exists $\tilde{t}$ such that $g(\tilde{t})=0$, that is, $\mathbf{u}^\lambda(\tilde{t})\in M$. In addition, from \eqref{Iu=Iphi} and \eqref{Iphimenm} we obtain that $ I(\mathbf{u}^{\lambda}(\tilde{t}))<m$, which is a contradiction. Of course the case $g(\bar{t})=0$ cannot occur either.  Hence the claim follows.

  Finally,  since \eqref{dersegQn=5} gives
 \begin{equation*}
      \frac{d^{2}}{dt^2}Q(x\mathbf{u}^{\lambda}(t))=8\left[K(\mathbf{u}^{\lambda}(t))-\frac{5}{2} P(\mathbf{u}^{\lambda}(t)) \right] 
     =8\mathcal{T}(\mathbf{u}^{\lambda}(t))=g(t)
     <-8\eta, \quad \forall t\in \tilde{I},
 \end{equation*}
 and $\psib^\lambda\in\Sigma$, as in the proof of Theorem \ref{thm:sharpglobalexistencecondn=5} we conclude that $\tilde{I}$ must be finite.

  The conclusion of the theorem then follows because $\psib^\lambda\to \psib$ in $\mathbf{H}^1$, as $\lambda\to1^+$.
 \end{proof}

\section*{Acknowledgement}
A.P. is partially supported by CNPq/Brazil grants 402849/2016-7 and 303098/2016-3.

N.N. is partially supported by Universidad de Costa Rica, through the OAICE.


\begin{thebibliography}{20}
	
\bibitem{Adams} R.A. Adams, J.J.F. Fournier, \emph{Sobolev Spaces}, Second Edition, Pure and Applied Mathematics, Elsevier/Academic Press, Amsterdam, 2003.
	

\bibitem{Albert} J.P. Albert, \emph{Concentration compactness and stability of solitary-waves solutions to nonlocal equations}, Contemp. Math.  221 (1999), 1--30.
    
\bibitem{beg} P. B\'egout, \emph{Necessary conditions and sufficient conditions for global existence in the nonlinear Schr\"odinger equation}, Adv. Math. Sci. Appl. 12 (2002), 817--827.

    
    
    
	
\bibitem{Buryak2} A.V. Buryak and Y.S.  Kivshar, \emph{Solitons due to second harmonic generation}, Phys. Lett. A.  197 (1995), no. 5/6, 407--412.


\bibitem{Buryak} A.V. Buryak, P.  Di Tranpani, D.V.  Skryabin, and S. Trillo, \emph{Optical solitons due to quadratic nonlinearities: From basic physics to futuristic applications}, Phys. Rep.  370  (2002), no. 2, 63--235.


\bibitem{burchard} A. Burchard and H. Hajaiej, \emph{Rearrangement inequalities for functionals with
	monotone integrands}, J. Funct. Anal.  233 (2006), no. 2, 561--582.
    
\bibitem{Cazenave} T. Cazenave, \textit{Semilinear Schr\"odinger Equations}, American Mathematical Society, 2003. 
	
\bibitem{Colin} M. Colin, T.  Colin, and M.  Otha, \emph{Stability of solitary waves for a  system of  nonlinear Schr\"{o}dinger equations  with three waves interations}, Ann. Inst. H. Poincar\'e Anal. Non Lin\'eaire.  26 (2009), no. 6, 2211--2226.
	
    
\bibitem{Colin2} M. Colin, L. Di Menza, and J.C. Saut, \emph{Solitons in quadratic media},  Nonlinearity. 29 (2016), no. 3, 1000--1035.
	
    
\bibitem{Corcho} A. Corcho, S. Correia, F. Oliveira, and J.D. Silva, \emph{On a nonlinear Schr\"odinger system arising in quadratic media}, arXiv 1703.10509v2.

\bibitem{Dinh} V.D. Dinh, \emph{Strong instability of standing waves for a NLS system with quadratic interaction}, arXiv:1810.00676v1.
				
\bibitem{Esfahani} A. Esfahani and A. Pastor, \emph{Sharp constant of an anisotropic Gagliardo-Nirenberg-type inequality and applications}, Bull. Braz. Math. Soc. 48 (2017), no. 1, 171--185.

    

\bibitem{gil} D. Gilbarg and N. Trundiger, \textit{Elliptic Partial Differential Equations of Second Order},  Classics in Mathematics, Springer, 1998.

\bibitem{haj} H. Hajaiej, \emph{On the necessity of the assumptions used to prove Hardy-Littlewood and Riesz rearrangement inequalities}, Arch. Math. 96 (2011), no. 3, 273--280.
     
\bibitem{Hamano} M. Hamano, \emph{Global dynamics below the ground state for the quadratic Sch\"odinger system in 5d }, arXiv:1805.12245.

\bibitem{Hamano1} M. Hamano, \emph{Scattering for the quadratic nonlinear Schr\"odinger system in $\R^5$ without mass-resonance condition}, arXiv:1903.05880v1.
    
\bibitem{Hayashi3}  N. Hayashi, C. Li, and P.I.  Naumkin, \emph{On a system of  nonlinear Schr\"{o}dinger equations  in 2D}, Differential Integral Equations.  24 (2011), no. 5/6, 417--434.
				
        
\bibitem{Hayashi}  N. Hayashi, T. Ozawa,  and K.  Tanaka,  \emph{On a system of nonlinear {S}chr\"odinger equations with quadratic interaction},  Ann. Inst. H. Poincar\'e Anal. Non Lin\'eaire. 30 (2013), no. 4, 661--690.
              
              
\bibitem{Holmer2} J. Holmer and S. Roudenko, \emph{A sharp condition for scattering of the radial  3D cubic   nonlinear {S}chr\"odinger equation}, Commun. Math. Phys. 282 (2008), no. 2,  435--467. 

\bibitem{Inui} T. Inui, N. Kishimoto, K. Nishimura, \emph{ Blow-up of radially symmetric solutions for the quadratic nonlinear Schr\"odinger system without mass-resonance}, arXiv:1810.09153v2.

\bibitem{Inui1} T. Inui, N. Kishimoto, K. Nishimura, \emph{ Scattering for a mass critical NLS system below the ground state with and without mass-resonance}, arXiv:1810.07904v3.

\bibitem{Kavian}  O. Kavian, \emph{A remark on the blowing-up solutions to the Cauchy problem for Nonlinear Schr\"{o}dinger Equations}, Trans. Am. Math. Soc. 299 (1987), no. 1, 193--203.  

\bibitem{Kiv} Y.S. Kivshar,  et al., \emph{Multi-component optical solitary waves}, Phys. A 288 (2000), no. 1/4, 152--173.   

 \bibitem{Leoni} G. Leoni, \emph{A first course in Sobolev spaces}, Graduate Studies in Mathematics, Vol. 105, American Mathematical Society, Providence, 2009. 
  
    
\bibitem{LiHa} C. Li and N. Hayashi, N, \emph{Recent porgress on  nonlinear  Schr\"{o}dinger system with quadratic interactions}, The Scientific World Journal. 2014  (2014). 
 
 
 \bibitem{Li}  C. Li,  \emph{On a system of nonlinear  Schr\"{o}dinger equations and scale invariant spaces in 2D}, Differential  Integral Equations. 28 (2015), no. 3/4, 201--220.

   
    
\bibitem{Linares}  F. Linares and G.   Ponce, \emph{Introduction to Nonlinear Dispersive Equations}, Universitex, Springer, New York, 2009.
    
\bibitem{Lieb}  E. Lieb and M.  Loss,  \emph{Analysis}, American Mathematical Society, Providence, RI, 2001.


\bibitem{Lions1} P.L. Lions, \emph{The concentration compactness principle in the calculus of variations. The
locally compact case, I,}  Ann. Inst. H. Poincar\'e Anal. Non Lin\'eaire. 1  (1984), no 2, 109--145.

\bibitem{Lions2} P.L. Lions, \emph{The concentration compactness principle in the calculus of variations. The
locally compact case, II}, Ann. Inst. H. Poincar\'e Anal. Non Lin\'eaire. 1 (1984), no. 4, 223--283.

\bibitem{NoPa} N. Noguera and A. Pastor, \emph{On the dynamics of a quadratic Schr\"odinger system in dimension $n=5$}, arXiv:1810.01390.
 
	
	
\bibitem{Pastor}  A.  Pastor, \emph{Weak concentration and wave operator for a 3D coupled nonlinear Schr\"{o}dinger system}, J. Math. Phys.  56 (2015), no. 2, 021507-1-- 021507-18.
	

\bibitem{Pastor2}  A. Pastor, \emph{On a three wave interaction  Schr\"{o}dinger systems with quadratic nonlinearities: Global well-posedness and standing waves},  Commun. Pure Appl. Anal. 18 (2019), no. 5.


\bibitem{Ogawa} T. Ogawa and Y. Tsutsumi,   \emph{ Blow-up of $H^{1}$ solutions for the Nonlinear Schr\"{o}dinger Equation }, J. Differential Equations 92 (1991), no. 2,  317--330.

\bibitem{Ozawa} T. Ozawa and Y. Tsutsumi,   \emph{Remarks on proofs of conservation laws
	for nonlinear Schr\"odinger equations},  Calc. Var. Partial Differential Equations  25 (2006), no. 3,  403--408.

\bibitem{Tao}  T. Tao,   \emph{Multilinear Weighted convolutions $L^{2}$  function, and applications to nonlinear dispersive equations}, Amer. J. Math. 123 (2001), no. 5, 839--908.


\bibitem{Weinstein} M.  Weinstein, \emph{ Nonlinear Schr\"{o}dinger equations and sharp interpolation estimates}, Commun. Math. Phys.  87 (1983), no. 4, 567--576.
  
  
\bibitem{Yew2} A.C. Yew, A.R. Champneys, and P.J MacKenna, \emph{Multiple solitary waves due  to second-harmonic generation in quadratic media.}, J. Nonlinear Sci. 9 (1999), no. 1, 33--52.        
 
 
 \bibitem{Yew} A.C. Yew, \emph{Multipulses of  nonlinearly coupled {S}chr\"odinger equation}, J. Differential Equations. 173 (2001), no. 1, 92--137.        

  
\bibitem{Zhang} H. Zhang, \emph{Local well-posednees for a system of quadratic nonlinear {S}chr\"odinger equations in one or two dimensions}, Commun. Math. Meth.  App. Sci. 39 (2016), no. 14, 4257--4267.  
  
\bibitem{Zhao} L. Zhao, F.  Zhao, and J. Shi, \emph{Higher dimensional solitary waves generated by secodn-harmonic generation in quadratic Media}, Calc. Var. 54 (2015), no. 3, 2657--2691.        
  		
\end{thebibliography}
\end{document}